\title{A qualitative description of the horoboundary of the Teichmüller metric}
\author{Aitor Azemar}
\address{School of Mathematics and Statistics, University of Glasgow, University place, Glasgow, G12 8QQ, UNITED KINGDOM}
\email{Aitor.Azemar@Glasgow.ac.uk}
\urladdr{Aitor.Azemar.xyz}
\theoremstyle{plain}
\newtheorem{theorem}{Theorem}[section]
\newtheorem{lemma}[theorem]{Lemma}
\newtheorem{corollary}[theorem]{Corollary}
\newtheorem{proposition}[theorem]{Proposition}
\newtheorem{question}[theorem]{Question}
\theoremstyle{definition}
\newtheorem{definition}[theorem]{Definition}
\newcommand{\ext}{\operatorname{Ext}}
\newcommand{\tray}[2]{{R(#1;#2)}}
\newcommand{\N}{\mathbb{N}}
\newcommand{\MF}{\mathcal{MF}}
\newcommand{\PMF}{\mathcal{PMF}}
\renewcommand{\H}{\mathbb{H}}
\newcommand{\bussmap}{B}
\newcommand{\fibermap}{\Pi}
\newcommand{\bigfibermap}{\widetilde{\Pi}}
\newcommand{\hbd}[1]{\partial\overline{#1}^h}
\newcommand{\hbc}[1]{\overline{#1}^{h}}
\newcommand{\vbd}[1]{\partial\overline{#1}^v}
\newcommand{\vbc}[1]{\overline{#1}^{v}}
\newcommand{\Vbc}[1]{\overline{#1}^{V}}
\newcommand{\wals}{\mathcal{W}}
\newcommand{\B}{\mathcal{B}}
\newcommand{\Bclose}{\overline{\mathcal{B}}}
\newcommand{\E}{\mathcal{E}}
\newcommand{\T}{\mathcal{T}}
\newcommand{\Curv}{\mathcal{S}}
\DeclareMathOperator{\MCG}{MCG}
\begin{document}
	
	\begin{abstract}
		Two commonly studied compactifications of Teichmüller spaces of finite type surfaces with respect to the Teichmüller metric are the horofunction and visual compactifications. We show that these two compactifications are related, by proving that the horofunction compactification is finer than the visual compactification. This allows us to use the straightforwardness of the visual compactification to obtain topological properties of the horofunction compactification. Among other things, we show  that Busemann points of Teichmüller metric are not dense within the horoboundary, answering a question by Liu--Su. We also show that the horoboundary of Teichmüller space is path connected, determine for which surfaces the horofunction compactification is isomorphic to the visual one and show that some horocycles diverge in the visual compactification based at some point.
		As an ingredient in one of the proofs we show that extremal length is not $C^{2}$ along some paths that are smooth with respect to the piecewise linear structure on measured foliations.
	\end{abstract}

	\maketitle
	
\section{Introduction}

	The horofunction compactification of a metric space is defined in terms of the metric, so its properties are well aligned for studying the metric properties of the space. For example, all geodesic rays converge to points and isometries of the space can be extended to homeomorphisms of the compactification. This compactification was first introduced by Gromov \cite{Gromov} as a natural, general compactification, based on previous ideas of Busemann. The horofunction compactification has since found many applications; it was used to obtain asymptotic properties of random walks on weakly hyperbolic spaces by Maher--Tiozzo \cite{Tiozzo}, to determine the isometry group of some Hilbert geometries by Lemmens--Walsh \cite{Walsh4} and to obtain properties of quantum metric spaces by Rieffel \cite{Rieffel}. The compactification is obtained by embedding the metric space $X$ into the space $C(X)$ of continuous functions on $X$ via the map $h:X\hookrightarrow C(X)$ defined by
	\begin{equation}\label{eq:hdefinition}
		h(p)(\cdot)=d(p,\cdot)-d(p,b),
	\end{equation}
	where $b\in X$ is an arbitrarily chosen basepoint. As explained, for example, by Walsh \cite[Section 2]{Walsh3}, if the space $X$ is proper then $h$ is and embedding, the closure of $h(X)$ is compact and the \emph{horofunction compactification} of $X$ is defined as the pair $(h,\overline{h(X)})$. By considering two functions equivalent if they differ by a constant one can show that the compactification does not depend on the basepoint $b$. While this compactification has been rather useful, it is sometimes hard to visualize, and there are not that many examples where the horofunction boundary is explicitly known. Some cases where the horofunction compactification is understood include Hadamard manifolds and some of their quotients, by Dal'bo--Peigné--Sambusetti \cite{Dal'bo}, as well as the Heisenberg group with the Carnot--Carath\'{e}odory metric, by Klein--Nicas \cite{Klein}, and Hilbert geometries, by Walsh \cite{Walsh2}.
	
	On the other hand, for a proper, uniquely geodesic, straight metric space $X$ (see \cref{se:metricdefinitions} for definitions) the visual compactification based at some point $b\in X$ is defined by pasting the set of geodesic rays exiting $b$, denoted $D_b$, to the space $X$ in such a way that a sequence $(x_n)\subset X$ converges to some ray $\gamma\in D_b$ if the distance $d(b,x_n)$ goes to infinity as $n\to\infty$, and the geodesic ray between $b$ and $x_n$ converges uniformly on compacts to $\gamma$. See \cref{se:metricdefinitions} for details on the topology of $X\cup D_b$. This compactification may depend on the basepoint $b$, which restricts its usefulness. It can even happen that isometries of $X$ that move the basepoint can not be extended continuously to the compactification, as Kerckhoff showed for Teichmüller spaces \cite{Kerckhoff}. However, the visual compactification usually has a simple geometric interpretation. For example, for a Hadamard manifold, as well as for a Teichmüller space with the Teichmüller metric, this compactification is homeomorphic to a closed ball of the same dimension as the space, where the boundary of that ball is the space of geodesic rays based at $b$. In the context of Teichmüller spaces with the Teichmüller metric, the visual compactification is often called the Teichmüller compactification.
	
\subsection{Horoboundary of proper, uniquely geodesic, straight metric spaces}
	To make this work as general as possible, we begin our analysis by using the aforementioned metric properties of the Teichmüller metric. The relationship between the horofunction compactification and the visual compactification is established by observing that, for such a metric space, a sequence converging to a point in the horofunction compactification also converges in the visual compactification. This allows us to build a continuous map $\fibermap_b$ from the horofunction compactification $\overline{h(X)}$ to the visual compactification $X\cup D_b$, showing that the former is finer than the latter. In the context of Teichmüller spaces without boundary, the map $\fibermap_b$ coincides with the one defined by Liu--Shi in \cite[Definition 3.3]{LiuShi}. We may denote this map as simply $\fibermap$ when the basepoint is not relevant to the discussion.
	
	Given a geodesic $\gamma$, the path $\gamma(t)$ converges, as $t\to\infty$, to the \emph{Busemann point} associated to $\gamma$ in the horofunction compactification, which we denote $B_\gamma$. As the map $\fibermap$ is defined in terms of sequences it follows that $\fibermap(B_\gamma)=\gamma$. The existence of the map $\fibermap$ shows a strong relation between the horofunction and the visual compactification, which we state in the following result.
	\begin{theorem}\label{th:projectionfunction}
		Let $(X,d)$ be a proper, uniquely geodesic, straight metric space. For any basepoint $b\in X$, there is a continuous surjection $\fibermap$ from the horofunction compactification to the visual compactification based at b such that $\fibermap(\bussmap_\gamma)=\gamma$ for every ray $\gamma$ starting at $b$ and $\fibermap(h(p))=p$ for every $p\in X$.
		
		In particular, the horofunction compactification of $X$ is finer than the visual compactification of $X$ based at any point.
	\end{theorem}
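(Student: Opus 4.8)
The plan is to construct $\fibermap$ through the sequential description of both compactifications. Since $(X,d)$ is proper it is $\sigma$-compact, so $C(X)$ with the topology of uniform convergence on compacta is metrizable; hence $\overline{h(X)}$ is compact metrizable and it suffices to work with sequences. The whole construction rests on one lemma, which I will call $(\star)$: if $(z_m)\subset X$ satisfies $h(z_m)\to\xi$ in $\overline{h(X)}$, then $(z_m)$ converges in the visual compactification $X\cup D_b$. Granting $(\star)$, I define $\fibermap(\xi)$ to be this visual limit. The value is independent of the representing sequence: given two sequences with $h$-limit $\xi$, interleave them into a single sequence, whose $h$-image still converges to $\xi$, so by $(\star)$ it converges visually and both subsequences share its limit. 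Throughout I will use that every $\xi\in\overline{h(X)}$ is $1$-Lipschitz and satisfies $\xi(\cdot)\ge -d(b,\cdot)$, both inherited from the functions $h(z_m)$.

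To prove $(\star)$ I first split by whether $(z_m)$ is bounded. If a subsequence is bounded, properness yields a sub-subsequence converging to some $p\in X$, and continuity of $h$ gives $\xi=h(p)$; since $h$ is an embedding and $h(p)\in h(X)$, the full sequence then converges to $p$ in $X$, so visually $z_m\to p\in X$ and $\fibermap(\xi)=p$. Otherwise $d(b,z_m)\to\infty$. The geodesics $[b,z_m]$ are $1$-Lipschitz paths issuing from the fixed point $b$, so by the Arzelà--Ascoli theorem and properness they form a precompact family in the uniform-on-compacta topology, and every subsequential limit is a geodesic ray $\gamma$ from $b$. For such a limit I claim $\xi(\gamma(t))=-t$ for all $t\ge 0$: writing $p_n$ for the point of $[b,z_n]$ at distance $t$ from $b$ one computes $h(z_n)(p_n)=-t$ exactly, and since $p_n\to\gamma(t)$ inside a fixed compact set while $h(z_n)\to\xi$ uniformly there, passing to the limit gives $\xi(\gamma(t))=-t$.

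The crux, and the step I expect to be the main obstacle, is upgrading this to \emph{uniqueness} of the limiting ray, for it is here that straightness (\cref{se:metricdefinitions}) is indispensable: I must rule out two subsequential limits $\gamma\neq\gamma'$, each satisfying $\xi(\gamma(t))=\xi(\gamma'(t))=-t$. A first reduction is that the set $Z=\{y:\xi(y)=-d(b,y)\}$ is star-shaped from $b$ along geodesics: if $y\in Z$ and $z$ lies on $[b,y]$ then $1$-Lipschitzness forces $\xi(z)\le -d(b,z)$, which combined with $\xi\ge -d(b,\cdot)$ gives $z\in Z$. Thus both limit rays lie in $Z$ and decrease $\xi$ at unit speed; the straightness hypothesis is exactly what guarantees that at most one geodesic ray from $b$ can realise this maximal rate of decrease (equivalently, that the asymptotic geodesic direction is pinned down by $\xi$), so $\gamma=\gamma'$ and $(z_m)$ converges. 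This completes $(\star)$ and the definition of $\fibermap$.

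It remains to record the stated values, surjectivity, and continuity. Taking the constant sequence $z_m=p$ gives $\fibermap(h(p))=p$, and since $\bussmap_\gamma=\lim_t h(\gamma(t))$, the sequence $z_m=\gamma(t_m)$ with $t_m\to\infty$ represents $\bussmap_\gamma$ and has $[b,\gamma(t_m)]=\gamma|_{[0,t_m]}\to\gamma$, so $\fibermap(\bussmap_\gamma)=\gamma$; these two facts also show $\fibermap$ is onto $X\cup D_b$. For continuity I use a diagonal argument in this compact metric setting: given $\xi_k\to\xi$, for each $k$ choose a representing sequence for $\xi_k$ and extract from it a single point $y_k$ that is simultaneously within $1/k$ of $\xi_k$ in $\overline{h(X)}$ and within $1/k$ of $\fibermap(\xi_k)$ in $X\cup D_b$; then $h(y_k)\to\xi$, so $(\star)$ forces $y_k\to\fibermap(\xi)$ visually, and the second estimate gives $\fibermap(\xi_k)\to\fibermap(\xi)$. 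Hence $\fibermap$ is a continuous surjection restricting to the identity on $X$, which is precisely the assertion that the horofunction compactification is finer than the visual one.
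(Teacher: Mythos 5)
Your construction follows the same route as the paper's: your lemma $(\star)$ is the paper's \cref{le:fixingvaluealongpath} together with \cref{pr:welldefined}, the interleaving trick for well-definedness, the computation $h(z_n)(p_n)=-t$, and the diagonal continuity argument (the paper's \cref{pr:fibermapcontinuous}, done there with nested neighborhoods rather than metrics) all coincide with what is done there. The problem is at the step you yourself call the crux. Having produced two subsequential limit rays $\gamma\neq\gamma'$ with $\xi(\gamma(t))=\xi(\gamma'(t))=-t$ for all $t\ge 0$, you dispose of them with the sentence that straightness ``is exactly what guarantees that at most one geodesic ray from $b$ can realise this maximal rate of decrease.'' That is a restatement of the desired conclusion, not an argument: straightness says only that geodesic segments extend uniquely to bi-infinite geodesics, and extracting from this the uniqueness of optimal rays is precisely the content of the paper's \cref{pr:bussmapinjective,pr:optimalpath}, which occupy the bulk of its proof of the theorem. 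Your star-shapedness observation about $Z=\{y:\xi(y)=-d(b,y)\}$ is correct but carries no load here, since it is perfectly consistent with $Z$ containing two distinct rays; nothing in your text rules that out.

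The missing argument, concretely, is the following (cf.\ \cref{pr:optimalpath}). Fix the subsequence $(z_n)=(\gamma_n(t_n))$ whose geodesics converge to $\gamma$, and use straightness to extend $\gamma$ and each $\gamma_n$ backwards to parameter $-s$. Because the space is uniquely geodesic and straight, the concatenation of $[\gamma(-s),b]$ with $[b,\gamma'(t)]$ cannot be a geodesic (otherwise the segment $[\gamma(-s),b]$ would admit two distinct geodesic extensions past $b$), so $d(\gamma(-s),\gamma'(t))<s+t-\delta$ for some $\delta>0$, and by uniform convergence on compacts $d(\gamma_n(-s),\gamma'(t))<s+t-\delta$ for $n$ large. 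On the other hand, $\xi(\gamma'(t))=-t$ together with $h(\gamma_n(t_n))\to\xi$ gives $d(\gamma'(t),\gamma_n(t_n))\le t_n-t+\varepsilon_n$ with $\varepsilon_n\to 0$. Concatenating at $\gamma'(t)$ produces a path from $\gamma_n(-s)$ to $\gamma_n(t_n)$ of length less than $(s+t-\delta)+(t_n-t+\varepsilon_n)$, which is less than $t_n+s=d(\gamma_n(-s),\gamma_n(t_n))$ once $\varepsilon_n<\delta$, a contradiction. Note that this cannot be argued from the values of $\xi$ along the two rays alone; one has to play the approximating sequence against the competitor ray $\gamma'$. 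Without this (or an equivalent) computation, $(\star)$ is unproved and your map $\fibermap$ is not even defined.
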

	Most of the subsequent results in the paper follow as applications of this theorem.
	
	It is not the first time that a map such as $\fibermap$ appears in the literature. Similar maps have been found for $\delta$-hyperbolic spaces by Webster--Winchester \cite{Webster}. Walsh defined such a map for Hilbert geometries \cite{Walsh2}, which satisfy the hypothesis of the theorem whenever there are no coplanar noncollinear segments in the boundary of the convex set, as shown by de la Harpe \cite[Proposition 2]{Harpe}.
	
	The map $\fibermap$ does not induce a fibration, as its fibers $\fibermap^{-1}(\gamma)$ vary from points to higher dimensional sets (see Theorem \ref{th:dimensionfiberslowerbound}). Still, \cref{th:projectionfunction} characterizes the horoboundary as the disjoint union of all the fibers $\fibermap^{-1}(\gamma)$. Furthermore, our analysis of the topology of these fibers shows that they are path connected (see Proposition \ref{pr:pathconnected}), which gives the following characterization of the connectivity of the horoboundary.
	\begin{proposition}\label{pr:finslerconnected}
		The horoboundary of a proper, uniquely geodesic straight metric space is connected if and only if its visual boundary based at some point (and hence, any) is connected.
	\end{proposition}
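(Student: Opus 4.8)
The plan is to prove both directions of the equivalence by leveraging the continuous surjection $\fibermap$ from \cref{th:projectionfunction} together with the path connectedness of the fibers $\fibermap^{-1}(\gamma)$ asserted in Proposition~\ref{pr:pathconnected}. Throughout, write $V=D_b$ for the visual boundary based at $b$ and recall that $\fibermap$ restricts to a continuous surjection from the horoboundary $\hbd{X}$ onto $V$, since $\fibermap(h(p))=p$ sends the interior homeomorphically to $X$ and thus carries the boundary onto the boundary.

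For the forward direction, suppose the horoboundary is connected. Since $\fibermap$ is continuous and surjective onto $V$, and the continuous image of a connected set is connected, $V$ is connected. This direction is essentially immediate. The substance lies in the converse.

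For the converse, suppose $V$ is connected; I want to conclude that $\hbd{X}$ is connected. The idea is that $\hbd{X}$ is the disjoint union $\bigsqcup_{\gamma\in V}\fibermap^{-1}(\gamma)$ of the fibers, each of which is path connected (hence connected) by Proposition~\ref{pr:pathconnected}, glued together over the connected base $V$. The natural strategy is a standard descent argument: a continuous surjection with connected fibers from a space onto a connected base has connected total space provided the map is, say, closed or a quotient map. Here $\fibermap$ is a continuous map between compact Hausdorff spaces (both compactifications are compact, and metric spaces are Hausdorff), so $\fibermap$ is automatically a closed map. Concretely, I would argue by contradiction: if $\hbd{X}=A\sqcup B$ were a separation into nonempty disjoint relatively open (equivalently closed, by compactness) sets, then each fiber $\fibermap^{-1}(\gamma)$, being connected, must lie entirely in $A$ or entirely in $B$. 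This induces a well-defined partition of $V$ into $\fibermap(A)$ and $\fibermap(B)$. Because $\fibermap$ is a closed surjection, the images $\fibermap(A)$ and $\fibermap(B)$ are closed, nonempty, and (since the fibers do not split) disjoint and cover $V$, contradicting the connectedness of $V$. Therefore no such separation exists and $\hbd{X}$ is connected.

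The main obstacle is the verification that the fiberwise partition descends to a genuine \emph{separation} of $V$, rather than merely a partition into closed sets whose images might overlap. The key point making this work is that no fiber is split by the separation, so $\fibermap^{-1}(A)=A$ and $\fibermap^{-1}(B)=B$ are saturated; consequently $\fibermap(A)$ and $\fibermap(B)$ are genuinely disjoint and their preimages recover $A$ and $B$. I would take care to invoke compactness of $\hbc{X}$ explicitly to guarantee that $\fibermap$ is closed, so that $\fibermap(A)$ and $\fibermap(B)$ are closed and their complements open, yielding the required open separation of the connected space $V$. With these points in place the contradiction is immediate, completing the proof.
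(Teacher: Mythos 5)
Your proof is correct and takes essentially the same route as the paper: both directions rest on the continuity of $\fibermap$ and the path-connectedness of the fibers (Proposition~\ref{pr:pathconnected}), with a separation of $\hbd{X}$ descending to $\vbd{X}$ because no connected fiber can be split, so the two pieces are saturated and their images are disjoint. The only difference is in the last verification: you invoke that a continuous map from a compact space to a Hausdorff space is closed to conclude $\fibermap(A)$ and $\fibermap(B)$ are closed, whereas the paper proves this fact by hand via a sequence-lifting and accumulation-point argument; your version is a slightly cleaner packaging of the same step.
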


	The \emph{Busemann map} $B$ from the visual compactification $X\cup D_b$ to the horofunction compactification is defined by setting $B(\gamma)=B_\gamma$ for each geodesic ray $\gamma\in D_b$ and $B(p)=h(p)$ for each $p\in X$. With this definition, the map satisfies $\fibermap \circ B =\operatorname{id}$. As the next result shows, the continuity of this map is related with the topology of the horofunction compactification.
	\begin{proposition}\label{pr:horobocompfiner}
		The visual compactification of a proper, uniquely geodesic, straight metric space based at some point is isomorphic to its horofunction compactification if and only if the Busemann map is continuous.
	\end{proposition}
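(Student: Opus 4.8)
The plan is to recognize that the continuous surjection $\fibermap$ provided by \cref{th:projectionfunction} is already the only possible candidate for an isomorphism, so that the whole statement reduces to asking whether the set-theoretic inverse of $\fibermap$ — which is exactly $\bussmap$ — is continuous. I take two compactifications of $X$ to be \emph{isomorphic} when there is a homeomorphism between them commuting with the two embeddings of $X$; concretely, a homeomorphism $f\colon\overline{h(X)}\to X\cup D_b$ with $f(h(p))=p$ for all $p\in X$. The only inputs I would use are that both compactifications are compact and Hausdorff (the former because $X$ is proper and $\overline{h(X)}\subset C(X)$, the latter from the construction of the visual compactification in \cref{se:metricdefinitions}), that $h(X)$ is dense in $\overline{h(X)}$, that $\fibermap$ is continuous, and the identities $\fibermap(h(p))=p$, $\bussmap(p)=h(p)$, and $\fibermap\circ\bussmap=\operatorname{id}$ recorded just before the statement.

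For the forward direction I would suppose the compactifications are isomorphic via some $f$ as above. Then $f$ and $\fibermap$ are two continuous maps into the Hausdorff space $X\cup D_b$ that agree on the dense subset $h(X)$, since both send $h(p)\mapsto p$; hence $f=\fibermap$, and in particular $\fibermap$ is a homeomorphism. Composing the identity $\fibermap\circ\bussmap=\operatorname{id}$ on the left with $\fibermap^{-1}$ then yields $\bussmap=\fibermap^{-1}$, so $\bussmap$ is continuous.

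For the converse I would assume $\bussmap$ continuous and consider the composite $\bussmap\circ\fibermap\colon\overline{h(X)}\to\overline{h(X)}$, which is continuous. On the dense subset $h(X)$ it satisfies $\bussmap(\fibermap(h(p)))=\bussmap(p)=h(p)$, so it agrees with the identity there; since $\overline{h(X)}$ is Hausdorff and two continuous maps agreeing on a dense set coincide, I conclude $\bussmap\circ\fibermap=\operatorname{id}$. Together with $\fibermap\circ\bussmap=\operatorname{id}$ this makes $\fibermap$ and $\bussmap$ mutually inverse continuous bijections, hence homeomorphisms; as $\fibermap(h(p))=p$, the map $\fibermap$ is the required isomorphism of compactifications.

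I do not expect a substantive obstacle, since the argument is formal once \cref{th:projectionfunction} is available. The care points are foundational rather than technical: I must confirm that ``isomorphic'' is meant in the sense of equivalence of compactifications, as this is what forces the isomorphism to be $\fibermap$ via the density-plus-Hausdorff uniqueness step (an abstract homeomorphism ignoring the embeddings would not give any control over $\bussmap$), and I must verify that the visual compactification of a proper, uniquely geodesic, straight space is genuinely compact and Hausdorff, so that the ``agreement on a dense set'' principle can be invoked in both directions.
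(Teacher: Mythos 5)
Your proof is correct, but it is organized differently from the paper's. The paper factors the statement through an auxiliary equivalence, its \cref{le:visualfineriffBcontinuous}: the visual compactification is finer than the horofunction compactification if and only if $\bussmap$ is continuous. The nontrivial direction of that lemma pins down any continuous extension $f\colon\vbc{X}\to\hbc{X}$ of $h$ on the boundary by a limit computation along rays, $f(\gamma)=\lim_{t\to\infty}h(\gamma(t))=B_\gamma$; combined with \cref{th:projectionfunction} and the paper's definition of isomorphism, the proposition is then immediate. You instead work entirely on the other side of the correspondence: you identify any isomorphism $\hbc{X}\to\vbc{X}$ with $\fibermap$ by density plus Hausdorffness, and then transfer continuity to $\bussmap$ through the purely formal identity $\fibermap\circ\bussmap=\operatorname{id}$, never needing to recompute boundary values along rays. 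Your converse also yields slightly more than the paper states, namely that $\fibermap$ and $\bussmap$ are mutually inverse homeomorphisms when $\bussmap$ is continuous.

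One point to tighten, which you yourself flag: the paper's notion of ``isomorphic'' is not your homeomorphism definition but mutual finer-ness, i.e.\ continuous maps in both directions commuting with the embeddings. The two notions do coincide here: given extensions $u\colon\hbc{X}\to\vbc{X}$ and $v\colon\vbc{X}\to\hbc{X}$, the composites $u\circ v$ and $v\circ u$ are continuous, restrict to the identity on the dense images of $X$, and hence are the identity maps by Hausdorffness, so $u$ is a homeomorphism commuting with the embeddings. This is exactly the density argument you already deploy twice, so it is a one-line addition rather than a gap. Note also that under the paper's definition your converse direction shortens considerably: a continuous $\bussmap$ directly witnesses that the visual compactification is finer than the horofunction one, and \cref{th:projectionfunction} supplies the other half, with no need for the identity $\bussmap\circ\fibermap=\operatorname{id}$.
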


	The Busemann map is essentially the identity inside $X$, so the only possible points of discontinuity are at the boundary. It is therefore of interest to find a criterion for the continuity of $B$ at the boundary, which turns out to give a criterion for when the fibers $\fibermap^{-1}(\gamma)$ are singletons.
	\begin{restatable}{proposition}{continuityatqintro}\label{pr:continuityatqintro}
		Let $X$ be a proper, uniquely geodesic, straight metric space, $b\in X$ a basepoint and $B$ the corresponding Busemann map. Furthermore, let $\gamma$ be a geodesic ray based at $b$. Then  the following three statements are equivalent:
		\begin{enumerate}
			\item The Busemann map $\bussmap$ restricted to the boundary is continuous at $\gamma$.
			\item The fiber $\fibermap^{-1}(\gamma)$ is a singleton.
			\item The Busemann map $\bussmap$ is continuous at $\gamma$.
		\end{enumerate}
	\end{restatable}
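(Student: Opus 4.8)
The plan is to prove the three statements equivalent through the cyclic chain $(3)\Rightarrow(1)\Rightarrow(2)\Rightarrow(3)$. The implication $(3)\Rightarrow(1)$ is immediate, since continuity of $\bussmap$ at $\gamma$ on all of $X\cup D_b$ restricts to continuity of $\bussmap|_{D_b}$ at $\gamma$. For $(2)\Rightarrow(3)$ I would exploit the compactness of the horofunction compactification together with the continuity of $\fibermap$ and the identity $\fibermap\circ\bussmap=\operatorname{id}$ furnished by \cref{th:projectionfunction}. Given any sequence $z_n\to\gamma$ in the visual compactification, every subsequence of $(\bussmap(z_n))$ has a convergent sub-subsequence; its limit $\eta$ satisfies $\fibermap(\eta)=\gamma$ by continuity of $\fibermap$ and $\fibermap\circ\bussmap=\operatorname{id}$, so $\eta\in\fibermap^{-1}(\gamma)=\{\bussmap_\gamma\}$. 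Since every subsequence clusters only at $\bussmap_\gamma=\bussmap(\gamma)$, the whole sequence converges to it, which is exactly continuity of $\bussmap$ at $\gamma$ (sequences suffice because the relevant topologies are the sequentially defined ones of \cref{se:metricdefinitions}, and $\overline{h(X)}$ is metrizable for $X$ proper).

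The substance lies in $(1)\Rightarrow(2)$. Take any $\xi\in\fibermap^{-1}(\gamma)$ and write $\xi=\lim_n h(x_n)$; continuity of $\fibermap$ forces $x_n\to\gamma$ in the visual compactification, so $s_n:=d(b,x_n)\to\infty$ and, by straightness, the rays $\gamma_n$ extending $[b,x_n]$ satisfy $\gamma_n\to\gamma$ uniformly on compact sets. I would then establish two opposite pointwise inequalities. For the upper bound, fix $t$: since $\gamma_n$ is a unit-speed geodesic and $d(\gamma_n(t),\gamma(t))\to 0$, one gets $d(x_n,\gamma(t))\le (s_n-t)+o(1)$, hence $h(x_n)(y)=d(x_n,y)-s_n\le [d(\gamma(t),y)-t]+o(1)$; letting $n\to\infty$ and then $t\to\infty$ yields $\xi(y)\le \bussmap_\gamma(y)$ for every $y$. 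This half uses only $\gamma_n\to\gamma$ and holds for all points of the fiber. For the lower bound I would use that $t\mapsto d(\gamma_n(t),y)-t$ is non-increasing, which gives $h(x_n)\ge\bussmap_{\gamma_n}$ pointwise; now hypothesis $(1)$ enters, since $\bussmap_{\gamma_n}=\bussmap(\gamma_n)\to\bussmap(\gamma)=\bussmap_\gamma$, and passing to the limit gives $\xi\ge\bussmap_\gamma$. Together the two inequalities force $\xi=\bussmap_\gamma$, so $\fibermap^{-1}(\gamma)=\{\bussmap_\gamma\}$ is a singleton.

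The main obstacle is the upper bound estimate just described. The rays $\gamma_n$ converge to $\gamma$ only uniformly on compact sets, while the comparison points $x_n=\gamma_n(s_n)$ march off to infinity along them, so there is no direct control comparing $x_n$ with $\gamma(s_n)$ — indeed this is precisely the slack that can make a fiber nontrivial. The device that resolves it is to compare $x_n$ with $\gamma(t)$ at a \emph{fixed} finite height $t$, where $\gamma_n$ and $\gamma$ are genuinely close, and only afterwards send $t\to\infty$. Along the way I should verify that straightness indeed guarantees that the segments $[b,x_n]$ extend to rays $\gamma_n\in D_b$ with $\gamma_n\to\gamma$, so that $\bussmap(\gamma_n)$ is defined and hypothesis $(1)$ can be applied; this is where the structural hypotheses on $X$ are genuinely used.
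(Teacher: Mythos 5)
Your proof is correct and follows essentially the same route as the paper: the same cyclic chain of implications, with $(1)\Rightarrow(2)$ obtained by sandwiching $\xi$ between $B_\gamma$ from above and the limit of the $B_{\gamma_n}$ from below, and $(2)\Rightarrow(3)$ via compactness of $\hbc{X}$ together with continuity of $\fibermap$ and $\fibermap\circ B=\operatorname{id}$. The only difference is cosmetic: you prove the two pointwise bounds inline (via comparison at a fixed height $t$ and monotonicity of $t\mapsto d(\gamma_n(t),y)-t$), whereas the paper invokes its previously established Propositions \ref{pr:upperbound} and \ref{pr:lowerbound}, which encapsulate exactly those estimates.
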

	
	In other words, we have reduced the continuity of $B$ to the continuity restricted to the boundary. This result can then be applied to different settings to obtain a more precise characterization. In the case of Teichmüller spaces \cref{pr:continuityatqintro} can be used to get an explicit criterion for the continuity of the Busemann map in terms of the quadratic differentials associated to the geodesic rays, giving us a characterization of the fibers that are singletons.
	
	\subsection{Horoboundary of the Teichmüller metric}
	Many compactifications have been defined for Teichmüller space, such as Thurston's compactification, the visual compactification (also known as the Teichm\"uller compactification) and the Gardiner--Masur compactification. These compactifications play an importal role in the study of mapping class groups and assymptotic aspects of Teichm\"uller space. See for example the articles by Thurston \cite{Thurston}, Kerckhoff \cite{Kerckhoff} or Ohshika \cite{Ohshika}. The main reason multiple compactifications have been introduced is that each one has been designed with a certain application in mind.
	
	Thurston's compactification takes the rather simple shape of a ball, upon which the mapping class groups acts as homeomorphisms. This facts make this compactification well suited for studying properties of the mapping class group. Indeed, Thurston's classification of the elements of the mapping class group relied on this compactification \cite{Thurston}. However, the Teichmüller metric is not directly related to the compactification, which results in some quirks when trying to use it to study the asymptotic geometry. For example, Lenzhen--Modami--Rafi \cite{Lenzhen} prove that there exist geodesic rays with high-dimensional limit sets.
	
	The visual compactification is defined directly using the metric, and takes the shape of a sphere where each point in the boundary has a clear geometric interpretation. This makes the compactifacation a good tool to interpret asymptotic geometric results. For example, Walsh \cite[Theorem 7]{Walsh} has proven that all geodesic rays converge to points in the visual boundary. However, as proved by Kerckhoff \cite{Kerckhoff}, the action of the mapping class group does not extend continuously to this compactification, which implies that the compactification depends on the choice of basepoint. This fact limits the usability of the visual compactification.
	
	The Gardiner--Masur compactification was initially defined to study the asymptotic properties of extremal lengths, following an analogous construction to that of the Thurston's compactification. It was later proved by Liu--Su \cite{LiuSu} that this compactification is isomorphic to the horofunction compactification with respect to the Teichm\"uller metric, giving it a geometric meaning. Furthermore, the mapping class group extends continuously to the compactification. These two properties make the Gardiner--Masur compactification a good canditate to study asymptotic properties of the Teichm\"uller metric. However, as noted by Miyachi \cite{Miyachi3} and Liu--Su \cite{LiuSu}, there is a lack of information on the shape of this compactification. In this paper, we start working towards an understanding of the shape of this boundary.
	
	Let $S$ be a compact surface with (possibly empty) boundary and finitely many marked points, where we allow marked points to be on the boundary. Denote by $\T(S)$ its Teichmüller space equipped with the Teichmüller metric. Furthermore, for any quadratic differential $q$ based at some basepoint $b\in \T(S)$, denote by $\tray{q}{\cdot}$ the geodesic ray in $\T(S)$ starting at $b$ in the direction $q$, and $V(q)$ the vertical foliation associated to $q$, see \cref{se:backgroundteichmuller} for a quick introduction or the book by Farb--Margalit \cite{primer} for a more in-depth explanation of these concepts. Recall that a measured foliation is \emph{indecomposable} if it is either a thickened curve, or a component with a transverse measure that cannot be expressed as the sum of two projectively distinct non zero transverse measures. Furthermore, each measured foliation can be decomposed uniquely into finitely many indecomposable components (see \cref{se:measuredfoliations} for detailed definitions). Walsh has shown the following characterization of the convergence of Busemann points in terms of the convergence of the associated quadratic differentials.
	\begin{theorem}[Walsh {\cite[Theorem 10]{Walsh}}]\label{th:infusibledefinitionintro}
		Let $(q_n)$ be a sequence of unit area quadratic differentials based at $b\in \T(S).$ Then, $B_{\tray{q_n}{\cdot}}$ converges to $B_{\tray{q}{\cdot}}$ if and only if both of the following hold:
		\begin{enumerate}
			\item $(q_n)$ converges to $q$ with respect to the $L^1$ norm on $T^*_b\T(S)$;
			\item for every subsequence $(G^n)_n$ of indecomposable measured foliations such that, for each $n\in \N$, $G^n$ is a component of $V(q_n)$, we have that every limit point of $G^n$ is indecomposable.
		\end{enumerate}
	\end{theorem}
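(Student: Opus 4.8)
The plan is to start from an explicit formula for the Busemann function of a Teichmüller ray and then read off both directions of the equivalence from the way that formula interacts with limits. Recall first that convergence in the horofunction compactification means uniform convergence on compact sets of the functions $B_{\tray{q_n}{\cdot}}$. By Kerckhoff's formula $d(x,y)=\tfrac12\log\sup_{\alpha}\ext_x(\alpha)/\ext_y(\alpha)$, together with the scaling of extremal length along the ray (the extremal length of the vertical foliation decays like $e^{-2t}$), the Busemann function is the limit $B_{\tray{q}{\cdot}}(x)=\lim_{t\to\infty}\bigl(d(\tray{q}{t},x)-t\bigr)$. Using the Gardiner--Masur identity $\ext_x(F)=\sup_\alpha i(\alpha,F)^2/\ext_x(\alpha)$, the leading-order flat approximation $\ext_{\tray{q}{t}}(\alpha)\sim e^{2t}\, i(\alpha,V(q))^2$ suggests the clean expression $\tfrac12\log\ext_x(V(q))/\ext_b(V(q))$. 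The essential point, however, is that this approximation is not uniform in $\alpha$, so the interchange of $\lim_t$ and $\sup_\alpha$ must be carried out componentwise along the indecomposable decomposition $V(q)=\sum_i V_i$; this is exactly where the decomposition structure enters the Busemann function.

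The main technical step is to quantify, uniformly for $x$ in a compact set, how the supremum over curves $\alpha$ distributes among the indecomposable components $V_i$. For an indecomposable component the transverse measure is projectively rigid, so the curves realizing its extremal-length contribution are asymptotically pinned to that component and its contribution is stable under small perturbations of the foliation. The crucial phenomenon is what happens when a sequence of indecomposable components $G^n$ of $V(q_n)$ degenerates to a decomposable limit $G=\sum_j G_j$: the curves realizing the contributions of the projectively distinct pieces $G_j$ separate, so the $t\to\infty$ limit records them as independent contributions, producing a Busemann function strictly different from the one attached to $V(q)$, in which the $G_j$ are genuine separate components. Establishing this discontinuity quantitatively---controlling the extremal-length asymptotics and the behaviour of the extremizing curves as the foliation degenerates---is the heart of the argument and the step I expect to be the main obstacle, since it is precisely the non-uniformity of the flat approximation that must be tamed.

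Granting this analysis, the backward direction is direct. Assuming (1) and (2), continuity of extremal length in both the point and the measured foliation turns the $L^1$ convergence $q_n\to q$ into the convergence $\ext_x(V(q_n))\to\ext_x(V(q))$, and of each matched family of components, uniformly for $x$ in a compact set. Condition (2) guarantees that the indecomposable components of $V(q_n)$ have only indecomposable limits, so no splitting occurs and the componentwise structure of the Busemann formula passes to the limit; combining these yields uniform convergence of $B_{\tray{q_n}{\cdot}}$ to $B_{\tray{q}{\cdot}}$ on compacta.

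For the forward direction, condition (1) comes essentially for free from \cref{th:projectionfunction}: since $\fibermap$ is continuous and $\fibermap(B_{\tray{q_n}{\cdot}})=\tray{q_n}{\cdot}$, convergence of the Busemann points forces the rays $\tray{q_n}{\cdot}$ to converge in the visual compactification, and because the assignment of a ray to its unit-area initial quadratic differential is a homeomorphism from the visual boundary to the unit sphere of $T^*_b\T(S)$, this is exactly $L^1$ convergence $q_n\to q$. Condition (2) then follows by contraposition: if some subsequence of components $G^n$ converged to a decomposable foliation, the computation above would force the Busemann functions to converge, along that subsequence, to a function strictly different from $B_{\tray{q}{\cdot}}$, contradicting the assumed convergence. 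Hence every limit point of such a family must be indecomposable, which is condition (2).
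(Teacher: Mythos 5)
Your proposal is an outline rather than a proof: the step you yourself flag as ``the heart of the argument and the step I expect to be the main obstacle'' is precisely the content of the theorem, and it is never carried out. Concretely, everything rests on the asymptotic formula
\[
\lim_{t\to\infty} e^{-2t}\ext_{\tray{q}{t}}(F)=\sum_j\frac{i(G_j,F)^2}{i(G_j,H(q))},
\]
where the $G_j$ are the indecomposable components of $V(q)$ (Walsh's Theorem 1, restated in \cref{se:GMboundaryWalshpaper}), together with two further facts: (a) that this limit, interchanged with Kerckhoff's supremum uniformly for $x$ in a compact set, identifies the Busemann function with the Gardiner--Masur function $\bigl(\sum_j i(G_j,\cdot)^2/i(G_j,H(q))\bigr)^{1/2}$; and (b) that when a sequence of indecomposable components $G^n$ degenerates to a decomposable limit $G=G_1+G_2$, the limiting function carries the single term $i(G,\cdot)^2/i(G,H(q))$ in place of $i(G_1,\cdot)^2/i(G_1,H(q))+i(G_2,\cdot)^2/i(G_2,H(q))$, and that these genuinely differ --- which requires the strict case of the Cauchy--Schwarz inequality (Lemma \ref{le:elementaryinequality}) plus a foliation whose normalized intersection numbers separate the projectively distinct pieces. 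Without (a) and (b), your backward direction (``the componentwise structure of the Busemann formula passes to the limit'') asserts the conclusion rather than proving it, and your forward argument for condition (2) (``the computation above would force \dots a function strictly different'') has nothing to appeal to. Even granting (a), the backward direction needs more bookkeeping than ``no splitting occurs'': you must pass to subsequences along which the (topologically bounded) number of components is constant and each component converges, use uniqueness of the indecomposable decomposition to see that the indecomposable limits regroup into the decomposition of $V(q)$, and check that two components converging to proportional multiples of the same $V_i$ cause no harm, via $\wals^q(aV_i)+\wals^q(bV_i)=(a+b)\wals^q(V_i)$.

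You should also know that the paper does not reprove this statement: it quotes Walsh's Theorem 10 and observes that Walsh's proof extends unchanged to surfaces with boundary (\cref{th:Busemanncontinuityifstrong}), so what you are attempting is a reconstruction of Walsh's argument, and you have reproduced its outline but not its technical core. The one genuinely sound and attractive piece of your proposal is the derivation of condition (1) in the forward direction from \cref{th:projectionfunction}: continuity of $\fibermap$, the identity $\fibermap(B_{\tray{q_n}{\cdot}})=\tray{q_n}{\cdot}$, and the homeomorphism between the visual boundary based at $b$ and the unit sphere of $T^*_b\T(S)$ do give $q_n\to q$ with no extremal-length analysis at all; this is a legitimate shortcut not available to Walsh, whose paper predates that projection. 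But it settles only the easiest quarter of the equivalence.
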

	While Walsh's proof is done in the context of surfaces without boundary, it can be easily extended to our setting. In view of this theorem, we say that a sequence of quadratic differentials $(q_n)$ \emph{converges strongly} to $q$ if it satisfies the two conditions of \cref{th:infusibledefinitionintro}. Furthermore, we say that $q$ is \emph{infusible} if every sequence of quadratic differentials converging to $q$ converges strongly. By \cref{pr:continuityatqintro}, a quadratic differential $q$ is infusible if and only if the Busemann map is continuous at $\tray{q}{\cdot}$. In \cref{th:maxcondition}, we derive a topological characterization of the vertical foliations of infusible quadratic differentials. This allows us to determine precisely which surfaces only admit infusible quadratic differentials, yielding the following result.
	\begin{theorem}\label{th:homeomorphictovisualcomp}
		Let $S$ be a compact surface of genus $g$ with $b_m$ and $b_u$ boundary components with and without marked points respectively and $p$ interior marked points. Then the horofunction compactification of $\T(S)$ is isomorphic to the visual compactification if and only if $3g+2b_m+b_u+p\le 4$.
	\end{theorem}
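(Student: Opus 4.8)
The plan is to reduce the statement to a question about the infusibility of quadratic differentials and then to a purely combinatorial classification of the surfaces via the topological description of \cref{th:maxcondition}. By \cref{pr:horobocompfiner} the horofunction compactification of $\T(S)$ is isomorphic to the visual compactification based at $b$ exactly when the Busemann map $\bussmap$ is continuous. Since $\bussmap$ is essentially the identity on $\T(S)$, its only possible discontinuities lie over the rays $\tray{q}{\cdot}$, and by \cref{pr:continuityatqintro} continuity at such a ray is equivalent to $\tray{q}{\cdot}$ being infusible, i.e.\ to the fiber $\fibermap^{-1}(\tray{q}{\cdot})$ being a singleton. Hence the two compactifications are isomorphic if and only if every quadratic differential based at $b$ is infusible, and the theorem becomes the assertion that $\T(S)$ admits only infusible quadratic differentials precisely when $3g+2b_m+b_u+p\le 4$. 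I would then invoke \cref{th:maxcondition}, by which $q$ is infusible if and only if its vertical foliation $V(q)$ satisfies the \emph{max condition}, a property of the way the indecomposable components of $V(q)$ sit inside $S$. Thus the claim is equivalent to saying that every measured foliation on $S$ satisfies the max condition if and only if $3g+2b_m+b_u+p\le 4$, and the proof splits into the two implications.

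For the sufficiency direction I would run through the finitely many surfaces with $3g+2b_m+b_u+p\le 4$. A short case analysis shows that these are exactly the surfaces containing no essential proper subsurface of positive complexity, so that every measured foliation on $S$ is indecomposable, being either a thickened curve or a uniquely ergodic minimal filling foliation. In either case condition (2) of \cref{th:infusibledefinitionintro} holds vacuously, while condition (1) holds automatically by the construction of $\fibermap$; hence by \cref{th:maxcondition} every $q$ is infusible and the compactifications agree.

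For the necessity direction, assume $3g+2b_m+b_u+p\ge 5$; I would produce a measured foliation violating the max condition. The classification at the threshold shows that such a surface contains a proper essential subsurface $Y$ of positive complexity whose boundary carries an essential curve $c$ and whose complement in $S$ is essential. Choosing a minimal foliation $F_Y$ filling $Y$, the foliation $F=F_Y+c$ is decomposable, yet it is a limit in $\MF$ of filling, and hence indecomposable, foliations $G_n$. Realizing $F=V(q)$ and $G_n=V(q_n)$, the sequence $q_n\to q$ converges in the $L^1$ norm but not strongly, so by \cref{th:infusibledefinitionintro} the fiber $\fibermap^{-1}(\tray{q}{\cdot})$ is not a singleton; thus $q$ is not infusible and the Busemann map is discontinuous.

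The main obstacle is the necessity construction, whose analytic content is packaged inside \cref{th:maxcondition}: one must guarantee that the approximating foliations $G_n$ are genuinely indecomposable while converging to the decomposable limit $F$, and that the associated differentials converge in the cotangent ($L^1$) norm, which is the same normalization appearing in \cref{th:infusibledefinitionintro}. Granting \cref{th:maxcondition}, the remaining work is the purely topological classification, namely verifying that a proper essential subsurface of positive complexity with essential complement exists exactly when $3g+2b_m+b_u+p\ge 5$. This amounts to a finite check at the threshold value $4$ together with a monotonicity argument showing that enlarging the surface only creates more room for such a subsurface.
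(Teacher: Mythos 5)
Your reduction is exactly the paper's: \cref{pr:horobocompfiner} plus \cref{pr:continuityatqintro} turn the theorem into ``every quadratic differential is infusible iff $3g+2b_m+b_u+p\le 4$,'' and \cref{th:maxcondition} turns that into a classification of which surfaces carry only foliations satisfying the topological criterion (the paper's \cref{pr:continuousBusemann}). However, the combinatorial classification --- which is the actual content once \cref{th:maxcondition} is granted --- contains a genuine error. The criterion in \cref{th:maxcondition} is \emph{internal} indecomposability (at most one interior component in each interior part), not indecomposability. Your sufficiency step asserts that on the surfaces with $3g+2b_m+b_u+p\le 4$ ``every measured foliation is indecomposable,'' and this is false as soon as $S$ has boundary: on the torus with one unmarked boundary ($3g+b_u=4$), a non-separating closed curve together with a boundary-parallel annulus is a decomposable foliation; on the sphere with two marked boundaries ($2b_m=4$), a proper arc plus a disjoint closed curve is decomposable. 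These foliations are nevertheless \emph{internally} indecomposable, which is precisely why the theorem holds there; your argument, which ignores boundary annuli and proper arcs, cannot see this distinction. Proving internal indecomposability for these surfaces is not a consequence of ``no essential proper subsurface of positive complexity'': the paper does it (\cref{le:exceptionalimpliesnonfusible}) by replacing boundaries with marked points, reducing to the once-punctured torus and four-punctured sphere, and then doubling and invoking the $3g-3$ bound on the number of indecomposable components --- a step your proposal has no substitute for, and which is also needed to rule out non-ergodic minimal foliations on the two punctured cases.

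Two further points. In your necessity direction, ``filling, and hence indecomposable'' is incorrect: a minimal (filling) foliation can carry a transverse measure that splits as a sum of projectively distinct ergodic measures, so filling does not imply indecomposable; you would need uniquely ergodic approximants or weighted multicurves (this is what the paper's \cref{pr:onlyifpart} and the Dehn-twist construction in \cref{le:interesctingcurves} accomplish for surfaces with boundary, where simple closed curves are not even dense). That said, if \cref{th:maxcondition} is taken as a black box, this whole approximation argument is redundant --- you only need to exhibit an internally decomposable foliation, as the paper does in \cref{le:nonexceptionalimpliesfusible} with an explicit pair of disjoint, non-isotopic interior closed curves in each case. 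Finally, your claimed equivalence between $3g+2b_m+b_u+p\ge 5$ and the existence of a positive-complexity essential subsurface with essential complement is left unverified, and it is exactly at the marked boundaries that it is delicate: the coefficient $2$ on $b_m$ reflects that a curve parallel to a marked boundary is an interior component, whereas one parallel to an unmarked boundary is not (e.g.\ the sphere with two marked boundaries fails to admit two disjoint non-isotopic interior curves because the two boundary-parallel curves are isotopic, while adding any further puncture or boundary makes them non-isotopic). The paper settles this by the explicit case analysis in \cref{le:nonexceptionalimpliesfusible,le:exceptionalimpliesnonfusible}; without that, or an equally careful check, your classification step does not go through.
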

	
	This result had been previously proven by Miyachi \cite{Miyachi3} for surfaces without boundary, that is, when $b_m=b_u=0$. For the cases where we do not have an isomorphism Miyachi found non-Busemann points in the boundary. These points are in the closure of Busemann points, which prompted Liu--Su to ask the following question:
	\begin{question}[Liu--Su {\cite[Question 1.4.2]{LiuSu}}]
		Is the set of Busemann points dense in the horofunction boundary?
	\end{question}
	We give a negative answer to this question, summed up in the following result.
	\begin{theorem}\label{th:busemannnotdense}
		Let $S$ be a closed surface of genus $g$ with $p$ marked points. Then the Busemann points are not dense in the horofunction boundary of $\T(S)$ whenever $3g+p\ge5$.
	\end{theorem}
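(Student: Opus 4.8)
The plan is to show that the Busemann points, which are the image of the visual boundary under the Busemann map $B$, fail to be dense by exhibiting an open set in the horoboundary that contains no Busemann points. The natural strategy is to leverage \cref{pr:continuityatqintro}: a quadratic differential $q$ is infusible exactly when the fiber $\fibermap^{-1}(\gamma)$ over $\gamma=\tray{q}{\cdot}$ is a singleton, which happens precisely when $B$ is continuous at $\gamma$. The condition $3g+p\ge5$ is the strict failure of the inequality $3g+2b_m+b_u+p\le4$ from \cref{th:homeomorphictovisualcomp} (specialized to closed surfaces where $b_m=b_u=0$), so on such surfaces there exist fusible quadratic differentials, i.e.\ the fibers of $\fibermap$ are not all singletons. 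First I would fix a fusible $q$ whose vertical foliation $V(q)$ has an indecomposable component that can be approximated by a sequence of indecomposable foliations with a decomposable limit point, violating condition (2) of \cref{th:infusibledefinitionintro}; the hypothesis $3g+p\ge5$ guarantees enough topological room for such a configuration.

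Next I would analyze the structure of the fiber $\fibermap^{-1}(\gamma)$ for this fusible $q$. By \cref{th:projectionfunction} the horoboundary is the disjoint union of the fibers, and since $q$ is fusible this particular fiber is not a singleton; by \cref{pr:pathconnected} (referenced in the excerpt) it is path connected, hence contains nondegenerate continua. The key point is that the Busemann point $B_\gamma$ is only \emph{one} point of this fiber, namely the endpoint of the geodesic ray $\tray{q}{\cdot}$, while the other horofunctions in the fiber arise as limits of Busemann points $B_{\tray{q_n}{\cdot}}$ along sequences $(q_n)$ that converge to $q$ but \emph{not} strongly. I would want to identify a point $\xi$ in the interior of this fiber (away from $B_\gamma$) together with a horoball neighborhood of $\xi$ in $\overline{h(X)}$ that meets no Busemann point. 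Concretely, one exhibits a horofunction $\xi$ that is a limit of Busemann points but is itself not a Busemann point, and then shows it has a neighborhood disjoint from the set of Busemann points.

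The crux is to produce the open set avoiding all Busemann points, and this is where I expect the main obstacle. It is not enough to find a single non-Busemann limit point (Miyachi already found such points, and indeed they lie in the closure of Busemann points); one must find non-Busemann points that are \emph{isolated away} from the Busemann set. The plan is to use the explicit description of horofunctions on $\T(S)$ in terms of detour costs and the decomposition of foliations into indecomposable components: a Busemann point corresponds to a single geodesic direction with an indecomposable-component structure, whereas certain limit horofunctions record a genuinely different combinatorial pattern of how mass is distributed among merging components. I would argue that this pattern is stable under small perturbations in the horofunction topology, so that a whole neighborhood inherits the same pattern and therefore contains no Busemann point. Establishing this stability rigorously — translating the failure of strong convergence into an open condition on horofunctions — is the technical heart of the argument, and it is precisely here that the inequality $3g+p\ge5$ must be used to guarantee that the obstructing configuration persists in an open set rather than occurring only on a lower-dimensional sliver of the boundary.
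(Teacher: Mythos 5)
There is a genuine gap, and it is fatal to the proposed strategy. Your construction only ever produces points in the \emph{closure} of the Busemann set: the horofunctions you locate inside the fiber $\fibermap^{-1}(\gamma)$ of a fusible $q$ are obtained as limits of Busemann points $B_{\tray{q_n}{\cdot}}$ along non-strongly converging sequences, and any such limit lies in $\Bclose$ by definition, so \emph{every} neighborhood of it contains Busemann points. Your sentence ``one exhibits a horofunction $\xi$ that is a limit of Busemann points but is itself not a Busemann point, and then shows it has a neighborhood disjoint from the set of Busemann points'' is self-contradictory. You do flag this objection yourself (Miyachi's points), but the proposed repair --- that a ``combinatorial pattern of how mass is distributed among merging components'' is stable under perturbation and so excludes Busemann points from a neighborhood --- is precisely the unproved step, and it cannot be proved for the points your construction supplies. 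To show non-density one needs a horofunction that is \emph{not a limit of Busemann points at all}, and nothing in the fiber-plus-path-connectedness framework of \cref{th:projectionfunction,pr:pathconnected,pr:continuityatqintro} produces one: those results constrain where limits of Busemann points can live, not where they cannot.

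The paper's proof (\cref{se:nondensity}) has two ingredients, neither of which appears in your outline. First, \cref{pr:busemanclosureshape} gives a \emph{necessary condition} on members of $\Bclose(q)$: their Gardiner--Masur representation, squared, is a homogeneous degree-$2$ polynomial in the variables $x_i=i(V_i,\cdot)/i(V_i,H(q))$ --- in particular it is smooth along any path on which the $x_i$ vary smoothly. Second, one exhibits a horofunction violating this condition: by Fortier Bourque's \cref{th:maxsresult}, the limit of the Dehn-multitwist orbit $\phi_n(X)$ of an \emph{interior} point (not a limit of Busemann points) is a horofunction of the form $\lambda\ext\bigl(\sum_i i(\cdot,\hat\alpha_i)\hat\alpha_i+\sum_j i(\cdot,\hat\beta_j)\hat\beta_j,X\bigr)$, and the analytic heart of the argument --- \cref{le:extremallengthnotsmooth}, an explicit computation on the five-punctured sphere using Markovic's estimates for the $L$-shaped pillowcase, lifted to all surfaces with $3g+p\ge 5$ via the branched covers of \cref{le:branchingcover,le:lifting} --- shows that this extremal-length function is not $C^{2+\varepsilon}$ along a path on which the $x_i$ are smooth, hence not a degree-$2$ polynomial, hence outside $\Bclose(q)$. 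Note also that the hypothesis $3g+p\ge 5$ enters not to make some pattern ``persist on an open set,'' as you suggest, but to guarantee the existence of the branched cover onto $S_{0,5}$ (and of the fusible configuration); the open set avoiding Busemann points is then simply the complement of the closed set $\Bclose$, which the exhibited point shows is nonempty.
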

	
	To achieve this result we use Liu--Su's \cite{LiuSu} and Walsh's \cite{Walsh} characterization of the horofunction compactification as the Gardiner--Masur compactification. We use an equivalent but slightly different definition than usual for the Gardiner--Masur compactification, as the definition we use is more well suited for our computations, and more easily extendable to surfaces with boundary (see \cref{se:GMboundaryWalshpaper} for the precise definition). For each point in the horofunction compactification there is an associated real-valued function on the set of measured foliations. We show that the functions associated to elements in the closure of Busemann points are polynomials of degree 2 with respect to some variables (see Proposition \ref{pr:busemanclosureshape} for the precise statement). We then show that the elements of the Gardiner--Masur boundary found by Fortier Bourque \cite{Fortier} do not satisfy that condition. The main ingredient for this last part of the reasoning is the following result, which shows that extremal length is not $C^{2}$ along certain smooth paths in \(\MF\).
	\begin{theorem}\label{th:extremallengthnotsmoothintro}
		Let $S$ be a closed surface of genus $g$ with $p$ marked points and empty boundary satisfying $3g+p\ge5$. Then there is a point $X\in\T(S)$ and a path $G_t$, $t\in[0,t_0]$, in the space of measured foliations on $X$, smooth with respect to the canonical piecewise linear structure of the space of measured foliations, such that $\ext(G_t)$ is not $C^2$.
	\end{theorem}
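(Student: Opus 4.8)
The plan is to reduce the computation of $\ext(G_t)$ to an explicit flat-geometric quantity and then to exhibit a logarithmic defect at second order. Recall first that, for a fixed $X$, the extremal length of a measured foliation $G$ equals the total area $\|q_G\|$ of the Hubbard--Masur quadratic differential $q_G$ on $X$ determined by $G$ (with $G$ realized as a horizontal foliation of $q_G$); this Gardiner--Masur description shows that $\ext(\,\cdot\,)$ is homogeneous of degree $2$ and, by Gardiner's first-variation formula, of class $C^1$ on $\MF$. The entire point of the theorem is that this $C^1$ regularity does not upgrade: along a suitable direction the second-order term carries a factor $\log(1/t)$. So the first step is to set up the identity $\ext(G_t)=\|q_{G_t}\|$ and to record the first-order expansion, so that the singular behaviour is isolated in the remainder.

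Next I would construct the path. Since $3g+p\ge 5$, the complex dimension $3g-3+p$ is at least $2$, which is exactly the room needed to place two indecomposable pieces that can interact. I would take $G_0$ to be the vertical foliation of a degenerate configuration modeled on Fortier Bourque's examples \cite{Fortier}: a foliation whose Hubbard--Masur differential $q_{G_0}$ has an extra horizontal saddle connection (equivalently, $q_{G_0}$ lies on the boundary between two cells of the stratification of quadratic differentials, although $G_0$ need not be a singular point of the PL structure on $\MF$ itself). Working in Dehn--Thurston / train-track coordinates, where the coordinate transitions are piecewise linear, I would then choose $G_t$ to be a path that is manifestly smooth (indeed real-analytic) in a single chart and passes through $G_0$ transversally to the degenerate locus. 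The construction must guarantee two things simultaneously: that $t\mapsto G_t$ is smooth for the canonical PL structure on $\MF$, and that the topological type of the flat structure $q_{G_t}$ genuinely changes as $t$ crosses the value giving $G_0$ (a saddle connection opening into a short cylinder), since it is precisely this opening that produces the logarithm.

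The heart of the argument is the asymptotic evaluation of $\|q_{G_t}\|$ as the saddle connection degenerates. Here I would compare $q_{G_t}$ with the explicit model differential on the local flat picture (a pair of half-planes glued along a short slit, or a thin forming cylinder) and estimate the conformal modulus of the region affected by the degeneration. The modulus of a region opened across a slit of width $\sim t$ grows like $\tfrac{1}{\pi}\log(1/t)$, and feeding this into the area computation produces an expansion of the form
\[
	\ext(G_t)=P(t)+c\,t^{2}\log(1/t)+o\bigl(t^{2}\log(1/t)\bigr),
\]
where $P$ is a degree-$2$ polynomial and $c\ne 0$. A function with such a term has unbounded second derivative as $t\to 0^+$, hence is not $C^2$ and a fortiori not $2+\varepsilon$ Hölder for any $\varepsilon>0$; and if the comparison only yields that $\ext(G_t)$ is $C^2$ with second derivative of logarithmic modulus of continuity, the same conclusion follows, since a logarithmic modulus is not $\varepsilon$-Hölder for any $\varepsilon>0$.

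I expect the main obstacle to be this last asymptotic step: controlling the Hubbard--Masur differential $q_{G_t}$ uniformly near the degeneration well enough to pin down the coefficient $c$ and to show that it is nonzero. The Hubbard--Masur map is real-analytic away from the degenerate locus but not across it, so one must carry out genuine flat-geometry estimates (a modulus/extremal-length comparison on the forming cylinder, together with a gluing estimate controlling the contribution of the rest of the surface) rather than differentiate a closed formula. A secondary, more bookkeeping obstacle is to confirm that the path remains smooth in the PL structure of $\MF$ while the flat structure changes type; this amounts to checking that the degenerate locus of the quadratic-differential strata does not obstruct smoothness for the PL structure on $\MF$, which should follow by describing $G_t$ directly in train-track weights.
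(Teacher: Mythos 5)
Your overall strategy---degenerate a flat configuration and extract a logarithmic defect at second order---has the right shape, but the proposal has a genuine gap exactly where the theorem lives: the nonvanishing of the logarithmic term. You yourself defer ``controlling the Hubbard--Masur differential $q_{G_t}$ uniformly near the degeneration well enough to pin down the coefficient $c$ and to show that it is nonzero'' as the main obstacle, and nothing in the proposal resolves it; without that lower bound, neither your primary expansion nor your hedge yields failure of $C^{2+\varepsilon}$. The paper closes this gap not with new flat-geometry estimates but with three borrowed ingredients: (a) it works on the five-marked sphere realized as the double of an L-shaped polygon, where the Hubbard--Masur differential of the path $G_t=\alpha+t\beta$ (two disjoint simple closed curves, so the path is linear, hence PL-smooth, for free) is literally $dz^2$, and the relevant quantities are side lengths of the polygon; (b) it invokes Miyachi's first-variation formula $\frac{d}{dt}\ext(G_t,X)=2\,i(\beta,F_{G_t,X})$ to convert the $C^{2+\varepsilon}$ question for $\ext$ into a $C^{1+\varepsilon}$ question for the single side length $l$; and (c) it quotes Markovic's estimates $a(r)=a(0)+D_1r+O(r^2)$, $b(r)=D_2r+O(r^2)$, $l(r)=l(0)+D_3r/\log r+o(r/\log r)$, whose $r/\log r$ term immediately contradicts $C^{1+\varepsilon}$ for every $\varepsilon>0$. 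The general case $3g+p\ge 5$ is then obtained by lifting through the Gekhtman--Markovic branched cover using Fortier Bourque's identity $\ext(\pi^{-1}(F),\iota(X))=d\,\ext(F,X)$; your plan to build the example directly on $S$ would force you to carry out the hard asymptotics on a surface with no explicit model available.

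A second, substantive problem: your primary prediction points the wrong way. You predict $\ext(G_t)=P(t)+c\,t^{2}\log(1/t)+o(t^{2}\log(1/t))$ with $c\neq 0$, i.e.\ a second derivative blowing up like $\log(1/t)$. The asymptotics coming from Markovic's estimates give the opposite: the deviation of $\ext(G_t)$ from its best quadratic approximation is of order $t^{2}/\log(1/t)$, logarithmically \emph{smaller} than $t^{2}$. This is consistent with the paper's remark that extremal length may well be $C^{2}$ along such paths (in analogy with Rees's theorem that the Teichm\"uller distance is $C^{2}$ but not $C^{2+\varepsilon}$), and it means an argument banking on an unbounded second derivative would likely fail. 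Only your fallback hedge---$C^{2}$ with a logarithmic modulus of continuity for the second derivative---matches what actually happens, and that hedge still requires the unproven lower bound on the defect.
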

	The canonical piecewise linear structure of the space of measured foliations was developed by Bonahon \cite{Bonahon,Bonahon2,Bonahon3}. The first derivative of the extremal length along such a path was determined by Miyachi \cite{Miyachi2}, so our proof is based on finding cases where Miyachi's expression is not $C^{1}$. This follows from an explicit computation, whose complication is greatly reduced by using previous estimates established by Markovic \cite{Markovic}. 
 %This contrasts with Rees result, who proved that Teichmüller distance is $C^{2}$ \cite{Rees}, but not $C^{2+\varepsilon}$ \cite{Rees2}.
	
	The relation between the Thurston compactification and the horofunction compactification was studied by Miyachi \cite{Miyachi}. He proves that, while neither Thurston's nor the horofunction compactification is finer than the other, there is a bicontinuous map from the union of $\T(S)$ and uniquely ergodic foliations in Thurston's boundary to a subset of the horofunction compactification. Masur showed \cite{Masur2} that this result can be interpreted to say that these two compactifications are the same almost everywhere according to the Lebesgue measure on Thurston's boundary. The image of uniquely ergodic foliations by the bicontinuous map is the set of Busemann points associated to uniquely ergodic foliations. As we show in \cref{th:nowheredense}, this set is nowhere dense within the horoboundary. Hence the map defined by Miyachi does not show that these two are the same almost everywhere according to any strictly positive measure on the horoboundary. In fact, any attempt to extend the identity map from the interior of the Thurston compactification to the interior of the horoboundary compactification to a set of full measure within the Thurston compactification results in the same problem.
	\begin{restatable}{corollary}{noequivalentmeasuresintro}\label{co:noequivalentmeasuresintro}
		Let $\nu$ be any finite strictly positive measure on the horoboundary and let $\mu$ be the Lebesgue measure on the Thurston boundary. Furthermore, let $\phi$ be a map from the Thurston compactification to the horofunction compactification satisfying $\phi|_{\T(S)}=h$, where $h$ is as in \eqref{eq:hdefinition}. Then there is no subset $U$ of the Thurston boundary with full $\mu$-measure such that $\phi$ is continuous at every point in $U$ and $\phi(U)$ has full $\nu$-measure.
	\end{restatable}
		
	Under some smoothness assumptions satisfied by Teichmüller metric, we are able to use the maps $\fibermap_b$ to give an alternative definition of the horofunction compactification based on geometric notions. This definition characterizes the horofunction compactification as the reachable subset of the product of all visual compactifications obtained by choosing different basepoints (see \cref{se:alternativehorofunctiondefinition} for details). Hence, the horofunction compactification can be interpreted as a collection of the asymptotic information provided by all visual compactifications. As a straightforward result of this alternative definition we get the following characterization of converging sequences in the horofunction compactification.
	\begin{corollary}\label{co:convergingtohoriffconvergingeveryvisintro}
		A sequence $(x_n)\subset \T(S)$ converges in the horofunction compactification if and only if the sequence converges in all the visual compactifications.
	\end{corollary}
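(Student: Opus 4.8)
The plan is to read the corollary off from the alternative description of the horofunction compactification constructed in \cref{se:alternativehorofunctiondefinition}. There, using the smoothness properties of the Teichmüller metric, $\hbc{\T(S)}$ is identified with the reachable subset $R$ of the product $P$ of all visual compactifications, that is $P = \prod_{b\in\T(S)}(\T(S)\cup D_b)$, the identification being realized by the map $\Phi(\xi)=(\fibermap_b(\xi))_{b}$, which is a homeomorphism onto $R$. Recall that the maps $\fibermap_b$ exist and are continuous surjections by \cref{th:projectionfunction}, since $\T(S)$ with the Teichmüller metric is proper, uniquely geodesic and straight. The whole point of this reformulation is that convergence in a product space is coordinatewise, so convergence in $\hbc{\T(S)}$ becomes a statement about convergence in each visual compactification at once.

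First I would record the behaviour of $\Phi$ on $\T(S)$. By \cref{th:projectionfunction} we have $\fibermap_b(h(p))=p$ for every $p\in\T(S)$ and every basepoint $b$, so $\Phi(h(p))$ is the diagonal element of $P$ all of whose coordinates equal $p$, where $p$ is regarded inside $\T(S)\cup D_b$ through the canonical inclusion. Hence, for a sequence $(x_n)\subset\T(S)$, the images $\Phi(h(x_n))$ are the diagonal elements determined by the $x_n$, and by the definition of the product topology this sequence converges in $P$ if and only if the coordinate sequence $x_n$ converges in $\T(S)\cup D_b$ for every basepoint $b$; that is, if and only if $(x_n)$ converges in all of the visual compactifications simultaneously.

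It then remains to compare convergence of $\Phi(h(x_n))$ in $P$ with convergence of $(x_n)$, equivalently of $h(x_n)$, in $\hbc{\T(S)}$. Since $\hbc{\T(S)}$ is compact and $\Phi$ is a homeomorphism onto its image, $R=\Phi(\hbc{\T(S)})$ is a compact, and therefore closed, subset of the Hausdorff space $P$. Consequently, if $\Phi(h(x_n))$ converges in $P$ then its limit necessarily lies in $R$, so that for this sequence convergence in $P$ and convergence in the subspace $R$ coincide; and since $\Phi^{-1}\colon R\to\hbc{\T(S)}$ is continuous, convergence of $\Phi(h(x_n))$ in $R$ is equivalent to convergence of $h(x_n)$ in $\hbc{\T(S)}$. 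Chaining these equivalences gives that $(x_n)$ converges in the horofunction compactification if and only if $\Phi(h(x_n))$ converges in $P$, if and only if $(x_n)$ converges in every visual compactification, which is exactly the claim.

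The argument is essentially formal once the identification $\hbc{\T(S)}\cong R$ is in hand; the only non-elementary ingredient is that this identification holds, i.e.\ that $\Phi$ is an injective (hence, by compactness, a homeomorphic) embedding. Injectivity is precisely the assertion that a horofunction boundary point is determined by the totality of its images in the visual compactifications, and it is here that the smoothness hypotheses on the Teichmüller metric from \cref{se:alternativehorofunctiondefinition} are used. I therefore expect all of the real content to be absorbed into that section, leaving the corollary as a short deduction combining the coordinatewise nature of convergence in a product with the closedness of the compact image $R$.
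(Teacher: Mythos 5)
Your proposal is correct and takes essentially the same route as the paper: both reduce the corollary to the identification of $\hbc{\T(S)}$ with the closure of the diagonal embedding in the product of all visual compactifications (\cref{th:horocompisvisualfromeverypoint}, whose injectivity step is exactly where the $C^1$-along-geodesics and constant-distance-variation hypotheses enter), after which the statement is just the coordinatewise description of convergence in a product. The point-set details you spell out (compactness, hence closedness, of the image $R$ and continuity of $\Phi^{-1}$) are precisely what the paper leaves implicit when it deduces \cref{co:convergingtohoriffconvergingeveryvis} and restricts it to the Teichmüller metric.
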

	Considering the horocycles diverging in the horofunction compactification found by Fortier Bourque \cite{Fortier} we get that there is some visual compactification in which these horocycles do not converge.
	\begin{corollary}\label{co:divergencehorocycles}
		Let $S$ be a closed surface of genus $g$ with $p$ marked points, such that $3g+p\ge5$. There is a basepoint such that a horocycle diverges in the visual compactification based at that point.
	\end{corollary}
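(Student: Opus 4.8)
The plan is to obtain this as an immediate consequence of \cref{co:convergingtohoriffconvergingeveryvisintro} together with the existence, due to Fortier Bourque \cite{Fortier}, of a horocycle that diverges in the horofunction compactification. Since that corollary characterizes convergence in $\hbc{\T(S)}$ as simultaneous convergence in \emph{all} visual compactifications, divergence in $\hbc{\T(S)}$ must be witnessed by divergence in at least one visual compactification; the rest of the work is simply to transfer this statement from sequences to the path traced out by the horocycle.

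First I would invoke Fortier Bourque's construction \cite{Fortier}, which applies exactly in the range $3g+p\ge 5$, to fix a horocycle $c(t)$ that does not converge to a point of $\hbc{\T(S)}$ as $t$ tends to infinity. Because the horofunction compactification is compact and metrizable, this failure of convergence yields two sequences of parameters $s_n\to\infty$ and $t_n\to\infty$ with $c(s_n)\to\xi$ and $c(t_n)\to\eta$ for two distinct boundary points $\xi\neq\eta$ of $\hbc{\T(S)}$. Interlacing these, the sequence defined by $z_{2n-1}=c(s_n)$ and $z_{2n}=c(t_n)$ has the two distinct subsequential limits $\xi$ and $\eta$, so, the compactification being Hausdorff, $(z_n)$ does not converge in $\hbc{\T(S)}$.

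Next I would apply the contrapositive of \cref{co:convergingtohoriffconvergingeveryvisintro} to $(z_n)$: since $(z_n)$ fails to converge in the horofunction compactification, it must fail to converge in the visual compactification based at some point $b$. It then remains to upgrade this from the interlaced sequence to the path itself, which is the only genuinely new bookkeeping. If $c$ converged in the visual compactification based at $b$, say to a limit $L$, then both subsequences $c(s_n)$ and $c(t_n)$, and hence $(z_n)$, would converge to $L$ there, contradicting the previous step. Therefore $c$ diverges in the visual compactification based at $b$, which is precisely the assertion to be proved.

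The substance of the statement lies entirely in the two imported ingredients, namely Fortier Bourque's delicate construction of a divergent horocycle and the product-embedding description of the horofunction compactification underlying \cref{co:convergingtohoriffconvergingeveryvisintro}; the argument above is elementary point-set manipulation on top of them. The one point demanding care, rather than a true obstacle, is to confirm that the smoothness hypotheses on the Teichmüller metric required for \cref{co:convergingtohoriffconvergingeveryvisintro} are in force for $\T(S)$, so that the corollary genuinely applies; granting that, the proof is complete.
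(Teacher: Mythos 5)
Your proof is correct and follows essentially the same route as the paper: combine Fortier Bourque's horocycle that diverges in the horofunction compactification with \cref{co:convergingtohoriffconvergingeveryvisintro} to conclude divergence in some visual compactification. The only difference is that you spell out the sequence-to-path bookkeeping (interlacing subsequential limits) that the paper leaves implicit when it applies the sequence statement to a path, which is a harmless elaboration rather than a different argument.
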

	This contrasts with the behavior of Teichmüller rays, which converge in all visual compactifications \cite[Theorem 7]{Walsh}.
	
	The structure of the horoboundary provided by \cref{th:projectionfunction}, as well as the path-connectivity of the fibers, allows us to prove the following connectivity result.
	\begin{theorem}\label{th:teichconnected}
		The horoboundary of any Teichmüller space of real dimension at least 2 is path connected.
	\end{theorem}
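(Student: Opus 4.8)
The plan is to exploit the fibration-like structure furnished by \cref{th:projectionfunction}. Since $\T(S)$ has a closed ball as its visual compactification, its visual boundary is a sphere $D_b\cong\Sph^{n-1}$ with $n=\dim_\R\T(S)\ge 2$; in particular $D_b$ is path connected. By \cref{th:projectionfunction} the map $\fibermap\colon\hbd{\T(S)}\to D_b$ is a continuous surjection, and by \cref{pr:pathconnected} each fiber $\fibermap^{-1}(\gamma)$ is path connected and contains the Busemann point $\bussmap_\gamma$, since $\fibermap(\bussmap_\gamma)=\gamma$. Given two boundary points $\xi_0,\xi_1\in\hbd{\T(S)}$ with $\gamma_i=\fibermap(\xi_i)$, I would first join $\xi_i$ to $\bussmap_{\gamma_i}$ by a path inside the fiber $\fibermap^{-1}(\gamma_i)$. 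This reduces the theorem to joining any two Busemann points $\bussmap_{\gamma_0},\bussmap_{\gamma_1}$ by a path in the horoboundary.

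For the reduced problem, choose an arc $\sigma\colon[0,1]\to D_b$ with $\sigma(0)=\gamma_0$ and $\sigma(1)=\gamma_1$. If the Busemann map $\bussmap$ were continuous, then $s\mapsto\bussmap_{\sigma(s)}$ would be the desired path. The obstruction is precisely that $\bussmap$ can fail to be continuous; this is exactly the phenomenon that makes the horofunction compactification strictly finer than the visual one. By \cref{pr:continuityatqintro}, $\bussmap$ is discontinuous at $\gamma$ precisely when $\gamma$ is fusible, i.e.\ when $\fibermap^{-1}(\gamma)$ is not a singleton. The idea is therefore to route $\sigma$ through infusible directions: I would choose $\sigma$ so that $\sigma((0,1))$ avoids the set $F$ of fusible directions. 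Along such an arc $s\mapsto\bussmap_{\sigma(s)}$ is continuous on $(0,1)$; by continuity of $\fibermap$ together with compactness of $\hbd{\T(S)}$, every subsequential limit as $s\to 0^+$ lies in $\fibermap^{-1}(\gamma_0)$, and similarly at $s=1$. I would then bridge such a limit point to $\bussmap_{\gamma_0}$, and correspondingly to $\bussmap_{\gamma_1}$, inside the path-connected fibers, and concatenate to obtain a single path.

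The main obstacle is constructing the avoiding arc, i.e.\ controlling the fusible set $F$. Here I would use \cref{th:infusibledefinitionintro}: a direction whose vertical foliation is uniquely ergodic is infusible, since any limit of components of nearby vertical foliations is a sub-foliation of a uniquely ergodic foliation and hence indecomposable. Consequently $F$ is contained in the non-uniquely-ergodic locus of $D_b\cong\PMF$, a small (measure zero, meager) subset. When $\dim D_b=n-1\ge 2$, a general-position argument—a path is one-dimensional, so it can be perturbed off a set of topological dimension at most $n-3$—should let me join $\gamma_0$ and $\gamma_1$ through the complement of $F$; pinning down the required codimension estimate on the non-uniquely-ergodic locus is the technical heart of this step. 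The remaining case $\dim D_b=1$ is precisely the low-complexity regime $3g+2b_m+b_u+p\le 4$, where by \cref{th:homeomorphictovisualcomp} the horofunction and visual compactifications coincide, so $\hbd{\T(S)}\cong\Sph^{1}$ is path connected outright.

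Finally, I would record a more robust fallback in case the avoidance estimate is not directly available: one can instead attempt to bridge $s\mapsto\bussmap_{\sigma(s)}$ at each of its discontinuities using the path-connected fibers, controlling the bridges by compactness of $\hbd{\T(S)}$, or alternatively upgrade the connectivity already established in \cref{pr:finslerconnected} to path-connectivity by showing that the horoboundary is a Peano continuum. The cleanest route, however, is the avoidance argument above, whose only nonformal ingredient is the smallness of the fusible locus inside $\PMF$.
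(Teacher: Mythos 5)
Your reduction --- join each $\xi_i$ to $\bussmap_{\fibermap(\xi_i)}$ inside the path-connected fiber (\cref{pr:pathconnected}) and then connect Busemann points by lifting an arc from the visual sphere --- is the same skeleton as the paper's, but the lifting mechanism you propose has two genuine gaps. First, the codimension estimate at what you call the technical heart is false, not merely unverified. By \cref{th:maxcondition}, for a closed surface a direction $q$ is fusible as soon as $V(q)$ has two or more indecomposable components. Fix a nonseparating simple closed curve $\alpha$ on $S_g$: every projective class $[c\alpha+F']$ with $c>0$ and $F'$ a nonzero foliation supported in $S\setminus\alpha$ is then fusible, and this set has dimension $1+\dim\MF(S_{g-1,2})=6g-7$ in $\MF(S_g)$, i.e.\ codimension \emph{one} in $\PMF\cong \Sph^{6g-7}$. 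A codimension-one set cannot be pushed off a path by general position, and ``measure zero and meager'' does not prevent a set from separating the sphere (a hyperplane is both). To run your avoidance argument you would need something like path-connectivity of the locus of internally indecomposable directions, which is a statement of the same depth as connectivity of ending lamination space, not a perturbation lemma. Second, even granted an avoiding arc $\sigma$, the map $s\mapsto\bussmap_{\sigma(s)}$ is continuous only on $(0,1)$: the endpoint $\gamma_0$ is precisely a fusible direction in the cases of interest, hence a discontinuity point of $\bussmap$, and $\lim_{s\to0^+}\bussmap_{\sigma(s)}$ need not exist --- you only know that all subsequential limits lie in $\fibermap^{-1}(\gamma_0)$. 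Declaring the value at $s=0$ to be one such subsequential limit does not make the concatenation continuous. Worse, no choice of $\sigma$ can force $\bussmap_{\sigma(s)}\to\bussmap_{\gamma_0}$: an infusible direction has internally indecomposable vertical foliation, whose limit $V(q_{\gamma_0})$ is decomposable, so by \cref{th:infusibledefinitionintro} the convergence is never strong; and the paper's \cref{pr:continuoussection3} exhibits distinct infusible approach sequences whose associated points converge to \emph{different} elements of the fiber, so oscillation (hence non-existence of the limit) is a real phenomenon, not a technicality.

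The paper sidesteps both problems by abandoning $\bussmap$ as the lifting map. For surfaces without boundary it lifts the arc via the minimal point $M(q)$ of \cref{pr:lowerboundGM}, which by \cref{pr:continuoussection} (and \cref{th:globalsection}) is a \emph{globally continuous} section of $\fibermap$; then $M\circ\sigma$ is automatically a path whose endpoints lie in the correct fibers, and \cref{pr:pathconnected} finishes the argument. For surfaces with boundary, where no global section need exist, the paper constructs an explicit path of directions (passing through a boundary-parallel curve) along which the Busemann assignment has only finitely many discontinuities, each repaired by a path inside a fiber. Your fallbacks do not close the gaps either: along an arbitrary arc the discontinuity set of $\bussmap$ can be dense (fusible directions are dense once the fusible locus is nonempty, e.g.\ sums of two disjoint weighted curves), so ``bridge at each discontinuity'' is not available without the careful choice of arc the paper makes; and the Hahn--Mazurkiewicz route would require local connectivity of the horoboundary, which is not established anywhere.
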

%	The study of the connectivity of the Gromov boundary of the curve complex, the ending lamination space, resulted in interesting developments, see the work by Chaika--Hensel \cite{Hensel} and Gabai \cite{Gabai}. We start a similar study for the horoboundary of the Teichmüller metric by finding that it is path connected.	
	Furthermore, we also prove that whenever the surface has empty boundary the map $\fibermap$ restricted to the horoboundary admits a section, while it only admits a section for surfaces of low complexity if the boundary is nonempty (see \cref{th:globalsection} for details).

	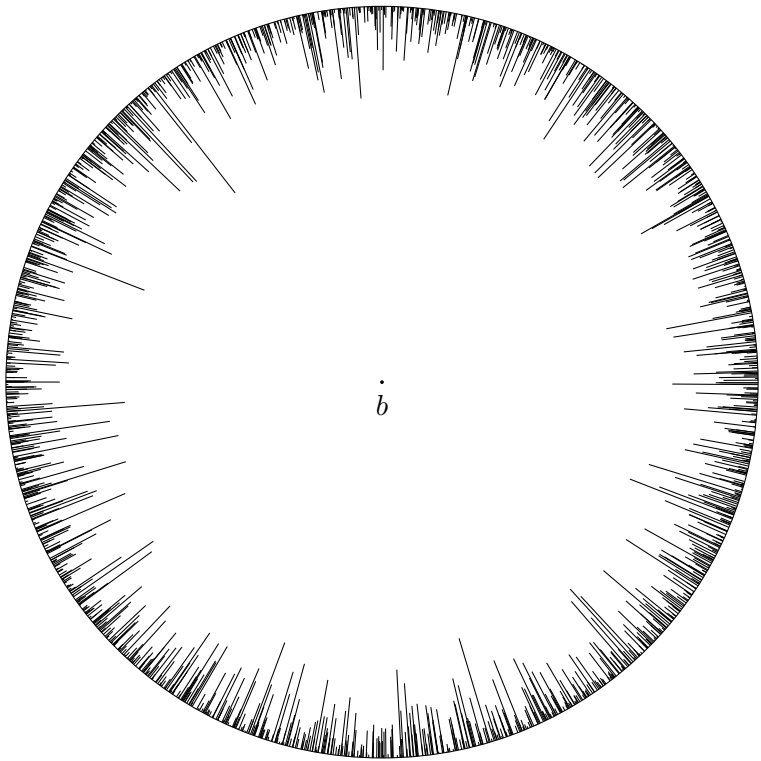
\begin{figure} \centering
		\begin{tikzpicture}
			\pgfmathsetseed{1}
			\pgfmathsetmacro\spin{5*360};
			\draw  (0,0) ellipse (5 and 5);
			\foreach \ang in {1,...,\spin}{
				\pgfmathsetmacro\disp{rnd};
				\pgfmathsetmacro\radius{ln(rnd)/4+5};
%				\pgfmathsetmacro\radius{ln((rnd+1)/2)+5};
				\draw [smooth] (\ang+\disp:5) to (\ang+\disp: \radius);
			}
			\node [circle,fill,inner sep=0.5pt,label=below:$b$] (b) at (0,0) {};
		\end{tikzpicture}
		\caption{Sketch of the shape of the horoboundary of the Teichmüller metric for surfaces without boundary.}
		\label{fi:cartoonfinal}
	\end{figure}

	\cref{fi:cartoonfinal} shows a sketch of what we think the horoboundary looks like based on the results of this paper. The outer circle represents the section given by Theorem \ref{th:globalsection}. Each line perpendicular to the sphere represents one of the fibers induced by the map $\fibermap$, so it is associated with a unique Teichmüller ray starting at $b$. Note that while by Proposition \ref{pr:pathconnected} the fibers are path connected, by Theorem \ref{th:dimensionfiberslowerbound} they are bigger than segments in some cases. Furthermore, a priori they might not be contractible.
	
	The nearest point to the basepoint $b$ of each fiber represents the Busemann point associated to the geodesic joining $b$ to the fiber. This point could indeed be considered the nearest point to $b$ from the fiber, as one can access it in a straight way, through a geodesic exiting $b$. On the other hand, the points in the outer circle represents the points associated to the section alluded to earlier. These can be accessed through a sequence of Busemann points whose associated fiber is a point, which can be considered as the most tangentially possible way to reach points in the boundary. Following a result by Masur \cite{Masur2}, with respect to the measure on the fibers induced by the Lebesgue measure on the set of Teichmüller rays exiting $b$, almost all the fibers are actually points. As we shall see in \cref{th:nowheredense} these points are nowhere dense in the boundary.
	
	Note that there exist paths within the horoboundary connecting the fibers without passing through the section, and a priori there may be paths not represented in the sketch along which the fibers vary continuously. For surfaces for which the map $\fibermap$ does not admit a global section, a similar sketch could be drawn, although there would be no continuous global section in some cases. Hence, the outer circle would be broken at some places.
		
	Finally, Liu--Su's and Walsh's characterization of the horofunction compactification as the Gardiner--Masur compactification can be used to translate some of these findings to results regarding the asymptotic value of extremal length functions. For example, we get the following estimate.
	\begin{theorem}\label{th:limits}
		Let $(q_n)$ be a sequence of unit quadratic differentials converging strongly to a unit quadratic differential $q$. Denote $G_j$ the components of the vertical foliation associated to $q$, and $H(q)$ the horizontal foliation. Then, for any $F\in \MF$ and sequence $(t_n)$ of real values converging to positive infinity we have
		\[\lim_{n\to\infty} e^{-2t_n}\ext_\tray{q_n}{t_n}(F)=\sum_j\frac{i(G_j,F)^2}{i(G_j,H(q))}.\]
	\end{theorem}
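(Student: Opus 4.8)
The plan is to read both sides of the identity through the Gardiner--Masur realization of the horofunction compactification, so that the stated limit becomes the continuity of a Gardiner--Masur coordinate along the deforming rays. Write $\E_X(F)^2=e^{-2d(b,X)}\ext_X(F)$ for the (suitably normalized) Gardiner--Masur coordinate of $X\in\T(S)$; since $\tray{q_n}{t_n}$ lies at distance $t_n$ from $b$, the left-hand side is exactly $\E_{\tray{q_n}{t_n}}(F)^2$. Under the Liu--Su/Walsh identification each boundary point is recorded by such a function on $\MF$, and the squared function attached to the Busemann point is $\E_{\bussmap_{\tray{q}{\cdot}}}(F)^2=\sum_j i(G_j,F)^2/i(G_j,H(q))$, which is the explicit Gardiner--Masur boundary function of a Teichmüller ray; thus the right-hand side is the value at $F$ of the function attached to $\bussmap_{\tray{q}{\cdot}}$. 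In this language the theorem is equivalent to the single convergence statement
\[
\tray{q_n}{t_n}\longrightarrow \bussmap_{\tray{q}{\cdot}}
\]
in the horofunction compactification, together with the fact that convergence there forces pointwise convergence on $\MF$ of the associated functions (if the compactification is phrased on simple closed curves, one extends to $\MF$ by continuity and homogeneity of intersection numbers).

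First I would establish this convergence softly as far as possible. The hypotheses give $d(b,\tray{q_n}{t_n})=t_n\to\infty$, and the $L^1$ part of strong convergence (condition (1) of \cref{th:infusibledefinitionintro}) makes the initial segments of the rays $\tray{q_n}{\cdot}$ converge uniformly on compacts to $\tray{q}{\cdot}$; hence $\tray{q_n}{t_n}\to \tray{q}{\cdot}$ in the visual compactification. By \cref{th:projectionfunction} the map $\fibermap$ is continuous with $\fibermap(h(p))=p$, so any subsequential limit $\xi$ of $h(\tray{q_n}{t_n})$ in the compact horofunction compactification satisfies $\fibermap(\xi)=\tray{q}{\cdot}$, i.e.\ $\xi$ lies in the fiber $\fibermap^{-1}(\tray{q}{\cdot})$. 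This already pins the limit to the fiber over the correct ray; what remains is to show that within that fiber $\xi$ is the Busemann point.

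The hard part is exactly this last identification, and it is where the full strength of strong convergence enters. For fixed $n$ one has $h(\tray{q_n}{t})\to \bussmap_{\tray{q_n}{\cdot}}$ as $t\to\infty$, while \cref{th:infusibledefinitionintro} gives $\bussmap_{\tray{q_n}{\cdot}}\to \bussmap_{\tray{q}{\cdot}}$; the theorem is the diagonal interchange of these two limits for an arbitrary sequence $t_n\to\infty$, which requires the first limit to be uniform in $n$. I expect this uniformity, rather than either limit separately, to be the main obstacle. Condition (2) of strong convergence---that no indecomposable component of $V(q_n)$ breaks up in the limit---is precisely what forbids the relevant degeneration: along each ray the surface $\tray{q_n}{t}$ develops thin pieces, one for every component $G^n_j$ of $V(q_n)$, whose extremal length behaves like $e^{-2t}\,i(G^n_j,H(q_n))$, and $\ext_{\tray{q_n}{t}}(F)$ is controlled two-sidedly by the contributions $i(G^n_j,F)^2/\ext_{\tray{q_n}{t}}(G^n_j)$ measuring how $F$ crosses these pieces. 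Because strong convergence keeps this component and cylinder decomposition stable as $n\to\infty$, these estimates can be made uniform in $n$, giving
\[
e^{-2t}\ext_{\tray{q_n}{t}}(F)=\sum_j\frac{i(G^n_j,F)^2}{i(G^n_j,H(q_n))}+o(1),
\]
with error tending to $0$ uniformly in $n$; combined with $i(G^n_j,F)\to i(G_j,F)$ and $i(G^n_j,H(q_n))\to i(G_j,H(q))$, setting $t=t_n\to\infty$ yields the claim. I would carry out the uniform estimate through Minsky's product-regions description of extremal length in the thin part, so that the implied constants depend only on the topological type of the decomposition and not on the individual $q_n$.
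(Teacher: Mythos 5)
Your reduction is the paper's own: rewrite $e^{-2t_n}\ext_{\tray{q_n}{t_n}}(F)$ as the normalized Gardiner--Masur coordinate of $\tray{q_n}{t_n}$, identify the right-hand side as the Gardiner--Masur function of $\bussmap_{\tray{q}{\cdot}}$ (\cref{co:walshbusemanshape}), and use visual convergence of $\tray{q_n}{t_n}$ together with continuity of $\fibermap$ (\cref{th:projectionfunction}) to place every subsequential horofunction limit in the fiber $\fibermap^{-1}(\tray{q}{\cdot})$. The gap is precisely the step you flag as the main obstacle: identifying that limit as the Busemann point. You propose a two-sided expansion $e^{-2t}\ext_{\tray{q_n}{t}}(F)=\sum_j i(G^n_j,F)^2/i(G^n_j,H(q_n))+o(1)$ with error uniform in $n$, justified by (i) the claim that strong convergence keeps the component and cylinder decomposition ``stable'' and (ii) Minsky-type product-region estimates. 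Neither justification works. Strong convergence does not stabilize the decomposition: condition (2) of \cref{th:infusibledefinitionintro} only forces limit points of components to be indecomposable, while the number and type of the components of $V(q_n)$ may differ from those of $V(q)$ for every $n$ (for instance, two components of $V(q_n)$ can converge to multiples of one and the same component of $V(q)$), so even your componentwise statement $i(G^n_j,F)\to i(G_j,F)$ is ill-posed. And product-region technology compares extremal lengths only up to uniform multiplicative constants; it cannot produce the exact $o(1)$ asymptotics that your interchange-of-limits argument requires. Establishing Walsh's asymptotic formula uniformly along the sequence is essentially the whole theorem, and your sketch supplies no mechanism for it.

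What your proposal misses is that no uniformity is needed, because the paper's two bounds are asymmetric. Lower bound: the extension of Walsh's Lemma 3 (\cref{le:walshlowerbound}) is an exact inequality valid for every $t$ and every $n$, namely $e^{-2t}\ext_{\tray{q_n}{t}}(F)\ge \sum_j \wals^{q_n}(G^n_j)(F)$; combined with the continuity statement \cref{le:limith} (whose homogeneity takes care of components merging in the limit), it gives $\liminf_n e^{-2t_n}\ext_{\tray{q_n}{t_n}}(F)\ge\sum_j\wals^{q}(G_j)(F)$ under strong convergence (\cref{pr:lowerboundrefinedlimit}). Upper bound: for any subsequential limit $\xi\in\fibermap^{-1}(\tray{q}{\cdot})$, the optimal-geodesic identity $\xi(\tray{q}{t})=-t$ and the definition of $\Xi$ give $(\Xi^{-1}\xi)(F)^2\le e^{-2t}\ext_{\tray{q}{t}}(F)$ for every $t$, and letting $t\to\infty$ invokes Walsh's asymptotic formula only for the single, fixed $q$, yielding $\Xi^{-1}\xi\le\Xi^{-1}B(q)$ (\cref{pr:upperboundGM}). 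Strong convergence makes the two bounds coincide, which is the paper's proof (\cref{pr:boundslimits} together with \cref{pr:lowerboundrefinedlimit}). Replacing your uniform Minsky estimate by this asymmetric pair of bounds is what is needed to close the argument.
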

	This generalizes a previous result proven by Walsh in \cite[Theorem 1]{Walsh}, where the same is shown for $q_n$ constant.

\subsection{Outline of the paper and a note for the reader interested in surfaces without boundary}
	The paper is structured as follows. In \cref{se:metricdefinitions} we introduce the necessary metric notions used in the paper. We follow in \cref{se:horofunctionmetric} by proving the results related to the more general metric setting, such as showing that the horofunction compactification is finer than the visual one. In \cref{se:backgroundteichmuller} we give a short review of the necessary background on Teichmüller spaces. In \cref{se:continuitybusemanteich} we determine which quadratic differentials are infusible, and find which surfaces admit infusible quadratic differentials, getting a proof of \cref{th:homeomorphictovisualcomp}. In \cref{se:shapeoffibers} we characterize the points in the closure of Busemann points, and get some bounds on the dimension of the fibers of the map $\fibermap$. In \cref{se:nondensity} we show that Busemann points are not dense. In \cref{se:globaltopology} we determine which surfaces result in the map $\fibermap$ having a section, and prove that the horoboundary is path connected. Finally, in \cref{se:formulas} we use the previous results to obtain estimates regarding asymptotic values of extremal lengths. 
	
	Some of the most dense parts of this paper are due to the added complexity of considering surfaces with boundary. As such, the reader focused in surfaces with empty boundary might want to omit the corresponding sections on a first reading. One of the largest related parts starts after the remark following \cref{th:maxcondition} and ends before the start of \cref{se:horocycles}. The other sizable part starts with \cref{pr:continuoussection2} and ends at the start of the proof of \cref{th:teichconnected}, where we note that the proof is significantly simpler in the case of surfaces without boundary.
	
\subsection*{Acknowledgments}
	The author would like to thank Maxime Fortier Bourque and Vaibhav Gadre for many helpful discussions and corrections.

\section{Metric definitions}\label{se:metricdefinitions}

\subsection{Compactifications}

	A compactification of a space serves, among other things, as a way of characterizing convergence to infinity. Formally, a \emph{compactification} of a topological space $X$ is a pair $(f,\overline{X})$, where $\overline{X}$ is a compact topological space and $f:X\to \overline{X}$ is an embedding such that $f(X)$ is dense in $\overline{X}$. The boundary of a compactification $\partial \overline{X}=\overline{X}-X$ describes the different ways of converging to infinity provided by that compactification. We shall usually identify the points in $X$ with the ones in $\overline{X}$ via the map $f$, and say that a sequence $(x_n)\subset X$ converges in $\overline{X}$ if $f(x_n)$ converges.
	
	A compactification $(f_1,X_1)$ is \emph{finer} than another one $(f_2,X_2)$ if there exists a continuous map $\overline{f}_2:X_1\to X_2$ such that $\overline{f}_2\circ f_1 =f_2$. Since $f_2(X)$ is dense in $X_2$, the continuous extension $\overline{f}_2$ is surjective. Furthermore, we can restrict the map $\overline{f}_2$ to the boundary to get a surjective map $\left.\overline{f}_2\right\vert_{\partial X_1}:\partial X_1\to \partial X_2$, which can be seen as a projection. Having a compactification finer than another ones means, from an intuitive point of view, that the finer compactification catalogs more ways of converging to infinity than the other one. Namely, any sequence in $X$ converging in the finer compactification converges also in the coarser one, while the opposite may not be true.
	
	We say that two compactifications are isomorphic if each one is finer than the other one. The following Lemma found in \cite[Lemma 17]{Walsh} coincides with the intuitive notion of finer compactifications.
	
	\begin{lemma}\label{le:walshcompact}
		Let $(f_1,X_1)$ and $(f_2,X_2)$ be two compactifications of $X$ such that $f_2$ extends continuously to an injective map $\overline{f}_2:X_1\to X_2$. Then the two compactifications are isomorphic.
	\end{lemma}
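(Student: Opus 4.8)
The plan is to show that the hypothesized map $\overline{f}_2 \colon X_1 \to X_2$ is in fact a homeomorphism, from which the isomorphism of the two compactifications follows almost immediately. First I would observe that one of the two required directions is free: by the very definition of \emph{finer} recalled just above the lemma, the hypothesis that $f_2$ extends to a continuous map $\overline{f}_2$ with $\overline{f}_2 \circ f_1 = f_2$ says precisely that $(f_1,X_1)$ is finer than $(f_2,X_2)$. So the entire content of the statement lies in producing a continuous extension $\overline{f}_1 \colon X_2 \to X_1$ of $f_1$ satisfying $\overline{f}_1 \circ f_2 = f_1$, which will exhibit $(f_2,X_2)$ as finer than $(f_1,X_1)$ and thereby establish the isomorphism.

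The natural candidate for $\overline{f}_1$ is the inverse of $\overline{f}_2$, so the key step is to check that $\overline{f}_2$ is a bijective homeomorphism. Injectivity is assumed. For surjectivity I would use compactness together with density: since $X_1$ is compact and $\overline{f}_2$ is continuous, the image $\overline{f}_2(X_1)$ is a compact, hence closed, subset of the Hausdorff space $X_2$; as it contains $f_2(X) = \overline{f}_2(f_1(X))$, which is dense in $X_2$ by the definition of a compactification, it must be all of $X_2$. Thus $\overline{f}_2$ is a continuous bijection from a compact space to a Hausdorff space, and therefore a homeomorphism; in particular its inverse $(\overline{f}_2)^{-1}$ is continuous.

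It then remains to set $\overline{f}_1 := (\overline{f}_2)^{-1}$ and verify the two formal requirements. Continuity is exactly the statement just proved. To see that $\overline{f}_1$ is the desired extension, I would compute $\overline{f}_1 \circ f_2 = (\overline{f}_2)^{-1} \circ (\overline{f}_2 \circ f_1) = f_1$, using the extension identity $f_2 = \overline{f}_2 \circ f_1$ supplied by the hypothesis. This shows $(f_2,X_2)$ is finer than $(f_1,X_1)$, and combined with the free direction above, the two compactifications are isomorphic.

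I expect no genuine obstacle here; the argument is a short piece of point-set topology. The only point that requires care is the standing assumption that the compactification spaces are compact Hausdorff, which is precisely what makes the classical fact ``a continuous bijection from a compact space to a Hausdorff space is a homeomorphism'' applicable and what guarantees that the dense-image argument for surjectivity closes up. I would also flag the apparent typo in the statement, where the second compactification should read $(f_2,X_2)$ rather than $(f_2,X_1)$, since the map $\overline{f}_2$ is declared to land in $X_2$.
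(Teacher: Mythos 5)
Your proof is correct. Note that the paper does not actually prove this lemma — it cites it from Walsh \cite[Lemma 17]{Walsh} — and your argument (surjectivity of $\overline{f}_2$ from compactness plus density of $f_2(X)$, then the classical fact that a continuous bijection from a compact space to a Hausdorff space is a homeomorphism, and finally taking $\overline{f}_1=(\overline{f}_2)^{-1}$) is exactly the standard proof of that cited result. Your two flagged caveats are both apt: the Hausdorff hypothesis is genuinely needed (the paper's general definition of compactification omits it, but all compactifications considered in the paper are metrizable, hence Hausdorff), and the statement does contain the typo $(f_2,X_1)$ for $(f_2,X_2)$.
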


	We will usually refer to the space $\overline{X}$ as the compactification when the embedding is clear from the context.
	Since the images of $X$ by the embedding are dense, the extensions we get to compare the compactifications are unique. That is, we have the following result
	\begin{lemma}
		Let $(f_1,X_1)$ and $(f_2,X_2)$ be two compactifications of $X$ such that $X_1$ is finer than $X_2$. Then the extension $\overline{f_2}:X_1\to X_2$ is unique.
	\end{lemma}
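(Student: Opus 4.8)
The plan is to invoke the standard principle that a continuous map into a Hausdorff space is completely determined by its restriction to any dense subset; here the relevant dense subset is $f_1(X)\subset X_1$, which is dense precisely because $(f_1,X_1)$ is a compactification of $X$. Throughout I would take for granted, as is implicit in the notion of compactification used in the paper, that the target $X_2$ is Hausdorff.

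First I would suppose that $g$ and $g'$ are two continuous maps $X_1\to X_2$, each extending $f_2$ in the sense that $g\circ f_1=f_2=g'\circ f_1$. For any point of the form $f_1(x)$ with $x\in X$ we then have $g(f_1(x))=f_2(x)=g'(f_1(x))$, so $g$ and $g'$ coincide on $f_1(X)$. It remains to promote this agreement on the dense set $f_1(X)$ to agreement on all of $X_1$.

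To do this I would consider the locus $A=\{y\in X_1 : g(y)=g'(y)\}$ and argue that it is closed. Indeed, $A$ is the preimage of the diagonal $\Delta\subset X_2\times X_2$ under the continuous map $(g,g'):X_1\to X_2\times X_2$; since $X_2$ is Hausdorff, $\Delta$ is closed, and hence $A$ is closed. By the previous step $A$ contains $f_1(X)$, so $A$ is a closed set containing a dense subset, forcing $A=X_1$. Therefore $g=g'$, which is the desired uniqueness.

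The argument is essentially routine, so there is no serious obstacle; the only point requiring care is the appeal to the Hausdorff property of $X_2$, without which the diagonal $\Delta$ need not be closed and uniqueness can genuinely fail. Since the compactifications considered in this paper (the horofunction and the visual compactifications) live inside Hausdorff spaces of functions and of geodesic rays respectively, this hypothesis holds automatically in every application, and the extension comparing two compactifications is therefore well defined.
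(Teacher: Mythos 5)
Your proof is correct and follows essentially the same route as the paper's: both determine $\overline{f_2}$ on the dense set $f_1(X)$ and then extend by continuity, the only difference being that you spell out the standard diagonal argument (and the Hausdorff hypothesis on $X_2$ it requires) that the paper's one-line proof leaves implicit. Since the compactifications in this paper are Hausdorff, your added care does not change the substance.
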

	\begin{proof}
		For any $x\in X$ we have $\overline{f_2}(f_1(x))=f_2(x)$. Hence, the image of $\overline{f_2}$ is determined on a dense subset of $X_1$, so by continuity it is determined on $X_1$.
	\end{proof}

\subsection{Visual compactification of proper, uniquely geodesic, straight spaces.} \label{se:visualdefinition}

	Let $(X,d)$ be a metric space. We shall say that a map $\gamma$ from an interval $I\subset \R$ to $X$ is a \emph{geodesic} if it is an isometric embedding, that is, if $d(\gamma(t),\gamma(s))=|t-s|$. We shall consider two geodesics to be equal if their image is equal and have the same orientation. A space is \emph{uniquely geodesic} if for any two distinct points $a,b\in X$ there is a unique geodesic starting at $a$ and ending at $b$. 

	Furthermore, we say that the space is \emph{proper} if the closed balls $D(x,r)=\{p\in X\mid d(p,x)\le r\}$ are compact. 
	
	If geodesic segments can be extended uniquely, that is, if for any geodesic segment $\gamma_1$ there is a unique bi-infinite geodesic $\gamma_2$ such that $\gamma_1\cap \gamma_2=\gamma_1$, we say that the space is \emph{straight}. 
	
	Let then $X$ be a proper, uniquely geodesic, straight space and let $D_b$ be the set of infinite geodesic rays starting at $b$, with the topology given by uniform convergence on compact sets. Furthermore, denote $S_b^1=\{x\in X \mid d(x,b)=1\}$ the sphere of radius 1 around $b$. 
	\begin{lemma}\label{le:geodesicsequalsphere}
		The map from $D_b$ to $S_b^1$ defined by sending $\gamma\in D_b$ to $\gamma(1)$ is a homeomorphism.
	\end{lemma}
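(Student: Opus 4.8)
The plan is to denote the map by $\Phi\colon D_b\to S_b^1$, $\Phi(\gamma)=\gamma(1)$, and to check in turn that it is well defined, a continuous bijection, and has continuous inverse. Well-definedness is immediate: since $\gamma$ is an isometric embedding with $\gamma(0)=b$, we have $d(\gamma(1),b)=|1-0|=1$, so $\gamma(1)\in S_b^1$. Continuity of $\Phi$ is equally direct, because convergence in $D_b$ is uniform convergence on compact sets, which in particular forces convergence of the evaluations at the point $1$; thus $\gamma_n\to\gamma$ gives $\gamma_n(1)\to\gamma(1)$.

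For injectivity, suppose $\Phi(\gamma_1)=\Phi(\gamma_2)=x$. Then $\gamma_1|_{[0,1]}$ and $\gamma_2|_{[0,1]}$ are both geodesics from $b$ to $x$, so by unique geodesicity they coincide in a common segment $\sigma$. To propagate this to the full rays I would invoke straightness: for each $n$ the segment $\gamma_i|_{[0,n]}$ extends to a unique bi-infinite geodesic, and all of these extensions contain $\sigma=\gamma_1|_{[0,1]}=\gamma_2|_{[0,1]}$, so by uniqueness of the bi-infinite geodesic through $\sigma$ they all coincide with a single $G$. Hence both $\gamma_1$ and $\gamma_2$ are the forward ray of $G$ issuing from $b$ in the direction of $x$, and therefore agree. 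For surjectivity, given $x\in S_b^1$ I take the unique geodesic from $b$ to $x$, which has length $1$, extend it by straightness to a bi-infinite geodesic, and let $\gamma$ be the forward ray from $b$; then $\gamma\in D_b$ and $\gamma(1)=x$.

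The substantive step is continuity of $\Phi^{-1}$, and this is where properness enters. The topology on $D_b$ is metrizable (it is the compact-open topology on a subspace of $C([0,\infty),X)$ with $X$ metric), so it suffices to argue sequentially. Given $x_n\to x$ in $S_b^1$, write $\gamma_n=\Phi^{-1}(x_n)$. Each $\gamma_n$ is $1$-Lipschitz and sends $[0,T]$ into the ball $D(b,T)$, which is compact by properness; hence the family $\{\gamma_n\}$ is equicontinuous and pointwise relatively compact, and Arzelà--Ascoli applies on each $[0,T]$. By a diagonal argument every subsequence admits a sub-subsequence converging uniformly on compacts to some map $\gamma_\infty$. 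A uniform limit on compacts of isometric embeddings is again an isometric embedding, so $\gamma_\infty\in D_b$, and passing to the limit gives $\gamma_\infty(0)=b$ and $\gamma_\infty(1)=x$. By the injectivity already established, $\gamma_\infty=\Phi^{-1}(x)$. Since every subsequence of $(\gamma_n)$ has a sub-subsequence converging to the single limit $\Phi^{-1}(x)$, the whole sequence converges to it, proving continuity of $\Phi^{-1}$.

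I expect the identification of the limit in the last paragraph to be the main obstacle: the Arzelà--Ascoli argument only produces \emph{some} limiting ray, and one must use straightness together with unique geodesicity (through injectivity) to recognize it as the correct ray $\Phi^{-1}(x)$. As an alternative packaging of the same ingredients, one can note that the Arzelà--Ascoli estimate shows $D_b$ is relatively compact in $C([0,\infty),X)$ and the closedness of geodesic rays under limits shows it is closed, hence compact; since $S_b^1$ is Hausdorff, the continuous bijection $\Phi$ is then automatically a homeomorphism.
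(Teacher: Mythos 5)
Your proof is correct and follows essentially the same route as the paper's: continuity of $\gamma\mapsto\gamma(1)$ from the compact-open topology, bijectivity from unique geodesicity plus straightness, and continuity of the inverse via properness, equicontinuity of geodesics, and Arzelà--Ascoli, with the limit ray identified through uniqueness. The only cosmetic difference is that you package the inverse-continuity step as a subsequence principle (and note the compact-to-Hausdorff shortcut), whereas the paper argues by contradiction with a subsequence avoiding the claimed limit.
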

	\begin{proof}
		Since the topology on $D_b$ is given by uniform convergence on compact sets, the point $\gamma(1)$ varies continuously with respect to $\gamma$. 
		
		On the other hand, since the space is straight and has unique geodesics, given any point $a\in S_b^1$ there is a unique geodesic ray starting at $b$ and passing through $a$. This is the inverse to the map obtained by evaluating the geodesics. To see that the relation is continuous we consider a sequence $(a_n)\subset S_b^1$ converging to some $a$, and denote $(\gamma_n)$ and $\gamma$ the associated geodesics. Assume $\gamma_n$ does not converge to $\gamma$. Then we have a subsequence without $\gamma$ as an accumulation point. For any $t>0$, the geodesic segments $\left.\gamma\right\vert_{[0,t]}$ are contained in the ball of radius $t$, which is compact, as $X$ is proper. As these are geodesics we have equicontinuity, so by Arzelà--Ascoli we can take a subsequence converging uniformly to some path $\gamma'$. Since the distance function is continuous, $\gamma'$ is a geodesic. Furthermore, $\gamma'(1)=\lim_{n\to\infty} \gamma_n(1)=\lim_{n\to\infty} a_n= a$. By uniqueness of geodesics, $\gamma'$ and $\gamma$ are equal when restricted to $[0,1]$, which by straightness implies they are equal. Hence, $\gamma_n$ converges to $\gamma$ uniformly on the compact $[0,t]$.
	\end{proof}
	
	Following a similar reasoning it is possible to show the following, still under the same hypotheses on $X$.
	\begin{lemma}\label{le:straightspacesareballs}
		The space $X$ is homeomorphic to $D_b\times [0,\infty)/D_b\times\{0\}$.
	\end{lemma}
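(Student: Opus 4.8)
The plan is to exhibit an explicit homeomorphism and verify the two directions of continuity separately. Write $C=D_b\times[0,\infty)/(D_b\times\{0\})$ for the cone on $D_b$ and define $\Phi\colon C\to X$ by $\Phi([\gamma,r])=\gamma(r)$. This is well defined on the quotient because $\gamma(0)=b$ for every $\gamma\in D_b$, so the collapsed set $D_b\times\{0\}$ is sent to the single point $b$. First I would check that $\Phi$ is a bijection. Surjectivity uses straightness together with unique geodesics: the basepoint $b$ is the image of the cone point, and any $x\neq b$ lies on a unique geodesic segment from $b$, which extends uniquely to a ray $\gamma\in D_b$ with $x=\gamma(d(b,x))$. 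For injectivity, if $\gamma(r)=\gamma'(r')$ with $r,r'>0$, then comparing distances to $b$ forces $r=r'=d(b,\gamma(r))$; the rays $\gamma$ and $\gamma'$ then pass through the common point $\gamma(r)\neq b$, so they agree on $[0,r]$ by uniqueness of geodesics and hence everywhere by straightness.

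Next I would verify that $\Phi$ is continuous. Away from the cone point, if $(\gamma_n,r_n)\to(\gamma,r)$ then $\gamma_n\to\gamma$ uniformly on compacts and $r_n\to r$, and the triangle inequality gives
\[
	d(\gamma_n(r_n),\gamma(r))\le |r_n-r|+d(\gamma_n(r),\gamma(r)),
\]
both terms of which tend to $0$. At the cone point it suffices to note that, for any $\delta>0$, the image of $D_b\times[0,\delta)$ is an open neighborhood of the cone point that $\Phi$ maps into the open ball $D(b,\delta)$, so $\Phi$ is continuous there as well.

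The main work, and the step I expect to be the genuine obstacle, is continuity of the inverse, i.e. showing that $x\mapsto[\gamma_x,d(b,x)]$ is continuous, where $\gamma_x$ denotes the ray through $x$. The radial coordinate $x\mapsto d(b,x)$ is continuous because the distance is; and if $x_n\to b$ then $d(b,x_n)\to0$, which already forces $[\gamma_{x_n},d(b,x_n)]$ to converge to the cone point, using that $D_b\cong S_b^1$ is compact (as $X$ is proper) so that every neighborhood of the cone point contains the image of some $D_b\times[0,\varepsilon)$. The delicate part is the angular coordinate $x\mapsto\gamma_x$ away from $b$, and here I would reuse the Arzelà--Ascoli argument of \cref{le:geodesicsequalsphere}. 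Given $x_n\to x\neq b$, suppose $\gamma_{x_n}$ does not converge to $\gamma_x$ and pass to a subsequence bounded away from $\gamma_x$. For each $t>0$ the restrictions $\gamma_{x_n}\vert_{[0,t]}$ are $1$-Lipschitz maps into the compact ball $D(b,t)$, so a further subsequence converges uniformly on compacts to a path $\gamma'$, which is a geodesic ray since $d$ is continuous. Writing $r=d(b,x)>0$ and using $d(b,x_n)\to r$ with the evaluation estimate above, we obtain $\gamma'(r)=\lim_n\gamma_{x_n}(d(b,x_n))=\lim_n x_n=x$, so $\gamma'$ is a ray from $b$ through $x$; by uniqueness of geodesics and straightness $\gamma'=\gamma_x$, contradicting the choice of subsequence. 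Hence $\Phi^{-1}$ is continuous and $\Phi$ is a homeomorphism. The only bookkeeping beyond this concerns the basepoint, where the angular coordinate is undefined but harmless because the radial coordinate degenerates to $0$ and the cone point absorbs it.
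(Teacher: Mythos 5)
Your proof is correct and follows essentially the same route as the paper: the same evaluation map $[\gamma,r]\mapsto\gamma(r)$, bijectivity from unique geodesics and straightness, continuity from the compact-open topology on $D_b$, and continuity of the inverse via the Arzel\`a--Ascoli argument of \cref{le:geodesicsequalsphere} after splitting off the case $x_n\to b$. The extra details you supply (the explicit triangle-inequality estimate and the collar neighborhoods of the cone point) are fine but not a departure from the paper's argument.
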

	\begin{proof}
		We define the map $C:D_b\times [0,\infty)/D_b\times\{0\}\to X$ given by $C(\theta,r)=\theta(r)$. This is well defined, as $C(\theta,0)=b$ for any $\theta\in D_b$. Furthermore, this is a bijection, since for every $x\in X-\{b\}$ there is a unique geodesic ray from $b$ to $x$. The map is continuous, as the topology on $D_b$ is given by uniform convergence on compact sets. To see that the inverse is continuous consider a sequence $a_n\in X$ converging to some $a\in X$. If $a=b$, then $d(a_n,b)\to 0$, so we have continuity. Otherwise we denote $r_n=d(a_n,b)$ and $r=d(a,b)$. We have $r_n\to r$, so denoting $(\gamma_n)$ and $\gamma$ the unique geodesic in $D_b$ such that $\gamma_n(r_n)=a_n$ and $\gamma(r)=a$ and applying Arzelà--Ascoli's theorem in the same way as in \cref{le:geodesicsequalsphere}, we have that $\gamma_n$ converges to $\gamma$.
	\end{proof}
			
	The space $D_b\times [0,\infty)/D_b\times\{0\}$ can be included into the compact space $D_b\times [0,\infty]/D_b\times\{0\}$, which can be written as $\left(D_b\times [0,\infty)/D_b\times\{0\}\right)\cup D_b\times\{\infty\}$. Using the homeomorphism from \cref{le:straightspacesareballs}, we can use this inclusion to give a compact topology on the space $X\cup D_b$. The \emph{visual compactification} is defined as the pair $(i,X\cup D_b)$, where $i$ is the inclusion $i:X\to X\cup D_b$ and the topology on the space $X\cup D_b$ is the one we just defined. We shall denote $X\cup D_b$ as $\vbc{X}_b$, or $\vbc{X}$ when the basepoint is not relevant to the discussion.

\subsection{Horofunction compactification}\label{se:horodefinition}

	The second compactification that will play a part in this paper is slightly more involved and difficult to visualize.
	
	Let $X$ be a proper, uniquely geodesic, straight metric space. Given a basepoint $b\in X$, one can embed $X$ into the space of continuous functions from $X$ to $\R$ via the map $h:X\to C(X)$ defined by
	\[h(x)(\cdot ):=d(x,\cdot)-d(x,b).\]
	The topology given to $C(X)$ is that of uniform convergence on compact sets. The map $h$ is indeed continuous, as the distance function is continuous. Furthermore, $h$ is injective, as $h(x)$ has a strict global minimum at $x$. It can also be proven that since $X$ is proper, $h$ is an embedding. For more details about this construction see \cite[Section 2]{Walsh3}. Furthermore, the properness of $X$ implies it is second countable, so the closure of $h(X)$ is compact, Hausdorff and second countable. We shall denote the closure of $h(X)$ on $C(X)$ as $\hbc{X}$. The \emph{horofunction compactification} is defined as the pair $(h,\hbc{X})$. We call the set $\hbd{X}=\hbc{X}-X$ the \emph{horofunction boundary} or \emph{horoboundary}, and we call its members \emph{horofunctions}. If we want to specify the chosen basepoint we write $\hbc{X}_b$. However, it is possible to see that quotienting the compactification by letting $f\sim g$ whenever the difference is constant we get an isomorphic compactification, showing that the horofunction compactification does not depend on the basepoint.

	Usually the easier points to identify in the horoboundary are the Busemann points. These are the ones that can be reached as a limit along almost geodesics, which is a slight weakening of the notion of geodesic by allowing an additive constant approaching $0$. That is, a path $\gamma:[0,\infty)\to X$ is an \emph{almost geodesic} if for each $\varepsilon>0$,
	\[\left|d(\gamma(0),\gamma(s))+d(\gamma(s),\gamma(t))-t\right|<\varepsilon\]
	for all $s$ and $t$ large enough, with $s\le t$.
	Rieffel \cite{Rieffel} proved that every almost geodesic converges to a limit in $\hbd{X}$. A horofunction is called a \emph{Busemann point} if there exists an almost geodesic converging to it. We shall denote the Busemann point associated in this way to the almost geodesic $\gamma$ by $B_\gamma$.

\section{Horofunction compactification of proper, uniquely geodesic, straight metric spaces.}\label{se:horofunctionmetric}

\subsection{The relation between the horofunction compactification and the visual compactification}

Fix a uniquely geodesic, proper and straight metric space $(X,d)$ and a basepoint $b\in X$. We will assume $X$ satisfies these hypotheses through this section.
For each geodesic ray $\gamma\in \vbd{X}$ starting at $b$ there is an associated Busemann point $B_\gamma\in \hbd{X}$. We can extend this map to all the visual compactification by setting it as the identification with the map $h$ on $X$ given by the horofunction compactification. That is, we define the \emph{Busemann map} $B:\vbc{X}\to \hbc{X}$ by setting $B(\gamma)=B_\gamma$ for $\gamma\in \vbd{X}$ and $B(x)=h(x)$ for $x\in X$. The relevance of this map can be seen with the following result.

\begin{lemma}\label{le:visualfineriffBcontinuous}
	The visual compactification $(i, \vbc{X})$ is finer than the horofunction compactification $(h,\hbc{X})$ if and only if the map $B$ is continuous.
\end{lemma}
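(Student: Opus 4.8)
The plan is to show that the Busemann map $B$ is precisely the unique candidate for the continuous extension witnessing that $\vbc{X}$ is finer than $\hbc{X}$, so that the existence of such an extension is equivalent to the continuity of $B$. Recall that, by the definition of finer, $\vbc{X}$ is finer than $\hbc{X}$ exactly when there is a continuous map $\overline{h}\colon\vbc{X}\to\hbc{X}$ with $\overline{h}\circ i=h$. Since $B(i(x))=B(x)=h(x)$ for every $x\in X$, the map $B$ already satisfies this compatibility condition on the dense subset $i(X)\subset\vbc{X}$; the only thing that can fail is its continuity. This immediately handles the easy direction: if $B$ is continuous, then $B$ itself is the required extension, so $\vbc{X}$ is finer than $\hbc{X}$.

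For the converse, suppose a continuous extension $\overline{h}\colon\vbc{X}\to\hbc{X}$ exists; I would show that necessarily $\overline{h}=B$, from which the continuity of $B$ follows. The two maps agree on $i(X)$ by construction, so it remains to check that they agree on the visual boundary $\vbd{X}$. Fix a geodesic ray $\gamma\in\vbd{X}$. Under the homeomorphism of \cref{le:straightspacesareballs}, the point $\gamma(t)$ corresponds to $(\gamma,t)$ inside $D_b\times[0,\infty]/D_b\times\{0\}$, so $i(\gamma(t))\to\gamma$ in $\vbc{X}$ as $t\to\infty$. Continuity of $\overline{h}$ then gives $\overline{h}(\gamma)=\lim_{t\to\infty}\overline{h}(i(\gamma(t)))=\lim_{t\to\infty}h(\gamma(t))$. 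On the other hand, $\gamma$ is in particular an almost geodesic, so by Rieffel's theorem the horofunctions $h(\gamma(t))$ converge in $\hbc{X}$ to the Busemann point $B_\gamma=B(\gamma)$. Comparing the two limits yields $\overline{h}(\gamma)=B(\gamma)$, as required, so $\overline{h}=B$ everywhere and $B$ is continuous.

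The crux of the argument is the observation that a boundary point $\gamma$ of the visual compactification is approached by the very points $\gamma(t)$ of its defining geodesic ray, and that these same points approach $B_\gamma$ in the horofunction compactification; this compatibility of approach is what forces any continuous extension to send $\gamma$ to $B_\gamma$. I expect the only delicate points to be the bookkeeping of the visual topology needed to justify $i(\gamma(t))\to\gamma$ and the precise invocation of Rieffel's convergence result, both of which are routine given the constructions in \cref{se:visualdefinition} and \cref{se:horodefinition}. No real obstacle arises because the extension, if it exists, is pinned down on the dense set $i(X)$ and is computed on the boundary by a single limit along each geodesic ray.
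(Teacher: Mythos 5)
Your proposal is correct and follows essentially the same route as the paper's proof: the forward direction uses $B(i(x))=h(x)$ directly, and the converse pins down any continuous extension on the dense set $i(X)$ and then computes its boundary values as $\lim_{t\to\infty}h(\gamma(t))=B_\gamma$ along each ray. Your version merely makes explicit two points the paper leaves implicit (the justification that $i(\gamma(t))\to\gamma$ in the visual topology, and the appeal to Rieffel's convergence result defining $B_\gamma$), which is fine but not a different argument.
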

\begin{proof}
	We have that $B(i(x))=h(x)$, so $B$ is an extension of $h$ to $\vbc{X}$. Hence, if $B$ is continuous, then the visual compactification is finer than the horofunction compactification.
	
	On the other hand, if the visual compactification is finer than the horofunction compactification, then we have a continuous map $f:\vbc{X}\to \hbc{X}$. For every $x\in X$, we have $f(i(x))=h(x)=B(i(x))$. Furthermore, for any ray $\gamma$ starting at the basepoint we have $f(\gamma)=\lim_{t\to\infty} f(i(\gamma(t))=\lim h(\gamma(t))=B(\gamma)$. Hence, $B=f$, and $B$ is continuous.
\end{proof}

In general, the Busemann map may not be surjective nor continuous. However, we have the following.

\begin{proposition}\label{pr:bussmapinjective}
	For a proper, uniquely geodesic, straight metric space $(X,d)$ the Busemann map is injective.
\end{proposition}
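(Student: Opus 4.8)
The plan is to split the injectivity of $B$ into three cases according to where the two arguments come from. First, on $X$ the map $B$ coincides with $h$, and $h$ is injective because $h(x)$ has a strict global minimum at $x$ (recorded in \cref{se:horodefinition}). Second, no Busemann point $B_\gamma$ can equal any $h(x)$: evaluating the defining limit along the ray gives $B_\gamma(\gamma(s)) = \lim_{t\to\infty}(d(\gamma(t),\gamma(s)) - t) = -s$, so $B_\gamma$ is unbounded below, whereas each $h(x) = d(x,\cdot) - d(x,b)$ is bounded below by $-d(x,b)$. It therefore remains to prove that distinct rays give distinct Busemann points.

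For this main case I would reduce everything to the unit sphere $S_b^1$ via the homeomorphism $D_b \cong S_b^1$, $\gamma \mapsto \gamma(1)$, of \cref{le:geodesicsequalsphere}: it suffices to show that $B_\gamma$ attains its minimum over $S_b^1$ only at $\gamma(1)$. Indeed, if $B_{\gamma_1} = B_{\gamma_2}$, this common function would have the same unique minimizer on $S_b^1$, forcing $\gamma_1(1) = \gamma_2(1)$ and hence $\gamma_1 = \gamma_2$. To locate the minimum I would use the detour function $e(z) := B_\gamma(z) + d(b,z)$. Writing $B_\gamma(z) = \lim_t (d(\gamma(t),z) - t)$ and using $d(b,\gamma(t)) = t$, one sees $e(z) = \lim_t\big(d(b,z) + d(z,\gamma(t)) - d(b,\gamma(t))\big) \ge 0$ by the triangle inequality, while $e(\gamma(s)) = 0$; so on $S_b^1$ the minimum value of $B_\gamma$ is $-1$, attained at $\gamma(1)$. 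The whole content is to show $e(z) > 0$ whenever $z \ne \gamma(1)$.

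To prove this I would argue by contradiction. Suppose $z \in S_b^1$, $z \ne \gamma(1)$, and $e(z) = 0$, i.e.\ $d(z,\gamma(t)) = t - 1 + o(1)$. Since $X$ is proper, Arzelà--Ascoli (exactly as in \cref{le:geodesicsequalsphere}) lets me extract a subsequential limit $\rho$ of the geodesics $[z,\gamma(t)]$; bounding $d(b,\rho(w))$ from above and below by the triangle inequality gives $d(b,\rho(w)) = 1 + w$, so $z$ lies on each geodesic $[b,\rho(w)]$ and the concatenation of $[b,z]$ with $\rho$ is a genuine geodesic ray $\delta$ from $b$ with $\delta(1) = z$. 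A short computation (using that $B_\gamma$ is $1$-Lipschitz on $[b,z]$ and passing to the limit along $\rho$) then yields $e(\delta(u)) = 0$ for all $u \ge 0$, i.e.\ $\delta$ is asymptotic to $\gamma$. It then remains to show that an asymptotic ray from the same basepoint must coincide with $\gamma$, contradicting $\delta(1) = z \ne \gamma(1)$.

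This last step is where I expect the real difficulty to lie, and where properness, unique geodesics, and straightness must all be used together. The subtlety is that the strict inequality $d(b,z) + d(z,\gamma(t)) > t$ (valid for every finite $t$, since $z \notin [b,\gamma(t)]$ by uniqueness of geodesics) can degrade in the limit, so one cannot merely pass to the limit in it; what is needed is a uniform positive lower bound on the excess $e_t(z) = d(b,z)+d(z,\gamma(t))-t$, which I would extract by a compactness argument on the compact sphere $S_b^1$, together with the observation that by straightness two distinct rays from $b$ cannot share a nondegenerate initial segment. (If one is willing to invoke the projection $\fibermap$ of \cref{th:projectionfunction}, injectivity is immediate from $\fibermap\circ B = \operatorname{id}$; the argument above is the self-contained route that avoids that machinery and the potential circularity.)
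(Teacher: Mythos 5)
Your setup is sound as far as it goes: the case split, the computation showing Busemann points are unbounded below while each $h(x)$ is bounded below, the detour function $e(z)=B_\gamma(z)+d(b,z)\ge 0$ on $S_b^1$, and the Arzel\`a--Ascoli construction producing a ray $\delta$ from $b$ through $z$ with $B_\gamma(\delta(u))=-u$ all work. But at that point you have not reduced the problem to anything smaller: the statement ``a ray $\delta$ from $b$ along which $B_\gamma$ decreases at unit speed must equal $\gamma$'' \emph{is} the proposition --- it is the only case with content --- and your proposal leaves it unproven. Moreover, the plan you sketch for it cannot work. The loss of strictness occurs in the limit $t\to\infty$ for a \emph{fixed} $z$: the excess $e_t(z)=d(b,z)+d(z,\gamma(t))-t$ is nonincreasing in $t$ and positive for each finite $t$, but its infimum over $t$ could a priori vanish; compactness of $S_b^1$ has no bearing on a limit taken in $t$ at fixed $z$. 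And the straightness observation you invoke (distinct rays from $b$ cannot share a nondegenerate initial segment) never gets any purchase, because your $\delta$ and $\gamma$ share no initial segment at all. Asking for ``a uniform positive lower bound on the excess'' is a restatement of the goal, not an argument.

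The idea you are missing --- the one the paper's proof turns on --- is to use straightness to extend $\gamma$ \emph{backwards} to $\gamma(-s)$, $s>0$, and to anchor the strict inequality there, at finite fixed parameters where it cannot degrade. Fix $t,s>0$. The concatenation of the segment from $\gamma(-s)$ to $b$ with the segment from $b$ to $\delta(t)$ is not a geodesic: by straightness the first segment extends uniquely to a bi-infinite geodesic, namely the extension of $\gamma$, and $\delta\neq\gamma$. Hence there is a fixed $\delta_0>0$ with $d(\gamma(-s),\delta(t))\le s+t-\delta_0$. On the other hand, $B_\gamma(\delta(t))=-t$ gives $d(\delta(t),\gamma(n))\le n-t+\varepsilon_n$ with $\varepsilon_n\to 0$. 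Concatenating through $\delta(t)$,
\[
d(\gamma(-s),\gamma(n))\le (s+t-\delta_0)+(n-t+\varepsilon_n)=s+n-\delta_0+\varepsilon_n<s+n
\]
once $\varepsilon_n<\delta_0$, contradicting that the bi-infinite extension of $\gamma$ is an isometric embedding, which forces $d(\gamma(-s),\gamma(n))=s+n$. This supplies exactly the uniformity you were looking for: the deficit $\delta_0$ is attached to the fixed pair of points $\gamma(-s)$, $\delta(t)$ rather than to the tail of the ray, so it survives as $n\to\infty$. (Your parenthetical fallback of invoking $\fibermap$ from \cref{th:projectionfunction} is indeed circular in the paper's development, since the construction of $\fibermap$ rests on \cref{pr:optimalpath}, which is proved by this very argument; you were right to discard it.)
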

\begin{proof}
	For each $x\in X$, the associated function $h(x)$ has a global minimum at $x$, while $B_\gamma$ is unbounded below for every $\gamma\in \vbd{X}$. Hence, in the interior of $\vbc{X}$ the map is injective and $B(X)\cap B(\vbd{X})=\emptyset$. Assume we have $\gamma, \gamma'\in \vbd{X}$ such that $\gamma\neq \gamma'$ and $B(\gamma)=B(\gamma')=\xi$. Then, for a given sequence $t_n\to\infty$ we have $\lim_{n\to\infty}h(\gamma(t_n))=\lim_{n\to\infty}h(\gamma'(t_n))=\xi$. For any $t\in\R$ and any $n$ such that $t_n>t$ we have 
	\[h(\gamma(t_n))(\gamma(t))=d(\gamma(t_n),\gamma(t))-d(\gamma(t_n),\gamma(0))=t_n-t-t_n=-t,\]
	and similarly for $\gamma'$.
	Hence $\xi(\gamma(t))=\xi(\gamma'(t))=-t$ for all $t$.
	
	Fix now a $t>0$. We have
	\begin{align*}
		-t =\xi(\gamma'(t))=&\lim_{n\to\infty}(d(\gamma'(t),\gamma(t_n))-d(b,\gamma(t_n)))\\
		=&\lim_{n\to\infty}(d(\gamma'(t),\gamma(t_n))-t_n).
	\end{align*}
	That is, there is a sequence $\varepsilon_n$ with $\varepsilon_n\to 0$ such that 
	\[t_n-t+\varepsilon_n \ge d(\gamma'(t),\gamma(t_n))\ge t_n-t-\varepsilon_n.\]
	for every $n$.
	
	By straightness we can extend $\gamma$ in the negative direction towards $\gamma(-s)$ for some $s>0$. We shall now show that the geodesic $\gamma$ does not minimize the distance between $\gamma(-s)$ and $\gamma(t_n)$ for $n$ big enough. Since the space is straight, the geodesic segment between $\gamma(-s)$ and $b$ can be extended uniquely, so concatenating it with the segment between $b$ and $\gamma'(t)$ does not result in a geodesic. Hence, the distance between $\gamma'(t)$ and $\gamma(-s)$ is strictly smaller than $s+t$. That is, there is some $\delta>0$ such that $d(\gamma(-s),\gamma'(t))<t+s-\delta$. As shown in \cref{fi:badtriangles} we get a path going from $\gamma(-s)$, to $\gamma(t_n)$, passing through $\gamma'(t)$ that has length less than $t+s-\delta+t_n-t+\varepsilon_n=t_n+s-\delta+\varepsilon_n$. Hence, taking $n$ big enough so that $\varepsilon_n<\delta$ we get that the geodesic segment between $\gamma(-s)$ and $\gamma(t_n)$ is not minimizing. This is a contradiction, from which we conclude that $\gamma=\gamma'$. Therefore, $B$ is injective.
	
	\begin{figure} \centering
		\begin{tikzpicture}[scale=3]

			\node at (1/3,1/9) [circle,fill,inner sep=1pt,label=below:$b$](b){};
			\node at (0.5,0.8) [circle,fill,inner sep=1pt,label=above:$\gamma'(t)$](away){};
			\node at (3,1) [circle,fill,inner sep=1pt,label=right:$\gamma(t_n)$](upper){};
			\node at (-1/3,-1/9) [circle,fill,inner sep=1pt,label=left:$\gamma(-s)$](back){};
			\draw (b) -- node[label=right:$t$]{} (away);
			\draw (b) -- node[label=below:$t_n$]{} (upper);
			\draw (upper) --  node[label=above:$<t_n-t+\varepsilon_n$]{} (away);
			\draw (back) -- node[label=below:$s$]{} (b);
			\draw (back) --  node[label=left:$<t+s-\delta$]{} (away);

		\end{tikzpicture}
		\caption{The triangles involved in the proof of Proposition \ref{pr:bussmapinjective}.}
		\label{fi:badtriangles}
	\end{figure}
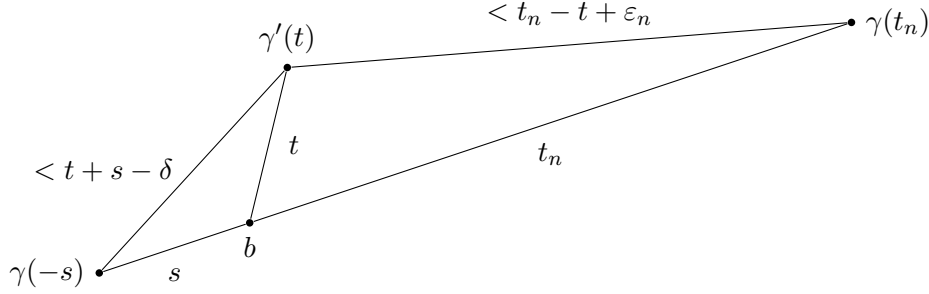
\end{proof}

	Hence, given a Busemman point $\xi$ in $B(\vbd{X})$ we have a unique associated geodesic ray $\gamma\in\vbd{X}$ such that $\xi(\gamma(t))=-t$ for all $t$. Our next aim is to build a similar relation for all other horofunctions. Our approach is similar to the one used by Walsh in \cite[Section 7]{Walsh}.
	
	We say that a geodesic $\gamma$ is an \emph{optimal geodesic} for a certain horofunction $\xi\in \hbc{X}$ if $\xi(\gamma(t))-\xi(\gamma(0))=-t$ for all $t\in \R$. We shall now see that each function in the horoboundary has at least one optimal geodesic.

\begin{lemma}\label{le:fixingvaluealongpath}
	Let $X$ be a proper, uniquely geodesic, straight metric space and let $\xi\in \hbd{X}$ be a horofunction. Suppose that
	$(x_n)\subset X$ converges to $\xi$, with $x_n=\gamma_n(t_n)$, $\gamma_n\in\vbd{X}$ and $(\gamma_n)$ converging to $\gamma$ as $n\to \infty$. Then $\xi(\gamma(t))=-t$ for every $t\in \R$. That is, $\gamma(t)$ is an optimal geodesic for $\xi$.
\end{lemma}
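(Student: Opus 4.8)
The plan is to compare, for each fixed $t$, the value of the horofunction $h(x_n)$ at $\gamma(t)$ with its value at the corresponding point $\gamma_n(t)$ on the $n$-th ray; the latter can be computed exactly because $\gamma_n(t)$, $b=\gamma_n(0)$ and $x_n=\gamma_n(t_n)$ all lie on a single geodesic. First I would record that $t_n=d(b,x_n)\to\infty$: otherwise a subsequence stays in a compact ball $D(b,M)$, so $x_n$ subconverges to some $x\in X$ and, by continuity of $h$, $h(x_n)\to h(x)$; since also $h(x_n)\to\xi$ this would force $\xi=h(x)\in h(X)$, contradicting $\xi\in\hbd{X}$.

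Next, fixing $t\ge 0$ and taking $n$ large enough that $t_n>t$, the collinearity of $\gamma_n(t)$, $b$ and $x_n$ on $\gamma_n$ gives
\[
	h(x_n)(\gamma_n(t))=d(x_n,\gamma_n(t))-d(x_n,b)=(t_n-t)-t_n=-t.
\]
Since each $h(x_n)$ is $1$-Lipschitz (a distance function minus a constant) and $\gamma_n(t)\to\gamma(t)$ by hypothesis,
\[
	\left|h(x_n)(\gamma(t))-h(x_n)(\gamma_n(t))\right|\le d(\gamma(t),\gamma_n(t))\to 0,
\]
so $h(x_n)(\gamma(t))\to -t$. As convergence in $C(X)$ is uniform on compacts, it is in particular pointwise at $\gamma(t)$, whence $\xi(\gamma(t))=-t$ for every $t\ge 0$.

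Finally I would treat $t<0$, which is needed because optimal geodesics are parametrized over all of $\R$. Using straightness I extend each $\gamma_n$ and $\gamma$ to bi-infinite geodesics. For $t<0$ the three collinear points satisfy $t<0<t_n$, so once more $d(x_n,\gamma_n(t))=t_n-t$ and $h(x_n)(\gamma_n(t))=-t$; this is precisely where straightness is essential, as it places $b$ between $\gamma_n(t)$ and $x_n$. The one genuine point to verify -- and the main obstacle -- is that convergence of the forward rays propagates to their backward extensions, i.e.\ that $\gamma_n(t)\to\gamma(t)$ for $t<0$. This should follow from the Arzelà--Ascoli argument already used in \cref{le:geodesicsequalsphere}: on a segment $[t,T]$ the geodesics $\gamma_n$ lie in a compact ball and are equicontinuous, so any subsequence has a sublimit that is a geodesic agreeing with $\gamma$ on $[0,T]$, and by uniqueness of geodesics together with straightness this sublimit is forced to be the bi-infinite extension of $\gamma$; hence the whole sequence converges and $\gamma_n(t)\to\gamma(t)$. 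With this in hand, the same Lipschitz transfer and pointwise convergence yield $\xi(\gamma(t))=-t$ for $t<0$ as well, completing the proof.
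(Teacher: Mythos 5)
Your proof is correct and follows essentially the same route as the paper's: the core step---computing $h(x_n)(\gamma_n(t))=(t_n-t)-t_n=-t$ by collinearity and transferring this value to $\gamma(t)$ via the $1$-Lipschitz bound $|h(x_n)(\gamma(t))-h(x_n)(\gamma_n(t))|\le d(\gamma(t),\gamma_n(t))\to 0$---is exactly the paper's triangle-inequality argument. The two preliminary points you verify (that $t_n\to\infty$, and that convergence of the rays propagates to their bi-infinite extensions so that $\gamma_n(t)\to\gamma(t)$ for $t<0$) are left implicit in the paper's proof, so your write-up is, if anything, more complete.
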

\begin{proof}
	Fix $t$. We have that \[\xi(\gamma(t))=\lim_{n\to\infty}(d(\gamma(t),\gamma_n(t_n))-d(b,\gamma_n(t_n)))=\lim_{n\to\infty}(d(\gamma(t),\gamma_n(t_n))-t_n).\]
	As $n$ goes to infinity, $\gamma_n$ converges to $\gamma$. Hence by the given topology on the visual boundary, the maps $\gamma_n(\cdot)$ converge uniformly on compact sets to the geodesic $\gamma(\cdot)$. In particular, denoting $d(\gamma(t),\gamma_n(t))=\varepsilon_n$ we have $\varepsilon_n\to 0$. 
	We get then \cref{fi:squeezedtriangle}, so by the triangle inequality, \[|d(\gamma(t),\gamma_n(t_n))-(t_n-t)|=|d(\gamma(t),\gamma_n(t_n))-d(\gamma_n(t),\gamma_n(t_n))|\le\varepsilon_n,\]
	and so $\xi(\gamma(t))=-t$.
	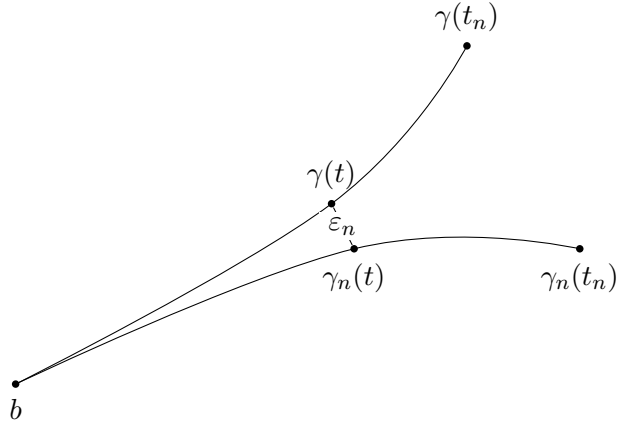
\begin{figure} \centering
		\begin{tikzpicture}[scale=3]
			\node [circle,fill,inner sep=1pt,label=below:$b$] (b) at (0,0) {};
			\node [circle,fill,inner sep=1pt,label=below:$\gamma_n(t_n)$] (c) at (2.5,0.6) {};
			\node [circle,fill,inner sep=1pt,label=above:$\gamma(t_n)$] (d) at (2,1.5) {};
			\node [circle,fill,inner sep=1pt,label=above:$\gamma(t)$] (e) at (1.4,0.8) {};
			\node [circle,fill,inner sep=1pt,label=below:$\gamma_n(t)$] (f) at (1.5,0.6) {};
			\draw plot[smooth, tension=.7] coordinates {(b) (f) (c)};
			\draw plot[smooth, tension=.7] coordinates {(b) (e) (d)};
			\draw (e) --  node[fill=white,inner ysep=2pt]{$\varepsilon_n$} (f);
		\end{tikzpicture}
		\caption{
			In the proof of Lemma \ref{le:fixingvaluealongpath}, $\gamma_n$ converges to $\gamma$, so $\gamma_n(t)$ converges to $\gamma(t)$, and hence the distance between $\gamma_n(t_n)$ and $\gamma_n(t)$ gets arbitrarily close to the distance between $\gamma_n(t_n)$ and $\gamma(t)$.
		}
		\label{fi:squeezedtriangle}
	\end{figure}
\end{proof}

Since $\vbd{X}$ is compact, for any horofunction $\xi\in\hbd{X}$ and sequence $(x_n)\subset X$ converging to $\xi$ we can take a subsequence such that the hypotheses of \cref{le:fixingvaluealongpath} are satisfied, so each $\xi\in\vbd{X}$ does have at least one optimal geodesic.

If $\xi$ has another optimal geodesic $\gamma'$ with $\gamma'(0)=\gamma(0)$ we have at least two geodesics along which $\xi(\gamma(t))=\xi(\gamma'(t))=-t$ for all $t$. Following a reasoning similar to the one in the proof of \cref{pr:bussmapinjective}, we get a contradiction. This time, however, we have to be a bit more careful about the distances, as instead of two fixed rays we have a fixed ray and a sequence converging to a distinct fixed ray.

\begin{proposition}\label{pr:optimalpath}
	Let $\xi\in \hbd{X}$ and $b\in X$. Then there is a unique optimal geodesic for $\xi$ passing through $b$.
\end{proposition}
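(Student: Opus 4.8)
The existence of an optimal geodesic through $b$ is essentially already in hand: applying \cref{le:fixingvaluealongpath} to a subsequence of any sequence $(x_n)\subset X$ realizing $\xi$ (using compactness of $\vbd{X}$ to extract a convergent sequence of rays $\gamma_n\to\gamma$) produces a ray $\gamma$ with $\gamma(0)=b$ and $\xi(\gamma(t))=-t$ for all $t\in\R$; by straightness this ray extends uniquely to a bi-infinite geodesic through $b$, and it is optimal since $\xi(b)=\lim_n h(x_n)(b)=0$. So the real content of the statement is \emph{uniqueness}, and the strategy is to adapt the argument of \cref{pr:bussmapinjective}, replacing the points moving out along a single ray with the defining sequence $(x_n)$ of the horofunction.

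Suppose then that $\gamma$ and $\gamma'$ are two optimal geodesics through $b$ with $\gamma\neq\gamma'$. Fix a sequence $(x_n)\subset X$ with $h(x_n)\to\xi$ and write $r_n=d(b,x_n)$; since $\xi\in\hbd{X}$ the points diverge, so $r_n\to\infty$. Optimality together with $\xi(b)=0$ gives $\xi(\gamma(t))=\xi(\gamma'(t))=-t$ for every $t\in\R$. Unravelling the horofunction limits, this says precisely
\[
d(\gamma(t),x_n)=r_n-t+\varepsilon_n(t),\qquad d(\gamma'(t),x_n)=r_n-t+\varepsilon'_n(t),
\]
with $\varepsilon_n(t),\varepsilon'_n(t)\to 0$ for each fixed $t$.

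Now fix $t>0$ and, using straightness, extend $\gamma$ backwards to $\gamma(-s)$ for some $s>0$. Because $\gamma$ and $\gamma'$ are distinct geodesics sharing the point $b=\gamma(0)=\gamma'(0)$, the concatenation of the segment $[\gamma(-s),b]$ with $[b,\gamma'(t)]$ cannot be a geodesic: by straightness and unique geodesics the only geodesic extending $[\gamma(-s),b]$ continues along $\gamma$, which does not pass through $\gamma'(t)$. Hence $d(\gamma(-s),\gamma'(t))\le s+t-\delta$ for the fixed constant $\delta:=s+t-d(\gamma(-s),\gamma'(t))>0$. Combining the value of $\xi$ at $\gamma(-s)$, namely $d(\gamma(-s),x_n)=r_n+s+\varepsilon_n(-s)$, with the triangle inequality through $\gamma'(t)$ yields
\[
r_n+s+\varepsilon_n(-s)=d(\gamma(-s),x_n)\le d(\gamma(-s),\gamma'(t))+d(\gamma'(t),x_n)\le r_n+s-\delta+\varepsilon'_n(t).
\]
This forces $\delta\le \varepsilon'_n(t)-\varepsilon_n(-s)$, and letting $n\to\infty$ gives $\delta\le 0$, contradicting $\delta>0$. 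Therefore $\gamma=\gamma'$.

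\textbf{Main obstacle.} The delicate point is the same strict-inequality step as in \cref{pr:bussmapinjective}: one must produce a genuine positive gap $\delta$ between the geodesic distance $d(\gamma(-s),\gamma'(t))$ and the broken-path length $s+t$, which is exactly where the straightness and unique-geodesic hypotheses enter (a bent path through $b$ cannot be length-minimizing). The only new wrinkle compared with \cref{pr:bussmapinjective} is the bookkeeping: since $\xi$ need not be a Busemann point, we cannot assume the $x_n$ lie on a geodesic ray, so the quantities we control are instead the error terms $\varepsilon_n(-s)$ and $\varepsilon'_n(t)$ coming directly from the definition of $\xi$ as the limit of the functions $h(x_n)$.
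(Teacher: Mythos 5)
Your proof is correct, and while it rests on the same geometric pivot as the paper's argument---the strict gap $d(\gamma(-s),\gamma'(t))<s+t$ forced by straightness and unique geodesics---your uniqueness step runs along a genuinely different and leaner route. The paper keeps the representation $x_n=\gamma_n(t_n)$ with rays $\gamma_n\to\gamma$ throughout: it transfers the gap from $\gamma(-s)$ to $\gamma_n(-s)$ via uniform convergence on compact sets, and then contradicts the exact identity $d(\gamma_n(-s),\gamma_n(t_n))=t_n+s$, i.e.\ the geodesicity of the approximating rays; this is exactly the ``extra care'' the paper announces before \cref{pr:optimalpath}. You instead evaluate the horofunction itself at two points---$\xi(\gamma(-s))=s$, which exploits the bi-infiniteness built into the definition of an optimal geodesic, and $\xi(\gamma'(t))=-t$---and run one triangle inequality through $\gamma'(t)$, so the contradiction $\delta\le\varepsilon'_n(t)-\varepsilon_n(-s)\to 0$ falls out of the defining limits of $h(x_n)$ alone. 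This removes the convergence-of-rays bookkeeping (no extraction of $\gamma_n\to\gamma$, no uniform convergence, no continuity-of-distance step) from the uniqueness argument, and shows that once both competitors are known to be optimal, the ``fixed ray versus converging sequence of rays'' subtlety disappears; what the paper's formulation buys in return is that existence and uniqueness live in one framework, with the distinguished geodesic realized as the limit of rays through the $x_n$, which is the form reused in \cref{pr:welldefined} and \cref{pr:fibermapcontinuous}. Your existence step coincides with the paper's remark after \cref{le:fixingvaluealongpath}. One small point to make explicit if you write this up: your phrase that the extension of $[\gamma(-s),b]$ ``does not pass through $\gamma'(t)$'' glosses over the a priori possibility $\gamma'(t)=\gamma(-t)$; the clean statement is that a geodesic continuation of $[\gamma(-s),b]$ past $b$ of length $t$ must end at $\gamma(t)$, and $\gamma(t)=\gamma'(t)$ would force $\gamma=\gamma'$ by unique geodesics and straightness (the reversed-orientation case is in any event excluded because both geodesics are optimal). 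This is the same level of terseness as the paper's own treatment in \cref{pr:bussmapinjective}, so it is a presentational remark rather than a gap.
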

\begin{proof}
	Let $(x_n)=(\gamma_n(t_n))$ be a sequence converging to $\xi$, with $(\gamma_n) \subset \vbd{X}$, and take a subsequence such that $\gamma_n$ converges to some geodesic $\gamma$. By \cref{le:fixingvaluealongpath}, $\gamma$ is an optimal geodesic. Assume that we have a different optimal geodesic $\gamma'$ passing through $b$.
	
	Using that $h(\gamma_n(t_n))$ converges pointwise to $\xi$ we have
	\begin{align*}
		-t =\xi(\gamma'(t))=&\lim_{n\to\infty}(d(\gamma'(t),\gamma_n(t_n))-d(b,\gamma_n(t_n)))\\
		=&\lim_{n\to\infty}(d(\gamma'(t),\gamma_n(t_n))-t_n).
	\end{align*}
	
	Hence, there is a sequence $\varepsilon_n$ with $\varepsilon_n\to 0$ such that 
	\[t_n-t+\varepsilon_n \ge d(\gamma'(t),\gamma_n(t_n)) \ge t_n-t-\varepsilon_n.\]
	We proceed by showing that for $n$ big enough there is some $s>0$ such that the geodesic $\gamma_n$ does not minimize the distance between $\gamma_n(-s)$ and $\gamma_n(t_n)$. As in the proof of Proposition \ref{pr:bussmapinjective}, by applying the triangle inequality between \(\gamma'(t),\gamma(-s)\) and \(b\) we have $d(\gamma'(t),\gamma(-s))<s+t$. Fix $s>0$ and pick $\delta>0$ such that $d(\gamma'(t),\gamma(-s))< t+s-\delta$. Since $\gamma_n$ converges to $\gamma$ uniformly on compact sets, $\gamma_n(-s)$ converges to $\gamma(-s)$.
	Hence, $d(\gamma'(t),\gamma_n(-s))$ converges to $d(\gamma'(t),\gamma(-s))$. 
	Then for $n$ big enough we have $d(\gamma'(t),\gamma_n(-s)) < t+s-\delta$. Consider then $n$ big enough so that $\varepsilon_n\le \delta/2$ as well. The triangle between $\gamma'(t),\gamma_n(-s)$ and $\gamma_n(t_n)$ gives
 \[
 d(\gamma_n(-s),\gamma_n(t_n))\le d(\gamma_n(-s),\gamma'(t))+d(\gamma'(t),\gamma_n(t_n))< (t+s-\delta)+(t_n-t+\epsilon_n)<t_n+s.
 \]
 This is a contradiction, which proves the uniqueness of $\gamma$.
\end{proof}

Given a basepoint $b\in X$ we can now define a map $\fibermap_b:\hbc{X}\to\vbc{X}_b$ by sending any $\xi\in\hbd{X}$ to the unique optimal geodesic $\gamma$ of $\xi$ with $\gamma(0)=b$, and by sending $h(x)$ to $x$ for any $x\in X$. This map is indeed an extension of the relation we had established for Busemann points in $\B(\vbd{X})$, since if $\xi=B(\gamma)$ for $\gamma\in D_b$ then $\gamma$ is an optimal geodesic of $\xi$, giving us $\fibermap_b(B(\gamma))=\gamma$.

We will often write $\fibermap$ instead of $\fibermap_b$ whenever the basepoint is not relevant to the discussion.
To prove that $\fibermap$ is continuous, we first have to see the following result.
\begin{proposition}\label{pr:welldefined}
	Let $(x_n)\subset X$ be a sequence converging to $\xi\in\hbd{X}$. Then, $(x_n)$ has a unique accumulation point in the visual compactification. Further, this accumulation point depends only on $\xi$.
\end{proposition}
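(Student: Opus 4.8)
The plan is to reduce the statement entirely to the optimal-geodesic machinery already in place, namely \cref{le:fixingvaluealongpath} and \cref{pr:optimalpath}. The first step is to establish that $d(b,x_n)\to\infty$. If this failed, some subsequence of $(x_n)$ would remain in a closed ball $D(b,r)$, which is compact because $X$ is proper; passing to a further subsequence we would obtain $x_{n_k}\to x\in X$, and continuity of $h$ would force $\xi=h(x)\in h(X)$, contradicting $\xi\in\hbd{X}$. This divergence is really the only new estimate required; everything else is assembly of prior results.

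Next I would record that every accumulation point of $(x_n)$ in the visual compactification lies in $\vbd{X}$ and is represented by a geodesic ray. Writing $x_n=\gamma_n(r_n)$, where $\gamma_n\in\vbd{X}$ is the unique ray through $x_n$ furnished by the homeomorphism $C$ of \cref{le:straightspacesareballs} and $r_n=d(b,x_n)$, I would note that $\vbd{X}\cong S_b^1$ is compact by \cref{le:geodesicsequalsphere}, so the sequence $(\gamma_n)$ has subsequential limits in $\vbd{X}$. Since $r_n\to\infty$, no point of $X$ can be an accumulation point of $(x_n)$ in $\vbc{X}_b$; and by the definition of the visual topology, each subsequential limit $\gamma$ of $(\gamma_n)$ is exactly an accumulation point of $(x_n)$. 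Thus accumulation points exist and all live in $\vbd{X}$.

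For uniqueness, suppose $\gamma$ and $\gamma'$ are two accumulation points, realized along subsequences $\gamma_{n_k}\to\gamma$ and $\gamma_{m_k}\to\gamma'$. The corresponding subsequences $(x_{n_k})$ and $(x_{m_k})$ still converge to $\xi$, so \cref{le:fixingvaluealongpath} applies to each and shows that both $\gamma$ and $\gamma'$ are optimal geodesics for $\xi$. As both pass through $b$, \cref{pr:optimalpath} forces $\gamma=\gamma'$. Hence $(x_n)$ has a single accumulation point in the compact space $\vbc{X}_b$, and therefore converges to it.

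Finally, the dependence on $\xi$ alone is immediate: the common limit is precisely the unique optimal geodesic ray through $b$ attached to $\xi$ by \cref{pr:optimalpath}, and this ray is determined by $\xi$, independently of the chosen sequence $(x_n)$; in fact it equals $\fibermap_b(\xi)$. The main obstacle here is purely organizational—correctly matching the visual-compactification topology to the sequence $(\gamma_n)$ of rays through the $x_n$—since the substantive content (existence of optimal geodesics and their uniqueness through a fixed basepoint) has already been established in the preceding results.
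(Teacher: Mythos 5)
Your proof is correct and follows essentially the same route as the paper: both identify every accumulation point of $(x_n)$ in the visual compactification as an optimal geodesic for $\xi$ via \cref{le:fixingvaluealongpath}, and then invoke the uniqueness of optimal geodesics through $b$ from \cref{pr:optimalpath}. The only differences are organizational—you spell out why $d(b,x_n)\to\infty$ and hence why all accumulation points lie in $\vbd{X}$ (details the paper leaves implicit), and you obtain independence of the sequence directly from the fact that the limit is the optimal geodesic determined by $\xi$, whereas the paper argues this by merging two sequences.
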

\begin{proof}
	Since $\vbd{X}$ is compact, $(x_n)$ has accumulation points in the visual compactification. If $(x_n)$ has two accumulation points we can take two subsequences converging to two different geodesics, which by Lemma \ref{le:fixingvaluealongpath} are optimal geodesics, contradicting Proposition \ref{pr:optimalpath}.
	
	If there is another sequence $(y_n)$ converging to $\xi$ with a different accumulation point the result follows by merging both sequences and repeating the reasoning.
\end{proof}

	Hence, $\fibermap$ can be alternatively defined by sending any $\xi\in \hbd{X}$ to the unique accumulation point in $\vbc{X}$ of the sequences converging to $\xi$ in $\hbc{X}$, and by sending $h(x)$ to $x$ for any $x\in X$. By \cref{pr:welldefined}, this definition is equivalent to the previous one.

	By this second definition of the map $\fibermap$, we see how it is mostly related to the convergence of sequences, so using a diagonal sequence argument we can prove its continuity.
\begin{proposition}\label{pr:fibermapcontinuous}
	The map $\fibermap$ is continuous.
\end{proposition}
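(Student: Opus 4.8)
The plan is to verify continuity sequentially. This is legitimate because both compactifications are metrizable: $\hbc{X}$ is compact, Hausdorff and second countable by construction (as noted in \cref{se:horodefinition}), and $\vbc{X}$ is obtained from the compact metric space $D_b\times[0,\infty]$ by collapsing the closed set $D_b\times\{0\}$ to a point, hence is again compact, Hausdorff and second countable; by the Urysohn metrization theorem both are metrizable. Fix metrics $\rho$ on $\hbc{X}$ and $\sigma$ on $\vbc{X}$, take a sequence $\xi_k\to\xi$ in $\hbc{X}$, and aim to show $\fibermap(\xi_k)\to\fibermap(\xi)$ in $\vbc{X}$. Since $\vbc{X}$ is compact, it suffices to show that every convergent subsequence of $(\fibermap(\xi_k))$ has limit $\fibermap(\xi)$; so after passing to a subsequence we may assume $\fibermap(\xi_k)\to\eta$ for some $\eta\in\vbc{X}$, and the goal becomes to prove $\eta=\fibermap(\xi)$.

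The key tool is the second description of $\fibermap$ provided by \cref{pr:welldefined}. For each $k$ choose a sequence $(x_{k,m})_m\subset X$ with $h(x_{k,m})\to\xi_k$ in $\hbc{X}$ as $m\to\infty$ (if $\xi_k=h(x_k)$ already lies in $X$, take the constant sequence $x_{k,m}=x_k$). By \cref{pr:welldefined} this sequence has a \emph{unique} accumulation point in the compact space $\vbc{X}$, namely $\fibermap(\xi_k)$; since a sequence in a compact space with a unique accumulation point converges to it, we conclude that $x_{k,m}\to\fibermap(\xi_k)$ in $\vbc{X}$ as $m\to\infty$. Thus, for each fixed $k$, the inner sequence $(x_{k,m})_m$ approximates $\xi_k$ in the $\rho$-metric and simultaneously approximates $\fibermap(\xi_k)$ in the $\sigma$-metric.

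The heart of the argument is then a diagonal extraction. For each $k$ pick $m_k$ large enough that the point $y_k:=x_{k,m_k}$ satisfies both $\rho(h(y_k),\xi_k)<1/k$ and $\sigma(y_k,\fibermap(\xi_k))<1/k$. Since $\xi_k\to\xi$ in $\hbc{X}$, the first estimate gives $h(y_k)\to\xi$; since $\fibermap(\xi_k)\to\eta$ in $\vbc{X}$, the second estimate gives $y_k\to\eta$. Therefore $(y_k)$ is a sequence in $X$ converging to $\xi$ in $\hbc{X}$ whose image accumulates at $\eta$ in $\vbc{X}$. Applying \cref{pr:welldefined} one final time, the unique accumulation point in $\vbc{X}$ of any sequence converging to $\xi$ is exactly $\fibermap(\xi)$, so $\eta=\fibermap(\xi)$, which is what we wanted.

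I expect the main obstacle to be organizing the diagonalization so that it interleaves the two topologies correctly: we must approximate $\xi_k$ in the horofunction metric and $\fibermap(\xi_k)$ in the visual metric at the \emph{same} inner index $m_k$, which is precisely what the convergence $x_{k,m}\to\fibermap(\xi_k)$ in $\vbc{X}$ (derived from uniqueness of the accumulation point in \cref{pr:welldefined}) makes possible. A minor point to check uniformly is the edge case where some $\xi_k$, or $\xi$ itself, lies in $X$ rather than on the boundary; these are absorbed without incident by allowing constant inner sequences.
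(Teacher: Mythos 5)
Your proof is correct and follows essentially the same route as the paper: both reduce to sequential continuity via metrizability of the two compactifications and run a diagonal extraction --- your estimates $\rho(h(y_k),\xi_k)<1/k$ and $\sigma(y_k,\fibermap(\xi_k))<1/k$ play exactly the role of the paper's nested neighborhoods $V_n$ and $W_n$ --- before concluding with \cref{pr:welldefined}. The only cosmetic difference is that the paper case-splits on whether points lie in $h(X)$ or in $\hbd{X}$ while you absorb interior points with constant inner sequences; just note that when $\xi\in h(X)$ itself the final identification $\eta=\fibermap(\xi)$ uses that $h$ is an embedding (and that the visual topology restricts to the metric topology on $X$) rather than \cref{pr:welldefined}, which is stated only for $\xi\in\hbd{X}$.
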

\begin{proof}
	Take a sequence $(\xi_n)\subset \hbc{X}$ converging to $\xi$. If $\xi\in h(X)$ we have that, as $h(X)$ is open, $\xi_n\in h(X)$ for $n$ big enough. Hence, $\fibermap(\xi_n)=h^{-1}(\xi_n)$, which converges to $h^{-1}(\xi)$, as $h$ is a homeomorphism with its image.
	
	If $\xi\in \hbd{X}$ we split the sequence into two subsequences, one contained in $h(X)$ and one contained in $\hbd{X}$. The one contained in $h(X)$ converges to $\xi$, so by definition of $\fibermap$ and we have $\fibermap(\xi)=\lim_{n\to\infty} h^{-1}(\xi_n)$.
	
	Assume then that $(\xi_n)\subset \hbd{X}$ converges to $\xi$. We want to see that $\gamma_n=\fibermap(\xi_n)$ converges to $\gamma=\fibermap(\xi)$. For each $\xi_n$ we can take a sequence $\left(h(\gamma_n^m(t_n^m))\right)_m$ converging, as $m\to\infty$ to $\xi_n$. By Proposition \ref{pr:welldefined} the sequence $\gamma_n^m(t_n^m)$ converges to $\gamma_n$. Let $\gamma'$ be an accumulation point of $\gamma_n$. Take a convergent subsequence of $\gamma_n$ converging to $\gamma'$, and relabel it as $\gamma_n$. Let $(V_n)$ be a nested sequence of open neighborhoods of $\xi$ in $\hbc{X}$ such that $\xi_n\in V_n$ and $\bigcap_n V_n = \{\xi\}$ and let $(W_n)$ be a nested sequence of open neighborhoods of $\gamma'$ in $\vbc{X}$ such that $\gamma_n\in W_n$ and $\bigcap_n W_n= \{\gamma'\}$. We can take such sequences of sets, as both spaces are metrizable.
	
	For each $n$, there exists $m(n)$ big enough so that $\gamma_n^{m(n)}\in W_n$ and $h(\gamma_n^{m(n)}(t_n^{m(n)}))\in V_n$. By the first condition on $m(n)$, we have that $\gamma_n^{m(n)}$ converges to $\gamma'$. By the second condition, $h(\gamma_n^{m(n)}(t_n^{m(n)}))$ converges to $\xi$, so by the the definition of $\fibermap$ and Proposition \ref{pr:welldefined} the sequence $\gamma_n^{m(n)}$ converges to $\fibermap(\xi)=\gamma$. Hence, $\gamma=\gamma'$, so the only accumulation point of $(\gamma_n)$ is $\gamma$ and by compactness of $\vbd{X}$ the sequence $(\gamma_n)$ converges to $\gamma$.
\end{proof}

By combining Propositions \ref{pr:welldefined} and \ref{pr:fibermapcontinuous} we get that $\fibermap$ is the map announced at the introduction, giving us a proof of Theorem \ref{th:projectionfunction}. As mentioned in the introduction, this map shows that the horofunction compactification is finer than the visual compactification. By using the Busemann map to insert the visual boundary inside the horoboundary, we can consider the map $\fibermap$ as a projection. 

One straightforward consequence of the continuity of $\fibermap_b$ is as follows.
\begin{corollary}\label{co:convergencegeodesics}
	Let $\gamma$ be a geodesic ray, not necessarily starting at the basepoint $b\in X$. Then, $\gamma$ converges in the visual compactification of $X$ based at $b$.
\end{corollary}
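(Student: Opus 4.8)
The plan is to reduce the statement to the continuity of $\fibermap_b$ combined with Rieffel's convergence theorem for almost geodesics. The guiding observation is that a geodesic ray converges in the horofunction compactification no matter where it starts, and that applying the continuous projection $\fibermap_b$ then forces convergence in the visual compactification based at $b$.

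First I would record that any geodesic ray $\gamma:[0,\infty)\to X$ is in particular an almost geodesic. Since $\gamma$ is an isometric embedding, for $s\le t$ we have
\[
d(\gamma(0),\gamma(s))+d(\gamma(s),\gamma(t))-t = s+(t-s)-t = 0,
\]
so the defining inequality holds with room to spare for every $\varepsilon>0$. The point to stress is that this computation never refers to the basepoint $b$, so it remains valid even when $\gamma(0)\neq b$. By Rieffel's theorem (recalled in \cref{se:horodefinition}), every almost geodesic converges to a limit in $\hbd{X}$, which I denote $B_\gamma$. Because the horofunction compactification does not depend on the chosen basepoint, this convergence holds in $\hbc{X}_b$, that is, $h(\gamma(t))\to B_\gamma$ as $t\to\infty$.

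Next I would invoke the continuous projection $\fibermap_b:\hbc{X}\to\vbc{X}_b$ furnished by \cref{pr:fibermapcontinuous}. On the interior the map is the identity, so $\fibermap_b(h(\gamma(t)))=\gamma(t)$, viewed as a point of $\vbc{X}_b$ through the inclusion $i$. Continuity of $\fibermap_b$ then yields
\[
\lim_{t\to\infty} i(\gamma(t)) = \lim_{t\to\infty}\fibermap_b(h(\gamma(t))) = \fibermap_b(B_\gamma)\in D_b,
\]
so $\gamma$ converges in $\vbc{X}_b$ to the ray $\fibermap_b(B_\gamma)$, as claimed.

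The only delicate point—and what I regard as the main (if modest) obstacle—is the basepoint bookkeeping. One must verify that neither the almost-geodesic property nor Rieffel's convergence is affected by the fact that $\gamma$ need not begin at $b$; this is precisely where the basepoint-independence of the horofunction compactification is used. Once that is in place, the conclusion is a direct transfer through the continuous map $\fibermap_b$, with no further estimates required.
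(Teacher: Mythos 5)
Your proof is correct and follows exactly the same route as the paper: the ray converges in the (basepoint-independent) horofunction compactification to $B_\gamma$ by Rieffel's theorem, and the continuous projection $\fibermap_b$ from \cref{pr:fibermapcontinuous} then transports this to convergence in $\vbc{X}_b$ toward $\fibermap_b(B_\gamma)$. The paper's proof is simply a compressed version of yours, leaving the almost-geodesic verification and basepoint bookkeeping implicit.
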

\begin{proof}
	The ray $\gamma$ converges in the horofunction compactification to $B_\gamma$. Since $\fibermap_b$ is continuous, the ray also converges in the visual compactification based at $b$ to $\fibermap_b(B_\gamma)$.
\end{proof}
For Teichmüller spaces with the Teichmüller metric this result was first proved by Walsh \cite[Theorem 7]{Walsh}.

By Lemma \ref{le:visualfineriffBcontinuous}, the visual compactification is finer than the horofunction compactification if and only if the Busemann map is continuous. Hence, since the horofunction compactification is always finer than the visual compactification, we obtain an isomorphism whenever this is the case, resulting in \cref{pr:horobocompfiner}.

\subsection{The fiber structure}
To get a better picture of the shape of the horoboundary we shall study the shape of the preimages of the projection $\fibermap$ restricted to the boundary. That is, for a given point $\gamma$ in the visual boundary we are interested in finding out information about the fiber $\fibermap^{-1}(\gamma)$. We first prove the following lemma, which we will use to get bounds on the values of $\fibermap^{-1}(\gamma)$.

\begin{lemma}\label{le:strictinequality}
	Fix a geodesic ray $\gamma \in \vbd{X}$ and $p\in X$ not in the bi-infinite extension of the geodesic ray $\gamma$. Then, the function $h(\gamma(\cdot))(p)$, with domain $[0,\infty)$, is strictly decreasing.
\end{lemma}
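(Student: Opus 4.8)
The plan is to reduce the statement to the rigidity of geodesics in a straight, uniquely geodesic space. Writing $b=\gamma(0)$ and parametrizing the bi-infinite extension of $\gamma$ by arclength, the function to study is $g(t)=d(\gamma(t),p)-t$ on $(-\infty,\infty)$; along the forward ray $d(\gamma(t),b)=t$, so this is $h(\gamma(t))(p)$, and it is precisely the quantity that, via $\xi(\gamma(t))=-t$ and the $1$-Lipschitzness of horofunctions, will bound the values of the members of $\fibermap^{-1}(\gamma)$. That $g$ is non-increasing is immediate from the triangle inequality: for $s<t$ one has $d(\gamma(t),p)\le d(\gamma(t),\gamma(s))+d(\gamma(s),p)=(t-s)+d(\gamma(s),p)$, and subtracting $t$ gives $g(t)\le g(s)$.

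The content of the lemma is the strictness, and this is where I expect the real work. Suppose $g$ is not strictly decreasing; being non-increasing, it is then constant on some nondegenerate interval $[t_1,t_2]$. Constancy means that for all $t_1\le s\le t_2$ the triangle inequality above is an equality, $d(\gamma(t_2),p)=d(\gamma(t_2),\gamma(s))+d(\gamma(s),p)$. I would then argue that the concatenation of $\gamma|_{[s,t_2]}$ (traversed from $\gamma(t_2)$ to $\gamma(s)$) with a geodesic from $\gamma(s)$ to $p$ is a path of length exactly $d(\gamma(t_2),p)$, hence a shortest path, hence a geodesic; by uniqueness of geodesics it must be \emph{the} geodesic $\sigma$ from $\gamma(t_2)$ to $p$. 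In particular $\gamma(s)\in\sigma$ for every $s\in[t_1,t_2]$, so $\sigma$ contains the nondegenerate geodesic segment $\gamma([t_1,t_2])$.

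It then remains to propagate this to a contradiction with $p\notin\widehat\gamma$, where $\widehat\gamma$ denotes the bi-infinite extension of $\gamma$. The idea is a connectedness (bootstrap) argument: parametrizing $\sigma$ by arclength from $\gamma(t_2)$, set $u^\ast=\sup\{u:\sigma([0,u])\subseteq\widehat\gamma\}$. This set is nonempty, since $\sigma$ starts by tracing $\gamma([t_1,t_2])\subseteq\widehat\gamma$, and closed by continuity. If $u^\ast<L=d(\gamma(t_2),p)$, then near $u^\ast$ the geodesic segment $\sigma$ contains a nondegenerate sub-segment of $\widehat\gamma$; by straightness the unique bi-infinite extension of that sub-segment is $\widehat\gamma$, forcing $\sigma$ to continue inside $\widehat\gamma$ past $u^\ast$ and contradicting the supremum. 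Hence $u^\ast=L$ and $p=\sigma(L)\in\widehat\gamma$, contradicting the hypothesis; therefore $g$ is strictly decreasing. The main obstacle is exactly this last step: turning the pointwise equalities in the triangle inequality into the global conclusion that the minimizing path $\sigma$ literally runs inside $\widehat\gamma$ all the way to $p$, where uniqueness of geodesics (to identify the minimizing concatenation with $\sigma$) and straightness (to prevent $\sigma$ from leaving $\widehat\gamma$) are both essential.
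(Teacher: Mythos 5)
Your proof is correct, and its skeleton is the same as the paper's: the triangle inequality $d(\gamma(t),p)\le d(\gamma(s),p)+(t-s)$ for $s<t$ gives monotonicity, and strictness is a rigidity statement about geodesics. Where you genuinely differ is in how the equality case is dispatched, and your route is the more complete one. The paper kills equality in one clause: it ``would give us two different paths with the same length between $\gamma(t)$ and $p$, with one of them being geodesic,'' i.e.\ a contradiction with unique geodesicity alone. As written this skips the case in which the minimizing concatenation through $\gamma(s)$ \emph{coincides} with the geodesic $\sigma$ from $\gamma(t)$ to $p$; in that case there are no two different paths, and the contradiction must come from somewhere else --- exactly as in your argument, from the fact that $\sigma$ then contains a nondegenerate subsegment of $\widehat\gamma$, so that straightness forces $\sigma\subseteq\widehat\gamma$ and hence $p\in\widehat\gamma$, contrary to hypothesis. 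In other words, yours is the version that visibly uses both straightness and the assumption $p\notin\widehat\gamma$, neither of which appears explicitly in the paper's one-line justification. Two minor comments. First, your bootstrap with $u^\ast$ can be compressed: straightness applied once to $\gamma([t_1,t_2])$ says $\widehat\gamma$ is the unique bi-infinite geodesic containing it, while the bi-infinite extension of $\sigma$ is another such geodesic, so $\sigma\subseteq\widehat\gamma$ in one step. Second, your choice to prove the statement for the signed quantity $g(t)=d(\gamma(t),p)-t$ on all of $(-\infty,\infty)$ is the right reading of the lemma: with the literal normalization $d(\gamma(t),p)-d(\gamma(t),b)$ the claim fails for negative $t$ (already in the Euclidean plane, where $\sqrt{t^2+1}-|t|$ increases on $[-1,0]$), and the paper's own proof only treats $s,t>0$.
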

\begin{proof}
	Take $t,s\ge0$ with $s<t$. By the triangle inequality we have 
	\[d(\gamma(t),p)\le d(\gamma(s),p)+ d(\gamma(t),\gamma(s))=d(\gamma(s),p)+t-s.\]
	Further, we have strict inequality, as equality would give us two different paths with the same length between $\gamma(t)$ and $p$, with one of them being geodesic. Hence,
	\begin{align*}
		h(\gamma(t))(p)&=d(\gamma(t),p)-d(\gamma(t),b)\\
		&<d(\gamma(s),p)+t-s-t\\
		&=h(\gamma(s))(p).
	\end{align*}
\end{proof}

The set $C(X)$ can be partially ordered by saying that $f\ge g$ whenever $f(x)\ge g(x)$ for all $x\in X$. If $f\ge g$ and $f\neq g$ then we write $f>g$. If $p=\gamma(r)$ for some $r$ and $s<t$ we have $h(\gamma(s))(p)=h(\gamma(t))(p)=-r$ for $r\le s$ and $-s=h(\gamma(s))(p)>h(\gamma(t))(p)=-\min(r,t)$ otherwise. Hence, adding the previous lemma we have $h(\gamma(s))>h(\gamma(t))$ whenever $s<t$. By attempting to extend this relation to the horofunction boundary we get that Busemann points are maximal in their fibers.

\begin{proposition}\label{pr:upperbound}
	Let $\gamma\in \vbd{X}$ and $\xi\in \fibermap^{-1}(\gamma)$. Then, $\xi\le B(\gamma)$.
\end{proposition}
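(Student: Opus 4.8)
The plan is to exploit two elementary facts about an element $\xi$ of the fiber $\fibermap^{-1}(\gamma)$: that $\xi$ is $1$-Lipschitz, and that along its optimal geodesic it decreases at unit speed. For the first, recall that every horofunction is a pointwise limit of functions $h(x_n)$, each of which satisfies $|h(x_n)(p)-h(x_n)(q)|=|d(x_n,p)-d(x_n,q)|\le d(p,q)$; passing to the limit, $\xi$ is itself $1$-Lipschitz, so $\xi(p)-\xi(\gamma(t))\le d(p,\gamma(t))$ for every $p\in X$ and every $t$. For the second, since $\xi\in\fibermap^{-1}(\gamma)$ the ray $\gamma$ is the positive half of the optimal geodesic of $\xi$ based at $b$, so by definition $\xi(\gamma(t))-\xi(\gamma(0))=-t$; as $\gamma(0)=b$ and every horofunction vanishes at the basepoint (being a limit of functions $h(x_n)$ with $h(x_n)(b)=0$), this yields $\xi(\gamma(t))=-t$ for all $t\ge 0$.

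Combining the two observations, I would write, for each $t>0$,
\[\xi(p)\le \xi(\gamma(t))+d(p,\gamma(t))=d(p,\gamma(t))-t=d(p,\gamma(t))-d(b,\gamma(t))=h(\gamma(t))(p).\]
Thus $\xi(p)\le h(\gamma(t))(p)$ for all $t>0$. Letting $t\to\infty$, the right-hand side converges to $B(\gamma)(p)$ by the very definition of the Busemann point $B_\gamma$; in fact, by the monotonicity recorded just before the proposition, $h(\gamma(t))(p)$ is decreasing in $t$, so its limit equals its infimum and $\xi(p)\le\inf_{t>0}h(\gamma(t))(p)=B(\gamma)(p)$. Since $p\in X$ was arbitrary, this gives $\xi\le B(\gamma)$.

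The argument is short, so I do not expect a serious obstacle; the only points requiring care are that the normalization $\xi(\gamma(t))=-t$ holds on the whole positive ray (which is exactly the content of $\xi$ belonging to $\fibermap^{-1}(\gamma)$, guaranteed by \cref{pr:optimalpath}) and that the $1$-Lipschitz bound genuinely survives the pointwise limit defining $\xi$. The heart of the estimate is the single triangle-type inequality $\xi(p)\le \xi(\gamma(t))+d(p,\gamma(t))$, which converts the abstract maximality assertion into a statement about the concrete monotone family $h(\gamma(t))(p)$ whose infimum is, by construction, the Busemann function.
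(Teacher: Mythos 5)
Your proof is correct, and it takes a genuinely different route from the paper's. The paper works with an approximating sequence: it writes $\xi$ as the limit of $h(\gamma_n(t_n))$ with $\gamma_n\to\gamma$, introduces cutoff times $s_n=\sup\{t: d(\gamma(t),\gamma_n(t))<\varepsilon \text{ and } t<t_n\}$, compares $h(\gamma_n(t_n))$ to $h(\gamma_n(s_n))$ via the monotonicity of Lemma \ref{le:strictinequality}, and then compares $h(\gamma_n(s_n))$ to $h(\gamma(s_n))$ up to $2\varepsilon$ before letting $\varepsilon\to 0$. You instead bypass the approximating sequence entirely and argue intrinsically with $\xi$: membership in $\fibermap^{-1}(\gamma)$ means precisely that $\gamma$ is the positive part of the optimal geodesic of $\xi$ through $b$ (Proposition \ref{pr:optimalpath}), which together with $\xi(b)=0$ gives the normalization $\xi(\gamma(t))=-t$, and the $1$-Lipschitz property of horofunctions (inherited pointwise from the functions $h(x_n)$) turns this into $\xi(p)\le h(\gamma(t))(p)$ for every $t$, whence the conclusion by letting $t\to\infty$. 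Your two ingredients are both already available in the paper at this point, so there is no gap; the one thing to note is that your argument leans on the \emph{first} definition of $\fibermap$ (via optimal geodesics), whereas the paper's proof only uses the \emph{second} definition (via accumulation of sequences in the visual compactification) plus Lemma \ref{le:strictinequality} — a technique the paper then reuses almost verbatim for Proposition \ref{pr:lowerbound} and for the path-connectedness of fibers in Proposition \ref{pr:pathconnected}, which is presumably why it was chosen. Your version is shorter, avoids the $\varepsilon$ and $s_n$ bookkeeping, and makes transparent that maximality of the Busemann point in its fiber is just the triangle inequality applied along the optimal geodesic.
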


\begin{proof}
	Choose any sequence $(x_n)\subset X$ such that $h(x_n)$ converges to $\xi$. Since $\xi\in\fibermap^{-1}(\gamma)$ the sequence $(x_n)$ converges to $\gamma$ in $\vbc{X}$, so we can write $x_n=\gamma_n(t_n)$ with $t_n$ converging to infinity and $\gamma_n$ converging to $\gamma$.

	Fix $p\in X$ and let $\varepsilon>0$. Denote $s_n=\sup\{t: d(\gamma(t),\gamma_n(t))<\varepsilon \text{ and } t<t_n\}$. The geodesics $\gamma_n$ converge to $\gamma$ uniformly on compact sets, so $s_n\to\infty$ as $n\to\infty$. Hence, by definition of the Busemann point and since $d(\gamma_n(s_n),\gamma(s_n))<\varepsilon$,
	\[B_\gamma(p)=
	\lim_{n\to\infty}h(\gamma(s_n))(p)\ge\limsup_{n\to\infty}h(\gamma_n(s_n))(p)-2\varepsilon .\]
	Furthermore, $s_n\le t_n$, so by Lemma \ref{le:strictinequality},
	\begin{align*}
		\xi(p)=
		\lim_{n\to\infty}h(\gamma_n(t_n))(p)\le\limsup_{n\to\infty}h(\gamma_n(s_n))(p)\le B_\gamma(p)+2\varepsilon.
	\end{align*}
	Since $\varepsilon$ can be arbitrarily small we get the proposition.
\end{proof}

While it might not be possible to get a similar unique minimum in each fiber, we can get the following result.

\begin{proposition}\label{pr:lowerbound}
		Let $\gamma\in \vbd{X}$ and $\xi\in \fibermap^{-1}(\gamma)$. Furthermore, let $(x_n)\subset X$ be a sequence converging to $\xi$ with $x_n=\gamma_n(t_n)$. For any $p$, define $\eta(p)=\liminf_{n\to\infty} B(\gamma_n)(p)$. Then, $\xi\ge \eta$.
\end{proposition}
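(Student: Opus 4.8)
The plan is to exploit the monotonicity of the function $s\mapsto h(\gamma_n(s))(p)$ established in Lemma~\ref{le:strictinequality}, which forces the value at the finite time $t_n$ to dominate the limiting Busemann value, and then to pass to the limit in $n$. First I would fix a point $p\in X$ and recall that, since $h(x_n)$ converges to $\xi$ in the topology of uniform convergence on compact sets, evaluation at the single point $p$ gives the pointwise convergence $\xi(p)=\lim_{n\to\infty} h(\gamma_n(t_n))(p)$.

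Next, for each fixed $n$ I would examine the ray $\gamma_n\in\vbd{X}$ in isolation. By Lemma~\ref{le:strictinequality} together with the remark following it, the map $s\mapsto h(\gamma_n(s))(p)$ is non-increasing on all of $\R$. Since $\gamma_n$ is in particular an almost geodesic, $h(\gamma_n(s))$ converges as $s\to\infty$ to the Busemann point $B(\gamma_n)$; evaluating at $p$ and using monotonicity identifies this limit as an infimum, so that $B(\gamma_n)(p)=\lim_{s\to\infty}h(\gamma_n(s))(p)=\inf_{s}h(\gamma_n(s))(p)$. Evaluating the monotone function at $s=t_n$ therefore yields the key pointwise inequality
\[
	h(\gamma_n(t_n))(p)\ \ge\ B(\gamma_n)(p)\qquad\text{for every }n.
\]

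Finally I would pass to the limit. Because the left-hand side converges to $\xi(p)$, taking the $\liminf$ of both sides of the displayed inequality gives
\[
	\xi(p)=\lim_{n\to\infty}h(\gamma_n(t_n))(p)=\liminf_{n\to\infty}h(\gamma_n(t_n))(p)\ \ge\ \liminf_{n\to\infty}B(\gamma_n)(p)=\eta(p),
\]
where the middle equality holds because the limit exists. As $p\in X$ was arbitrary, this gives $\xi\ge\eta$, completing the argument.

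I expect the proof to be essentially immediate once the monotonicity in $s$ is in hand; the only point requiring a little care is the identification of $B(\gamma_n)(p)$ with the infimum (equivalently, the $s\to\infty$ limit) of the monotone sequence, which relies on Rieffel's convergence of almost geodesics and the definition of the Busemann point rather than on any new estimate. In contrast with the proof of Proposition~\ref{pr:upperbound}, I do not anticipate any genuine obstacle here: no $\varepsilon$-comparison between the two distinct rays $\gamma_n$ and $\gamma$ is needed, since each term of the sequence is compared only against its own Busemann function.
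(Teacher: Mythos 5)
Your proof is correct and follows essentially the same route as the paper's: both rest on the monotonicity of $s\mapsto h(\gamma_n(s))(p)$ from Lemma~\ref{le:strictinequality} together with the fact that $B(\gamma_n)$ is the pointwise limit of $h(\gamma_n(s))$ as $s\to\infty$. Your observation that monotonicity identifies $B(\gamma_n)(p)$ as an infimum, yielding the exact inequality $h(\gamma_n(t_n))(p)\ge B(\gamma_n)(p)$ for each fixed $n$, neatly replaces the paper's $\varepsilon_m$-approximation and choice of times $s_m>t_{n(m)}$, but the underlying idea is identical.
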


\begin{proof}
	The proof follows a similar reasoning as the last one.
	
	Fix $p\in X$, choose a subsequence so $B(\gamma_n)(p)$ converges to $\eta(p)$ and let $(\varepsilon_m)$ be a sequence of positive numbers coverging to $0$. For each $\varepsilon_m$, take $n(m)$ big enough so that $B(\gamma_{n(m)})(p)\ge \eta(p)-\varepsilon_m$. Further, take $s_{m}$ bigger than $t_{n(m)}$, and big enough so that
	\[h(\gamma_{n(m)}(s_m))(p)\ge B(\gamma_{{n(m)}})(p)-\varepsilon_m.\]
	Such an $s_m$ always exists by the definition of $B(\gamma_{n(m)})$.
	In particular, we have that 
	\[\liminf_{m\to\infty}h(\gamma_{n(m)}(s_m)(p)\ge \eta(p).\]
	By Lemma \ref{le:strictinequality} we have
	\begin{align*}
		\xi(p)=\lim_{m\to\infty}h(\gamma_{n(m)}(t_{n(m)})(p)\ge\liminf_{m\to\infty} h(\gamma_{n(m)}(s_m))(p)\ge \eta(p).
	\end{align*}
\end{proof}

The intuition one might get from these propositions is that approaching $\gamma$ ``through the boundary'', that is, through the furthest way possible from the interior of $X$, gives a lower bound on the possible values of approaching through other angles, and approaching $\gamma$ in a straight way, that is, through the geodesic, gives an upper bound. Hence, when these two ways of approaching $\gamma$ are the same, every other possible angle of approach should also yield the same limit. Following this reasoning we get our next result, announced in the introduction.

\continuityatqintro*

\begin{proof}
	$(1)\implies(2)$:
	Take $\xi\in \fibermap^{-1}(\gamma)$. By Proposition \ref{pr:upperbound} we have $\xi\le B(\gamma)$. Since $\bussmap$ is continuous at $\gamma$ when restricted to the boundary we have that for any $\gamma_n\to \gamma$ the horofunctions $B(\gamma_n)$ converge to $B(\gamma)$. Hence, by Proposition \ref{pr:lowerbound}, $\xi\ge B(\gamma)$, so $\xi=B(\gamma)$ and we have (2).
	
	$(2)\implies (3)$: Take then any $(x_n)\subset \vbc{X}$ converging to $\gamma$, consider the sequence $(B(x_n))\subset \hbc{X}$ and let $\eta$ be an accumulation point. By the definition of $\fibermap$ we have $\eta\in \fibermap^{-1}(\gamma)$, so $\eta=B(\gamma)$ since we assumed that $\fibermap^{-1}(\gamma)$ is a singleton. This shows that $B$ is continuous at $\gamma$.
	
	Finally, it is clear that $(3) \implies (1)$.
\end{proof}

The relation obtained in Lemma \ref{le:strictinequality} can be exploited further. Indeed, trying to carry it to the boundary in a more delicate manner we can see that the fibers are path connected.

\begin{proposition}\label{pr:pathconnected}
	Let $\gamma \in \vbd{X}$. For any $\xi\in \fibermap^{-1}(\gamma)$ there exists a continuous path from $B(\gamma)$ to $\xi$ contained in $\fibermap^{-1}(\gamma)$.
\end{proposition}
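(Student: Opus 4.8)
The plan is to realize $\xi$ and $B(\gamma)$ as the two endpoints of a single continuous curve obtained as an Arzelà--Ascoli limit of the geodesic segments used to approach $\xi$, reparametrized so as to have uniformly bounded length. Start by fixing, as in the proof of \cref{pr:welldefined}, a sequence $x_n=\gamma_n(t_n)\to\xi$ with $\gamma_n\to\gamma$ in $\vbd{X}$ and $t_n\to\infty$. First I would produce a companion sequence of times $s_n\le t_n$ with $s_n\to\infty$ and $d(\gamma(s_n),\gamma_n(s_n))\to 0$: since $\gamma_n\to\gamma$ uniformly on compact sets, a standard diagonal choice $s_n=\min(r_n,t_n)$ with $r_n\to\infty$ and $\sup_{[0,r_n]}d(\gamma,\gamma_n)\to0$ works. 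Because $h(\gamma_n(s_n))(p)-h(\gamma(s_n))(p)=d(\gamma_n(s_n),p)-d(\gamma(s_n),p)$ is bounded in absolute value by $d(\gamma_n(s_n),\gamma(s_n))\to0$, while $h(\gamma(s_n))\to B_\gamma$ pointwise, the functions $h(\gamma_n(s_n))$ converge (uniformly on compacta, by equi-Lipschitzness) to $B(\gamma)$, whereas $h(\gamma_n(t_n))\to\xi$. Thus the geodesic segments $\beta_n\colon s\mapsto h(\gamma_n(s))$, $s\in[s_n,t_n]$, are curves in $\hbc{X}$ whose endpoints tend to $B(\gamma)$ and to $\xi$.

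The heart of the argument --- and where \cref{le:strictinequality} is pushed to the boundary --- is a uniform length bound for the $\beta_n$. Fix a dense sequence $(p_k)\subset X$ and set $M_k=\sup_n|h(\gamma_n(s_n))(p_k)-h(\gamma_n(t_n))(p_k)|$, which is finite because the $n$-th term converges to $B_\gamma(p_k)-\xi(p_k)$. Equip $\hbc{X}$ with the metric
\[
\rho(f,g)=\sum_k c_k\,\min\bigl(|f(p_k)-g(p_k)|,\,1\bigr),\qquad c_k=\frac{2^{-k}}{1+M_k},
\]
which induces the topology of uniform convergence on compacta since all horofunctions are $1$-Lipschitz. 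By \cref{le:strictinequality} and the monotonicity remark following it, for each fixed $k$ the map $s\mapsto h(\gamma_n(s))(p_k)$ is monotone on $[s_n,t_n]$, so its total variation telescopes to $|h(\gamma_n(s_n))(p_k)-h(\gamma_n(t_n))(p_k)|\le M_k$. Summing over $k$, the $\rho$-length of $\beta_n$ is at most $\sum_k c_kM_k\le 1$, \emph{uniformly in $n$}. This is the crucial point: although the parameter interval $[s_n,t_n]$ has length $t_n-s_n\to\infty$, coordinatewise monotonicity keeps the intrinsic length bounded.

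With the uniform bound in hand, reparametrize each $\beta_n$ by constant speed to obtain maps $\alpha_n\colon[0,1]\to\hbc{X}$ that are $1$-Lipschitz for $\rho$ and satisfy $\alpha_n(0)=h(\gamma_n(s_n))$ and $\alpha_n(1)=h(\gamma_n(t_n))$. The family $(\alpha_n)$ is equicontinuous with values in the compact space $\hbc{X}$, so by Arzelà--Ascoli a subsequence converges uniformly to a continuous path $\alpha\colon[0,1]\to\hbc{X}$ with $\alpha(0)=B(\gamma)$ and $\alpha(1)=\xi$. It remains to check that $\alpha$ stays in the fiber. Writing $\alpha(\lambda)=\lim_n h(\gamma_n(\phi_n(\lambda)))$ with $\phi_n(\lambda)\in[s_n,t_n]$, the parameter satisfies $\phi_n(\lambda)\ge s_n\to\infty$ while $\gamma_n\to\gamma$, so $\gamma_n(\phi_n(\lambda))\to\gamma$ in $\vbc{X}$; by \cref{pr:welldefined} this forces $\fibermap(\alpha(\lambda))=\gamma$, i.e.\ $\alpha(\lambda)\in\fibermap^{-1}(\gamma)$. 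Reversing orientation yields the desired path from $B(\gamma)$ to $\xi$. The main obstacle is precisely the uniform length estimate of the second paragraph: a naive affine reparametrization of the segments $\beta_n$ fails to be equicontinuous because $t_n-s_n\to\infty$, and it is only the monotonicity supplied by \cref{le:strictinequality} that rescues equicontinuity once one reparametrizes by $\rho$-arclength.
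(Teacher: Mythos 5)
Your proof is correct, and it reaches the conclusion by a genuinely different mechanism than the paper, even though both arguments share the same skeleton: a sequence $\gamma_n(t_n)\to\xi$, companion times $s_n\le t_n$ with $h(\gamma_n(s_n))\to B_\gamma$, the geodesic segments of $\gamma_n$ in between, a countable dense set $(p_k)\subset X$, and the coordinatewise monotonicity of \cref{le:strictinequality} as the engine. The paper turns the dense set into a linear functional $I(f)=\sum_i f(p_i)/(2^i d(b,p_i))$, uses it to parametrize each segment implicitly (equal increments of $I$-value), builds the limit path only on a countable dense set of parameter values via nested diagonal subsequences, and then extends to all of $[0,1]$ by a monotone-limit argument in which $I$ certifies that the one-sided limits $\alpha^+$ and $\alpha^-$ coincide. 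You instead turn the dense set into a metric $\rho$ on $\hbc{X}$ and observe that coordinatewise monotonicity makes the $\rho$-length of the segments uniformly bounded; constant-speed reparametrization then gives an equicontinuous family, and a single application of Arzel\`a--Ascoli produces the entire limit path at once, with continuity for free. Your route buys a much shorter limit-passing argument --- no dense set of times, no nested subsequences, no separate continuity verification --- at the modest cost of checking that $\rho$ metrizes the topology of $\hbc{X}$ (true on the equi-Lipschitz set $\hbc{X}$) and that rectifiable curves admit arclength parametrizations; the paper's route is more hands-on but stays entirely within elementary pointwise reasoning. One small patch is needed at the end: \cref{pr:welldefined} presupposes that the limit in question lies in $\hbd{X}$, so before invoking it you should rule out $\alpha(\lambda)\in h(X)$ --- immediate, since $d(b,\gamma_n(\phi_n(\lambda)))\ge s_n\to\infty$ --- or, more simply, apply the already-established continuity of $\fibermap$ (\cref{pr:fibermapcontinuous}) to the sequence $h(\gamma_n(\phi_n(\lambda)))\to\alpha(\lambda)$, which yields $\fibermap(\alpha(\lambda))=\lim_n\gamma_n(\phi_n(\lambda))=\gamma$ directly.
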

\begin{proof}
	Take a sequence $(x_n)\subset  X$ converging to $\xi$ in the horofunction compactification, and write $x_n=\gamma_n(u_n)$.
	As we have seen in the proof of Proposition \ref{pr:lowerbound}, we can take a sequence $(l_n)\subset \mathbb{R}$ with $\gamma_n(l_n)$ converging to $B_\gamma$ such that $l_n<u_n$ for all $n$.
	For each $n$ we have a path $\tilde{\alpha}^n(t)$ connecting $\gamma_n(l_n)$ and $\gamma_n(u_n)$ by setting $\tilde{\alpha}^n(t)=\gamma_n(tu_n+(1-t)l_n)$ for $t\in[0,1]$.
	We would like to carry this path to the limit, getting a path between $\xi$ and $B(q)$. However, directly taking such a limit might result in some discontinuities, so we have to choose a parametrization carefully. 
	
	To find a good parametrization we shall use a certain functional as a control. We want the functional to carry discontinuities and strict increases in the path of functions to discontinuities and strict increases in the value of the functional. Since $X$ is proper, it is separable, so let $(p_i)_{i\in \N}$ be a countable dense set in $X$. We define the functional $I:\hbc{X}\to \R$ given by 
	\[I(f)=\sum_{i\in\N} \frac{f(p_i)}{2^i d(b,p_i)}.\]
	Since $|f(x)|\le d(b,x)$ for all $f\in\hbc{X}$, the summation in the definition of $I(f)$ is absolutely convergent, so $I(f)$ is defined, finite, continuous with respect to $f$, and for any two $f,g\in \hbc{X}$ we have $I(f+g)=I(f)+I(g)$. Furthermore, since $(p_n)$ is dense and we are taking continuous functions, we have that the functional translates strict inequalities. That is, $f>g$ implies $I(f)>I(g)$. Hence, if $I(f)=0$ and $f\ge 0$ we have $f=0$.
	
	We define then the function $F_n(t)=I(h(\gamma_n(t))$. By continuity of $I$ this function is continuous, and by Lemma \ref{le:strictinequality} it is strictly decreasing with respect to $t$. That is, we have continuous strictly decreasing functions $F_n:[l_n,u_n]\to [F_n(u_n),F_n(l_n)]$. Hence, we can define implicitly the continuous parametrizations $s_n:[0,1]\to [l_n,u_n]$ by taking the unique value $s_n(t)$ such that
	\[F_n(s_n(t))=(1-t)F_n(l_n) + t F_n(u_n).\]
	Denote the $F_n(s_n(t))$ as $E_n(t)$. By the continuity of $I$ we have that $E_n(t)$ converges to $(1-t)I(B_\gamma) + t I(\xi)$ as $n\to\infty$, which we denote $E(t)$.
	
	Take now a countable dense set $(t^k)_{k\in \N}\subset [0,1]$ containing $0$ and $1$. We are now ready to start defining the path $\alpha:[0,1]\to \fibermap^{-1}(\gamma)$, and we begin defining it for the dense set $(t^k)$.
	For $k=1$ we define $\alpha(t^1)$ as an accumulation point of $h(\gamma_n(s_n(t^1)))$. 
	Denote $(\gamma_{m^1(n)})$ the subsequence of $\gamma_n$ such that $h(\gamma_{m^1(n)}(s_{m^1(n)}(t^1)))$ converges to $\alpha(t^1)$. 
	Define inductively $\alpha(t^k)$ and $(\gamma_{m^k(n)})$ by taking an accumulation point and a corresponding converging subsequence of $h(\gamma_{m^{k-1}(n)}(s_{m^{k-1}(n)}(t^k)))$. By the continuity of $I$ we have \[I(\alpha(t^k))=\lim_{n\to\infty} (F_{m^k(n)}(s_{m^k(n)}(t^k)))=E(t^k).\]
			
	For each pair $i>j$ we have that $m^i(n)$ is a subsequence of $m^j(n)$, so $h(\gamma_{m^i(n)}(s_{m^i(n)}(t^j)))$ converges to $\alpha(t^j)$. 
	Assume $t^i>t^j$. 
	By Lemma \ref{le:strictinequality} we have that $h(\gamma_{m^i(n)}(s_{m^i(n)}(t^i)))<h(\gamma_{m^i(n)}(s_{m^i(n)}(t^j)))$, so $\alpha(t^i)\le \alpha(t^j)$. 
	
	We now have to prove that the definition we have given for $\alpha$ on $(t^k)$ can be extended continuously to $[0,1]$. Fix any $t\notin (t^k)$ and take a subsequence of $t^k$, labeled $t^{k_n}$, such that $t^{k_n}\to t$. 
	We shall now see that $\alpha(t^{k_n})$ converges to a function which does not depend on the chosen subsequence, and define $\alpha(t)$ as that limit. We can split and reorder the sequence $(t^{k_n})$ into $(t^+_n)$ and $(t^-_n)$ satisfying $t^+_n>t^+_{n+1}>t>t^-_{n+1}>t^-_n$. 
	The associated $\alpha(t^\pm_n)$ are ordered, so for any $p\in X$ the sequence $\alpha(t^\pm_n)(p)$ is an increasing (or decreasing) sequence of of values in $\R$, bounded above (or below) by $\alpha(0)(p)$ (or $\alpha(1)(p)$). Hence, both sequences converge pointwise, which implies uniform convergence on compact sets, as these functions are 1-Liptschitz. Furthermore, these limits do not depend on the chosen sequence, since if we had any other we could intercalate them and the sequences would still converge. Denote then $\alpha^+$ the limit associated to $t^+_n$, and $\alpha^-$ the limit associated to $t^-_n$. Since $\alpha(t^+_n)<\alpha(t^-_m)$ for all $n,m$ we have $\alpha^+\le \alpha^-$. For each $\alpha(t^k)$ we have $I(\alpha(t^k)) = E(t^k)$. Hence by the continuity of $I$ we have that 
	\[ I(\alpha^+)= E(t)=I(\alpha^-).\]
	That is, we have
	\[I(\alpha^--\alpha^+)=0.\]
	Since $\alpha^-$ and $\alpha^+$ are continuous and $\alpha^--\alpha^+\ge 0$ we have $\alpha^-=\alpha^+.$ We thus define $\alpha(t)$ to be either one. The same reasoning shows that $\alpha$ is continuous.
\end{proof}

We would like to remark that several choices where made in the proof of the previous lemma, and the obtained path may not be unique.

We can use the previous result to observe that the horoboundary is connected if and only if the visual boundary is connected.
\begin{proof}[Proof of Proposition \ref{pr:finslerconnected}]
	Assume that the visual boundary is not connected. Then we have $U, V\subset \vbd{X}$ nonempty and open such that $U\cap V=\emptyset$ and $U\cup V=\vbd{X}$. As $\fibermap$ is continuous, the sets $\fibermap^{-1}(U)$ and $\fibermap^{-1}(V)$ are open, so the horoboundary is not connected.
	
	For the other implication, assume that the visual boundary is connected while the horoboundary is not connected. Then we have $U, V \subset \hbd{X}$ nonempty and open such that $U\cap V=\emptyset$ and $U\cup V=\hbd{X}$. Since fibers are path connected, each of them is contained in only one of $U$ or $V$, so $\fibermap(U)$ and $\fibermap(V)$ are disjoint. Since $U\cup V=\hbd{X}$ we have $\fibermap(U)\cup \fibermap(V)=\vbd{X}$, and since both $U$ and $V$ are nonempty, so are the images. Hence, both images cannot be open at the same time, as $\vbd{X}$ is connected. Therefore, these sets cannot be both closed. Assume $\fibermap(U)$ is not closed. We then have a sequence $(\gamma_n)\subset \fibermap(U)$ converging to a point in $\fibermap(V)$. Again, since $U\cup V=\hbd{X}$, we have that $U=\fibermap^{-1}\fibermap(U)$ and $V=\fibermap^{-1}\fibermap(V)$. Hence, any lift of the sequence $(\gamma_n)$ to $\fibermap^{-1}\fibermap(U)$ is contained in $U$ and, since $\hbd{X}$ is compact, has accumulation points which, by the continuity of the projection map, are be contained in $\fibermap^{-1}\fibermap(V)=V$. Hence, $U$ is not closed and we get a contradiction.
\end{proof}

\subsection{An alternative definition of the horofunction compactification}
\label{se:alternativehorofunctiondefinition}
Under what a priori seem to be more restrictive hypotheses on the space $X$ it is possible to characterize the horofunction compactification as a subset of the product of all of its visual compactifications. We detail the construction in this section.

The new extra hypotheses are both related to the differentiability of the distance function.
We say a that a uniquely geodesic metric space $X$ is \emph{$C^1$ along geodesics} if given a point $p\in X$ and a geodesic segment $\gamma$ that does not intersect $p$, the distance function $d(\gamma(t),p)$ is first differentiable and the value of the derivative depends continuously on both $t$ and $p$. Furthermore, the space $X$ has \emph{constant distance variation} if for any two distinct geodesics $\gamma,\eta$ with $\gamma(0)=\eta(0)$ we have either
\begin{equation}\label{eq:derivative}
		\left.\frac{d}{dt}d(\gamma(t),\eta(s))\right\vert_{t=0}=\left.\frac{d}{dt}d(\gamma(t),\eta(1))\right\vert_{t=0}
\end{equation}
for all $s>0$, or $\left.\frac{d}{dt}d(\gamma(t),\eta(s))\right\vert_{t=0}$ does not exist for any $s>0$.

Many commonly studied metric spaces have constant distance variation. For example, spaces with bounded curvature, either above of below, have constant distance variation, as explained in the book by Burago--Burago--Ivanov \cite[Section 4]{CourseMetricGeometryBook}. Importantly to our case, Teichmüller spaces with the Teichmüller distance satisfy both hypotheses. Earle \cite{EarleDifferentiable} proved that the distance function is $C^1$ by providing a formula for its derivative. Applying the formula to \eqref{eq:derivative} we get that the derivative depends only on the tangential vector to $\gamma$ at $0$ and the unit area quadratic differential associated to $\eta$ at $0$, so we also have constant distance variation. Furthermore, Teichmüller spaces with the Teichmüller distance are also straight and proper, so the results from this section can be applied to them.

Consider the product of all the possible visual compactifications obtained by changing the basepoint,
\[
	E=\prod_{b\in X} \vbc{X}_b,
\]
with the usual product topology. See the book by Munkres \cite[Chapters 2.19 and 5.37]{Munkres} for some background on infinite products of topological spaces. Denote $\pi_b$ the projection from $E$ to $\vbc{X}_b$. By definition of the product topology, the diagonal inclusion $i:X\hookrightarrow E$ such that by $\pi_b(i(x))=x$ for every $x,b\in X$ is continuous, and has continuous inverse restricted to $i(X)$ given by $\pi_b$. Hence, $i(X)$ is homeomorphic to $X$. That is, $i$ is an embedding. Furthermore, by Tychonoff's theorem the product is compact, as each factor of the product is compact. Hence the closure $\overline{i(X)}$, which we shall denote $\Vbc{X}$, is compact. The pair $(i,\Vbc{X})$ is then a compactification of $X$, which tracks the information given by the visual boundary at each point. That is, a sequence in $X$ converges in the topology of $\Vbc{X}$ if and only if it converges for every possible visual compactification $\vbc{X}_b$. The main interest of this compactification comes from the following result.

\begin{theorem}\label{th:horocompisvisualfromeverypoint}
	Let $X$ be a proper, uniquely geodesic, straight metric space which is $C^1$ along geodesics and has constant distance variation. Then $(i,\Vbc{X})$ is isomorphic to $(h,\hbc{X})$.
\end{theorem}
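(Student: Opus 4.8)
The plan is to establish the isomorphism by producing a continuous bijection in one direction and invoking \cref{le:walshcompact}. Since we already know from \cref{th:projectionfunction} that the horofunction compactification $\hbc{X}$ is finer than each visual compactification $\vbc{X}_b$, the universal property of the product topology gives a canonical continuous map $\Phi:\hbc{X}\to \Vbc{X}$ satisfying $\pi_b\circ\Phi=\fibermap_b$ for every basepoint $b$ and $\Phi\circ h=i$. This $\Phi$ is the natural candidate for the continuous extension showing $\hbc{X}$ is finer than $(i,\Vbc{X})$. By \cref{le:walshcompact}, it suffices to prove that $\Phi$ is \emph{injective}; the two compactifications are then isomorphic.

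**First I would** verify that $\Phi$ is well defined and continuous, which is essentially automatic: continuity into a product is equivalent to continuity of each composite $\pi_b\circ\Phi=\fibermap_b$, and each $\fibermap_b$ is continuous by \cref{pr:fibermapcontinuous}. The density of $\Phi(h(X))=i(X)$ in $\Vbc{X}$ holds because $h(X)$ is dense in $\hbc{X}$ and $\Phi$ is continuous. So the entire content of the theorem reduces to injectivity of $\Phi$ on the horoboundary, i.e.\ to showing that two distinct horofunctions $\xi,\xi'\in\hbd{X}$ must differ in some visual compactification: there exists a basepoint $b$ with $\fibermap_b(\xi)\neq\fibermap_b(\xi')$.

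**The hard part will be** this injectivity, and this is exactly where the two extra hypotheses ($C^1$ along geodesics and constant distance variation) must enter, since without them the fibers $\fibermap_b^{-1}(\gamma)$ can be large (as the paper emphasizes in \cref{th:dimensionfiberslowerbound}) and a single basepoint cannot separate all horofunctions. The strategy I would pursue is to recover the value $\xi(p)$, or at least enough infinitesimal data to reconstruct $\xi$, from the collection $\{\fibermap_b(\xi)\}_{b\in X}$. Fix $p\in X$ and move the basepoint along a geodesic through $p$; the optimal ray $\fibermap_b(\xi)$ emanating from $b$ records the direction in which $\xi$ decreases fastest, and by \cref{pr:optimalpath} this ray is unique. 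Using that $X$ is $C^1$ along geodesics, the directional derivative $\tfrac{d}{dt}\xi(\eta(t))$ exists along any geodesic $\eta$, and the optimal ray at $b$ is the unique direction achieving the extremal slope $-1$. The constant distance variation hypothesis is what guarantees that knowing this optimal direction at every basepoint pins down the gradient of $\xi$ everywhere: it forces the infinitesimal behavior of $d(\gamma(t),\cdot)$ to be consistent across all target points, so that the family of optimal rays assembles into a well-defined ``gradient field'' for $\xi$. I would then argue that if $\xi\neq\xi'$, then their gradients must differ at some point, hence their optimal rays differ at some basepoint $b$, giving $\fibermap_b(\xi)\neq\fibermap_b(\xi')$.

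**To carry this out** concretely, I would proceed in three steps. \emph{Step one:} show that under the $C^1$ hypothesis, for each horofunction $\xi$ and each basepoint $b$, the optimal ray $\fibermap_b(\xi)$ is characterized as the unique geodesic ray $\gamma$ from $b$ with $\left.\tfrac{d}{dt}\xi(\gamma(t))\right|_{t=0}=-1$, equivalently the steepest-descent direction of $\xi$ at $b$. \emph{Step two:} use constant distance variation to show that the steepest-descent direction at $b$ determines the full derivative $d\xi_b$ as a linear (or sublinear) functional on directions, so that the assignment $b\mapsto \fibermap_b(\xi)$ determines $d\xi$ pointwise. \emph{Step three:} since $\xi$ is $1$-Lipschitz and hence determined up to a constant by its derivative along paths (integrate along geodesics from a fixed reference point), and since horofunctions are normalized by $\xi(b)=0$ at the reference basepoint, conclude that $b\mapsto\fibermap_b(\xi)$ determines $\xi$, giving injectivity of $\Phi$. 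The delicate point throughout is justifying that the optimal-direction data integrates back to the function, which is precisely the role of the two differentiability hypotheses; I expect the verification that constant distance variation upgrades ``steepest direction'' to ``full gradient'' to be the crux of the argument.
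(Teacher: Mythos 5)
Your proposal follows essentially the same route as the paper: your $\Phi$ is the paper's $\bigfibermap$, the reduction to injectivity via \cref{le:walshcompact} is identical, and your step two --- that the optimal ray at each basepoint determines all directional derivatives of the horofunction, with constant distance variation and the $C^1$ hypothesis doing exactly this work --- is precisely the paper's \cref{pr:firstderivativehorofunction}, after which both arguments conclude by integrating along geodesics from the normalization basepoint. The one caveat is that this step, which you rightly call the crux, is where all the analysis lives (the paper proves it by approximating $\xi$ by $h(x_n)$, using constant distance variation to replace the distant targets $x_n$ by nearby points $\eta^n_t(1)$ on the connecting geodesics, and upgrading pointwise to uniform convergence of the derivatives via properness and uniqueness of optimal geodesics); your plan states it correctly and assigns the hypotheses their correct roles, but leaves that verification open.
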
 

Denote $\fibermap_b$ the continuous map from $\hbc{X}$ to $\vbc{X}_b$ given by \cref{th:projectionfunction}. The isomorphism between $\hbc{X}$ and $\Vbc{X}$ is defined by recording the value of each possible $\fibermap_b$ within $\Vbc{X}$. That is, we define $\bigfibermap:\hbc{X}\to\Vbc{X}$ in such a way that $\pi_b\circ\bigfibermap:=\fibermap_b$ for each $b\in X$. The only property required to prove that $\bigfibermap$ is an isomorphism not following directly from previous results is the injectivity. By Proposition \ref{pr:optimalpath} we know that if $f\in\fibermap_b^{-1}(\gamma)$ then $\gamma$ is an optimal geodesic of $f$. That is, $f(\gamma(t))-f(\gamma(s))=-(t-s)$. Hence, if $f,g\in\fibermap_b^{-1}(\gamma)$, then they differ by a constant along the geodesic $\gamma$. If $f$ and $g$ are horofunctions in the preimage of a point by $\bigfibermap$, then they differ by a constant along infinitely many geodesics, which cover $X$. However, the constant might depend on the geodesic, so we need a way to connect these constants. We proceed by strengthening Proposition \ref{pr:optimalpath} to show that any two functions in $\fibermap_b^{-1}(\gamma)$ also have the same directional derivatives at points in $\gamma$, which allows us to connect the geodesics.
Precisely, we prove the following.
\begin{proposition}\label{pr:firstderivativehorofunction}
	Let $X$ be a proper, uniquely geodesic, straight metric space which is $C^1$ along geodesics and has constant distance variation. Furthermore, let $\gamma$ be a geodesic ray starting at $b$, and let $\alpha$ be a geodesic starting at some point on $\gamma$. Then, $\left.\frac{d}{dt} f\circ\alpha (t) \right\vert_{t=0}$ exists and its value is the same for all $f\in \fibermap^{-1}_b(\gamma)$.
\end{proposition}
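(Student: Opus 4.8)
The plan is to reduce the statement to a single interchange of limit and integration, after which the value of the derivative will be manifestly independent of $f$. First I dispose of the two degenerate cases in which $\alpha$ runs along $\gamma$: writing $\alpha(0)=\gamma(r)$, if $\alpha(t)=\gamma(r+t)$ or $\alpha(t)=\gamma(r-t)$, then since $\gamma$ is the optimal geodesic of every $f\in\fibermap_b^{-1}(\gamma)$ we have $f(\alpha(t))=-(r\pm t)$, so the derivative equals $\mp1$ for all such $f$. From now on I assume $\alpha$ is not a subray of $\gamma$, so that $x_n\notin\alpha$ near $0$ for large $n$.

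Fix $f\in\fibermap_b^{-1}(\gamma)$ and choose a sequence $x_n=\gamma_n(t_n)$ with $\gamma_n\to\gamma$, $t_n\to\infty$ and $h(x_n)\to f$; then $\psi_n(t):=d(x_n,\alpha(t))-t_n$ converges to $f(\alpha(t))$, uniformly on compacts. Because $X$ is $C^1$ along geodesics, each $\psi_n$ is differentiable near $0$, with $|\psi_n'|\le1$. The heart of the argument is to identify $\lim_n\psi_n'(\tau)$. Reparametrising $\alpha$ at $\alpha(\tau)$ and letting $\mu_n^\tau$ be the geodesic from $\alpha(\tau)$ to $x_n$, constant distance variation gives $\psi_n'(\tau)=\left.\frac{d}{dt'}d(\alpha(\tau+t'),\mu_n^\tau(1))\right\vert_{t'=0}$, since $x_n$ lies on $\mu_n^\tau$ beyond $\mu_n^\tau(1)$. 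By \cref{pr:welldefined} and \cref{pr:optimalpath} applied with basepoint $\alpha(\tau)$, the sequence $x_n$ accumulates at a single point of $\vbc{X}_{\alpha(\tau)}$ depending only on $f$, namely the forward ray $\mu^\tau$ of the optimal geodesic of $f$ through $\alpha(\tau)$; hence $\mu_n^\tau(1)\to\mu^\tau(1)=:m^\tau$, and by the joint continuity built into the $C^1$-along-geodesics hypothesis we obtain $\psi_n'(\tau)\to c(\tau):=\left.\frac{d}{dt'}d(\alpha(\tau+t'),m^\tau)\right\vert_{t'=0}$.

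With pointwise convergence $\psi_n'\to c$ and the uniform bound $|\psi_n'|\le1$ in hand, dominated convergence yields $f(\alpha(t))-f(\alpha(0))=\lim_n\int_0^t\psi_n'(\tau)\,d\tau=\int_0^t c(\tau)\,d\tau$. It then suffices to show that $c$ is continuous at $0$, so that $\tfrac1t\int_0^t c\to c(0)$ as $t\to0$ and the two-sided derivative exists and equals $c(0)$. For this I note that $m^\tau$ is the point at distance $1$ from $\alpha(\tau)$ along the optimal geodesic $\gamma^\tau$ of $f$ through $\alpha(\tau)$; as $\tau\to0$, properness and Arzelà–Ascoli produce subsequential limits of $\gamma^\tau$, and the defining identity $f(\gamma^\tau(u))-f(\gamma^\tau(0))=-u$ passes to the limit, so every such limit is an optimal geodesic of $f$ through $q=\alpha(0)=\gamma(r)$. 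By the uniqueness in \cref{pr:optimalpath} this limit must be $\gamma$, whence $\gamma^\tau\to\gamma$, $m^\tau\to\gamma(r+1)$, and $c(\tau)\to c(0)$ by $C^1$ continuity. Finally $c(0)=\left.\frac{d}{dt'}d(\alpha(t'),\gamma(r+1))\right\vert_{t'=0}$ depends only on $\alpha$ and $\gamma$, giving both the existence of the derivative and its independence of $f$.

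The step I expect to be most delicate is the identification $\lim_n\psi_n'(\tau)=c(\tau)$, as it is exactly here that both hypotheses are used essentially: constant distance variation to replace the far-off, $n$-dependent point $x_n$ by the fixed finite point $\mu_n^\tau(1)$, and $C^1$ along geodesics to pass the resulting derivative to the limit. The accompanying convergence $\mu_n^\tau(1)\to m^\tau$ must be argued through the basepoint-$\alpha(\tau)$ version of \cref{pr:welldefined}, and the exceptional indices $n$ for which $\alpha$ points straight at $x_n$ (so that $\alpha$ and $\mu_n^\tau$ coincide and constant distance variation does not apply) must be handled directly from the $C^1$-along-geodesics property, where the derivative is simply $-1$.
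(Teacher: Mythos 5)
Your proof is correct, and it shares the paper's skeleton up to the pointwise limit of the derivatives: both arguments use constant distance variation to replace the far-off point $x_n$ by the point at distance $1$ along the connecting geodesic, then \cref{pr:welldefined} plus the $C^1$ hypothesis to get $\psi_n'(\tau)\to c(\tau)=\left.\frac{d}{dt'}d(\alpha(\tau+t'),m^\tau)\right\vert_{t'=0}$, with $m^\tau$ on the optimal geodesic of $f$ through $\alpha(\tau)$. Where you genuinely diverge is the interchange of limits. The paper proves that the convergence $\psi_n'\to c$ is \emph{uniform} on a compact interval, by contradiction: bad times $t_n\to T$ would yield a limit geodesic through $\alpha(T)$, distinct from $\eta_T$, that is nevertheless optimal for $f$, contradicting \cref{pr:optimalpath}; it then cites the classical theorem that uniform convergence of derivatives passes differentiability to the limit function. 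You instead integrate: the bound $|\psi_n'|\le 1$ and pointwise convergence give $f\circ\alpha(t)-f\circ\alpha(0)=\int_0^t c(\tau)\,d\tau$ by dominated convergence, after which you only need continuity of $c$ at the single point $\tau=0$, obtained by showing the optimal geodesics $\gamma^\tau$ through $\alpha(\tau)$ converge to (the extension of) $\gamma$ as $\tau\to 0$ via Arzel\`a--Ascoli, passage of the identity $f(\gamma^\tau(u))-f(\gamma^\tau(0))=-u$ to the limit, and uniqueness from \cref{pr:optimalpath}. So both proofs lean on uniqueness of optimal geodesics as the rigidity input, but against different families: the paper against the connecting geodesics $\eta^n_{t_n}$ as $n\to\infty$, you against the optimal geodesics $\gamma^\tau$ as $\tau\to 0$. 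Your route is a little more economical (no uniformity needed, only domination and one-point continuity), while the paper's yields slightly more information, namely locally uniform convergence and hence continuity of $(f\circ\alpha)'$ near $0$. Your explicit handling of the degenerate cases where $\alpha$ runs along $\gamma$, and of the exceptional indices where $\alpha$ points straight at $x_n$ (so constant distance variation cannot be invoked), addresses details the paper passes over silently.
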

\begin{proof}
	For any $b'\in\gamma$ we have that $\gamma$ is an optimal geodesic of $f$ passing through $b'$. Denoting $\gamma_{b'}$ the geodesic ray starting at $b'$ with the same bi-infinite extension as $\gamma$ we have that $f\in\fibermap_{b'}^{-1}(\gamma_{b'})$, by Proposition \ref{pr:optimalpath}. Hence, we can assume that $\alpha(0)=b$ by changing the basepoint if necessary. Let $x_n$ be a sequence converging to $f$. Furthermore, let $\eta^n_t$ be the geodesic from $\alpha(t)$ to $x_n$ and $g_n(t)$ be the value of $\left.\frac{d}{ds}h(x_n)\circ \alpha(s)\right\vert_{s=t}$. By the definition of the map $h$ we have $g_n(t)=\left.\frac{d}{ds}d(\alpha(s),x_n)\right\vert_{s=t}$. By the constant distant variation we have $g_n(t)=\left.\frac{d}{ds}d(\alpha(s),\eta^n_t(1))\right\vert_{s=t}$, which since $X$ is $C^1$ along geodesics depends continuously on $\eta^n_t(1)$ and $t$. 
	
	By Proposition \ref{pr:welldefined} the geodesics $\eta^n_t$ converge as $n\to\infty$ to some geodesics $\eta_t$, so $\eta^n_t(1)$ converges to $\eta_t(1)$. Since the space is $C^1$ along geodesics, the value of $\left.\frac{d}{ds}d(\alpha(s),\eta^n_t(1))\right\vert_{s=t}$ depends continuously on $\eta^n_t(1)$, and so $g_n$ converges pointwise to $g(t)=\left.\frac{d}{ds}d(\alpha(s),\eta_t(1))\right\vert_{s=t}$. 
	
	Take some $\delta>0$ and assume the convergence is not uniform on $[-\delta,\delta]$. Then there is some $\varepsilon>0$ such that for each $n$ there is at least one $t_n\in [-\delta,\delta]$ such that $|g_n(t_n)-g(t_n)|>\varepsilon$. Since $[-\delta,\delta]$ is compact we can take a converging subsequence such that $t_n$ converges to some $T\in[-\delta,\delta]$. Hence, the point $\eta_{t_n}^n(1)$ does not converge to $\eta_T(1)$, so by properness of $X$ we can take a subsequence such that $\eta_{t_n}^n(1)$ converges to some $p\in X$ different from $\eta_T(1)$. Let $\beta$ be the geodesic starting at $\alpha(T)$ passing through $p$. The geodesics $\eta^n_{t_n}$ converge uniformly to $\beta$, and $\beta\neq\eta_T$. For any fixed $t>0$ we have, following the same reasoning than in the proof of Proposition \ref{pr:welldefined},
	\[
		f(\beta(t))-f(\beta(0))=\lim_{n\to\infty}d(x_n,\beta(t))-d(x_n,\beta(0))=-t.
	\]
	Hence, $\beta$ is an optimal geodesic of $f$ passing through $\alpha(T)$. However, $f\in \fibermap^{-1}_{\alpha(T)}(\eta_T)$, so $\eta_T$ is also an optimal geodesic passing through $\alpha(T)$, contradicting Proposition \ref{pr:optimalpath}.
	
	Hence, the convergence of $(h(x_n)\circ\alpha)'=g_n$ to $g$ is uniform on $[-\delta,\delta]$. Therefore, $f$ is differentiable and $f'(0)=g(0)=\left.\frac{d}{ds}d(\alpha(s),\gamma(1))\right\vert_{s=0}$, which is the same for all $f\in\fibermap^{-1}(\gamma)$.
\end{proof}

\begin{proof}[Proof of Theorem \ref{th:horocompisvisualfromeverypoint}]
	Each $\fibermap_b$ is continuous, so by the definition of the product topology the map $\bigfibermap$ is continuous. Hence, by \cref{le:walshcompact} to see that $\bigfibermap$ is an isomorphism it is enough to show that $\bigfibermap$ is injective.

	Let $f,g\in \hbc{X}$ be such that $\bigfibermap(f)=\bigfibermap(g)$. If there is some $b\in X$ such that $\pi_b\circ \bigfibermap(f)\in X$ then $f=h(\pi_b\circ \bigfibermap(f))=g$. Assume then $\pi_b\circ \bigfibermap(f)\in\vbd{X}_b$ for all $b\in X$. By Proposition \ref{pr:firstderivativehorofunction} they have the same directional derivatives at every point. Let $\alpha$ be a geodesic from a fixed basepoint $b$ to any other point. We have $(f\circ \alpha)'=(g\circ\alpha)'$, so $f-g$ is constant along $\alpha$, and hence everywhere, since any point can be connected to $b$ by a geodesic. Hence, $f$ and $g$ are the same horofunctions.
\end{proof}

By the definition of the convergence in the product topology, this characterization gives us the following equivalence for the convergence to points in the horoboundary.
\begin{corollary}\label{co:convergingtohoriffconvergingeveryvis}
	Let $X$ be a proper, uniquely geodesic, straight metric space, $C^1$ along geodesics and with constant distance variation. 
	A sequence $(x_n)\subset X$ converges in the horofunction compactification if and only if the sequence converges in all the visual compactifications.
\end{corollary}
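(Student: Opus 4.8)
The plan is to deduce this purely from \cref{th:horocompisvisualfromeverypoint} together with the description of convergence in the product compactification $\Vbc{X}$. The corollary is a formal consequence of two facts already in hand: that $(h,\hbc{X})$ and $(i,\Vbc{X})$ are isomorphic compactifications, and that $\Vbc{X}$ carries the subspace topology inherited from the product $E=\prod_{b\in X}\vbc{X}_b$. My strategy is simply to chain these two equivalences together.

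First I would record that isomorphic compactifications induce the same notion of convergence for sequences coming from $X$. Indeed, since $(h,\hbc{X})$ and $(i,\Vbc{X})$ are each finer than the other, the continuous extensions furnished by the definition of ``finer'' compose, by the uniqueness of such extensions, to the identity on each side; thus $\bigfibermap\colon\hbc{X}\to\Vbc{X}$ is a homeomorphism with $\bigfibermap\circ h=i$. Consequently $h(x_n)$ converges in $\hbc{X}$ if and only if $i(x_n)=\bigfibermap(h(x_n))$ converges in $\Vbc{X}$, so it suffices to characterize convergence in $\Vbc{X}$.

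Next I would unwind convergence in $\Vbc{X}$ using the product topology. For the forward direction, if $i(x_n)\to\xi\in\Vbc{X}\subset E$, then coordinatewise convergence in the product gives $x_n=\pi_b(i(x_n))\to\pi_b(\xi)$ in each $\vbc{X}_b$, so the sequence converges in every visual compactification. For the converse, suppose $x_n$ converges in every $\vbc{X}_b$, say to a limit $\ell_b$; assembling these into the point $\xi\in E$ with $\pi_b(\xi)=\ell_b$ yields coordinatewise convergence $i(x_n)\to\xi$, hence convergence in $E$. The one point needing care is that this limit actually lies in $\Vbc{X}$: but $\xi$ is the limit of a sequence of points $i(x_n)\in i(X)$, so $\xi\in\overline{i(X)}=\Vbc{X}$, and the sequence converges in $\Vbc{X}$.

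Combining the two equivalences gives the claim. I do not expect a genuine obstacle here, since both ingredients are already established; the only step requiring attention is the converse direction of the product-topology argument, namely verifying that the coordinatewise limit assembled from the individual visual limits lands inside the closure $\Vbc{X}$ rather than merely in the ambient product $E$.
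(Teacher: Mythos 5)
Your proof is correct and follows essentially the same route as the paper: it combines the isomorphism $\bigfibermap$ of \cref{th:horocompisvisualfromeverypoint} with the fact that, by the product topology, a sequence converges in $\Vbc{X}$ if and only if it converges in every $\vbc{X}_b$. The paper treats both ingredients as immediate (stating the product-topology fact as an observation just before the theorem), whereas you spell out the details—the homeomorphism via uniqueness of extensions, and the verification that the assembled coordinatewise limit lies in $\overline{i(X)}=\Vbc{X}$—but the argument is the intended one.
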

Restricting the result to the Teichmüller metric we get \cref{co:convergingtohoriffconvergingeveryvisintro} announced in the introduction.

\section{Background on Teichmüller spaces}\label{se:backgroundteichmuller}
	
	A \emph{surface with marked points} $S$ is a pair $(\Sigma,P)$, where $\Sigma$ is a compact, orientable surface with possibly empty boundary, and $P\subset \Sigma$ is a finite, possibly empty, set of points, where we allow points to be on the boundary. The \emph{Teichmüller space} $\T(S)$ is the set of equivalence classes of pairs $(X,f)$ where $X$ is a Riemann surface and $f:\Sigma \to X$ is an orientation-preserving homeomorphism. Two pairs $(X,f)$ and $(Y,g)$ are equivalent if there is a conformal diffeomorphism $h:X\to Y$ such that $g^{-1}\circ h \circ f$ is isotopic to identity rel $P$.
	
	The \emph{Teichmüller distance} between two points $[(X,f)],[(Y,g)]\in \T(S)$ is defined as the value $\frac{1}{2} \log \inf K$, where the infimum is taken over all $K\ge 1$ such that there exists a $K$-quasiconformal homeomorphism $h:X\to Y$ with
	$g^{-1}\circ h \circ f$ isotopic to identity rel $P$. Together with the smooth structure provided by the Fenchel--Nielsen coordinates  $\T(S)$ satisfies all the metric properties discussed in the previous section. That is, $\T(S)$ with the Teichmüller distance is a proper, uniquely geodesic and straight metric space which is $C^1$ along geodesics and has constant distance variation. See \cite[Part 2]{primer} for some background on the Teichmüller metric and the Fenchel--Nielsen coordinates.

	A \emph{quadratic differential} on a Riemann surface $X$ is a map $q:TX\to \C$ such that $q(\lambda v)=\lambda^{2}q(v)$ for every $\lambda \in \C$ and $v\in TX$. Considering only holomorphic quadratic differentials with finite area $\int_X |q|$ we get a characterization of the cotangent space to the Teichmüller space based at $[(X,f)]$.
	Given a point $p\in \T(S)$ and a quadratic differential $q\in T_p^*\T(S)$ there is a unique geodesic $\gamma$ such that $\gamma(0)=p$ and $\gamma'(0)=|q|/q$. We shall denote such a geodesic as $\tray{q}{\cdot}$ and denote the associated Busemann points as $B(q)$ or $B_q$.
	 
 \subsection{Measured foliations}\label{se:measuredfoliations}
	A \emph{multicurve} on $S$ is an embedded $1$-dimensional submanifold of $\Sigma \backslash P$ with boundary in $\partial \Sigma \backslash P$ such that 
	\begin{itemize}
		\item no circle component bounds a disk with at most 1 marked point;
		\item no arc component bounds a disk with no interior marked points and at most 1 marked point on $\partial \Sigma$ and
		\item no two components are isotopic to each other in $\Sigma$ rel $P$.
	\end{itemize}
	Each of the components is called \emph{curve.} A \emph{weighted multicurve} is a multicurve together with a positive weight associated to each curve. We shall consider (weighted) multicurves up to isotopy rel $P$. If a simple curve is a circle we shall denote it \emph{closed curve}, and \emph{proper arc} otherwise.
 	
	A \emph{measured foliation} on $S$ is a foliation with isolated prong singularities, where we allow 1-prong singularities at marked points, equipped with an invariant transverse measure $\mu_F$ \cite[Exposé 5]{FLP}. Denoting $\alpha_i$ and $w_i$ the components and the weights of $\alpha$ respectively, the intersection number $i(\alpha,F)$ is defined as $\inf\sum_i w_i \int_{\alpha_i} |\mu_F|d\alpha_i$, where the infimum is taken over all representatives of $\alpha$. Two measured foliations $F$ and $G$ are \emph{equivalent} if $i(\alpha,F)=i(\alpha,G)$ for every multicurve $\alpha$. We shall always consider measured foliations up to this equivalence relation. The set of measured foliations is usually denoted as $\MF$, and its topology is defined in such a way that a sequence $(F_n)\subset \MF$ converges to $F$ if and only if $i(\alpha,F_n)$ converges to $i(\alpha,F)$ for every multicurve $\alpha$.
	
	Given a quadratic differential one can define the \emph{vertical foliation} as the union of \emph{vertical trajectories}, that is, maximal smooth paths $\gamma$ such that $q(\gamma'(t))<0$ for every $t$ in the interior of the domain. 
	This foliation can be equipped with the transverse measure given by $|\operatorname{Re} \sqrt{q}|$.
	This measured foliation is called the \emph{vertical measured foliation} of $q$, and shall be denoted as $V(q)$.
	This map is actually a homeomorphism.
	As such, given a measured foliation $F$ and a complex structure $X$ there is a unique quadratic differential $q_{F,X}$ on $X$ such that $V(q_{F,X})=F$. We call this quadratic differential the \emph{Hubbard--Masur} differential associated to $F$ on $X$ \cite{Hubbard}.
	Furthermore, for each $\lambda>0$ we have $q_{\lambda F,X}=\lambda q_{F,X}$. Similarly, the \emph{horizontal foliation} $H(q)$ can be defined as the union of maximal smooth paths $\gamma$ such that $q(\gamma'(t))>0$, with the transverse measure $|\operatorname{Im}\sqrt{q}|$.
	
	It is possible to associate a measured foliation to each weighted multicurve by thickening each proper arc and closed curve to a rectangle or cylinder respectively with width equal to the weight of the curve, and then collapsing the rest of the surface.
	The intersection numbers are maintained by this construction. 
	This association is injective, and hence we shall consider the set of weighted multicurves as a subset of the measured foliations, and use both expressions of weighted multicurve indistinctly.
	
	By removing the critical graph, a measured foliation is decomposed into a finite number of connected components, each of which is either a thickened curve, or a minimal component which does not intersect the boundary, in which every leaf is dense \cite[Chapter 24.3]{Strebel}. Each transverse measure within the minimal components can be further decomposed into a sum of finitely many projectively distinct ergodic measures. A foliation $F'$ is an indecomposable component of $F$ if it is either a thickened curve or a minimal component with a transverse measure that cannot be decomposed as a sum of more than one projectively distinct ergodic measure. Every foliation can be decomposed uniquely into a union of indecomposable foliations. For a surface of genus $g$ with no boundaries nor marked points Papadopoulos shows \cite{Papadopoulos} that the maximum number of indecomposable components for any foliation is $3g-3$. It is possible to get an upper bound  for foliations on surfaces with boundary and marked points by swapping the marked points for boundaries and using the doubling trick we will explain in \cref{se:doublingtrick}.

	It was shown by Thurston that for surfaces without boundary it is possible to achieve a dense subset by restricting to simple closed curves, see Fathi--Laudenbach--Poénaru \cite{FLP} for a reference. When there are boundaries the picture gets slightly more complicated, but it has been shown by Kahn, Pilgrim and Thurston in \cite[Proposition 2.12]{kahn} that multicurves can be seen as a dense subset. More precisely, they show the following.
	 
	\begin{proposition}[Kahn--Pilgrim--Thurston]\label{pr:curvesdense}
		Let $F$ be a measured foliation in $S$ not containing proper arcs. Then there exists a sequence of multicurves composed solely of closed curves approaching $F$.
	\end{proposition}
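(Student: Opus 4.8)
The plan is to produce the approximating closed curves from a train track carrying $F$ that lies in the interior of the surface, and then to verify convergence by passing to the closed double $DS$ of $S$, as in \cref{se:doublingtrick}. The reason doubling is the right tool is the following observation: under the doubling of $S$ along $\partial\Sigma$, every proper arc of $S$ becomes a \emph{closed} curve of $DS$, while every closed curve of $S$ stays closed. Consequently, the standard and well-understood density and continuity statements for measured foliations on the \emph{closed} surface $DS$ (Thurston's density of weighted simple closed curves, see \cite{FLP}, and the continuity of the train-track parametrization) will simultaneously control the intersection numbers of our approximants against both the closed curves and the proper arcs of $S$, which is exactly what convergence in $\MF(S)$ requires.

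First I would construct the candidate sequence. Since $F$ contains no proper arcs, each of its indecomposable components is either a closed curve or a minimal component disjoint from $\partial\Sigma$; hence $F$ is supported in the interior and is carried by a train track $\tau$ contained in $\operatorname{int}(\Sigma)\setminus P$. Measured foliations carried by $\tau$ are parametrized by a polyhedral cone of non-negative weight vectors satisfying the switch conditions, and $F$ corresponds to a vector $w$ in this cone. Rational weight vectors are dense in the cone, so I choose rational $w_n\to w$; the corresponding weighted multicurves $c_n$ are carried by $\tau$ and, because $\tau$ is disjoint from $\partial\Sigma$, each $c_n$ consists only of closed curves.

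To prove $c_n\to F$ in $\MF(S)$, I would double everything: $F$ reflects to a measured foliation $DF$ on the closed surface $DS$ (the seam lying in a complementary region of $DF$, since $F$ avoids $\partial\Sigma$), and $c_n$ doubles to the closed multicurve $Dc_n=c_n\sqcup\iota(c_n)$, where $\iota$ is the reflection involution. Both $DF$ and $Dc_n$ are carried by the doubled train track $D\tau$ on $DS$ with weights $Dw$ and $Dw_n$, and $Dw_n\to Dw$; by the closed-surface theory we conclude $Dc_n\to DF$ in $\MF(DS)$. Now for an arbitrary multicurve $\beta$ on $S$, its double $D\beta$ is a closed multicurve on $DS$, and the symmetric splitting gives the identities $i(\beta,c_n)=\tfrac12\,i(D\beta,Dc_n)$ and $i(\beta,F)=\tfrac12\,i(D\beta,DF)$; passing to the limit yields $i(\beta,c_n)\to i(\beta,F)$ for every $\beta$, which is precisely convergence in $\MF(S)$. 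I expect the main obstacle to be the construction in the second step, namely producing a single train track in $\operatorname{int}(\Sigma)\setminus P$ that carries all of $F$ at once, particularly its minimal components — this is exactly where the no-proper-arc hypothesis is used, since it is what keeps the support, and hence $\tau$, off the boundary. A secondary technical point is the careful bookkeeping of the doubling and of the factor $\tfrac12$ near marked points lying on $\partial\Sigma$, which require the precise conventions for doubling such points.
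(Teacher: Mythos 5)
A point of order first: the paper does not actually prove \cref{pr:curvesdense}. The statement is quoted verbatim from Kahn--Pilgrim--Thurston \cite[Proposition 2.12]{kahn} and used as a black box; the only argument supplied in its vicinity is the remark after the statement explaining how to extend it to foliations that \emph{do} contain proper arcs. So there is no internal proof to compare yours against, and your proposal must be judged as a free-standing reconstruction of the cited result.

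As such, it is sound, and your central observation is the right one: the only thing distinguishing the boundary case from the classical closed case is that convergence in $\MF(S)$ must be tested against proper arcs as well as closed curves, and doubling turns precisely those extra test objects into closed curves on the double, where the standard continuity of the train-track parametrization applies. The skeleton --- structure theory places the support of $F$ in $\operatorname{int}(\Sigma)\setminus P$, so $F$ is carried by an interior train track $\tau$; rational weights are dense in the rational polyhedral cone cut out by the switch conditions, and rational weights on an interior track yield weighted multicurves consisting only of closed curves; convergence is then verified on the double --- is correct. Three debts remain, all fillable by standard arguments, but worth recording. First, the existence of a single interior track carrying all of $F$ (take a disjoint union of tracks for the indecomposable components, then repair any forbidden complementary regions). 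Second, the continuity of the weights-to-foliation map for $D\tau$ on the closed double: note that $D\tau$ can violate the usual non-degeneracy conditions, e.g.\ it acquires an annular complementary region around every unmarked seam component whenever $F$ contains boundary annuli, so you should either enlarge the track or verify continuity of the relevant intersection numbers directly. Third, the identity $i(G^d,F^d)=2\,i(G,F)$, which you use in the form $i(\beta,c_n)=\tfrac12\,i(D\beta,Dc_n)$: you do not prove it, but the paper itself asserts and uses this same identity without proof in \cref{se:GMboundaryWalshpaper}, so relying on it is consistent with the paper's standards; a self-contained treatment would realize all objects $\iota$-equivariantly, for instance geodesically with respect to an $\iota$-invariant hyperbolic metric with geodesic seam. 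None of these is a step that would fail.
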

	
	The result can be extended to any foliation by cutting along the proper arcs and approaching the foliation in the resulting surfaces by multicurves. Then, joining the multicurves from the proposition with the proper arcs and the adequate weights we get a sequence of multicurves converging to our original foliation.
 
 \subsection{Extremal length}

	Given a marked conformal structure on $S$, that is, a point $X\in \T$, the \emph{extremal length} of $F$ on $X$ is defined as 
	\[\ext_X(F):=\int_X |q_{F,X}|.\]
	The map $\ext:\MF(S)\times \T(S)\to \R$ is continuous and homogeneous of degree 2 in the first variable.

	Given two points $x,y\in\T(S)$ we can define the function
	\[
		K_{x,y}:=\sup_{F\in P_b} \frac{\ext_x(F)}{\ext_y(F)},
	\]
	where $P_b$ is the set of measured foliations $F$ satisfying $\ext_b(F)=1$. As revealed by Kerckhoff's formula \cite{Kerckhoff}, the value $(1/2)\log K_{x,y}$ coincides with the usual definition of the Teichmüller distance $d(x,y)$.

\subsection{The doubling trick}\label{se:doublingtrick}
	
	Let $X$ be a Riemann surface with nonempty boundary. Denote by $\overline{X}$ the mirror surface, obtained by composing each atlas of $X$ with the complex conjugation. Gluing $X$ to $\overline{X}$ along the corresponding boundary components we obtain the \emph{conformal double} $X^d=X\cup \overline{X}/\sim$ of $X$. Note that $X^d$ has empty boundary. Given a foliation $F$ or a quadratic differential $q$ on $X$, we can repeat the same process, obtaining the corresponding conformal doubles $F^d$ and $q^d$ on $X^d$. For a more detailed treatment of this argument see \cite[Section II.1.5]{Abikoff}.
	
	The main interest of the conformal doubles is that these are surfaces without boundary, so most of the results relating to Teichmüller theory of surfaces without boundary can be translated to surfaces with boundary. We have the following.
	
	\begin{proposition}
		Let $X$ be a Riemann surface with boundary, and $F$ be a foliation on $X$. Then, 
		\[
			\ext_{X^d}(F^d)=2 \ext_X(F).
		\]
	\end{proposition}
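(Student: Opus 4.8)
The plan is to reduce everything to the uniqueness of the Hubbard--Masur differential, by showing that doubling commutes with the Hubbard--Masur construction; that is, that $(q_{F,X})^d=q_{F^d,X^d}$. Once this identity is available, the proposition becomes a one-line area computation.

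First I would set $q:=q_{F,X}$, the unique holomorphic quadratic differential on $X$ with $V(q)=F$, and check that its double $q^d$ is again a holomorphic quadratic differential on $X^d$. In a conformal chart adapted to a boundary component, in which the boundary lies on the real axis, the boundary of $X$ is a leaf of the foliation $F=V(q)$, so $q$ is real along $\partial X$; the Schwarz reflection principle for quadratic differentials then guarantees that the reflected differential on $\overline{X}$ matches $q$ along the seam and that $q^d$ is holomorphic across it. I expect this to be the main obstacle, since it requires being careful about the boundary behaviour of the Hubbard--Masur differential (the boundary components must be leaves so that $q$ is real there) and about the matching of the two differentials along the gluing locus.

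Second, I would identify the vertical foliation of $q^d$. Because $q^d$ restricts to $q$ on $X$ and to the mirror of $q$ on $\overline{X}$, its vertical trajectories are precisely the doubles of the vertical trajectories of $q$, carrying the doubled transverse measure; hence $V(q^d)=V(q)^d=F^d$. Since $X^d$ has empty boundary, the differential $q_{F^d,X^d}$ is the unique holomorphic quadratic differential on $X^d$ with vertical foliation $F^d$, so uniqueness forces $q^d=q_{F^d,X^d}$.

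Finally, the area computation closes the argument. Complex conjugation preserves $|q|$, so $|q^d|$ restricts to $|q|$ on each of the two copies of $X$ making up $X^d$, giving
\[
	\ext_{X^d}(F^d)=\int_{X^d}|q_{F^d,X^d}|=\int_{X^d}|q^d|=\int_X|q|+\int_{\overline{X}}|q^d|=2\int_X|q|=2\ext_X(F),
\]
as required.
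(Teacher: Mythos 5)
Your proof is correct and takes essentially the same route as the paper, whose entire proof consists of asserting the identity $q_{F^d,X^d}=(q_{F,X})^d$ and then performing the same area computation; you simply supply the justification (Schwarz reflection plus uniqueness of the Hubbard--Masur differential on the double) that the paper leaves implicit. One small imprecision: the boundary of $X$ need not be a leaf of $V(q)$, since where $F$ has proper arcs the boundary is horizontal rather than vertical, but $q$ is real along $\partial X$ in every case (this is the defining boundary condition for holomorphic quadratic differentials on a bordered surface), so the reflection argument goes through unchanged.
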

	\begin{proof}
		We have $q_{F^d,X^d}=q_{F,X}^d$, so the result follows, as $\int_{X^d}|q_{F,X}^d|=2\int_X|q_{F^d,X^d}|$.
	\end{proof}

	\begin{figure} \centering
		
		\begin{tikzpicture}
			
			\draw[smooth] (-3,0) to  [out=90, in = 180] (0,2.5) to [out=0,in=90] (3,0);
			
			\draw[smooth] (-2,0) to  [out=90,in=90] (-0.5,0);
			
			\draw[smooth] (0.5,0) to  [out=90,in=90] (2,0);
			
			\draw(-3,0) arc(180:0:0.5 and 0.25);
			\draw(-3,0) arc(180:360:0.5 and 0.25);
			
			\draw(-0.5,0) arc(180:0:0.5 and 0.25);
			\draw(-0.5,0) arc(180:360:0.5 and 0.25);
			
			\draw(2,0) arc(180:0:0.5 and 0.25);
			\draw(2,0) arc(180:360:0.5 and 0.25);
			
			\begin{scope}[xshift=-40,yshift=50]
				\draw[smooth] (0.4,0.1) .. controls (0.8,-0.25) and (1.2,-0.25) .. (1.6,0.1);
				\draw[smooth] (0.5,0) .. controls (0.8,0.2) and (1.2,0.2) .. (1.5,0);
				\draw  (1,0) ellipse (0.8 and 0.4);
			\end{scope}
			
			\draw[smooth] (0,0.25) to  [out=90, in = 90] (-2.5,0.25);
			
			\begin{scope}[xshift=40,yshift=35,rotate=270]
				\draw(0.5,0.5) .. controls (-0.2,0.1) .. (-0.5,-0.5); 
				\draw(-0.5,0.5) .. controls (-0.2,0.1) .. (0.5,-0.5); 
				\draw(0.5,0.4) .. controls (-0.1,0.1) .. (0.5,-0.4); 
				\draw(-0.4,0.5) .. controls (-0.2,0.2) .. (0.4,0.5); 
				\draw(-0.5,0.4) .. controls (-0.25,0.1) .. (-0.5,-0.4); 
				\draw(-0.4,-0.5) .. controls (-0.2,0.1) .. (0.4,-0.5); 
			\end{scope}
			
			\begin{scope}[xscale=1,yscale=-1,yshift=20]
				\draw[smooth] (-3,0) to  [out=90, in = 180] (0,2.5) to [out=0,in=90] (3,0);
				
				\draw[smooth] (-2,0) to  [out=90,in=90] (-0.5,0);
				
				\draw[smooth] (0.5,0) to  [out=90,in=90] (2,0);
				
				\draw[dashed](-3,0) arc(180:0:0.5 and 0.25);
				\draw(-3,0) arc(180:360:0.5 and 0.25);
				
				\draw[dashed](-0.5,0) arc(180:0:0.5 and 0.25);
				\draw(-0.5,0) arc(180:360:0.5 and 0.25);
				
				\draw[dashed](2,0) arc(180:0:0.5 and 0.25);
				\draw(2,0) arc(180:360:0.5 and 0.25);
				
				\begin{scope}[xshift=-40,yshift=50,yscale=-1]
					\draw[smooth] (0.4,0.1) .. controls (0.8,-0.25) and (1.2,-0.25) .. (1.6,0.1);
					\draw[smooth] (0.5,0) .. controls (0.8,0.2) and (1.2,0.2) .. (1.5,0);
					\draw  (1,0) ellipse (0.8 and 0.4);
				\end{scope}
				
				\draw[smooth] (0,-0.25) to (0,0.25) to  [out=90, in = 90] (-2.5,0.25) to (-2.5,-0.25);
				
				\begin{scope}[xshift=40,yshift=35,rotate=270]
					\draw(0.5,0.5) .. controls (-0.2,0.1) .. (-0.5,-0.5); 
					\draw(-0.5,0.5) .. controls (-0.2,0.1) .. (0.5,-0.5); 
					\draw(0.5,0.4) .. controls (-0.1,0.1) .. (0.5,-0.4); 
					\draw(-0.4,0.5) .. controls (-0.2,0.2) .. (0.4,0.5); 
					\draw(-0.5,0.4) .. controls (-0.25,0.1) .. (-0.5,-0.4); 
					\draw(-0.4,-0.5) .. controls (-0.2,0.1) .. (0.4,-0.5); 
				\end{scope}
				
			\end{scope}
			
		\end{tikzpicture}
	\caption{Visual representation of the doubling trick.}
	\label{fi:doublingtrick}
	\end{figure}
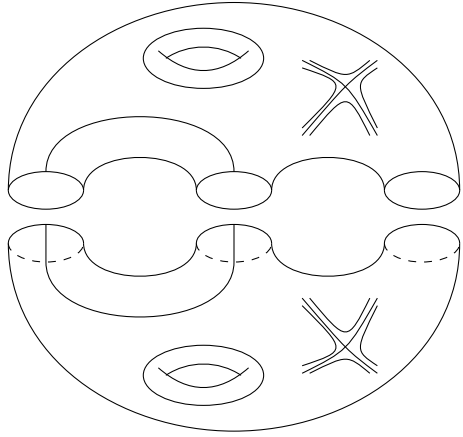
	 
\subsection{The Gardiner--Masur compactification}\label{se:GMboundaryWalshpaper}
	
	For a surface $S$ with marked points and empty boundary we can embed $\T(S)$ into the space of continuous functions from the set $\Curv$ of simple closed curves on $S$ to $\R$ via the map $\phi:\T(S)\to P(\R^\Curv)$ defined by
	\[
		\phi(X)=\left[\ext_X(\alpha)^{1/2}\right]_{\alpha\in\Curv},
	\]
	where the square brackets indicate a projective vector.
	Gardiner and Masur show \cite{Gardiner} that this map is indeed an embedding, and that $\phi(\T(S))$ is precompact. The Gardiner--Masur compactification of a surface without boundary is then defined as the pair $(\phi,\overline{\phi(\T(S))})$. 
	
	Alternatively, after choosing a basepoint $b\in \T(S)$, it is also possible to consider the map $\E:\T(S)\to C(\MF)$ defined by
	\[
		\E(X)(\cdot):=\left(\frac{\ext_X(\cdot)}{K_{b,X}}\right)^{1/2},
	\]
	This map is quite similar to the original map $\phi$, the differences being that $\E$ considers all measured foliations instead of just the closed curves, and normalizes instead of projectivizing. Walsh proves \cite{Walsh} that, for surfaces without boundary, the map $\E$ defines a compactification in the same way that $\phi$ does, and in fact this compactification is isomorphic to the one defined by $\phi$.
	
	The compactification defined by $\E$ fits better our goal, so we shall define the Gardiner--Masur compactification of Teichmüller spaces of surfaces with boundary as the one obtained by using $\E$. With this in mind, we first need the following result.

	\begin{proposition}
		Let $S$ be a compact surface with possibly boundary and marked points. Then the map $\E:\T(S)\to C(\MF)$ is injective.
	\end{proposition}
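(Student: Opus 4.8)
The plan is to reduce the injectivity of $\E$ to the proportionality of two extremal length functions, and then to extract a contradiction from the symmetry of the Teichmüller metric via Kerckhoff's formula, avoiding any appeal to the Gardiner--Masur embedding or to the doubling trick.

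First I would unwind the hypothesis $\E(X)=\E(Y)$. By definition this says $\ext_X(F)/K_{b,X}=\ext_Y(F)/K_{b,Y}$ for every $F\in\MF$, and since $K_{b,X}=e^{2d(b,X)}$ and $K_{b,Y}=e^{2d(b,Y)}$ are fixed positive constants, it follows that $\ext_X=c\,\ext_Y$ on all of $\MF$, where $c=K_{b,X}/K_{b,Y}>0$. Thus the whole problem is to show that two points of $\T(S)$ whose extremal length functions are proportional must coincide.

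Next I would compute the two quantities $K_{X,Y}$ and $K_{Y,X}$ directly. Because $\ext_X(F)=c\,\ext_Y(F)$ for every $F$, the ratio $\ext_X(F)/\ext_Y(F)$ is identically equal to $c$; in particular it equals $c$ on the normalizing set $P_b$, so $K_{X,Y}=\sup_{F\in P_b}\ext_X(F)/\ext_Y(F)=c$, and symmetrically $K_{Y,X}=1/c$. Applying Kerckhoff's formula to both orderings gives $d(X,Y)=\tfrac12\log K_{X,Y}=\tfrac12\log c$ and $d(Y,X)=\tfrac12\log K_{Y,X}=-\tfrac12\log c$. Since the Teichmüller metric is symmetric, $d(X,Y)=d(Y,X)$, which forces $\log c=-\log c$, hence $c=1$ and $d(X,Y)=0$. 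Therefore $X=Y$.

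The argument is uniform in the presence or absence of boundary, since Kerckhoff's formula has been recorded in exactly this generality, so no separate doubling discussion is needed. The only points requiring care are bookkeeping: that the constants $K_{b,X},K_{b,Y}$ are genuinely finite and positive (guaranteed by their identification with $e^{2d(b,\cdot)}$) so that $c$ is well defined, and that the degree-$2$ homogeneity of extremal length makes the ratio $\ext_X/\ext_Y$ independent of how each foliation is normalized, so that evaluating on $P_b$ really does compute $K_{X,Y}$. I expect no substantial obstacle; the main insight is simply to play the two asymmetric comparisons $K_{X,Y}$ and $K_{Y,X}$ against the symmetry of $d$.
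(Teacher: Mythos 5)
Your proof is correct and is essentially identical to the paper's: both reduce $\E(X)=\E(Y)$ to the constant ratio $\ext_X/\ext_Y=K_{b,X}/K_{b,Y}$, compute $K_{X,Y}$ and $K_{Y,X}$ as that constant and its reciprocal, and invoke the symmetry of the Teichmüller distance together with Kerckhoff's formula to force the constant to be $1$ and hence $d(X,Y)=0$. The only cosmetic difference is that you name the constant $c$ and phrase the contradiction as $\log c=-\log c$, while the paper writes $K_{y,x}=K_{x,y}^{-1}=K_{x,y}$ directly.
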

	\begin{proof}
		Assume we have $x,y\in \T(S)$ with $\E(x)(F)=\E(y)(F)$ for all $F\in \MF$. Then,
		\[
			K_{x,y}=\sup_{F\in P_b}\frac{\ext_x(F)}{\ext_y(F)}=\frac{K_{b,x}}{K_{b,y}}
		\]
		and
		\[
			K_{y,x}=\sup_{F\in P_b}\frac{\ext_y(F)}{\ext_x(F)}=\frac{K_{b,y}}{K_{b,x}}=K_{x,y}^{-1}.
		\]
		However, $K_{y,x}=K_{x,y}$, since the Teichmüller distance is symmetric. Hence, $K_{x,y}=1$ and, by Kerckhoff's formula, $d(x,y)=1/2 \log K_{x,y}=0$.
	\end{proof}
	
	Miyachi shows \cite{Miyachi3} that the set $E(S):=\{\E(X)\mid X\in\T(S)\}$ is precompact when $S$ is a surface without boundary. Given a surface with boundary $S$, denote $\MF^d(S)$ the set of measured foliations on $S^d$ obtained by doubling the foliations $\MF(S)$. 
	The set $E(S^d)|_{\MF^d(S)}=\{\E(X)|_{\MF^d(S)}\mid X\in\T(S^d)\}$, obtained by restricting the functions in $E(S^d)$ to $\MF^d$, is precompact. Furthermore, we can embed $E(S)$ into $E(S^d)|_{\MF^d(S)}$ by sending $f\in E(S)$ to $f^d\in E(S^d)|_{\MF^d(S)}$ defined by $f^d(F^d)=f(F)$. Hence, $E(S)$ is precompact.
	
	We define the \emph{Gardiner--Masur compactification} for a surface with boundary as the closure $\overline{E}$ of $E(S)$, together with the map $\E$. We shall be using the same characterization for surfaces without boundary.
	
	One of the relevant features of the Gardiner--Masur compactification is that it coincides with the horofunction compactification. Indeed, Liu--Su \cite{LiuSu} and Walsh \cite{Walsh} prove that for surfaces without boundary these two compactifications are isomorphic. In the following, we shall extend the relevant results to surfaces with boundary. We begin with the driving theorem from Walsh's paper.
	
	\begin{theorem}[Extension of {\cite[Theorem 1]{Walsh}} to surfaces with boundary]
		Let $\tray{q}{\cdot}:\R_+\to\T(S)$ be the Teichmüller ray with initial unit-area quadratic differential $q$, and let $F$ be a measured foliation. Then,
		\[
		\lim_{t\to\infty} e^{-2t}\ext_{\tray{q}{t}}(F)=\sum_j\frac{i(G_j,F)^2}{i(G_j,H(q))},	
		\]
		where the $\{G_j\}$ are the indecomposable components of the vertical foliation $V(q)$, and $H(q)$ is the horizontal foliation.
	\end{theorem}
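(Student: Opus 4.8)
The plan is to reduce the statement to the case of surfaces without boundary, where it is precisely Walsh's original theorem, via the doubling trick of \cref{se:doublingtrick}. Writing $\sigma$ for the anti-conformal mirror involution of the conformal double, the two facts that drive the reduction are the identity $\ext_{X^d}(F^d)=2\ext_X(F)$ and the observation that doubling is an isometric embedding $\T(S)\hookrightarrow\T(S^d)$: a $K$-quasiconformal map $X\to Y$ doubles to a $\sigma$-symmetric $K$-quasiconformal map $X^d\to Y^d$, and conversely the extremal map between $X^d$ and $Y^d$ must be $\sigma$-symmetric by uniqueness of Teichmüller maps on the closed surface $S^d$, so it restricts to a map realizing $d_S(X,Y)$. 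An isometric embedding into a uniquely geodesic space carries geodesic rays to geodesic rays, so the doubled ray $\tray{q}{t}^d$ is the Teichmüller ray in $\T(S^d)$ issuing from $X^d$ in the direction $q^d$; since the ray depends only on the projective class of its initial differential, this ray equals $\tray{\hat q}{t}$, where $\hat q=q^d/2$ is the unit-area normalization of $q^d$.

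First I would apply Walsh's theorem on $S^d$ to the unit-area differential $\hat q$ and the foliation $F^d$, obtaining
\[
	\lim_{t\to\infty}e^{-2t}\ext_{\tray{\hat q}{t}}(F^d)=\sum_k\frac{i(\hat G_k,F^d)^2}{i(\hat G_k,H(\hat q))},
\]
where the $\hat G_k$ are the indecomposable components of $V(\hat q)$. Using $\tray{q}{t}^d=\tray{\hat q}{t}$ together with the extremal length doubling identity, the left-hand side becomes $2\lim_{t\to\infty}e^{-2t}\ext_{\tray{q}{t}}(F)$. On the right-hand side, $V(\hat q)$ and $H(\hat q)$ are obtained from $V(q^d)$ and $H(q^d)$ by scaling the transverse measures by one and the same positive constant; since each summand is invariant under simultaneously rescaling its component $\hat G_k$ and $H(\hat q)$ by a common positive factor, the sum equals $\sum_k i(\tilde G_k,F^d)^2/i(\tilde G_k,H(q)^d)$, where the $\tilde G_k$ run over the indecomposable components of $V(q^d)=V(q)^d$ (and $H(q^d)=H(q)^d$ by the same reasoning applied to $-q$). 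Thus it remains to prove
\[
	\sum_k\frac{i(\tilde G_k,F^d)^2}{i(\tilde G_k,H(q)^d)}=2\sum_j\frac{i(G_j,F)^2}{i(G_j,H(q))},
\]
where the $G_j$ are the indecomposable components of $V(q)$; dividing by $2$ then yields the theorem.

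The heart of the argument, and the step I expect to be the main obstacle, is this last identity, which requires understanding how the indecomposable decomposition behaves under doubling together with careful bookkeeping of factors of two. I would treat each $G_j$ separately. If $G_j$ is an interior thickened closed curve or a minimal component (which by the structure theory does not meet $\partial S$), then $G_j^d$ is the disjoint union of $G_j$ and its mirror $\sigma(G_j)$, two indecomposable components of $V(q)^d$ supported on disjoint subsurfaces; each lies in a single copy of $X$, so $i(G_j,F^d)=i(\sigma(G_j),F^d)=i(G_j,F)$ and likewise with $H(q)$ in place of $F$, contributing $2\,i(G_j,F)^2/i(G_j,H(q))$. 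If instead $G_j$ is a thickened proper arc, then $G_j^d$ is a single thickened closed curve $\tilde c$ crossing both copies of $X$, so $i(\tilde c,F^d)=2\,i(G_j,F)$ and $i(\tilde c,H(q)^d)=2\,i(G_j,H(q))$ by the mirror symmetry of $F^d$ and $H(q)^d$, contributing $(2\,i(G_j,F))^2/(2\,i(G_j,H(q)))=2\,i(G_j,F)^2/i(G_j,H(q))$. In every case the contribution is exactly twice the $j$-th term of the right-hand sum, and summing over $j$ proves the identity. The delicate points to verify carefully are that doubling preserves indecomposability, in particular that the ergodic measure on a minimal component doubles to an ergodic measure on each of its two disjoint mirror copies so that no merging into a single component occurs, and that intersection numbers on $S^d$ restrict correctly to each copy of $X$, which follows from the incompressibility of $X$ in $S^d$.
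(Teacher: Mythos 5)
Your proof is correct and follows essentially the same route as the paper: double the surface, apply Walsh's theorem on $S^d$, observe that proper arcs double to single indecomposable closed components while interior components double to two disjoint mirror copies, and track the factors of two via $i(G^d,F^d)=2i(G,F)$ together with the mirror-symmetry identities. The paper's proof is the same argument modulo the points you flag explicitly — the isometric embedding $\T(S)\hookrightarrow\T(S^d)$ and the unit-area renormalization of $q^d$ — which it leaves implicit.
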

	\begin{proof}
		If $S$ does not have boundary the result follows from Walsh's paper. Assume then that $S$ has boundary. Let $p$ be the number of proper arcs of $V(q)$, and reorder the components so $G_j$ is a proper arc for $j\le p$. The conformal double $G_j^d$ is indecomposable whenever $G_j$ is a proper arc, and decomposes into two components otherwise, as it is not incident to the boundary of $S$. Denote $G^1_j$ and $G^2_j$ the two components of $G_j$ for $j>p$. We have 
		\begin{multline*}
			2\lim_{t\to\infty} e^{-2t}\ext_{\tray{q}{t}}(F)=\lim_{t\to\infty} e^{-2t}\ext_{\tray{q^d}{t}}(F^d)\\=
			\sum_{j\le p}\frac{i(G^d_j,F^d)^2}{i(G^d_j,H(q)^d)}
			+\sum_{i\in \{1,2\}}\sum_{j> 		p}\frac{i(G^i_j,F^d)^2}{i(G^i_j,H(q)^d)}.
		\end{multline*}
		
		For foliations $G,F\in \MF(S)$ we have $i(G^d,F^d)=2i(G,F)$. Hence, $i(G^d_j,F^d)=2i(G_j,F)$. Using the symmetry, $i(G^1_j,F^d)=i(G^2_j,F^d)$, so for $j>p$ we have $i(G^1_j,F^d)=i(G_j,F)$. Using these identities we get the result.
	\end{proof}
	Following the same reasoning we can extend as well the next result.
	\begin{lemma}[Extension of {\cite[Lemma 3]{Walsh}} to surfaces with boundary]\label{le:walshlowerbound}
		Let $q$ be a unit area quadratic differential. Then,
		\[
			e^{-2t}\ext_{\tray{q}{t}}(F)\ge \sum_j\frac{i(G_j,F)^2}{i(G_j,H(q))},
		\]
		where $t\in \R_+$ and $\{G_j\}$ are the indecomposable components of the vertical foliation $V(q)$.
	\end{lemma}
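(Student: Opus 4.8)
The plan is to reduce to Walsh's Lemma 3 on the closed double $S^d$, exactly mirroring the proof of the theorem immediately preceding this lemma, with the limit replaced throughout by the pointwise inequality. If $S$ has empty boundary there is nothing to prove: the statement is Walsh's Lemma 3 verbatim. So I assume $\partial S\neq\emptyset$.

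First I would record the two facts that make doubling work at the level of Teichmüller rays. Since doubling is compatible with the Teichmüller deformation in the direction $|q|/q$, the doubled ray satisfies $\tray{q^d}{t}=(\tray{q}{t})^d$ for the same distance parameter $t$, and by the extremal-length doubling proposition $\ext_{(\tray{q}{t})^d}(F^d)=2\ext_{\tray{q}{t}}(F)$. Combining these gives the key identity
\[
	e^{-2t}\ext_{\tray{q}{t}}(F)=\tfrac12\, e^{-2t}\ext_{\tray{q^d}{t}}(F^d).
\]
Next I would apply Walsh's Lemma 3 on the closed surface $S^d$ to the differential $q^d$. The one point needing care is that $q^d$ has area $2$ rather than $1$; I would dispose of this by observing that both sides of the desired inequality are invariant under the rescaling $q^d\mapsto\lambda q^d$, since the ray is unchanged while $V(q^d)$ and $H(q^d)$ both scale by $\sqrt\lambda$, leaving each summand $i(G',F^d)^2/i(G',H(q)^d)$ fixed. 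Hence the unit-area lemma may be applied after normalizing, yielding
\[
	e^{-2t}\ext_{\tray{q^d}{t}}(F^d)\ge \sum_{k}\frac{i(G'_k,F^d)^2}{i(G'_k,H(q)^d)},
\]
where the $G'_k$ are the indecomposable components of $V(q^d)=V(q)^d$ and $H(q^d)=H(q)^d$.

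Finally I would translate the right-hand side back to $S$ using the same bookkeeping as in the theorem. Reorder so that $G_1,\dots,G_p$ are the proper arcs of $V(q)$; each arc doubles to an indecomposable $G_j^d$, while each non-arc component $G_j$ with $j>p$ doubles to two components $G_j^1,G_j^2$. Using $i(G^d,F^d)=2i(G,F)$ for the arcs together with the symmetry relations $i(G_j^1,F^d)=i(G_j^2,F^d)=i(G_j,F)$ and $i(G_j^1,H(q)^d)=i(G_j,H(q))$ for the split pieces, every summand contributes exactly $2\,i(G_j,F)^2/i(G_j,H(q))$, so the displayed sum equals $2\sum_j i(G_j,F)^2/i(G_j,H(q))$. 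Dividing the resulting inequality by $2$ through the key identity gives precisely the claimed bound.

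I expect no serious obstacle, since the argument is a direct transcription of the already-established theorem. The only delicate points are the normalization of $q^d$ to unit area and verifying that the two doubling rules for intersection numbers, namely the factor $2$ on the arcs and the factor $1$ on each of the split non-arc pieces, recombine into the single uniform factor $2$ that cancels against the $\tfrac12$ coming from the extremal-length doubling.
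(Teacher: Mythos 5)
Your proposal is correct and takes essentially the same route as the paper: the paper proves this lemma by exactly the doubling argument used for its extension of Walsh's Theorem~1 (it literally says the lemma follows ``by the same reasoning''), namely passing to $S^d$, applying Walsh's closed-surface result to $q^d$, and translating back via $\ext_{X^d}(F^d)=2\ext_X(F)$ together with the intersection-number identities for arcs versus split components. Your explicit handling of the unit-area normalization of $q^d$ by scale invariance of both sides is a detail the paper leaves implicit, but it is the same argument.
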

	
	Most of the results in Walsh's paper use the previous theorem. In particular, we have the following.
	\begin{corollary}[Extension of {\cite[Corollary 1]{Walsh}} to surfaces with boundary]\label{co:walshbusemanshape}
		Let $q$ be a quadratic differential and denote by $G_j$ the indecomposable components of its vertical foliation. Then, the Teichmüller ray $\tray{q}{\cdot}$ converges in the Gardiner--Masur compactification to
		\[\left(\sum_j \frac{i(G_j,\cdot)^2}{i(G_j,H(q)}\right)^{1/2}.\]
	\end{corollary}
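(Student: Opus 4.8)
The plan is to read the claimed limit directly off the definition of the embedding $\E$ together with the extended version of Walsh's theorem proved immediately above, and then to upgrade the resulting pointwise convergence to convergence in the Gardiner--Masur compactification by a compactness argument. After normalizing $q$ to unit area (which changes neither the ray $\tray{q}{\cdot}$, as it depends only on the direction $|q|/q$, nor the claimed limit, which is invariant under positive scaling of $q$), I would first record that $\tray{q}{\cdot}$ is a unit-speed geodesic issuing from the basepoint $b$ used to define $\E$. Hence Kerckhoff's formula gives $d(b,\tray{q}{t})=\tfrac12\log K_{b,\tray{q}{t}}=t$, so that $K_{b,\tray{q}{t}}=e^{2t}$. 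Substituting this normalization into the definition of $\E$ yields, for every $F\in\MF$,
\[
	\E(\tray{q}{t})(F)=\left(\frac{\ext_{\tray{q}{t}}(F)}{K_{b,\tray{q}{t}}}\right)^{1/2}=\left(e^{-2t}\ext_{\tray{q}{t}}(F)\right)^{1/2}.
\]

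Next I would invoke the extended theorem, which computes $\lim_{t\to\infty}e^{-2t}\ext_{\tray{q}{t}}(F)=\sum_j i(G_j,F)^2/i(G_j,H(q))$. Taking square roots — legitimate since each summand is nonnegative and every denominator $i(G_j,H(q))$ is strictly positive, as an indecomposable vertical component meets the horizontal foliation in positive measure — gives the pointwise limit
\[
	\lim_{t\to\infty}\E(\tray{q}{t})(F)=\left(\sum_j \frac{i(G_j,F)^2}{i(G_j,H(q))}\right)^{1/2}=:L(F),
\]
and $L$ is continuous on $\MF$ because each $i(G_j,\cdot)$ is, so that $L\in C(\MF)$.

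The remaining point — and the only genuinely nontrivial one — is to promote this pointwise statement to convergence in the topology of $C(\MF)$, that is, uniform convergence on compact subsets of $\MF$, which is what convergence in the Gardiner--Masur compactification means. Here I would use that the set $E(S)$ is precompact (established above) and that $C(\MF)$ is metrizable, as $\MF$ is second countable and locally compact. Then for any sequence $t_n\to\infty$ the functions $\E(\tray{q}{t_n})$ lie in the compact metrizable set $\overline{E(S)}$, so every subsequence admits a further subsequence converging in $C(\MF)$ to some $\psi\in\overline{E(S)}$. Since uniform-on-compacts convergence implies pointwise convergence, any such limit $\psi$ must satisfy $\psi(F)=L(F)$ for all $F$ by the computation above, whence $\psi=L$. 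As every subsequential limit equals $L$, the whole family $\E(\tray{q}{t})$ converges to $L$ in $C(\MF)$, which is exactly the assertion that $\tray{q}{\cdot}$ converges in the Gardiner--Masur compactification to $L$. I expect the bookkeeping of the first two paragraphs to be routine; the main thing to get right is the subsequence extraction in the last paragraph, though it follows an entirely standard pattern once precompactness and metrizability are in hand.
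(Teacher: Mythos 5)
Your proposal is correct and follows essentially the same route as the paper, which presents this corollary as a direct consequence of the extension of Walsh's Theorem 1: the normalization $K_{b,\tray{q}{t}}=e^{2t}$ via Kerckhoff's formula turns $\E(\tray{q}{t})(F)$ into $\bigl(e^{-2t}\ext_{\tray{q}{t}}(F)\bigr)^{1/2}$, and the theorem gives the pointwise limit. The compactness/subsequence upgrade you spell out in the last paragraph is the standard (and correct) way to pass from pointwise convergence to convergence in $\overline{E}$, which the paper leaves implicit.
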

	The relation between the Gardiner--Masur compactification and the horoboundary compactification is given by the map $\Xi:\overline{E}\to \hbc{\T(S)}$ defined by 
	\[
		\Xi(f)(x):=\frac{1}{2}\log\sup_{F\in \mathcal{P}}\frac{f(F)^2}{\ext_x(F)}.
	\]
	
	The following result can be extended to surfaces with boundary by repeating the proof found in Walsh's paper in this context.
	\begin{theorem}[Extension of {\cite[Lemma 21]{Walsh}} to surfaces with boundary]
		The map $\Xi$ is an isomorphism between the compactifications $(\E,\overline{E})$ and $(h,\hbc{\T(S)})$.
	\end{theorem}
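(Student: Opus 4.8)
The plan is to invoke \cref{le:walshcompact}: it suffices to check that $\Xi$ is a continuous, injective extension of the horofunction embedding, meaning that $\Xi\circ\E=h$, that $\Xi$ is continuous on $\overline{E}$, and that $\Xi$ is injective. The extension property is a direct computation. Writing $\mathcal P$ for a compact set of unit extremal length representatives of the projective classes (the ratio defining $\Xi$ is invariant under scaling $F$, since $f(F)^2$ and $\ext_x(F)$ are both homogeneous of degree $2$), we have $\E(X)(F)^2=\ext_X(F)/K_{b,X}$ for $X\in\T(S)$, so
\[
	\Xi(\E(X))(x)=\frac12\log\sup_{F\in\mathcal P}\frac{\ext_X(F)}{K_{b,X}\,\ext_x(F)}=\frac12\log\frac{K_{X,x}}{K_{b,X}}=d(X,x)-d(b,X),
\]
using Kerckhoff's formula $d(x,y)=\frac12\log K_{x,y}$ for the last two equalities. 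By the symmetry of the Teichmüller metric this equals $h(X)(x)$, so $\Xi$ agrees with $h$ on the dense subset $\E(\T(S))$; once continuity is known it follows that $\Xi$ takes values in the closed set $\hbc{\T(S)}$.

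For continuity, I would use that the topology on both $\overline{E}\subset C(\MF)$ and $\hbc{\T(S)}\subset C(\T(S))$ is uniform convergence on compact sets, so the only point needing care is the supremum over $F$. Since $\ext$ is jointly continuous and strictly positive on $(\MF\setminus\{0\})\times\T(S)$, on the compact set $\mathcal P$ it is bounded below by a positive constant locally uniformly in $x$; hence the integrand $(f,x,F)\mapsto f(F)^2/\ext_x(F)$ is jointly continuous, and the supremum of a jointly continuous function over the compact parameter space $\mathcal P$ depends continuously on $(f,x)$, uniformly for $x$ in compacta. This gives continuity of $\Xi$.

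The main obstacle is injectivity. The mechanism is an inversion formula recovering $f$ from the horofunction $\xi=\Xi(f)$, namely
\[
	f(F)^2=\inf_{x\in\T(S)}e^{2\xi(x)}\,\ext_x(F).
\]
The inequality $\ge$ is immediate from the definition of $\Xi$, since $e^{2\xi(x)}\ge f(F)^2/\ext_x(F)$ for every $x$. The reverse inequality is the hard direction: I must produce points $x$ at which the foliation $F$ itself nearly realizes the supremum defining $e^{2\xi(x)}$. I would do this by travelling along the Teichmüller ray $\tray{q}{\cdot}$ whose vertical foliation points in the direction of $F$; along such a ray the extremal length of $F$ decays while that of transverse foliations grows, so $F$ dominates the supremum, and the extended asymptotic formula $e^{-2t}\ext_{\tray{q}{t}}(F')\to\sum_j \frac{i(G_j,F')^2}{i(G_j,H(q))}$ together with \cref{co:walshbusemanshape} quantifies this domination and forces the infimum down to $f(F)^2$. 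Since this determines $f(F)$ for every $F$, it yields $f=g$ whenever $\Xi(f)=\Xi(g)$. This is exactly Walsh's argument for \cite[Lemma 21]{Walsh}; the only new ingredient for surfaces with boundary is that each of its components — precompactness of $E(S)$, the asymptotic extremal length formula, and the shape of the limiting functions in \cref{co:walshbusemanshape} — has already been established in this setting via the doubling trick, so no step of the argument is sensitive to the presence of boundary.
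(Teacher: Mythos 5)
Your skeleton is the right one, and it is the one the paper itself has in mind (the paper gives no argument beyond saying that Walsh's proof goes through with the extended ingredients): by \cref{le:walshcompact} it suffices that $\Xi$ extends $h$, is continuous, and is injective. Your verification of $\Xi\circ\E=h$ via Kerckhoff's formula is correct, and your continuity sketch is essentially fine. The gap is in injectivity, which is the actual content of Walsh's Lemma 21. The inversion formula $f(F)^2=\inf_x e^{2\xi(x)}\ext_x(F)$ is a reasonable target, but your mechanism for the hard inequality --- travelling along the ray $\tray{q}{\cdot}$ with $V(q)$ proportional to $F$ and claiming that $G=F$ asymptotically dominates the supremum $\sup_G f(G)^2/\ext_x(G)$ --- fails. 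Along that ray it is not only $F$ whose extremal length decays like $e^{-2t}$: so does that of each individual component of $F$ and of curves meeting only some components, and a boundary function $f$ that weights these differently from $F$ makes them win the supremum. Concretely, on any surface with two disjoint non-isotopic essential curves $\alpha_1,\alpha_2$, let $F=w_1\alpha_1+w_2\alpha_2$, let $q=q_{F,b}$ have unit area with horizontal foliation $H$, and let $f=\Xi^{-1}B(q')$ where $V(q')=\beta'$ is a unit-area multiple of a curve with $i(\beta',\alpha_1)>0=i(\beta',\alpha_2)$. Then $f(\cdot)^2=i(\beta',\cdot)^2$, so by Gardiner--Masur's extremal length duality $e^{2\xi(x)}=\sup_G i(\beta',G)^2/\ext_x(G)=\ext_x(\beta')$, and the extended asymptotic formula along $x_t=\tray{q}{t}$ gives
\[
\lim_{t\to\infty}e^{2\xi(x_t)}\ext_{x_t}(F)=\lim_{t\to\infty}e^{-2t}\ext_{x_t}(\beta')=\frac{w_1\,i(\alpha_1,\beta')^2}{i(\alpha_1,H)}=\frac{f(F)^2}{w_1\,i(\alpha_1,H)}>f(F)^2,
\]
since $w_1 i(\alpha_1,H)<i(F,H)=1$. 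So along your ray the quantity stays bounded away from $f(F)^2$ by a definite factor; for this $f$ the infimum is instead approached along the ray defining $f$, where $\xi(\tray{q'}{t})=-t$ and $e^{-2t}\ext_{\tray{q'}{t}}(F)\to f(F)^2$. The witnesses of the infimum must be adapted to $f$, not to $F$ --- the opposite of what you propose.

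There is a second, structural problem: both tools you invoke (the extended asymptotic formula and \cref{co:walshbusemanshape}) are statements about Teichmüller rays, so even after the correction above your argument establishes the inversion formula only when $f$ is a Busemann point. But injectivity must be proved at every point of $\overline{E}$, and for most surfaces $\overline{E}$ contains points that are not Busemann points, indeed not even limits of Busemann points (Miyachi's examples; cf.\ \cref{th:busemannnotdense}). Such points arise as limits of $\E(x_n)$ along sequences $(x_n)$ that are not almost geodesics, so neither a defining ray nor the optimal geodesic of $\xi$ is available (going along the optimal geodesic only bounds the infimum by the fiber's Busemann point, which by \cref{pr:upperboundGM} sits \emph{above} $f$). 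Handling these general limits is exactly where Walsh's proof does its real work, and it is the part your proposal leaves untouched.
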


	Directly from the definition of $\Xi$ we have the following
	\begin{corollary}\label{co:orderpreserved}
		Let $f,g\in \overline{E}$. If $f\ge g$ then $\Xi(f)\ge \Xi(g)$.
	\end{corollary}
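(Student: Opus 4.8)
The plan is to simply unwind the definition of $\Xi$ and observe that it is built out of a chain of order-preserving operations. Recall that every element of $\overline{E}$ is a limit of functions of the form $\E(X)(\cdot)=\left(\ext_X(\cdot)/K_{b,X}\right)^{1/2}$, which are \emph{nonnegative} functions on $\MF$; since nonnegativity passes to limits, any $f\in\overline{E}$ satisfies $f(F)\ge 0$ for all $F$. The hypothesis $f\ge g$ is the pointwise order on $C(\MF)$, so it reads $f(F)\ge g(F)\ge 0$ for every $F$. The conclusion $\Xi(f)\ge\Xi(g)$ is likewise the pointwise order, so it suffices to show $\Xi(f)(x)\ge\Xi(g)(x)$ for each fixed $x\in\T(S)$.

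I would carry this out in four monotonicity steps. First, since $f(F)\ge g(F)\ge 0$, squaring gives $f(F)^2\ge g(F)^2$ for every $F$. Second, each $F$ in the index set $\mathcal{P}$ of the supremum is a nonzero foliation (it is normalized, e.g.\ by $\ext_b(F)=1$), so $\ext_x(F)>0$, and dividing by this positive quantity preserves the inequality:
\[
	\frac{f(F)^2}{\ext_x(F)}\ge\frac{g(F)^2}{\ext_x(F)}\qquad\text{for all }F\in\mathcal{P}.
\]
Third, taking the supremum over $\mathcal{P}$ preserves a pointwise domination, whence
\[
	\sup_{F\in\mathcal{P}}\frac{f(F)^2}{\ext_x(F)}\ge\sup_{F\in\mathcal{P}}\frac{g(F)^2}{\ext_x(F)}.
\]
Fourth, applying the increasing function $\tfrac12\log$ to both sides yields $\Xi(f)(x)\ge\Xi(g)(x)$. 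As $x$ was arbitrary, this gives $\Xi(f)\ge\Xi(g)$.

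There is essentially no obstacle here; the corollary is a formal consequence of the definition, which is exactly why the text flags it as following directly. The only points requiring a moment's care are the two positivity facts that make the monotonicity legitimate: that elements of $\overline{E}$ are nonnegative (so that squaring respects the order rather than reversing it on negative values), and that $\ext_x(F)$ is strictly positive on the index set $\mathcal{P}$ (so that dividing is order-preserving and the quotient is well defined). Both follow immediately from the construction of $E(S)$ and the normalization defining $\mathcal{P}$, so no genuine difficulty arises.
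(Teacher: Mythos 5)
Your proof is correct and is exactly the argument the paper intends: the paper offers no written proof, stating only that the corollary follows directly from the definition of $\Xi$, and your chain of monotone steps (nonnegativity of elements of $\overline{E}$, squaring, dividing by the positive quantity $\ext_x(F)$, taking the supremum, and applying $\tfrac12\log$) is precisely that direct verification. The two positivity points you flag are indeed the only things to check, and both hold for the reasons you give.
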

			
	We shall denote the representation of the Busemann point $B(q)$ in the Gardiner--Masur compactification as $\E(q)$. By \cref{co:walshbusemanshape} we have an explicit representation of $\E(q)$.
	As we have seen in \cref{pr:horobocompfiner,pr:continuityatqintro}, the continuity of the Busemann map has some interesting implications, and it is enough to look for continuity of the map restricted to the boundary. Related to this question we have the following result, which can also be derived by the same proof found in Walsh's paper, applied to this context.
	\begin{theorem}[Extension of {\cite[Theorem 10]{Walsh}} to surfaces with boundary]\label{th:Busemanncontinuityifstrong}
		Let $(q_n)$ be a sequence of quadratic differentials based at $b\in \T(S).$ Then $B(q_n)$ converges to $B(q)$ if and only if both of the following hold:
		\begin{enumerate}
			\item $(q_n)$ converges to $q$;
			\item for every subsequence $(G^n)_n$ of indecomposable elements of $\MF$ such that, for each $n\in \N$, $G^n$ is a component of $V(q_n)$, we have that every limit point of $G^n$ is indecomposable.
		\end{enumerate}
	\end{theorem}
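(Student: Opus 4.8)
The plan is to transport the statement to the Gardiner--Masur model, where the Busemann points are given by an explicit formula, and then run Walsh's analysis with the boundary-adapted ingredients already assembled above. Since $\Xi$ is an isomorphism of compactifications, $B(q_n)\to B(q)$ in $\hbc{\T(S)}$ is equivalent to $\E(q_n)\to\E(q)$ in $\overline{E}$, that is, to $\E(q_n)(F)\to\E(q)(F)$ for every $F\in\MF(S)$. By \cref{co:walshbusemanshape} these functions are the explicit expressions
\[
	\E(q)(F)^2=\sum_j\frac{i(G_j,F)^2}{i(G_j,H(q))},
\]
summed over the indecomposable components $G_j$ of $V(q)$, so the whole question becomes the continuity of this quadratic form as the foliation data $(V(q_n),H(q_n))$ vary. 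I would phrase everything in terms of this form from the outset, noting that each term is scale-invariant (degree $2$ numerator over degree $1$ denominator, so homogeneous of degree $1$ in its component).

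For the sufficiency direction I would assume (1) and (2) and fix an arbitrary $F$. First I would use the bound on the number of indecomposable components (finite on closed surfaces by Papadopoulos, and finite in the bounded case through the doubling trick of \cref{se:doublingtrick}) to pass to a subsequence along which $V(q_n)$ has a fixed number of components $G_j^n$, each converging projectively to some limit $G_j^\infty$. Condition (1) forces $\sum_j G_j^n\to V(q)$, and condition (2) guarantees each $G_j^\infty$ is itself indecomposable; by uniqueness of the indecomposable decomposition of $V(q)$, the limits $G_j^\infty$ are therefore positive multiples of the true components $M_i$ of $V(q)$, grouped so that the multiples assigned to a given $M_i$ sum to $M_i$. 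The homogeneity of each term then makes the grouped contributions collapse to exactly one term $i(M_i,F)^2/i(M_i,H(q))$, which yields termwise convergence $\E(q_n)(F)^2\to\E(q)(F)^2$. The extended lower bound \cref{le:walshlowerbound} supplies the uniform control that rules out mass escaping to components whose intersection with $H$ degenerates, so that the subsequential analysis is legitimate and no genuine loss occurs.

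For the necessity direction I would argue contrapositively. If (1) fails, matching the quadratic form across the limit pins down the supporting foliation data: convergence of $\E(q_n)(\cdot)$ determines the limit of $(V(q_n),H(q_n))$, and since the Busemann map is injective (\cref{pr:bussmapinjective}) that limit must be $q$ itself, forcing $q_n\to q$. If (1) holds but (2) fails, some subsequence of indecomposable components $G^n$ converges to a decomposable limit $G^\infty=A+B$ with $A,B$ projectively distinct; the single term then tends to $(i(A,F)+i(B,F))^2/(i(A,H(q))+i(B,H(q)))$, whereas the genuine Busemann value of $q$ treats $A$ and $B$ as separate components, contributing $i(A,F)^2/i(A,H(q))+i(B,F)^2/i(B,H(q))$. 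By the strict form of the Cauchy--Schwarz (Engel) inequality $\frac{a^2}{c}+\frac{b^2}{d}\ge\frac{(a+b)^2}{c+d}$ these differ whenever $i(A,F)/i(A,H(q))\neq i(B,F)/i(B,H(q))$, and I would realize such an $F$ as a multicurve using density of multicurves (Thurston on closed surfaces and \cref{pr:curvesdense} in the bounded case). For this $F$ the limit of $\E(q_n)(F)^2$ differs strictly from $\E(q)(F)^2$, so $B(q_n)\not\to B(q)$.

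The main obstacle is the fusion analysis at the heart of both directions: certifying that the quadratic form jumps precisely when an indecomposable component degenerates to a decomposable limit, that this jump is unavoidable (necessity), and that it is the only obstruction (sufficiency). For surfaces with boundary the additional care is concentrated in three places where the closed-surface inputs are replaced by the boundary analogues established above---the finiteness of the number of indecomposable components (via doubling), the explicit Busemann formula and lower bound (\cref{co:walshbusemanshape} and \cref{le:walshlowerbound}), and the density of multicurves used to build the separating test foliation (\cref{pr:curvesdense}). Once these substitutions are in place the remainder of Walsh's argument carries over verbatim.
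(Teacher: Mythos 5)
Your overall plan (transport everything to the Gardiner--Masur model via $\Xi$ and rerun Walsh's analysis with the boundary-adapted ingredients) is exactly what the paper intends: its own ``proof'' of \cref{th:Busemanncontinuityifstrong} is a one-line deferral to Walsh's argument, using the extensions it has already set up. But your reconstruction has a genuine gap in the necessity of condition (1). You argue: take a subsequential limit $q'$ of $(q_n)$, note that the limiting function determines the foliation data, and invoke injectivity of the Busemann map (\cref{pr:bussmapinjective}) to force $q'=q$. Injectivity only says $B(q')=B(q)\Rightarrow q'=q$; to apply it you would need to know that $B(q_{n_k})\to B(q')$ along the subsequence $q_{n_k}\to q'$, and that is precisely continuity of the Busemann map --- the very property this theorem shows can fail. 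What the subsequence actually gives (after also extracting convergent components $G_j^{n_k}\to G_j^\infty$) is that $\E(q)^2$ equals the ``merged'' form $\sum_j \wals^{q'}(G_j^\infty)$, where the $G_j^\infty$ need not be indecomposable, and this merged form is in general \emph{not} $\E(q')^2$. Ruling out that a merged form attached to some $q'\neq q$ coincides with $\E(q)$ is a real step: Walsh needs separate lemmas for it (comparing these forms with extremal length at the basepoint and using Hubbard--Masur uniqueness). Alternatively, there is a cheap fix available inside this paper that you do not use: by \cref{th:projectionfunction} the map $\fibermap_b$ is continuous and sends $B(\tray{q_n}{\cdot})$ to $\tray{q_n}{\cdot}$, so $B(q_n)\to B(q)$ forces $\tray{q_n}{\cdot}\to\tray{q}{\cdot}$ in the visual compactification, and since unit-area differentials at $b$ correspond homeomorphically to rays (compare \cref{le:geodesicsequalsphere}), this gives $q_n\to q$ at once.

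A second, smaller defect is in your sufficiency direction. The terms coming from components $G_j^n$ whose limit is the zero foliation are not controlled by \cref{le:walshlowerbound}: that lemma bounds the sum $\sum_j \wals^{q_n}(G_j^n)$ from \emph{below}, whereas what is needed is an upper bound showing each such term $i(G_j^n,F)^2/i(G_j^n,H(q_n))$ tends to $0$ even though its denominator also degenerates. This is Walsh's Lemma 27, which the paper records (later, in \cref{se:shapeoffibers}) as the zero case of \cref{le:limith}; its proof rests on Minsky's inequality and the fact that $V(q)$ and $H(q)$ jointly fill the surface, both of which survive the passage to surfaces with boundary via doubling, but it is an independent lemma, not a consequence of the lower bound. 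Your necessity argument for condition (2) --- the strict form of \cref{le:elementaryinequality} evaluated at a multicurve separating the two merged pieces, with multicurves supplied by \cref{pr:curvesdense} --- is sound, provided it is run through the same subsequence-plus-\cref{le:limith} machinery. With these two repairs your outline does match what the paper means when it says Walsh's proof ``applies in this context.''
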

	
	In view of this theorem, we say that a sequence of quadratic differentials $(q_n)$ converges \emph{strongly} to $q$ if it does so in the sense described by the theorem. 
	
	Finally, while the following result may be extendable to surfaces with boundary, we only use it in the context of surfaces without boundary, so we shall not be working on finding an extension.
	\begin{theorem}[{\cite[Theorem 3]{Walsh}}]
		For the Teichmüller space of a surface without boundary with the Teichmüller metric,
		for any basepoint $X\in \T(S)$, all Busemann points can be expressed as $B(q)$ for some quadratic differential $q$ based at $X$.
	\end{theorem}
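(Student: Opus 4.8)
The plan is to prove the two inclusions between the set of Busemann points and the set $\{B(q)\}$ of horofunctions arising from Teichmüller rays. One inclusion is immediate: each ray $\tray{q}{\cdot}$ is in particular an almost geodesic, so every $B(q)$ is a Busemann point. The work lies in the reverse inclusion, so I fix a Busemann point $\xi=B_\gamma$ coming from an almost geodesic $\gamma$, noting that $d(b,\gamma(t))\to\infty$. By \cref{pr:optimalpath} the horofunction $\xi$ has a unique optimal geodesic through $b$, and by straightness and unique geodesy this is a genuine Teichmüller ray $\tray{q}{\cdot}$ for some unit-area quadratic differential $q$; this $q$ is the natural candidate, and the goal is to show $\xi=B(q)$.

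First I would record the upper bound. Since $\tray{q}{\cdot}$ is the optimal geodesic of $\xi$, we have $\xi\in\fibermap^{-1}(\tray{q}{\cdot})$, so \cref{pr:upperbound} gives $\xi\le B(q)$ directly. The whole theorem therefore reduces to the matching lower bound $\xi\ge B(q)$, that is, to showing that the Busemann point of an almost geodesic is the \emph{maximal} element of its fiber. I would translate this into extremal length using the isomorphism $\Xi$ with the Gardiner--Masur compactification: writing $f=\Xi^{-1}(\xi)\in\overline{E}$, one has $f(F)^2=\lim_t e^{-2d(b,\gamma(t))}\ext_{\gamma(t)}(F)$, and by \cref{co:walshbusemanshape} the claim $\xi=B(q)$ is equivalent to
\[
	\lim_{t\to\infty} e^{-2d(b,\gamma(t))}\ext_{\gamma(t)}(F)=\sum_j\frac{i(G_j,F)^2}{i(G_j,H(q))}
\]
for every $F\in\MF$, where the $G_j$ are the indecomposable components of $V(q)$. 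The upper bound already yields ``$\le$'', so only ``$\ge$'' remains.

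To attack the lower bound I would exploit the directions of the almost geodesic. Writing $\gamma(t)=\tray{q_t}{\ell_t}$ with $\ell_t=d(b,\gamma(t))$, continuity of $\fibermap$ together with \cref{pr:welldefined} forces $q_t\to q$, and the extended \cref{le:walshlowerbound} gives $e^{-2\ell_t}\ext_{\gamma(t)}(F)\ge \sum_j i(G^t_j,F)^2/i(G^t_j,H(q_t))$ for the components $G^t_j$ of $V(q_t)$. Passing to the limit would finish the proof provided the right-hand side converges to the value for $q$, which is precisely the statement that $(q_t)$ converges \emph{strongly} to $q$ in the sense of \cref{th:Busemanncontinuityifstrong}.

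This last point is where I expect the real difficulty, and it is genuinely Teichmüller-specific rather than a formal consequence of the metric axioms. If the indecomposable components of $V(q_t)$ merge in the limit, then by the Cauchy--Schwarz inequality $\frac{(\sum_i a_i)^2}{\sum_i b_i}\le\sum_i \frac{a_i^2}{b_i}$ the limit of the lower bound drops strictly below $\E(q)(F)^2$ and the argument collapses; indeed this is exactly the mechanism by which fibers of $\fibermap$ become non-trivial and acquire non-Busemann points. The crux is therefore to use the almost-geodesic inequality $|d(\gamma(0),\gamma(s))+d(\gamma(s),\gamma(t))-t|<\varepsilon$, rewritten through Kerckhoff's formula $d(x,y)=\tfrac12\log K_{x,y}$ as a near-equality for the relevant extremal length ratios, to rule out this merging and force strong convergence of the directions $q_t$. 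I would expect the bulk of the proof to be this foliation-theoretic argument controlling how the vertical foliations of the segments $[b,\gamma(t)]$ can degenerate, after which the displayed identity, and hence $\xi=B(q)$, follows.
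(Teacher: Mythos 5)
First, note that the paper does not prove this statement at all: it is quoted verbatim as \cite[Theorem 3]{Walsh}, and the surrounding text explicitly declines even to extend it to surfaces with boundary. So your proposal has to be judged on its own merits rather than against an in-paper argument. On those merits, your reduction is correct and cleanly uses the paper's general machinery: $B(q)$ is trivially a Busemann point; for the converse, \cref{pr:optimalpath} produces the candidate ray $\tray{q}{\cdot}$, \cref{pr:upperbound} (together with its Gardiner--Masur version \cref{pr:upperboundGM}) gives $\xi\le B(q)$, \cref{pr:welldefined} and the continuity of $\fibermap$ give $q_t\to q$ for the directions of the segments $[b,\gamma(t)]$, and \cref{le:walshlowerbound} gives the pointwise lower bound whose limit you need to control. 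All of that is sound and none of it is circular.

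The genuine gap is that the one step carrying all the content of the theorem --- that the almost-geodesic property forces the directions $(q_t)$ to converge \emph{strongly}, i.e.\ rules out merging of indecomposable components of $V(q_t)$ in the limit --- is only announced, not proved. You correctly observe that without strong convergence the Cauchy--Schwarz slack makes the lower bound collapse, and you say you ``would expect the bulk of the proof'' to be the foliation-theoretic argument ruling this out; but no lemma, estimate, or mechanism is supplied for how the inequality $|d(\gamma(0),\gamma(s))+d(\gamma(s),\gamma(t))-t|<\varepsilon$, fed through Kerckhoff's formula, constrains the vertical foliations of the segments. Note why nothing short of this can work: by \cref{pr:upperbound}, \emph{every} element of $\fibermap^{-1}(\tray{q}{\cdot})$ satisfies $\xi\le B(q)$, and for fusible $q$ these fibers genuinely contain non-Busemann points (this is exactly the phenomenon behind \cref{th:busemannnotdense} and Fortier Bourque's examples), so the portion of your argument that is actually carried out cannot distinguish a Busemann point from a non-Busemann point in the same fiber. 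The almost-geodesic hypothesis must enter in an essential, quantitative way precisely in the step you left open; as written, the proposal is an accurate reduction of Walsh's theorem to its hard core, not a proof of it.
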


\section{Horoboundary convergence for Teichmüller spaces}\label{se:continuitybusemanteich}
\subsection{Continuity of the Busemann map}
 We begin by using \cref{pr:continuityatqintro} to determine when the Busemann map is continuous. Recall that a sequence $(q_n)$ converges to $q$ strongly if and only if the sequence satisfies the conditions of Theorem \ref{th:Busemanncontinuityifstrong}. That is, a sequence $(q_n)$ converges to $q$ strongly if and only if the associated Busemann points $B(q_n)$ converge to $B(q)$. With this in mind we introduce the following notion.
	
	\begin{definition}
		Let $q$ be a quadratic differential. We say that $q$ is \emph{infusible} if any sequence of quadratic differentials converging to $q$ converges strongly. We say that $q$ is \emph{fusible} if it is not infusible.
	\end{definition}
	
	In other words, we say that $q$ is fusible when it can be approached by a sequence of quadratic differentials $(q_n)$ such that there is some sequence $(G^n)$ of measured foliations with each $G^n$ being an indecomposable component of $V(q_n)$, with $(G^n)$ having at least one decomposable accumulation point. The following statement follows directly from this definition, \cref{pr:continuityatqintro} and Walsh's result.

	\begin{proposition}
		Let $q$ be a unit area quadratic differential. The Busemann map $\bussmap$ is continuous at $q$ if and only if $q$ is infusible.
	\end{proposition}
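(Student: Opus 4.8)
The plan is to combine \cref{pr:continuityatqintro} with the extension of Walsh's theorem (\cref{th:Busemanncontinuityifstrong}), translating the abstract statement about continuity of the Busemann map at a geodesic ray into the concrete language of quadratic differentials. First I would recall that, for Teichmüller space, the visual boundary $\vbd{\T(S)}$ based at $b$ is identified with the space of unit-area quadratic differentials at $b$ via the correspondence $q\mapsto \tray{q}{\cdot}$; by \cref{le:geodesicsequalsphere} this set is homeomorphic to the unit sphere $S_b^1$, and under this identification convergence of rays $\tray{q_n}{\cdot}\to\tray{q}{\cdot}$ in the visual topology coincides with the convergence $q_n\to q$ appearing as condition (1) of \cref{th:Busemanncontinuityifstrong}. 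Thus a sequence of rays converges to $\tray{q}{\cdot}$ in the visual boundary precisely when the associated quadratic differentials converge to $q$.

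Next, by the equivalence $(1)\iff(3)$ of \cref{pr:continuityatqintro}, the Busemann map $B$ is continuous at $\tray{q}{\cdot}$ if and only if its restriction to the boundary is continuous there. Under the identification above, the restriction of $B$ to the boundary is the assignment $q_n\mapsto B(q_n)$, so continuity at $\tray{q}{\cdot}$ means exactly that for every sequence $q_n\to q$ one has $B(q_n)\to B(q)$ in the horofunction compactification.

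Finally I would apply \cref{th:Busemanncontinuityifstrong}, which states that $B(q_n)\to B(q)$ holds if and only if $q_n\to q$ strongly. Chaining these equivalences, $B$ is continuous at $\tray{q}{\cdot}$ if and only if every sequence of quadratic differentials converging to $q$ converges strongly, and this is precisely the definition of $q$ being infusible, completing the argument.

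The only step requiring genuine care is the first one: matching the topology on geodesic rays (uniform convergence on compact sets, as in \cref{se:visualdefinition}) with the topology on quadratic differentials used in Walsh's condition (1). This amounts to checking that $q\mapsto\tray{q}{\cdot}$ is a homeomorphism from unit-area differentials at $b$ onto $\vbd{\T(S)}$, which is standard Teichmüller theory but is the one place where the correspondence must be invoked explicitly; once it is in hand, the remainder is a direct concatenation of the cited equivalences.
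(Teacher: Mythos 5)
Your proposal is correct and follows essentially the same route as the paper: the paper's proof likewise combines \cref{pr:continuityatqintro} (continuity at a ray reduces to continuity of $B$ restricted to the boundary) with \cref{th:Busemanncontinuityifstrong} (convergence of Busemann points is equivalent to strong convergence), handling the two directions of the equivalence exactly as you chain them. The only difference is presentational: you make explicit the identification of $\vbd{\T(S)}_b$ with unit-area quadratic differentials at $b$, which the paper uses implicitly throughout.
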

	\begin{proof}
		If $q$ is fusible then we have a sequence converging to $q$ but not strongly. Hence, by Theorem \ref{th:Busemanncontinuityifstrong} the sequence $(B(q_n))$ does not converge to $B(q)$, and so the Busemann map is not continuous at $q$.
		
		If $q$ is infusible we have that any sequence $(q_n)$ converging to $q$ does so strongly, and so we have that $B(q_n)$ converges to $B(q)$, so $B$ is continuous at $q$ when restricted to the boundary. By \cref{pr:continuityatqintro} this implies that $B$ is continuous at $q$.
	\end{proof}

	We shall now find a criterion on the vertical foliation to determine when a unit area quadratic differential is infusible. 
	
	\begin{definition}
		Let $F$ be a measured foliation on a surface $S$ and let $G$ be one of its indecomposable components. We say that $G$ is a \emph{boundary annulus} if it is an annulus parallel to a boundary with no marked points, and a \emph{boundary component} if it is a boundary annulus or a proper arc. If $G$ is not a boundary component, we shall call it an \emph{interior component}. 
		Each of the connected components of the surface obtained after removing the proper arcs shall be called \emph{interior part}. If each of these interior parts has at most one interior component, then we say that $F$ is \emph{internally indecomposable}.
		If $F$ is not internally indecomposable we say that it is \emph{internally decomposable}.
	\end{definition}

	For surfaces without boundary, a foliation $F$ is internally indecomposable if and only if it is indecomposable, as we do not have boundary components. Given these definitions we can state our main result of this section
	
	\begin{theorem}\label{th:maxcondition}
		Let $q$ be a quadratic differential. Then $q$ is infusible if and only if its vertical foliation $V(q)$ is internally indecomposable.
	\end{theorem}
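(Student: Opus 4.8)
The statement is a biconditional, so the plan is to prove the two implications separately, using the reformulation already recorded before the statement: $q$ is \emph{fusible} precisely when there is a sequence $q_n\to q$ together with indecomposable components $G^n$ of $V(q_n)$ having a decomposable accumulation point (this is exactly the failure of condition (2) of \cref{th:Busemanncontinuityifstrong}, condition (1) being automatic once $q_n\to q$). Thus the theorem reduces to showing that such a fusing sequence exists if and only if some interior part of $V(q)$ carries at least two interior components.

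For the implication that an internally decomposable $V(q)$ forces $q$ to be fusible, suppose an interior part $P$ contains two distinct interior components $G_1,G_2$. The idea is to fuse them into a single indecomposable foliation by a small perturbation. Since $P$ is connected and, containing two non-isotopic interior components, has positive complexity, I can choose a connected subsurface $Y\subseteq P$ containing the supports of $G_1$ and $G_2$ and disjoint from the supports of the remaining components of $V(q)$; such a $Y$ supports minimal filling, hence uniquely ergodic, hence indecomposable, measured foliations. By the standard density of uniquely ergodic filling foliations in $\MF(Y)$, I can pick indecomposable $H^n$ supported in $Y$ with $H^n\to G_1+G_2$. Setting $V_n=H^n+\sum_{j\neq 1,2}G_j$ gives $V_n\to V(q)$ in $\MF$, and realizing each $V_n$ as a vertical foliation through the Hubbard--Masur homeomorphism at the basepoint (renormalizing to unit area) produces $q_n\to q$. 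By construction $H^n$ is an indecomposable component of $V(q_n)$ whose limit $G_1+G_2$ is decomposable, so $(q_n)$ does not converge strongly and $q$ is fusible.

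Conversely, assuming every interior part of $V(q)$ carries at most one interior component, I take any $q_n\to q$ and indecomposable components $G^n$ of $V(q_n)$ with $G^n\to G$, and show $G$ is indecomposable. Since the intersection number is additive over the indecomposable components of a foliation, $i(\alpha,G^n)\le i(\alpha,V(q_n))$ for every multicurve $\alpha$, and passing to the limit gives $i(\cdot,G)\le i(\cdot,V(q))$; in particular the support of $G$ lies in the union of the supports of the components of $V(q)$. The crucial point is that, because each $G^n$ is connected (a single thickened curve or a minimal component), the support of $G$ cannot meet two regions of $V(q)$ separated by a curve or arc carrying no transverse measure of $V(q)$: if $c$ is such a simple closed curve or proper arc then $i(c,G^n)\le i(c,V(q_n))\to i(c,V(q))=0$, and a connected minimal foliation crossing $c$ arbitrarily little cannot fill subsurfaces on both sides of $c$. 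This rules out fusing across a proper arc (which separates interior parts and meets the boundary) and rules out absorbing a boundary annulus (whose core is boundary parallel, with no interior on the far side). Hence the support of $G$ lies in a single interior part and avoids all boundary components, so by internal indecomposability $G$ is supported on a single interior component; being $\le V(q)$, it is a multiple of that component and therefore indecomposable. Thus every accumulation point is indecomposable, $(q_n)$ converges strongly, and $q$ is infusible.

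I expect the main obstacle to be the rigidity argument in the second implication, namely making precise that a connected limit cannot straddle a proper arc or swallow a boundary annulus. I plan to control this through the inequality $i(c,G^n)\le i(c,V(q_n))$ for well-chosen separating curves and arcs $c$, combined with the fact that a minimal component has positive intersection with every curve it fills; the doubling trick of \cref{se:doublingtrick} may be convenient for transferring the boundary cases to interior statements on $S^d$. A secondary technical point, in the first implication, is ensuring that the fusing foliations $H^n$ stay disjoint from the other components of $V(q)$ so that they are genuinely components of $V_n$; this is handled by choosing $Y$ inside the complement of the other supports.
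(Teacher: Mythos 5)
Your reduction of the theorem to the two implications via \cref{th:Busemanncontinuityifstrong} matches the paper, but both halves of your argument have genuine gaps, and in each case the missing ingredient is precisely the content of the paper's proof. For the implication ``internally decomposable implies fusible,'' your fusing mechanism --- indecomposable, minimal, uniquely ergodic foliations $H^n$ supported in a subsurface $Y$ and converging to $G_1+G_2$ --- simply does not exist in general. Take $S$ to be a sphere with two boundary circles, each carrying marked points, and one interior marked point (so $3g+2b_m+b_u+p=5>4$), and let $V(q)=G_1+G_2$ where $G_i$ is the curve parallel to the $i$-th boundary. Since these boundaries have marked points, the $G_i$ are \emph{interior} components lying in the same interior part, so $V(q)$ is internally decomposable and the theorem demands that $q$ be fusible. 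But this surface carries no minimal foliation components at all: a minimal component would have to fill an essential positive-complexity subsurface disjoint from $\partial S$ and the marked points, and topologically $S$ minus its boundary and marked points is a thrice-punctured sphere, so every measured foliation here is a weighted multicurve. Hence no subsurface $Y$ supports the foliations your construction needs, and your claim that connectedness plus two non-isotopic components forces positive complexity (in the sense of supporting minimal foliations) is false. The paper fuses instead by Dehn twisting: \cref{le:interesctingcurves} produces a small system of disjoint curves and arcs meeting every component, and twisting it along the $G_i$ yields \emph{single curves or arcs} --- automatically indecomposable --- converging projectively to a decomposable limit (\cref{pr:onlyifpart}). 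Note that even on closed surfaces, whenever $V(q)$ has components besides $G_1,G_2$, your argument needs density of minimal filling foliations of the bordered subsurface $Y$ near $G_1+G_2$, which you never justify; it is exactly the kind of statement that fails near foliations containing arcs, by the paper's own \cref{le:notsplittingboundary4}.

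For the converse implication, the pivot of your argument --- ``a connected minimal foliation crossing $c$ arbitrarily little cannot fill subsurfaces on both sides of $c$'' --- is false as a general principle, and it cannot be the engine of the proof: on a closed surface, uniquely ergodic filling foliations are connected, are dense in $\MF$, and hence converge to decomposable foliations $F_1+F_2$ spanning both sides of a separating curve $c$ while $i(c,G^n)\to i(c,F_1+F_2)=0$. This is precisely why fusing is possible at all, so smallness of intersection with $c$ rules out nothing. What makes the paper's argument (\cref{pr:ifpart}) work is not smallness but \emph{exactness}: \cref{pr:notsplittingboundary} shows that for large $n$ the boundary components of $V(q)$ are contained, with weights tending to $1$, as honest components of $V(q_n)$; consequently an indecomposable component $G^n$ of $V(q_n)$ is either a multiple of a proper arc of $V(q)$ or literally disjoint from it, so the crossing is zero, not merely small, and the connected $G^n$ is trapped in a single interior part. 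Combined with the dichotomies of \cref{le:notsplittingboundary4,le:notsplittingboundary5} (a limit of indecomposable foliations either is a multiple of a proper arc or boundary annulus, or contains none), this forces the limit to be a multiple of the unique interior component of one interior part. These persistence and dichotomy results, whose proofs via test curves and minimal-position arguments occupy most of the paper's section, are assumed rather than proved in your proposal.
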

	
	This result is somewhat straightforward whenever $S$ does not have boundary, as in order to have a sequence $(q_n)$ that converges to $q$ but not strongly we need a sequence of components of $V(q_n)$ converging to a decomposable component of $V(q)$, but if $S$ is closed and $V(q)$ is internally indecomposable, then $V(q)$ only has one indecomposable component. Conversely, if $V(q)$ has more than one indecomposable component, as $S$ does not have boundary $V(q)$ can be approached by a sequence of simple closed curves, so the associated sequence of quadratic differentials converges to $q$ but not strongly.
	
	For surfaces with boundary the proof is more involved, as simple closed curves are no longer dense. However, the density of multicurves from Proposition \ref{pr:curvesdense} allows us to follow a slightly similar reasoning. We begin by proving some results regarding the shape that foliations have to take when approaching a foliation with boundary components, namely, boundary components have to be eventually included in the approaching foliations.
	
	\begin{proposition}\label{pr:notsplittingboundary}
		Let $(F_n)$ be a sequence of measured foliations converging to a measured foliation $F$, let $G$ be the union of the boundary components of $F$ and let $H$ be such that $F=H+G$. Then, for $n$ big enough, $F_n=H_n+a_n G$, with $a_n$ converging to $1$ and $H_n$ converging to $H$.
	\end{proposition}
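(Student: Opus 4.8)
The plan is to peel the boundary components of $F$ off one at a time, showing that each of them is already present in $F_n$ for $n$ large, with a weight converging to its weight in $F$, and then to check that the complementary part converges to $H$. Write $G=\sum_i w_i G^{(i)}$ as the sum of the boundary components of $F$ (proper arcs and boundary annuli). To each $G^{(i)}$ I would attach test multicurves whose intersection numbers read off the weight of the $G^{(i)}$-type part of any foliation while being insensitive to $H$ and to the other boundary components, so that continuity of intersection numbers forces the corresponding weight in $F_n$ to converge to $w_i$. Once persistence is known, set $a_n^{(i)}$ to be the weight of $G^{(i)}$ in $F_n$ and define $H_n:=F_n-\sum_i a_n^{(i)}G^{(i)}$, which is a genuine measured foliation since the $G^{(i)}$ are actual components of $F_n$. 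Because the supports are disjoint, $i(\alpha,H_n)=i(\alpha,F_n)-\sum_i a_n^{(i)}\,i(\alpha,G^{(i)})$ for every multicurve $\alpha$, and the right-hand side converges to $i(\alpha,H)$; hence $H_n\to H$. This gives the proposition, with $\sum_i a_n^{(i)}G^{(i)}$ (each $a_n^{(i)}\to w_i$) playing the role of the boundary part $a_nG$ converging to $G$.

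The mechanism that makes boundary components rigid, in contrast to interior thickened curves, is that a curve parallel to a boundary circle $\partial_j$ has zero geometric intersection with \emph{every} closed curve, being pushable into a collar of $\partial_j$; only proper arcs meeting that collar detect it. Concretely, for a boundary annulus $G^{(i)}$ parallel to a boundary $\partial_j$ without marked points, fix a thin collar $N_j$ of $\partial_j$. In minimal position every interior component can be isotoped off $N_j$, so inside $N_j$ a foliation meets only leaves parallel to $\partial_j$ (an annulus) together with proper arcs that cross $N_j$ to reach $\partial_j$; a transversal to the annular leaves then registers the annulus weight up to the contribution of the through-arcs, which one controls with finitely many further test arcs of $F$. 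For a proper arc $G^{(i)}$ one argues similarly using a transverse arc dual to $G^{(i)}$ and confined near it, together with the fact that the two endpoints of $G^{(i)}$ lie on $\partial\Sigma$: near such an endpoint $F$ has a half-leaf hitting the boundary, and convergence forces $F_n$ to have leaves hitting the boundary in the same place, i.e.\ an arc in the class of $G^{(i)}$. Upper semicontinuity of these weights (they cannot jump up) is immediate from minimal position, so the content is the matching lower bound. This whole bookkeeping can alternatively be organized through the doubling trick of \cref{se:doublingtrick}: each boundary component of $F$ doubles to a $\sigma$-invariant simple-closed-curve component of $F^d$ supported on, or crossing, the fixed locus $\mathrm{Fix}(\sigma)=\partial\Sigma$, and the identity $i(G^d,F^d)=2\,i(G,F)$ transports the estimates to the closed surface $S^d$.

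The step I expect to be the main obstacle is the lower bound: ruling out that a boundary component of $F$ is only \emph{imitated} in the limit by interior pieces of $F_n$ that happen to reproduce its intersection numbers. This really is a phenomenon for interior curves (for instance along Dehn-twist orbits), so some genuine rigidity is needed and cannot come from soft continuity alone. The resolution is the collar confinement above: a thin collar $N_j$ can, in minimal position, contain only leaves parallel to $\partial_j$ and arcs crossing it to reach $\partial_j$, because an interior leaf cannot thread $N_j$ without crossing $\partial_j$, which is impossible; consequently the transverse measure $F_n$ places across $N_j$ is carried by an honest $\partial_j$-parallel annulus whose weight we have forced to converge to $w_i$, so the topological type of each boundary component persists and not merely its numerical data. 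Here I would also lean on \cref{pr:curvesdense}, which lets me reduce convergence testing to multicurves and to produce the approximating arcs. The same analysis shows that any extra boundary components carried by $F_n$ must have weight tending to $0$, so they are absorbed harmlessly into $H_n$ and do not spoil $H_n\to H$.
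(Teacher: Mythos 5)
Your skeleton is sound where it matches the paper's endgame: once one knows that each boundary component of $F$ persists in $F_n$ with weight converging to its weight in $F$, defining $H_n$ by subtraction and using additivity of intersection numbers over disjoint components does give $H_n\to H$; you are also right that upper semicontinuity of these weights is the easy direction, and that the arcs of $F_n$ ending on an unmarked boundary $\partial_j$ have total weight tending to $0$ (test against the boundary-parallel curve $\gamma_j$ itself, noting $i(F,\gamma_j)=0$). The genuine gap is the step you yourself flag as the main obstacle: the lower bound. The ``test multicurves \dots insensitive to $H$ and to the other boundary components'' that your argument needs do not exist. Any isotopy-invariant test object detecting the collar of $\partial_j$ (an essential proper arc $\delta$ with endpoints on the boundary, say) satisfies $i(H,\delta)>0$ whenever $H$ has a filling minimal component, and, worse, it meets the other components of $F_n$ in a way convergence does not control: writing $F_n=A_n\gamma_j+P_n+R_n$ (boundary-annulus part, arcs ending on $\partial_j$, rest), the through-arcs $P_n$ have total weight tending to $0$ yet may wind arbitrarily, so $i(P_n,\delta)$ need not tend to $0$; indeed $P_n$ can converge to a nonzero foliation (weight-$1/n$ arcs twisted $n$ times about an interior curve converge to a multiple of that curve). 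Hence the reading $i(F_n,\delta)=A_n\,i(\gamma_j,\delta)+i(P_n,\delta)+i(R_n,\delta)$ cannot be attributed to the annulus, and the limit relation $F=(\lim A_n)\gamma_j+\lim P_n+\lim R_n$ yields $A_n\to w_j$ only after one proves that $\lim P_n$ and $\lim R_n$ contain no copy of the boundary component. That is exactly the mimicry you set out to exclude, and collar confinement does not exclude it, because the danger never comes from leaves lying geometrically inside the collar: it comes from components of $F_n$ lying entirely outside it whose intersection numbers converge to those of a boundary component. In the paper this rigidity is precisely the content of \cref{le:notsplittingboundary4,le:notsplittingboundary5}, proved via explicit curve systems (the test curves $\beta_1,\beta_2$, the surrounding multicurve $A$ with the strict inequality $i(G,\beta_1)+i(G,\beta_2)>i(G,A)$, the pair-of-pants curves for annuli), the reduction to multicurves via \cref{pr:curvesdense}, the low-complexity classification of \cref{le:polygonsfinitelymanyarcs}, and a diagonal argument; nothing in your sketch substitutes for these.

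Two further assertions in your outline are unjustified. First, ``convergence forces $F_n$ to have leaves hitting the boundary in the same place'' is not something convergence in $\MF$ can give: convergence is weak convergence of intersection numbers against multicurves, and it provides no local control on leaves, nor on realizations of the $F_n$ in minimal position with respect to a fixed collar (so a transversal with an interior endpoint is not a legitimate test object). Second, the proposed reorganization through the doubling trick of \cref{se:doublingtrick} transports the statement to the closed surface $S^d$, where the doubles of boundary components are \emph{interior} curves and persistence of components under limits is genuinely false (this is the Dehn-twist phenomenon you yourself cite); the doubled statement can only hold because the approximating foliations are themselves doubles, a constraint your outline never exploits. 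The correct mechanism you half-identify --- twisting about a curve parallel to an unmarked boundary is isotopically trivial because arc endpoints may slide along that boundary --- is the right intuition, but it still has to be converted into intersection-number rigidity, which is what the paper's lemmas accomplish and what is missing here.
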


	In particular, the proper arcs of the limiting foliation have to be included in the approaching foliations. Hence, we will be able to separate the surface along these proper arcs into the interior parts of the limiting foliation, and study the convergence in each of these parts.

	We say that a subset of a boundary component is a \emph{boundary arc} if it is homeomorphic to an open interval or a circle, does not contain marked points and, if it is homeomorphic to an open interval, it is delimited by marked points.

	Repeating the argument by Chen--Chernov--Flores--Fortier Bourque--Lee--Yang \cite{Fortier2} to a more general setting we get the following characterization of foliations on simple surfaces, which we shall use to solve the simpler cases.
	\begin{lemma}\label{le:polygonsfinitelymanyarcs}
		Let $S$ be a sphere with one boundary component possibly containing boundary marked points and one interior marked point. Then every indecomposable foliation on $S$ is a proper arc and there are finitely many distinct proper arcs.
	\end{lemma}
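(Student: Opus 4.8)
The plan is to establish three facts: that $S$ carries no essential simple closed curve, that it carries no minimal component, and that it has only finitely many isotopy classes of essential proper arcs. Combined with the decomposition of a measured foliation into thickened curves and minimal components, the first two facts force every indecomposable foliation to be a proper arc, and the third yields finiteness.

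First I would rule out essential simple closed curves. Since $S$ is a disk, any simple closed curve $c$ in its interior bounds a disk $D_c$ on one side. The boundary marked points lie on $\partial S$, hence outside $D_c$, so $D_c$ contains at most the single interior marked point $p_0$. By the first bullet in the definition of a multicurve (no circle component bounds a disk with at most $1$ marked point), $c$ is inessential. In particular there are no boundary annuli, and no thickened-curve component of a foliation can be a closed curve.

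Next I would rule out minimal components. A minimal component does not meet $\partial S$, so it is supported on an essential subsurface $R$ contained in the interior of the disk. Every essential simple closed curve in the once-marked open disk is boundary-parallel --- either parallel to $\partial S$ or encircling $p_0$ --- so up to homeomorphism $R$ is an annulus or a disk, which has complexity $3g-3+n<1$ and therefore supports no filling (minimal) measured foliation. Hence the decomposition of any measured foliation into indecomposable pieces contains only thickened curves, and by the previous paragraph these are proper arcs. The subtlety to make precise here is the claim that no interior subsurface is complex enough; I would phrase it by observing that cutting the interior of $S$ along any essential curve produces only annuli and once-marked disks, none of which admits a minimal foliation.

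Finally I would prove finiteness of the proper arcs. An essential proper arc $\alpha$ is embedded with endpoints on $\partial S\setminus P$ and separates $S$ into two disks, exactly one of which, say $D_\alpha$, contains $p_0$; by the second bullet of the multicurve definition, the other disk must carry at least two boundary marked points. Because $S$ is planar and $p_0$ is its only interior marked point, $\alpha$ cannot wind, so its isotopy class rel $P$ is completely determined by the unordered pair of arcs of $\partial S\setminus P$ containing its two endpoints, together with the side of $\alpha$ on which $p_0$ lies. There are only finitely many such combinatorial choices --- at most $2\binom{n+1}{2}$ when there are $n$ boundary marked points --- so there are finitely many distinct proper arcs. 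I expect the main work to lie in justifying rigorously that this combinatorial datum determines $\alpha$ up to isotopy, which I would do by straightening $\alpha$ after cutting along it, or by adapting the corresponding argument of Chen--Chernov--Flores--Fortier Bourque--Lee--Yang \cite{Fortier2} to the present setting.
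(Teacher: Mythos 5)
Your proposal is correct in outline and reaches the statement by a genuinely different route, so it is worth comparing the two. For the first claim, the paper disposes of non-arc components in one stroke: a recurrent leaf leads to a contradiction, by the argument of \cite[Lemma 4.1]{Fortier2}. You instead argue self-containedly that closed curves are inessential (each bounds a disk with at most one marked point) and that a minimal component, which cannot meet $\partial S$, would have to fill a subsurface of the interior of the once-marked disk, where every subsurface has complexity too low to carry a minimal foliation; this is valid and avoids outsourcing the dynamics, at the price of invoking the standard relation between minimal foliations and complexity. For finiteness the difference is more substantive. The paper never classifies arcs: it takes \emph{any three} proper arcs with endpoints in a fixed pair of boundary arcs, makes them pairwise disjoint by sliding endpoints along those boundary arcs, and observes by pigeonhole (there is only one interior marked point) that two of them must cobound an unmarked rectangle, hence be isotopic --- giving at most two classes per pair of boundary arcs. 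You instead assert a complete classification: the pair of boundary arcs carrying the endpoints, together with the side containing the interior marked point, determines the isotopy class. That claim is true (and strictly stronger than what the paper establishes), but, as you acknowledge, it is exactly the step requiring work; when you fill it in --- make two arcs with the same datum disjoint and observe that the region between them is a rectangle with no marked points --- you will be writing precisely the paper's mechanism, applied to two arcs rather than three. The pigeonhole formulation is what lets the paper bypass proving that your combinatorial datum is a complete invariant. Two small points if you pursue your route: encode ``the side containing $p_0$'' combinatorially (say, by the set of boundary marked points on that side), since otherwise it is not obviously a finite datum; and note that when both endpoints lie in the same boundary arc only one side can be essential, so your bound $2\binom{n+1}{2}$ over-counts slightly, though it remains a valid finite bound.
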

	\begin{proof}
		Assuming that there is some foliation $F$ with a recurrent leaf to some part of $S$ we get a contradiction, as explained in the proof of \cite[Lemma 4.1]{Fortier2}. Hence, each indecomposable foliation is a curve. Any closed curve in $S$ is contractible to the marked point. Hence, a each indecomposable foliation is a proper arc.

        A proper arc in $S$ must have two endpoints, which must be contained in the boundary arcs in the boundary component of $S$. Denote $b_1$ and $b_2$ these two boundary arcs, which might be the same. We aim to show that there are at most two classes of arcs with endpoints in $b_1$ and $b_2$. Fix three proper arcs with endpoints on $b_1$ and $b_2$. Any intersection between these arcs can be removed by doing isotopies moving the endpoints along the arcs $b_1$ or $b_2$. Hence, these arcs can be isotoped to not intersect each other. Since there is only one interior marked point, two of these arcs delimit a rectangle with no marked interior marked points, so are isotopic. Hence, there are at most two different proper arcs between $b_1$ and $b_2$. There are finitely marked points in the boundary component, so there are finitely many boundary arcs. Therefore, there are finitely many pairs of boundary arcs, and since we have at most two proper arcs per pair, there are also finitely many different proper arcs.
	\end{proof}

	We shall first see the proposition for the case where $G$ contains a proper arc and we are approaching with a sequence of indecomposable foliations.
	\begin{lemma}\label{le:notsplittingboundary4}
		Let $S$ be a surface and let $(F_n)$ be a sequence of indecomposable foliations on $S$ converging to a measured foliation $G$. Then $G$ is either a multiple of a proper arc $\gamma$, in which case $F_n$ is also a multiple of $\gamma$ for $n$ big enough, or $G$ does not contain a proper arc.
	\end{lemma}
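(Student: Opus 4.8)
The plan is to analyze the limiting foliation $G$ by distinguishing whether the proper arc component survives in the limit. Recall that $G$ is a limit of indecomposable foliations $F_n$, so $G$ itself need not be indecomposable---the whole point of fusibility phenomena is that indecomposable foliations can limit to decomposable ones. Suppose $G$ contains a proper arc $\gamma$ with positive weight; I want to show that then $G$ must in fact be a multiple of $\gamma$ and that the $F_n$ are eventually multiples of $\gamma$ as well. The contrapositive branch is the trivial one: if no proper arc appears in $G$, there is nothing to prove. So the real content lies entirely in the case where a proper arc is present in the limit.

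\textbf{Exploiting the endpoints of the proper arc.}
First I would use the fact that a proper arc $\gamma$ has its endpoints on $\partial S \setminus P$, and that $\gamma$ is a \emph{boundary component} in the terminology introduced before \cref{th:maxcondition}. The key topological observation is that the presence of $\gamma$ in $G$ is detected by intersection numbers with a suitable family of test multicurves: since $i(\alpha, F_n) \to i(\alpha, G)$ for every multicurve $\alpha$, and since $i(\cdot,\gamma)$ is positive for curves crossing $\gamma$ but zero for curves disjoint from it, one can localize the analysis near the boundary arc containing the endpoints of $\gamma$. I would choose test arcs $\alpha$ dual to $\gamma$ (crossing it once) to force that, for $n$ large, $F_n$ must carry a component whose intersection pattern matches that of $\gamma$. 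Because $F_n$ is \emph{indecomposable}, it cannot simultaneously contain a proper arc component and another nontrivial component; indecomposability means $F_n$ is a single thickened curve or a single minimal component. Hence once $F_n$ is forced to behave like a proper arc near the relevant boundary, it must equal a multiple of that proper arc.

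\textbf{Finiteness and stabilization.}
The technical engine here is a finiteness statement analogous to \cref{le:polygonsfinitelymanyarcs}: in any fixed neighborhood of the boundary region determined by the endpoints of $\gamma$, there are only finitely many isotopy classes of proper arcs that can appear, because they are constrained by which boundary arcs they connect and by the combinatorics of the marked points. Using that $F_n$ is indecomposable and converges to something containing $\gamma$, I would argue the proper arc class of $F_n$ must stabilize to the class of $\gamma$ for $n$ large---otherwise a subsequence with a different fixed class would converge to a different proper arc (or to something with a transverse intersection with $\gamma$), contradicting $i(\cdot,F_n)\to i(\cdot,G)$ tested against an arc dual to $\gamma$. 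Once the class is fixed at $\gamma$, indecomposability forces $F_n = a_n \gamma$ for a scalar $a_n>0$, and continuity of intersection numbers gives $a_n i(\alpha,\gamma) = i(\alpha,F_n)\to i(\alpha,G)$, so $G$ is itself a multiple of $\gamma$ and $a_n$ converges.

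\textbf{Main obstacle.}
The hardest step is ruling out the intermediate scenario where $F_n$ is a minimal (non-arc) indecomposable component that nevertheless limits onto a foliation containing a proper arc $\gamma$---this is precisely the degeneration that makes proper arcs subtle, since a long thin band of an interior-type foliation can shadow a boundary arc before pinching. I expect this is where \cref{le:polygonsfinitelymanyarcs} and the dual-arc intersection tests must be combined carefully: one needs that the intersection number of $F_n$ with an arc crossing $\gamma$ stays bounded (it converges to $i(\alpha,\gamma)$), while a genuinely interior minimal component would be forced to have unbounded or structurally incompatible intersections with a well-chosen test family. Making this incompatibility precise, rather than merely plausible, is the crux of the argument.
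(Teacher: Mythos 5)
Your skeleton matches the paper's (dichotomy, test objects detecting $\gamma$, indecomposability forcing $F_n$ to be an arc, then finiteness and stabilization), but two of the load-bearing steps are wrong or missing. First, your test objects do not do the job you assign them. An arc dual to $\gamma$, crossing it once, passes through the interior of the surface, so a genuinely minimal interior component can have positive intersection with it; positive intersection with such an arc therefore does \emph{not} force $F_n$ to ``behave like a proper arc near the boundary.'' The paper instead uses boundary-hugging test curves $\beta$ --- a curve parallel to the boundary component containing the endpoint arc $b$ of $\gamma$, or an arc running alongside $b$ when that boundary has several marked points. Such a $\beta$ can be isotoped into an arbitrarily thin collar of the boundary, so any closed curve or minimal component has zero intersection with it; hence $i(F_n,\beta)>0$ (which follows from $i(G,\beta)\ge w\, i(\gamma,\beta)>0$) forces leaves of $F_n$ to terminate on $b$, and the structure theorem (minimal components are disjoint from $\partial S$) plus indecomposability then gives $F_n=w_n\gamma_n$ for a proper arc $\gamma_n$. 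This also shows that what you flag as the ``main obstacle'' --- ruling out minimal components shadowing a proper arc --- is in fact the easy step, and your proposed mechanism for it (minimal components would have ``unbounded'' intersections) is not a real phenomenon: all the intersection numbers $i(F_n,\alpha)$ converge to finite limits, so no unboundedness can ever be extracted.

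Second, your finiteness claim is false as stated, and it is precisely where the real work lies. In a surface with genus or enough marked points there are \emph{infinitely} many isotopy classes of proper arcs joining two fixed boundary arcs (they differ by twisting around handles and punctures), so ``constrained by which boundary arcs they connect'' gives no finiteness, and the stabilization argument you build on it has no basis. The finiteness statement \cref{le:polygonsfinitelymanyarcs} holds only for a sphere with one boundary component and at most one interior marked point. To invoke it, the paper must first \emph{confine} $\gamma_n$: it constructs a multicurve $A$ (case analysis on where the endpoints of $\gamma$ lie and how many marked points the relevant boundaries carry) cutting off the smallest subsurface $P$ containing $\gamma$, proves the strict inequality $i(G,\beta_1)+i(G,\beta_2)>i(G,A)$ by analyzing which leaves of $G$ can cross $A$, transfers this inequality to $\gamma_n$ for $n$ large via convergence of intersection numbers, and deduces that $\gamma_n$ cannot leave $P$. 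A separate argument (the weights $w_n$ are bounded away from $0$, so $i(\gamma_n,\gamma)=0$ eventually) then traps $\gamma_n$ in a component of $P$ cut along $\gamma$, which \emph{is} simple enough for \cref{le:polygonsfinitelymanyarcs} to apply, and only then does your stabilization argument go through. This confinement construction is the crux of the paper's proof and is absent from your proposal.
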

	\begin{proof}
		Assume $G$ contains a proper arc $\gamma$ with weight $w>0$ and denote $b$ one of the boundary arcs where $\gamma$ is incident.
		
		Our first step is seeing that, for $n$ big enough, $F_n$ intersects $b$. We shall do this by finding different test curves $\beta$ depending on the shape of $b$.
		If the boundary component containing $b$ has at most one marked point, we consider $\beta$ to be a curve parallel to that boundary component as in \cref{fi:samplecurves1}.
		Otherwise we consider $\beta$ to be the curve defined by taking a small arc starting at the boundary arc next to $b$, concatenating with a curve parallel to $b$, and concatenating another segment with endpoint in the boundary arc after $b$, as shown in \cref{fi:samplecurves2}.
		
		\begin{figure} \centering
			\begin{subfigure}[b]{0.45\textwidth}
				\centering
				\resizebox{\linewidth}{!}{
				\begin{tikzpicture}[scale=2]
					\node [label=right:$b$] at (-0.5,0) {};
					\draw  (0,0) ellipse (0.5 and 0.5);
					
					\node [label=left:$\beta$] at (-0.7,0) {};
					\draw  (0,0) ellipse (0.7 and 0.7);
					
					\node [label=above:$\gamma$] at (1,0) {};
					\draw (0.5,0) to [out=0, in =120] (1,0) to [out=-60,in=200] (1.5,0);		
				\end{tikzpicture}}  
				\caption{}
				\label{fi:samplecurves1}
			\end{subfigure}
			\begin{subfigure}[b]{0.45\textwidth}
				\centering
				\resizebox{\linewidth}{!}{
				\begin{tikzpicture}[scale=2]
					\node [label=above:$b$] at (0,-0.5) {};
					\draw  (0,0) ellipse (0.5 and 0.5);
					
					\node[circle,fill,inner sep=1pt] at (120:0.5) {};
					\node[circle,fill,inner sep=1pt] at (80:0.5) {};
					\node[circle,fill,inner sep=1pt] at (230:0.5) {};
					
					\node [label=left: ] at (-0.7,0) {};
					
					\node [label=above:$\beta$] at (50:0.7) {};
					\draw (90:0.5) to (90:0.7);
					\draw (220:0.5) to (220:0.7);
					\draw (90:0.7) arc (90:-140:0.7) ;
					
					\node [label=above:$\gamma$] at (1,0) {};
					\draw (0.5,0) to [out=0, in =120] (1,0) to [out=-60,in=200] (1.5,0);
				\end{tikzpicture}}  
				\caption{}
				\label{fi:samplecurves2}
			\end{subfigure}
			\caption{Sample curves used in the proof of \cref{le:notsplittingboundary4}}
			\label{fi:samplecurvesboth}
		\end{figure}
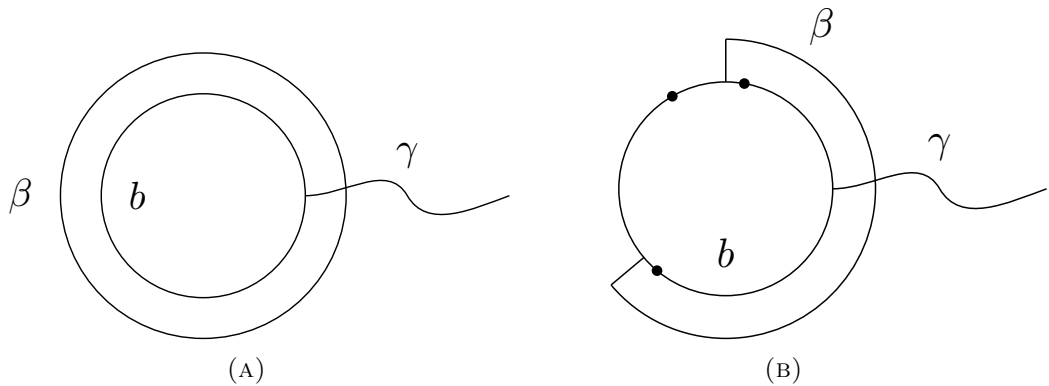
		
		If the curve $\beta$ is contractible then $S$ is a sphere with one boundary component and at most one interior marked point, so by \cref{le:polygonsfinitelymanyarcs} the result follows. Assume then that $\beta$ is not contractible. We have $i(\gamma,\beta)>0$, so $i(G,\beta)>0$ and hence $i(F_n,\beta)>0$ for $n$ big enough, which implies that $F_n$ intersects $b$. Hence, since $F_n$ is indecomposable, it is a weighted proper arc, which we denote $w_n\gamma_n$, where $w_n>0$ is the weight at $\gamma_n$ is a proper arc.
		
		Denote $b_1$ and $b_2$ the boundary arcs where $\gamma$ has its endpoints, and denote $\beta_1$ and $\beta_2$ the associated test curves shown in \cref{fi:samplecurvesboth}. If both endpoints are in the same boundary arc we set $b_2$ and $\beta_2$ as null curves. We shall now find a multicurve $A$ surrounding $\gamma$, $b_1$ and $b_2$ such that any leaf of $G$ intersecting $A$ but not $\gamma$ has an endpoint in either $b_1$ or $b_2$. The multicurve $A$ is chosen so that, together with the boundaries where $\gamma$ has its endpoints, delimits the smallest surface containing $\gamma$. The precise shape of $A$ depends on whether the endpoints of $\gamma$ are in the same boundary component or not, and the distribution of marked points in these boundaries.
				
		If both endpoints of $\gamma$ are in different boundary components we proceed differently according to the distribution of marked points at these boundaries. If each of the boundaries contains at most one marked point then we define $A$ as the curve shown in \cref{fi:boundarycurves1}. If one of the boundary components has two or more marked points, but the other has at most one marked point we define $A$ as the arc shown in \cref{fi:boundarycurves2}. Finally, if each of the boundaries contains at least two marked marked points we define $A$ as the multicurve formed by the curves $A_1$ and $A_2$ as shown in \cref{fi:boundarycurves3}.
		
		If both endpoints $\gamma$ are in the same boundary we also proceed differently according to the distribution of marked points. In all cases $A$ is defined as a multicurve formed by two curves. If each possible segment within the boundary component joining the two endpoints has at most one marked points we proceed as in \cref{fi:boundarycurves4}. If one of these segments has two or more marked points, while the other has at most one we proceed as in \cref{fi:boundarycurves5}. Finally, if both of these segments have two or more marked points we proceed as in \cref{fi:boundarycurves6}.
		
		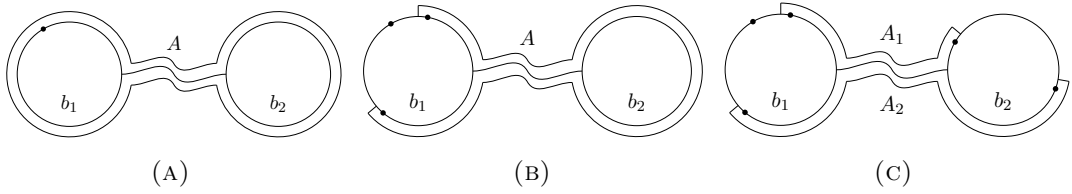
\begin{figure} \centering
			\begin{subfigure}[b]{0.30\textwidth}
				\centering
				\resizebox{\linewidth}{!}{
					\begin{tikzpicture}[scale=2]
						\node [label=above:$b_1$] at (0,-0.5) {};
						\draw  (0,0) ellipse (0.5 and 0.5);
						
						\node[circle,fill,inner sep=1pt] at (120:0.5) {};
						
						\node [label=above:$A$] at (1,0.1) {};
						\draw (-10:0.6) arc (-10:-350:0.6) ;
						\draw (10:0.6) to [out=0, in =120] (1,0.1) to [out=-60,in=180] ($(170:0.6)+(2,0)$);
						\draw ($(170:0.6)+(2,0)$) arc (170:190-360:0.6) ;
						\draw (-10:0.6) to [out=0, in =120] (1,-0.1) to [out=-60,in=180] ($(190:0.6)+(2,0)$);
						
						\draw (0.5,0) to [out=0, in =120] (1,0) to [out=-60,in=180] (1.5,0);		
						
						\node [label=above:$b_2$] at (2,-0.5) {};
						\draw  (2,0) ellipse (0.5 and 0.5);
				\end{tikzpicture}}  
				\caption{}
				\label{fi:boundarycurves1}
			\end{subfigure}
			\begin{subfigure}[b]{0.30\textwidth}
				\centering
				\resizebox{\linewidth}{!}{
				\begin{tikzpicture}[scale=2]
					\node [label=above:$b_1$] at (0,-0.5) {};
					\draw  (0,0) ellipse (0.5 and 0.5);
					
					\node[circle,fill,inner sep=1pt] at (120:0.5) {};
					\node[circle,fill,inner sep=1pt] at (80:0.5) {};
					\node[circle,fill,inner sep=1pt] at (230:0.5) {};
					
					\node [label=above:$A$] at (1,0.1) {};
					\draw (90:0.5) to (90:0.6);
					\draw (220:0.5) to (220:0.6);
					\draw (90:0.6) arc (90:10:0.6) ;
					\draw (-10:0.6) arc (-10:-140:0.6) ;
					\draw (10:0.6) to [out=0, in =120] (1,0.1) to [out=-60,in=180] ($(170:0.6)+(2,0)$);
					\draw ($(170:0.6)+(2,0)$) arc (170:190-360:0.6) ;
					\draw (-10:0.6) to [out=0, in =120] (1,-0.1) to [out=-60,in=180] ($(190:0.6)+(2,0)$);
					
					\draw (0.5,0) to [out=0, in =120] (1,0) to [out=-60,in=180] (1.5,0);		
					
					\node [label=above:$b_2$] at (2,-0.5) {};
					\draw  (2,0) ellipse (0.5 and 0.5);
				\end{tikzpicture}}  
				\caption{}
				\label{fi:boundarycurves2}
			\end{subfigure}
			\begin{subfigure}[b]{0.30\textwidth}
			\centering
			\resizebox{\linewidth}{!}{
			\begin{tikzpicture}[scale=2]
				\node [label=above:$b_1$] at (0,-0.5) {};
				\draw  (0,0) ellipse (0.5 and 0.5);
				
				\node[circle,fill,inner sep=1pt] at (120:0.5) {};
				\node[circle,fill,inner sep=1pt] at (80:0.5) {};
				\node[circle,fill,inner sep=1pt] at (230:0.5) {};
				
				\node[circle,fill,inner sep=1pt] at ($(150:0.5)+(2,0)$) {};
				\node[circle,fill,inner sep=1pt] at ($(-20:0.5)+(2,0)$) {};

				\node [label=above:$A_1$] at (1,0.1) {};
				\draw (90:0.5) to (90:0.6);
				\draw (90:0.6) arc (90:10:0.6) ;
				\draw (10:0.6) to [out=0, in =120] (1,0.1) to [out=-60,in=180] ($(170:0.6)+(2,0)$);
				\draw ($(170:0.6)+(2,0)$) arc (170:140:0.6) ;
				\draw ($(140:0.5)+(2,0)$) to ($(140:0.6)+(2,0)$);
				
				\node [label=below:$A_2$] at (1,-0.1) {};
				\draw ($(190:0.6)+(2,0)$) arc (190:350:0.6) ;
				\draw (-10:0.6) to [out=0, in =120] (1,-0.1) to [out=-60,in=180] ($(190:0.6)+(2,0)$);
				\draw ($(350:0.5)+(2,0)$) to ($(350:0.6)+(2,0)$);
				\draw (220:0.5) to (220:0.6);
				\draw (-10:0.6) arc (-10:-140:0.6) ;
				
				\draw (0.5,0) to [out=0, in =120] (1,0) to [out=-60,in=180] (1.5,0);		
				
				\node [label=above:$b_2$] at (2,-0.5) {};
				\draw  (2,0) ellipse (0.5 and 0.5);
			\end{tikzpicture}}  
			\caption{}
			\label{fi:boundarycurves3}
		\end{subfigure}
			\caption{Construction of the curves $A_1$ and $A_2$ whenever $\gamma$ has endpoints in different boundary components in the proof of \cref{le:notsplittingboundary4}}
		\end{figure}
				
		\begin{figure} \centering
			\begin{subfigure}[b]{0.30\textwidth}
				\centering
				\resizebox{\linewidth}{!}{
				\begin{tikzpicture}[scale=2]
					\node [label=above:$b$] at (0,-0.5) {};
					\draw  (0,0) ellipse (0.5 and 0.5);
					
					\draw (20:0.5) to [out=20, in =0] (0.5,1) to [out= 180, in = 10] (-0.5,1.5) to [out=190,in=160] (160:0.5);
					
					\node [label=above:$A_1$] at (0,0.5) {};
					\draw (30:0.6) to [out=30, in =0] (0.5,0.9) to [out= 180, in = 10] (-0.45,1.4) to [out=190,in=150] (150:0.6);
					
					\node [label=above:$A_2$] at (0,1.4) {};
					\draw (10:0.6) to [out=10, in =0] (0.5,1.1) to [out= 180, in = 10] (-0.5,1.6) to [out=190,in=170] (170:0.6);
					
					\draw (10:0.6) arc (10:-190:0.6) ;
					
					\draw (30:0.6) arc (30:150:0.6) ;
					
					\node[circle,fill,inner sep=1pt] at (120:0.5) {};
				\end{tikzpicture}}  
				\caption{}
				\label{fi:boundarycurves4}
			\end{subfigure}
			\begin{subfigure}[b]{0.30\textwidth}
				\centering
				\resizebox{\linewidth}{!}{
				\begin{tikzpicture}[scale=2]
					\node [label=right:$b$] at (-0.5,0) {};
					\draw  (0,0) ellipse (0.5 and 0.5);
					
					\draw (20:0.5) to [out=20, in =0] (0.5,1) to [out= 180, in = 10] (-0.5,1.5) to [out=190,in=160] (160:0.5);
					
					\node [label=above:$A_1$] at (0,0.5) {};
					\draw (30:0.6) to [out=30, in =0] (0.5,0.9) to [out= 180, in = 10] (-0.45,1.4) to [out=190,in=150] (150:0.6);
					\draw (30:0.6) arc (30:150:0.6) ;
					
					\node [label=above:$A_2$] at (0,1.4) {};
					\draw (10:0.6) to [out=10, in =0] (0.5,1.1) to [out= 180, in = 10] (-0.5,1.6) to [out=190,in=170] (170:0.6);
					\draw (10:0.6) arc (10:-70:0.6) ;
					\draw (-70:0.6) to (-70:0.5) ;
					\draw (170:0.6) arc (170:230:0.6) ;
					\draw (230:0.6) to (230:0.5) ;
					
					\node[circle,fill,inner sep=1pt] at (220:0.5) {};
					\node[circle,fill,inner sep=1pt] at (300:0.5) {};
				\end{tikzpicture}}  
				\caption{}
				\label{fi:boundarycurves5}
			\end{subfigure}
			\begin{subfigure}[b]{0.30\textwidth}
				\centering
				\resizebox{\linewidth}{!}{
				\begin{tikzpicture}[scale=2]
					\node [label=right:$b_1$] at (-0.5,0) {};
					\node [label=left:$b_2$] at (0.5,0) {};
					\draw  (0,0) ellipse (0.5 and 0.5);
					
					\draw (20:0.5) to [out=20, in =0] (0.5,1) to [out= 180, in = 10] (-0.5,1.5) to [out=190,in=160] (160:0.5);
					
					\node [label=above:$A_1$] at (-0.5,0.5) {};
					\draw (30:0.6) to [out=30, in =0] (0.5,0.9) to [out= 180, in = 10] (-0.45,1.4) to [out=190,in=150] (150:0.6);
					\draw (30:0.6) arc (30:40:0.6) ;
					\draw (40:0.6) to (40:0.5) ;
					\draw (120:0.6) arc (120:150:0.6) ;
					\draw (120:0.6) to (120:0.5) ;
					
					\node [label=above:$A_2$] at (0,1.4) {};
					\draw (10:0.6) to [out=10, in =0] (0.5,1.1) to [out= 180, in = 10] (-0.5,1.6) to [out=190,in=170] (170:0.6);
					\draw (10:0.6) arc (10:-70:0.6) ;
					\draw (-70:0.6) to (-70:0.5) ;
					\draw (170:0.6) arc (170:230:0.6) ;
					\draw (230:0.6) to (230:0.5) ;
					
					\node[circle,fill,inner sep=1pt] at (220:0.5) {};
					\node[circle,fill,inner sep=1pt] at (300:0.5) {};
					
					\node[circle,fill,inner sep=1pt] at (30:0.5) {};
					\node[circle,fill,inner sep=1pt] at (60:0.5) {};	
					\node[circle,fill,inner sep=1pt] at (95:0.5) {};
					\node[circle,fill,inner sep=1pt] at (130:0.5) {};
				\end{tikzpicture}}  
				\caption{}
				\label{fi:boundarycurves6}
			\end{subfigure}
			\caption{Construction of the curves $A_1$ and $A_2$ whenever $\gamma$ has endpoints in the same boundary component in the proof of \cref{le:notsplittingboundary4}}
		\end{figure}
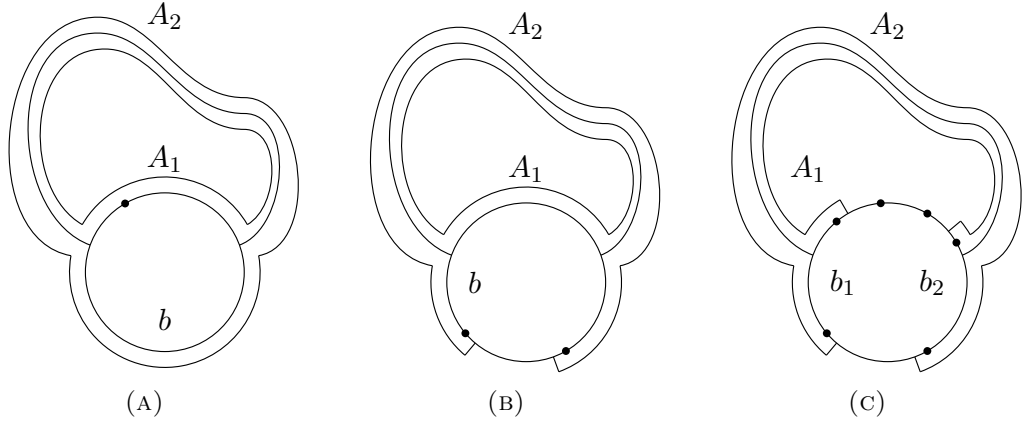
				
		In any of the cases above if a component of $A$ is non essential we remove it from $A$. The following argument also applies whenever $A$ is a null curve. Put $A$ and $G$ in minimal position and denote $P$ the surface containing $\gamma$, delimited by $A$ and the boundary components where $\gamma$ has its endpoints. Let $\alpha$ be a connected component of a non critical leaf of $G$ restricted to $P$ intersecting $A$. Since $G$ contains $\gamma$ the proper arc $\alpha$ cannot intersect $\gamma$. Furthermore, by observing the possible configurations, if $\alpha$ has one endpoint in $A_1$, the other one cannot be in $A_2$, as whenever we have both $A_1$ and $A_2$, these are separated within $P$ by the proper arc $\gamma$. Furthermore, if both endpoints are in $A_1$ then $\alpha$ can be isotoped to not intersect $A$. Therefore, the other endpoint of $\alpha$ is in either $b_1$ or $b_2$. Hence, $i(G,\beta_1)+i(G,\beta_2)\ge i(G,A)+w \, i(\gamma,\beta_1)+w \, i(\gamma,\beta_2)>i(G,A)$. Since $w_n \gamma_n$ converges to $G$, this last inequality implies that for $n$ big enough,
		\[i(\gamma_n,\beta_1)+i(\gamma_n,\beta_2)>i(\gamma_n,A).\]
		
		Fix $n$ such that $\gamma_n$ satisfies the previous inequality. Assume $\gamma_n$ has just one endpoint inside $P$. Then, $i(\gamma_n,\beta_1)+i(\gamma_n,\beta_2)=1$, so $i(\gamma_n,A)=0$ and $\gamma_n$ cannot leave $P$. If $\gamma_n$ has both endpoints in $P$ then $i(\gamma_n,\beta_1)+i(\gamma_n,\beta_2)=2.$ Furthermore, if $\gamma_n$ leaves $P$, then it has to reenter at some point, resulting in $i(\gamma_n,A_1+A_2)=2$. Hence, $\gamma_n$ stays inside $P$.
		
		The weights $w_n$ do not converge to $0$, as $w_ni(\gamma_n,\beta)$ converges to $i(G,\beta)$, but $i(\gamma_n,\beta)\le 2$. Since $\gamma$ is contained in $G$ we have $i(G,\gamma)=0$. Therefore, for any $\epsilon>0$ and $n$ big enough we have $w_ni(\gamma_n,\gamma)<\epsilon$, so for $n$ big enough $i(\gamma_n,\gamma)=0$. Since $\gamma_n$ does not intersect $\gamma$ and stays inside $P$, $\gamma_n$ can be isotoped to stay inside one of the components obtained after removing $\gamma$ from $P$. Denote $C$ such component. The component $C$ has either one or two boundary components and no interior marked points or one boundary component and one interior marked point. By \cref{le:polygonsfinitelymanyarcs} the only case where we do not have finitely many different proper arcs is when $C$ has two boundary components. However, in that case one of the boundary components is associated to a curve in $A$, so $\gamma_n$ does not intersect it and that boundary can be treated as a marked point. Hence, in all cases there are finitely many possible proper arcs, and so $\gamma_n$ is a multiple of $\gamma$ for $n$ big enough.
	\end{proof}
	
	When the boundary component is an annulus we have to be a bit more careful, so we start by proving it for approaching curves.
	\begin{lemma}\label{le:notsplittingboundary5}
		Let $S$ be a surface and let $(w_n\gamma_n)$ be a sequence of weighted curves on $S$ converging to a foliation $G$, where $(w_n)$ are the weights and $(\gamma_n)$ are the curves. Then $G$ is either a multiple of a boundary annulus $\gamma$, in which case $\gamma_n$ is $\gamma$ for $n$ big enough, or $G$ does not contain a boundary annulus.
	\end{lemma}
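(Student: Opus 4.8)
The plan is to mirror the structure of the proof of \cref{le:notsplittingboundary4}, but to replace the proper-arc test curves used there by a test object adapted to a boundary-\emph{parallel} closed curve. Suppose that $G$ contains a boundary annulus, whose core $\gamma$ is parallel to a boundary component $c$ carrying no marked points, and write this component as $w\gamma$ with $w>0$. The starting observation is that, since $\gamma$ is boundary parallel and every $\gamma_n$ is a closed curve, one has $i(\gamma_n,\gamma)=0$ for all $n$; hence each $\gamma_n$ can be isotoped off $\gamma$, so it lies either in the collar $R$ bounded by $\gamma$ and $c$ --- in which case $\gamma_n$ is isotopic to $\gamma$, the core being the only essential curve of an annulus --- or in the complementary subsurface $S'=S\setminus(R\cup\gamma)$. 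For the same reason every other indecomposable component of $G$ is disjoint from $\gamma$, and since $R$ is an annulus with no marked points it supports no component other than multiples of $\gamma$; thus we may write $G=w\gamma+H$ with $H$ supported in $S'$.

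First I would dispose of the degenerate case in which $S$ itself is an annulus: there the core is the unique essential curve, so $\gamma_n=\gamma$ trivially and $G$ is a multiple of $\gamma$. In every other case there exists an essential proper arc $a$ with both endpoints on $c$ that crosses $\gamma$, so that $i(\gamma,a)>0$. Closing $a$ up along one of the two subarcs $\sigma$ of $c$ joining its endpoints produces a closed curve $\hat a=a\cup\sigma$. The point of this construction is that $\gamma$, being boundary parallel, satisfies $i(\gamma,\hat a)=0$, whereas any measured foliation supported in $S'$ meets $a$ and $\hat a$ in exactly the same points, since these all lie in $S'$ while $\sigma$ and the collar portion of $a$ lie outside $S'$. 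Consequently $i(\gamma_n,a)=i(\gamma_n,\hat a)$ and $i(H,a)=i(H,\hat a)$, and therefore, using additivity of the intersection number over the disjoint components of $G$,
\[
i(G,a)-i(G,\hat a)=w\,i(\gamma,a)>0.
\]

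Now I would argue by contradiction: if $\gamma_n\subset S'$ for infinitely many $n$, then along that subsequence $w_n\,i(\gamma_n,a)=w_n\,i(\gamma_n,\hat a)$ for every $n$, and letting $n\to\infty$ using $w_n\gamma_n\to G$ gives $i(G,a)=i(G,\hat a)$, contradicting the displayed strict inequality. Hence $\gamma_n$ is isotopic to $\gamma$ for all large $n$. Finally, testing $w_n\gamma_n=w_n\gamma\to G$ against $a$ shows that $w_n$ converges to $i(G,a)/i(\gamma,a)$, so $G=\bigl(i(G,a)/i(\gamma,a)\bigr)\gamma$ is a multiple of the boundary annulus; this yields the claimed dichotomy.

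The step I expect to require the most care is the identity $i(\gamma_n,a)=i(\gamma_n,\hat a)$, and likewise $i(H,a)=i(H,\hat a)$. Although $\gamma_n$ and $a$ meet only inside $S'$, one must verify that closing $a$ up along $c$ neither creates nor destroys intersections in the minimal-position count, i.e.\ that no reducing bigon between $\gamma_n$ and $\hat a$ can escape $S'$ through the collar. This should follow because the boundary of such a bigon would have to cross $\gamma$, which is impossible as $\gamma_n$ is disjoint from $\gamma$. Making this transversality bookkeeping rigorous, together with checking the existence of a suitable crossing arc $a$ in the borderline configurations where $c$ has few or no marked points (and confirming that the only configurations admitting no such arc are precisely the annular ones already handled), is the main technical content of the argument.
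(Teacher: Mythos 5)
Your proof has a genuine gap at its central step: the identity $i(\gamma_n,a)=i(\gamma_n,\hat a)$ (and likewise $i(H,a)=i(H,\hat a)$) for curves and foliations supported in $S'$ is false in general, and it is false in exactly the way that kills the contradiction you are after. Here is a concrete counterexample. Let $S$ be a sphere with four boundary components $c,c_1,c_2,c_3$, let $\gamma$ be parallel to $c$, let $a$ be an essential arc with both endpoints on $c$ separating $\{c_1,c_2\}$ from $c_3$, and close $a$ up along the subarc $\sigma\subset c$ lying on the $c_3$-side. Then $\hat a=a\cup\sigma$ cobounds an annulus with $c_3$, so $\hat a$ is isotopic to the $c_3$-parallel curve. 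Now take $\delta\subset S'$ to be the curve separating $\{c_1,c_3\}$ from $\{c_2,c\}$ (not isotopic to $\gamma$): one checks $i(\delta,a)=2$ while $i(\delta,\hat a)=0$. So a sequence of curves $\gamma_n$ in $S'$ can perfectly well satisfy $i(\gamma_n,a)\neq i(\gamma_n,\hat a)$, and your limiting argument collapses: the only statement that survives is the inequality $i(\cdot,\hat a)\le i(\cdot,a)$ (close up a minimal-position representative and push it into the interior), which in the limit gives $i(G,\hat a)\le i(G,a)$ --- consistent with everything and yielding no contradiction. Note also that the identity you need cannot be salvaged by choosing the other subarc of $c$: which closing fails depends on $\delta$, and the approximating curves $\gamma_n$ are not known in advance.

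The reason your bigon justification does not rule this out is that a bigon between $\gamma_n$ and $\hat a$ is bounded by one subarc of \emph{each} curve, and only the $\gamma_n$-subarc is constrained to be disjoint from $\gamma$; the $\hat a$-subarc (which, in the representative $a\cup\sigma$, crosses $\gamma$ twice) is free to run through the collar, and in the example above the reducing bigon does exactly that. This is the essential difficulty: an arc anchored to the boundary carries intersection information that no single closed curve obtained by capping it off retains, because curves can ``escape around'' the capped-off side. The paper's proof circumvents this by never comparing $a$ with a capped-off curve: it localizes to a pair of pants neighborhood $P$ of $\gamma$ with boundary $B_1$ (parallel to $\gamma$), $B_2$, $B_3$, classifies the finitely many isotopy classes of arcs that leaves of $G$ or of $\gamma_n$ can trace in $P$, and derives the comparative inequality $i(C,G)>\max\bigl(i(B_2,G),i(B_3,G)\bigr)$ for the arc $C\subset P$, an inequality which every closed curve other than $\gamma$ violates. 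If you want to repair your argument, you would need to replace your exact identity by a relative comparison of this kind rather than by equality of intersection numbers under capping.
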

	\begin{proof}
		If $S$ is a polygon with at most one interior marked point, then $G$ cannot contain a boundary annulus. If $S$ is a cylinder then, since we have a boundary annulus, at least one of the boundaries must not contain marked points. Hence, the number of curves is finite, as there is only one possible closed curve, and for counting the proper arcs we can consider the boundary without marked points as a marked point and apply \cref{le:polygonsfinitelymanyarcs}. In that case, the conclusion follows.
		
		Assume then that $S$ is neither a disk with at most one interior marked point nor a cyclinder with no interior marked points. Then there is a pair of pants $P$ in $S$ containing $\gamma$ where each boundary component of $P$ is either non contractible or contractible to a marked point. Denote $B_1$ the boundary component parallel to $\gamma$ and $B_2$ and $B_3$ the other two boundary components of $P$. Furthermore, assume that $G$ contains $\gamma$ with weight $w$.
		
		Begin by assuming that $B_2$ and $B_3$ are not contractible to marked points. Let $C$ be the proper arc contained in $P$ with both endpoints in $B_1$. Put $B_2$, $B_3$ and $C$ in a minimal position with respect to $G$, and consider a connected component of a noncritical leaf of $G$ intersecting $C$ restricted to $P$. This noncritical leaf either is isotopic to $\gamma$, or to the curves $F$, $E$ and $D$ shown in \cref{fi:curvelabeling}. Since the leaves of $G$ do not intersect, there cannot be leaves isotopic to $E$ and leaves isotopic to $D$ at the same time. Breaking symmetry, assume there are no leaves isotopic to $D$. Then, $i(C,G)=i(C,\gamma)+i(B_3,G)>i(B_3,G)\ge i(B_2,G)$. Doing the same reasoning assuming that there are no leaves isotopic to $E$ we get $i(C,G)>\max(i(B_2,G),i(B_3,G))$. Hence, since $w_n\gamma_n$ converges to $G$, $\gamma_n$ has to satisfy
		\[
			i(C,\gamma_n)>\max(i(B_2,\gamma_n),i(B_3,\gamma_n))
		\]
		for $n$ big enough.
		
		For each $n$ put $B_3$, $B_2$ and $C$ in a minimal position with respect to $\gamma_n$, and consider the restriction of $\gamma_n$ to $P$. Assume $\gamma_n$ is not $\gamma$. Then, the curves on the restriction of $\gamma_n$ to $P$ intersecting $C$ are isotopic to either $E,F$ and $D$, but not $\gamma$. As before, this restriction cannot contain curves isotopic to $E$ and curves isotopic to $D$ for the same $n$, so assuming there are no curves isotopic to $D$ we have $i(C,\gamma_n)=i(B_3,\gamma_n)$ which is a contradiction. Doing the same reasoning assuming that there are no curves isotopic to $E$ also gives a contradiction. Hence, $\gamma_n$ is $\gamma$ for $n$ big enough.
		
		If $B_2$ or $B_3$ are contractible to marked points we have $i(G,B_2)$ or $i(G,B_3)$ is $0$, and a similar reasoning yields the same result.
		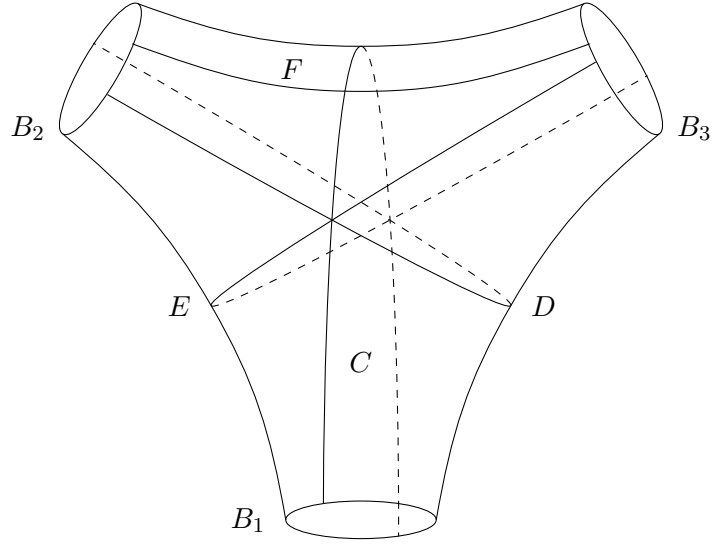
\begin{figure} \centering

			\begin{tikzpicture}
				\draw[rotate=150]  (0:4) ellipse (0.25 and 1);
				\draw[rotate=30]    (0:4) ellipse (0.25 and 1);
				\draw[rotate=270]  (0:4) ellipse (0.25 and 1);

				\draw[smooth] [rotate=30] (4,1) to  [out=170,in=-30] (60:2.3) to [out=150,in=-50] ($ (120:4)+({120-90}:1)$);
				\draw[smooth] [rotate=150] (4,1) to  [out=170,in=-30] (60:2.3) to [out=150,in=-50] ($ (120:4)+({120-90}:1)$);
				\draw[smooth] [rotate=270] (4,1) to  [out=170,in=-30] (60:2.3) to [out=150,in=-50] ($ (120:4)+({120-90}:1)$);
				
				\node[label=left:$F$] at (-0.5,2){};
	
				\draw[smooth] [rotate=30] (3.8,0.5) to  [out=170,in=-30] (60:1.7) to [out=150,in=-50] ($ (120:3.8)+({120-90}:0.5)$);

				\node[label=above:$B_3$] at (10:4.5) {};
%				\draw[dashed] [rotate=30] (3.3,1.14) arc(90:-90:0.25 and 1.14);
%				\draw[rotate=30] (3.3,1.14) arc(90:270:0.25 and 1.14);
				
				\node[label=above:$B_2$] at (170:4.5) {};
%				\draw[dashed] [rotate=150] (3.3,1.14) arc(90:-90:0.25 and 1.14);
%				\draw[rotate=150] (3.3,1.14) arc(90:270:0.25 and 1.14);
				
				\node[label=left:$B_1$] at (-1,-4){};
				
				\node[label=above:$C$] at (270:2.3) {};
				\draw [rotate=90] (0:2.3) arc(0:88:6.3 and 0.5);
				\draw[dashed] [rotate=90] (0:2.3) arc(0:-92:6.3 and 0.5);

				\node[label=left:$E$] at (210:2.3) {};
				\draw[dashed]  [rotate=210] (0:2.3) arc(0:92.3:6.3 and 0.25);
				\draw [rotate=210] (0:2.3) arc(0:-87.7:6.3 and 0.25);
				
				\node[label=right:$D$] at (330:2.3) {};
				\draw[dashed] [rotate=330] (0:2.3) arc(0:92.3:6.3 and 0.25);
				\draw [rotate=330] (0:2.3) arc(0:-87.7:6.3 and 0.25);
			\end{tikzpicture}
			\caption{Curve labeling for the proof of Lemma \ref{le:notsplittingboundary5}}
			\label{fi:curvelabeling}
		\end{figure}
	\end{proof}
	
	\begin{proof}[Proof of Proposition \ref{pr:notsplittingboundary}]
		Let $(F_n)$ be a sequence of measured foliations converging to $F$. As pointed out before, Proposition \ref{pr:curvesdense} can be extended to get sequences of weighted multicurves $(\gamma_n^m)_m$ converging to each $F_n$. Denote $\gamma_{n,1}^m,\gamma_{n,2}^m,\ldots,\gamma_{n,k(n,m)}^m$ the weighted curves of $\gamma_n^m$. For each $n$ we take a subsequence such that $k(n,m)$ is constant with respect to $m$, and $\gamma^{m}_{n,i}$ converges for each $i$ as $m\to\infty$. Denoting $F_{n,i}$ the limit of $\gamma^m_{n,i}$ as $m\to\infty$, we can write $F_n=\sum F_{n,i}$. 
		
		Denote $\beta_j$ the boundary components of $F$. That is, $\sum \beta_j=G$. Furthermore, denote $b_{n,j}$ and $b_{n,j}^m$ the weights of $\beta_j$ on $F_n$ and $\gamma_n^m$, where we set the weight to be $0$ if $\beta_j$ is not contained in the foliation. It is clear that if $b_{n,j}=0$ then $b_{n,j}^m\to 0$, as we must have $b_{n,j}\ge \liminf_{m\to\infty} b_{n,j}^m$. If $b_{n,j}>0$ for some $n$, then $F_{n,i}$ contains $\beta_j$ for some $i$. Hence, by Lemmas \ref{le:notsplittingboundary4} and \ref{le:notsplittingboundary5} we have $F_{n,i}$ and $\gamma_{n,i}^m$ are both multiples of $\beta_j$ for $m$ big enough. Then, since each of the multicurves in $\gamma_n^m$ has to be different, $\beta_j$ is not contained in any other foliation $F_{n,i}$ for that given $n$, so $F_{n_i}=b_{n,j}\beta_j$ and $\gamma_{n,i}^m$ can be written as $b_{n,i}^m\beta_j$ for $m$ big enough, with $b_{n,i}^m$ converging to $b_{n,j}$ as $m\to\infty$.
		
		Assume for some $j$ we have $b_{n,j}$ not converging to $1$. We can then take a subsequence such that $b_{n,j}$ converges to some $\lambda\neq 1$. Denote $\delta=|1-\lambda|/2$. For each $n$, there exists some $m_0(n)$ big enough so that $|1-b_{n,j}^m|>\delta$ for all $m\ge m_0(n)$. We can then take a diagonal sequence $\gamma_n^{m(n)}$ converging to $F$ with $m(n)\ge m_0(n)$. However, following the previous reasoning we get that $\gamma_n^{m(n)}$ should contain $\beta_{j}$ for $n$ big enough, and the weight should converge to the weight in $G$, that is, to $1$. However, $|1-b_{n,j}^{m(n)}|>\delta$, giving us a contradiction. Hence, $b_{n,j}$ converges to $1$ for all $j$. Let then $a_n=\min_j(b_{n,j})$. Since $b_{n,j}\ge a_n$ we can define $H_n=F_n-a_n G$ and we have $F_n=H_n+a_nG$. Finally, $a_n\to 1$ as $n\to\infty$, so the proposition is proved.
	\end{proof}
	
	\begin{proposition}\label{pr:ifpart}
		Let $q$ be a unit area quadratic differential such that $V(q)$ is internally indecomposable. Then $q$ is infusible.
	\end{proposition}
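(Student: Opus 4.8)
The plan is to localise the notion of strong convergence to the individual interior parts of $V(q)$, using \cref{pr:notsplittingboundary} to pin down the boundary components and the hypothesis of internal indecomposability to control what happens away from them. First I would set $F=V(q)$, let $G$ be the union of the boundary components of $F$ and write $F=H+G$, so that $H$ is the union of the interior components of $F$. Cutting $S$ along the proper arcs of $F$ produces the interior parts $P_1,\dots,P_k$, and by the internal indecomposability of $V(q)$ each $P_i$ carries at most one interior component $K_i$; hence $H=\sum_i K_i$. Given a sequence of quadratic differentials $(q_n)$ converging to $q$, the vertical foliation map is continuous, so $V(q_n)\to F$, and by \cref{pr:notsplittingboundary} I may write $V(q_n)=H_n+a_nG$ for $n$ large, with $a_n\to 1$ and $H_n\to H$. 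Fixing a sequence $G^n$ of indecomposable components of $V(q_n)$ and a nonzero limit point $G^\infty$ (so, after passing to a subsequence, $G^n\to G^\infty$), the goal is to show that $G^\infty$ is indecomposable; the degenerate limit $G^\infty=0$ is vacuous and may be set aside.

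I would then split into two cases according to whether $G^n$ is a boundary component of $F$. If $G^n$ coincides with one of the components $b_{n,j}\beta_j$ of $a_nG$ for infinitely many $n$ (with the index $j$ fixed after passing to a subsequence), then because the corresponding weights converge to $1$ we obtain $G^\infty=\beta_j$, which is a boundary component of $F$ and therefore indecomposable. Otherwise $G^n$ is, for all large $n$, distinct from every boundary component of $F$; since distinct indecomposable components of a measured foliation are pairwise disjoint, $G^n$ is disjoint from $a_nG$, so that $i(G^n,\cdot)\le i(H_n,\cdot)$ and in particular $G^n$ is disjoint from every proper arc of $F$. As an indecomposable component is connected, $G^n$ must lie inside a single interior part, and after a further passage to a subsequence I may assume $G^n\subset P_i$ for one fixed $i$.

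The heart of the matter is this second case. From $i(G^n,\cdot)\le i(H_n,\cdot)$ and $H_n\to H$ I get $i(G^\infty,\alpha)\le i(H,\alpha)$ for every multicurve $\alpha$, while $G^n\subset P_i$ forces $i(G^\infty,\alpha)=0$ for every $\alpha$ contained in a part $P_l$ with $l\neq i$. Combining these facts with $H=\sum_l K_l$ and the fact that the support of each $K_l$ is contained in $P_l$, the support of $G^\infty$ is confined to the support of $K_i$, and $i(G^\infty,\alpha)\le i(K_i,\alpha)$ for all $\alpha$; that is, $G^\infty$ is dominated by the single indecomposable foliation $K_i$. It remains to upgrade this domination to the statement that $G^\infty$ is a multiple of $K_i$. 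If $K_i$ is a thickened curve this is immediate, since any measured foliation supported on a single curve is a multiple of it; if $K_i$ is a minimal component, its transverse measure is ergodic by the definition of indecomposability, and an invariant transverse measure supported on its minimal set and dominated by it must be a scalar multiple of it. In either case $G^\infty=cK_i$ for some $c>0$, hence indecomposable.

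The main obstacle I anticipate is precisely this last implication, "dominated by an indecomposable foliation implies proportional to it": controlling the support of the limit $G^\infty$ and invoking ergodicity of the transverse measure of a minimal component is the delicate point, whereas the boundary bookkeeping is entirely supplied by \cref{pr:notsplittingboundary}. Putting the two cases together shows that every nonzero limit point of $(G^n)$ is indecomposable for an arbitrary sequence $q_n\to q$ and arbitrary choice of components $G^n$, which by the definition of strong convergence is exactly the assertion that $q$ is infusible.
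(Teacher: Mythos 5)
Your skeleton is sound and runs parallel to the paper's own argument: \cref{pr:notsplittingboundary} pins down the boundary components, the fact that distinct components of $V(q_n)$ cannot intersect keeps $G^n$ off the proper arcs of $V(q)$, and internal indecomposability is what confines everything to a single interior part. Case 1 and the reduction in Case 2 up to ``$G^n\subset P_i$'' are correct. The genuine gap is in what you call the heart of the matter. From the two facts you actually establish --- $i(G^\infty,\cdot)\le i(H,\cdot)$ and $i(G^\infty,\alpha)=0$ for every $\alpha$ supported in a part $P_l$ with $l\ne i$ --- you cannot conclude that the support of $G^\infty$ is contained in the support of $K_i$, nor that $G^\infty$ is proportional to $K_i$: intersection-number inequalities do not control supports. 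Concretely, take $S$ closed and $V(q)=K_1$ a minimal foliation filling a proper subsurface $\Sigma\subsetneq S$ with essential boundary $\delta=\partial\Sigma$ (so there is one interior part and $H=K_1$). Every essential arc of a multicurve crossing $\delta$ must enter $\Sigma$ and therefore picks up a definite amount of intersection with the filling foliation $K_1$; this gives an $\epsilon_0>0$ with $i(K_1,\alpha)\ge \epsilon_0\, i(\delta,\alpha)$ for all multicurves $\alpha$. Hence for small $c>0$ the \emph{decomposable} foliation $c\,\delta+\tfrac{1}{2}K_1$ satisfies both of your hypotheses --- it is dominated by $K_1$, hence by $H$, and trivially vanishes outside $P_1$ --- yet it is neither supported in the support of $K_1$ nor a multiple of it. So the implication ``dominated by an indecomposable foliation implies proportional to it'' is false precisely in the situation where you invoke it, and the ergodicity argument cannot even start, because ``supported on its minimal set'' is exactly the unproved part.

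The repair is to replace the domination argument by the unique-decomposition principle that the paper uses (implicitly in this proof, and explicitly in \cref{pr:busemanclosureshape}): since the number of indecomposable components of $V(q_n)$ is uniformly bounded by the topology of $S$, pass to a subsequence in which \emph{every} component converges; the limits have pairwise zero intersection and sum to $V(q)$, so by uniqueness of the decomposition into indecomposables each limit is a nonnegative combination of the indecomposable components of $V(q)$. In particular $G^\infty=\sum_l c_l K_l+\sum_j d_j\beta_j$; your Case 2 disjointness kills the $\beta_j$ terms, and vanishing outside $P_i$ gives $c_l=0$ for $l\ne i$, so $G^\infty=c_iK_i$, which is indecomposable or zero. (The paper avoids identifying the limit altogether by arguing by contradiction: a decomposable limit would have to contain interior components lying in two different interior parts, forcing $G^n$ to cross a proper arc of $V(q)$, which is impossible because that arc is itself a component of $V(q_n)$.)
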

	
	\begin{proof}
		Assume $q$ is fusible, that is, we have a sequence of quadratic differentials $(q_n)$ converging to $q$ but not strongly. Let $F_i^n$ be the indecomposable components of $V(q_n)$. To have non-strong convergence we must have at least one sequence of indecomposable components converging to a decomposable component $G$, which we assume is $(F_1^n)_n$. Let $\beta$ be a boundary component of $V(q)$. By Proposition \ref{pr:notsplittingboundary} for $n$ big enough a multiple of $\beta$ must be contained in $V(q_n)$. Furthermore, $\beta$ cannot be contained in $G$. Since $G$ cannot contain boundary components, it must contain at least two interior components. On the other hand, since $V(q)$ is internally indecomposable, each interior part obtained by removing the proper arcs contains at most one interior component. Hence, for $n$ big enough $F_1^n$ must intersect at least two interior parts, that is, $F_1^n$ must cross at least one proper arc. However, for each proper arc $\gamma$ there is some $n$ big enough such that $\gamma$ is contained in the foliation $V(q_n)$, so $F_1^n$, a component of $V(q_n)$, intersects the foliation $V(q_n)$, giving us a contradiction.
	\end{proof}

	To prove the other direction we shall first see the following lemma.
	\begin{lemma}\label{le:interesctingcurves}
		Let $S$ be a compact surface with with possibly nonempty boundary and finitely many marked points, let $k\ge 2$ and let $\alpha=\{\alpha_1,\alpha_2,\ldots ,\alpha_k\}$ be a collection of non intersecting closed curves on $S$. Furthermore, let $p$ be the number of curves in $\alpha$ parallel to a boundary. Then there exists a collection of $\max(\lceil(p/2)\rceil),1)$ non intersecting curves intersecting each $\alpha_i$.
	\end{lemma}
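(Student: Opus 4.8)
The plan is to build the required simple closed curves by separating the boundary--parallel curves from the rest and then assembling everything into a pairwise disjoint family. Write the collection as $\beta_1,\dots,\beta_p$ for the curves parallel to boundary components $\partial_1,\dots,\partial_p$, so that each $\beta_j$ cobounds an annulus $A_j$ with $\partial_j$, together with $\delta_1,\dots,\delta_{k-p}$ for the remaining (non--boundary-parallel) curves. Cutting $S$ along $\bigcup_i\alpha_i$ produces complementary subsurfaces, and I would specify each new curve by prescribing the arcs it makes inside these pieces, taking care that the arcs glue into a single embedded circle and that distinct new curves use disjoint arcs. The governing feature, which I would isolate first, is that crossing a non--boundary-parallel curve is rigid while crossing a boundary-parallel curve is not: an arc meeting some $\delta_i$ once cannot be pushed off it (since $\delta_i$ does not cut off an annulus parallel to a boundary), whereas a single excursion of a curve into an annulus $A_j$ is boundary--parallel and can be isotoped off $\beta_j$.

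First I would produce one simple closed curve $c_0$ meeting every $\delta_i$. Using the dual graph whose vertices are the complementary pieces and whose edges are the $\delta_i$, connectedness of $S$ lets me start from a curve crossing $\delta_1$ and then, inductively, enlarge it by a band sum along an embedded arc so that it also crosses $\delta_2,\delta_3,\dots$, while keeping it embedded and preserving the crossings already achieved; the excursion realizing each crossing can be taken essential (a wall crossing when $\delta_i$ separates two pieces, or an arc around a handle or separating marked points when $\delta_i$ bounds a more complicated piece). When $p=0$ this single curve $c_0$ already constitutes the whole family, which is what the term $\max(\lceil p/2\rceil,1)$ records.

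Next I would handle the boundary-parallel curves two at a time. For a pair $(\partial_{2l-1},\partial_{2l})$ I take an embedded arc $\eta_l$ joining them and let $c_l$ be the boundary of a regular neighborhood of $\eta_l\cup\partial_{2l-1}\cup\partial_{2l}$; this is a single \emph{essential} simple closed curve that necessarily enters both annuli $A_{2l-1}$ and $A_{2l}$ and hence crosses $\beta_{2l-1}$ and $\beta_{2l}$. Joining exactly two annuli is what makes such a curve essential rather than boundary--parallel, and choosing the arcs $\eta_l$ with pairwise disjoint neighborhoods makes the curves $c_l$ pairwise disjoint; this is the source of the bound $\lceil p/2\rceil$. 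If $p$ is odd I would attach the single leftover boundary to one of the existing pairs by an extra arc, and I would fold $c_0$ into one of the curves $c_l$ by an arc--surgery along a disjoint arc, so that the total number of curves comes out to $\max(\lceil p/2\rceil,1)$ and every $\alpha_i$ is crossed by some member of the family.

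The main obstacle is the simultaneous embeddedness and disjointness: I must realize $c_0$ together with $c_1,\dots,c_{\lceil p/2\rceil}$ as pairwise disjoint embedded circles while still guaranteeing that each $\delta_i$ and each $\beta_j$ is actually met. This reduces to routing all of the band--sum arcs, the excursion arcs, and the joining arcs $\eta_l$ disjointly through the complementary pieces, and the delicate situations are the small pieces (disks and cylinders with few marked points), where the finiteness and rigidity of the admissible arcs established in \cref{le:polygonsfinitelymanyarcs} is exactly what lets me control the routing and confirm that the construction can be carried out.
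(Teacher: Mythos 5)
Your construction of the curves $c_l$ fails at exactly the point your own first paragraph warns about. The boundary $c_l$ of a regular neighborhood of $\eta_l\cup\partial_{2l-1}\cup\partial_{2l}$ makes a single excursion into each annulus $A_j$: it enters through $\beta_j$, runs parallel to $\partial_j$, and exits through $\beta_j$ again. An embedded arc in an annulus with both endpoints on the same boundary circle is always inessential — it cuts off a disk containing no marked points, since any marked points of $A_j$ lie on $\partial_j$, which sits in the other complementary component — so this excursion can be isotoped off $\beta_j$. Hence $i(c_l,\beta_{2l-1})=i(c_l,\beta_{2l})=0$. Equivalently, $c_l$ is the curve cutting off a pair of pants containing $\partial_{2l-1}$ and $\partial_{2l}$, and such a curve is disjoint, up to isotopy, from any curve parallel to either of those boundaries. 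The same objection applies to your odd-$p$ fix (a curve encircling three boundaries) and to the final fold of $c_0$ into some $c_l$, which again outputs a closed curve.

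The gap is not repairable within your framework, because \emph{no closed curve whatsoever} can essentially intersect a boundary-parallel closed curve: every arc of intersection with the annulus $A_j$ has both endpoints on $\beta_j$ and therefore bounds an unmarked bigon, so in minimal position the curves are disjoint. The only objects with positive geometric intersection number with $\beta_j$ are proper arcs ending on $\partial_j$, and indeed the ``curves'' in the conclusion of \cref{le:interesctingcurves} are allowed to be proper arcs (in \cref{se:measuredfoliations} a curve is a component of a multicurve, either a closed curve or a proper arc); positive geometric intersection is what the application in \cref{pr:onlyifpart} requires, since one twists along the $\alpha_i$ there. This is precisely what the paper's proof arranges: it glues the relevant boundary components of $S$ in pairs (leaving at most one unglued), builds a single dual curve in the glued surface by merging the dual pieces with half twists, and then cuts back along the gluing circles; each cut converts the closed dual curve into proper arcs hitting the boundary, and counting the cut points yields $\max(\lceil p/2\rceil,1)$. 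Your $c_0$ step, handling the non-boundary-parallel $\delta_i$ by iterated band sums, is morally parallel to the paper's half-twist merging and could likely be made rigorous, but without producing proper arcs the boundary-parallel curves can never be intersected, so the proposal does not prove the lemma for any $p\ge 1$.
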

	Our main interest in the lemma is that the amount of curves needed is strictly smaller than the amount of closed curves in $\alpha$. This will allow us, by doing Dehn twists along the closed curves in $\alpha$, to create a sequence of foliations converging to a foliation with strictly more components, which can be translated to a sequence of quadratic differentials that converge but not strongly. The proof of this lemma is based on a reasoning found in \cite[Proposition 3.5]{primer}.
	\begin{proof}
		We start by replacing all boundaries of $S$ without parallel curves in $\alpha$ by marked points. Let then $\alpha'$ be a completion of $\alpha$ to a pair of pants decomposition. Glue the remaining boundaries pairwise until we have at most one left. After cutting the surface along the closed curves that were not parallel to boundaries we get a collection of $\lceil p/2 \rceil$ tori with one boundary component and some spheres with $b$ boundary components and $n$ marked points, with $b+n=3$ and $b\ge 1$. If $p$ is odd, one of these spheres has a boundary of $S$ as a boundary. We join the boundaries of each of these surfaces with non intersecting arcs, as shown in \cref{fi:pathslaying}, that is, in such a way that each boundary component has two arcs incident to it. We can then paste these surfaces back together in order to obtain a collection $\beta_1,\beta_2,\ldots,\beta_l$ of pairwise disjoint curves in $S$. If $p$ is odd this collection contains precisely one proper arc, as we only have two endpoints coming from the boundary we did not paste. If $p$ is even the collection does not contain any proper arc. By the bigon criterion each $\beta_j$ is in minimal position with respect to each $\alpha_i$, and each $\alpha_i$ intersects either one or two of the $\beta_j$. Furthermore, since we did not cut along the original boundaries we pasted from $S$, each $\alpha_i$ parallel to a boundary of $S$ intersects precisely one of the $\beta_j$. Suppose we have $\beta_j$ and $\beta_{j'}$ intersecting a curve $\kappa\in \alpha'$ and that $\beta_j$ and $\beta_{j'}$ are distinct. Since we have at most one proper arc, at least one of $\beta_j$ and $\beta_{j'}$ is a closed curve. Hence, doing a half twist about $\kappa$, $\beta_j$ and $\beta_{j'}$ become a single curve. Since this process does not create any bigons, the resulting collection is still in minimal position with $\alpha$. Continuing this way we obtain a single curve $\gamma$ intersecting each curve in $\kappa$. Furthermore, $\gamma$ intersects each pasted boundary once. Cutting along the pasted boundaries, we get the curves from the lemma. If $p$ is odd, $\beta$ is a proper arc, so each cut along a pasted boundary increases the curve count by one, totalling $ (p+1)/2$ curves. If $p$ is even, $\beta$ is a closed curve, so the first cut transforms it into a proper arc, and the following ones increase the curve count by one, giving a total of $\max(p/2,1)$ curves.
		\begin{figure} \centering
		\begin{tikzpicture}
			\draw[smooth] (2.5,-0.85) to[out=180,in=30] (2,-1) to[out=210,in=-30] (0,-1) to[out=150,in=-150] (0,1) to[out=30,in=150] (2,1) to[out=-30,in=180] (2.5,0.85);
			\draw[smooth] (0.4,0.1) .. controls (0.8,-0.25) and (1.2,-0.25) .. (1.6,0.1);
			\draw[smooth] (0.5,0) .. controls (0.8,0.2) and (1.2,0.2) .. (1.5,0);
			\draw (2.5,-0.85) arc(270:90:0.3 and 0.85);
			\draw (2.5,-0.85) arc(270:450:0.3 and 0.85);
			\draw[smooth] (2.26,-0.5) to[out=180,in=30] (2,-0.6) to[out=210,in=270] (0,0) to[out=90,in=150] (2,0.6) to[out=-30,in=180] (2.26,0.5);
			
			\draw (3.5,-0.85) arc(270:90:0.3 and 0.85);
			\draw[dashed] (3.5,-0.85) arc(270:450:0.3 and 0.85);
			\draw (6,-0.85) arc(270:90:0.3 and 0.85);
			\draw (6,-0.85) arc(270:450:0.3 and 0.85);
			\draw (3.5,-0.85) to (6,-0.85);
			\draw (3.25,-0.5) to (5.75,-0.5);
			\draw (3.25,0.5) to (5.75,0.5);
			\draw[smooth] (3.5,0.85) to[out=0,in=260] (4.75,1.5) to [out=280,in=180] (6,0.85) ;
			\node at (4.75,1.5) [circle,fill,inner sep=0.7pt]{};
			
			\draw (7,-0.85) arc(270:90:0.3 and 0.85);
			\draw[dashed] (7,-0.85) arc(270:450:0.3 and 0.85);
			\draw (9.5,-0.85) arc(270:90:0.3 and 0.85);
			\draw (9.5,-0.85) arc(270:450:0.3 and 0.85);
			\draw (7,-0.85) to (9.5,-0.85);
			\draw[smooth] (7,0.85) to[out=0,in=260] (7.75,1.5)  ;
			\draw[smooth] (8.75,1.5) to[out=280,in=180] (9.5,0.85)  ;
			\draw (7.75,1.5) arc(180:0:0.5 and 0.15);
			\draw (7.75,1.5) arc(-180:0:0.5 and 0.15);
			\draw (6.75,-0.5) to (9.25,-0.5);
			\draw[smooth] (6.75,0.5) to[out=0, in = 270] (8,1.37);
			\draw[smooth] (9.25,0.5) to[out=180, in = 270] (8.5,1.37);
			
			\draw (10.5,-0.85) arc(270:90:0.3 and 0.85);
			\draw[dashed] (10.5,-0.85) arc(270:450:0.3 and 0.85);
			\draw[smooth](10.5,0.85) to [out=0, in = 220] (12.5,1.5) to [out=240, in = 120] (12.5,-1.5) to [out=140, in = 0] (10.5,-0.85);  
			\draw[smooth](10.25,-0.5)  to [out=0, in = 230] (12.06,0) ;  
			\draw[smooth,dashed](12.06,0)  to [out=100, in = -20] (11.5,0.95) ;  
			\draw[smooth](11.5,0.95) to [out=200, in = 0] (10.25,0.5) ;  
			\node at (12.5,1.5) [circle,fill,inner sep=0.7pt]{};
			\node at (12.5,-1.5)  [circle,fill,inner sep=0.7pt]{};
		\end{tikzpicture}
			\caption{Laying out of curve segments for the proof of Lemma \ref{le:interesctingcurves}}
			\label{fi:pathslaying}
		\end{figure}
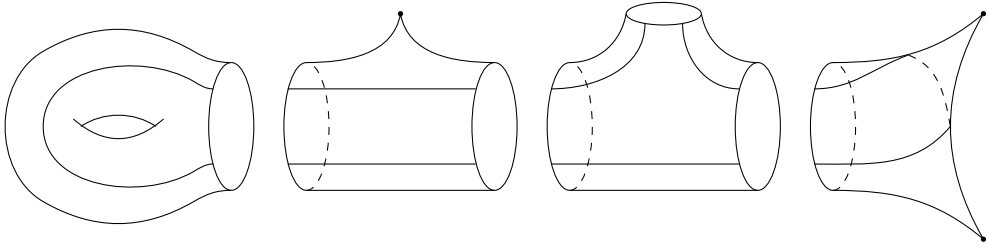
	\end{proof}

	\begin{proposition}\label{pr:onlyifpart}
		Let $F$ be an internally decomposable measured foliation. Then, $F$ can be approached by a sequence of weighted multicurves with fewer components than $F$.
	\end{proposition}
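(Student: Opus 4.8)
The plan is to approximate $F$ one indecomposable component at a time, using the Dehn-twist reduction of \cref{le:interesctingcurves} to spend strictly fewer curves on the interior part that witnesses the internal decomposability. Write the indecomposable decomposition
\[
	F=\sum_{j}\gamma_j+\sum_{l}c_l+\sum_{i}G_i ,
\]
where the $\gamma_j$ are the proper arcs, the $c_l$ the boundary annuli and the $G_i$ the interior components, and let $N$ be the total number of these components. Since $F$ is internally decomposable, some interior part $P$ contains at least two interior components; let $G_1,\dots,G_m$ (with $m\ge 2$) be all the interior components lying in $P$. The arcs $\gamma_j$ and annuli $c_l$ are already weighted curves, and they are disjoint from the interiors of the parts, so the whole problem is to approximate the $G_i$ by few closed curves, saving at least one curve on $P$.

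The key step is an auxiliary claim: if $G$ is a measured foliation supported in a connected subsurface $R$ lying in the interior of $S$, containing no proper arc and no component parallel to $\partial R$, then $G$ is a limit of single weighted closed curves contained in $R$. I would prove this by applying \cref{pr:curvesdense} on $R$ to write $G=\lim_n B_n$ with $B_n$ a multicurve of closed curves in $R$; the components of $B_n$ parallel to $\partial R$ have weights tending to $0$ (by the argument of \cref{pr:notsplittingboundary}, since $G$ has no such component) and may be discarded, so that the remaining multicurve has no $\partial R$-parallel component. Applying \cref{le:interesctingcurves} inside $R$ with $p=0$ then yields a single closed curve $\beta$ meeting every component, and the standard convergence $\tfrac1s\,T_{\alpha_1}^{s_1}\cdots T_{\alpha_k}^{s_k}(\beta)\to\sum_i w_i\alpha_i$ in $\MF$ as the $s_i\to\infty$ (see \cite{FLP}) realizes each $B_n$, and hence $G$, as a limit of single closed curves.

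With this claim in hand I would assemble the approximation. Choosing a connected subsurface $R\subseteq P$ that is a regular neighbourhood of the supports of $G_1,\dots,G_m$ together with arcs joining them, the foliation $G_1+\dots+G_m$ satisfies the hypotheses of the claim, so it is a limit of single closed curves $e^{\,n}\subset R$. Each of the remaining interior components (one at a time) is likewise a limit of single closed curves by the claim, and the $\gamma_j,c_l$ are kept with their weights from $F$. Since the $e^{\,n}$, the curves approximating the other $G_i$, the $\gamma_j$ and the $c_l$ can all be taken pairwise disjoint—$e^{\,n}$ stays in $R$, away from $\partial S$ and from the other parts—their union is an honest weighted multicurve $A_n$, and intersection numbers add over its components, so $A_n\to F$ in $\MF$. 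Its number of components is $1+(r-m)+(\#\gamma_j)+(\#c_l)=N-m+1\le N-1$, where $r$ is the number of interior components, which is strictly fewer than $N$, as required.

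The main obstacle is organizational rather than conceptual: turning the nested limits—approximating each foliation by multicurves, reducing each multicurve to one curve, and then letting $n\to\infty$—into a single sequence $A_n\to F$, which is handled by a diagonal argument with the twisting exponents $s_i$ chosen as fast-growing functions of $n$ that match the weights $w_i$ of the $G_i$ in $F$. The one geometric point that must be checked with care is that the reducing curve can be kept inside the subsurface $R$, hence disjoint from all other components of $A_n$; this is precisely why \cref{le:interesctingcurves} is applied on $R$ rather than on all of $S$, and why the value $\max(\lceil p/2\rceil,1)=1$ for $p=0$ is exactly what produces the saving of $m-1\ge1$ curves on the part $P$.
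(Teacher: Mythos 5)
Your overall architecture (approximate by multicurves of closed curves via \cref{pr:curvesdense}, produce a curve meeting every component via \cref{le:interesctingcurves}, recover the multicurve by Dehn twisting and rescaling, then diagonalize) is the same as the paper's, but there is a genuine gap in where you run this construction. Your auxiliary claim is applied with $R$ a regular neighbourhood of the supports of $G_1,\dots,G_m$ joined by arcs, and you assert that $G_1+\dots+G_m$ then has no component parallel to $\partial R$. This is false precisely in the basic case: if some $G_i$ is an annular component (a weighted closed curve) -- which is unavoidable, since the simplest internally decomposable foliations are multicurves of two interior closed curves, exactly the examples used in \cref{le:nonexceptionalimpliesfusible} -- then $G_i$ \emph{is} parallel to a component of $\partial R$. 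The failure is not merely of hypotheses but of the conclusion: take $F=G_1+G_2$ two disjoint, non-isotopic weighted closed curves on a closed surface; then $R$ is a pair of pants, in which every essential simple closed curve is boundary-parallel and hence disjoint, up to isotopy, from both $G_1$ and $G_2$. So no sequence of single weighted closed curves contained in $R$ can converge to $G_1+G_2$ (any limit is a multiple of one of the three boundary classes), and \cref{le:interesctingcurves} applied inside $R$ has $p=2$, not $p=0$, so it produces a proper arc \emph{of $R$}, whose endpoints lie in the interior of $S$ and which is therefore not a legal component of a multicurve on $S$. Your twisting step then collapses.

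The point your proof misses is that the curve meeting all components must exploit topology of the surface \emph{outside} a regular neighbourhood of the foliation (genus, other boundaries, marked points): this is exactly what the gluing construction in the proof of \cref{le:interesctingcurves} uses, and why the paper applies that lemma in the full interior parts $Z_i$ (obtained by cutting $S$ along the proper arcs), not in a neighbourhood of the support. Doing so forces one to confront the curves of the approximating multicurve that are parallel to boundaries of $Z_i$, which the paper handles through the counts $p$ and $u$ (boundary-parallel curves at marked and unmarked boundaries, respectively), yielding approximations with $\max(\lceil p/2\rceil,1)+u$ components and the strict saving on a part with at least two interior components. Your argument does go through in the special case where all the interior components of the relevant part are minimal (non-annular), since such components are never boundary-parallel in their regular neighbourhood; but repairing the annular case essentially requires redoing the paper's argument in the whole interior part. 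A secondary, smaller issue: the discarding of $\partial R$-parallel components of $B_n$ needs an argument (compactness of the approximants plus additivity of intersection numbers over disjoint components, so that non-vanishing weights would force the limit to contain a $\partial R$-parallel annulus); \cref{pr:notsplittingboundary} states a different implication and does not directly give this.
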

	\begin{proof}		
		By the extension to Proposition \ref{pr:curvesdense}, we have a sequence of weighted multicurves $\gamma^n$ converging converging to $F$, with the only proper arcs being the ones contained in $F$. Cutting the surface along the proper arcs of $\gamma^n$ and quotienting these proper arcs to points we get $k$ many surfaces $Z_1,Z_2,\ldots ,Z_k$ with boundary. Let $\gamma^n_i$ be the restriction of $\gamma^n$ to $Z_i$, and let $F_i$ be the limit of $\gamma^n_i$. The foliation $F$ is the union of the foliations $F_i$ and the proper arcs. 
		
		Fix some $i$ such that $F_i$ is nonempty, and let $\alpha_1,\ldots, \alpha_b$ be the closed curves parallel to the boundaries of $Z_i$. Let $a^n_1,\ldots, a^n_b$ be the weights of $\alpha_1,\ldots,\alpha_b$ in $\gamma^n_i$. We can take a subsequence such that $a^n_j$ converges for each $j$ to some $a_j$. If $a_j>0$, the closed curve $\alpha_j$ is contained in $F_i$. If $a_j=0$, then the weights $a^n_j$ can be set to $0$ on the multicurves $\gamma^n_i$ while leaving the limit intact. Hence, we can assume that $a^n_j=0$ for all $j$ such that $a_j=0$. Let $p$ and $u$ be the number of closed curves with $a_j>0$ parallel to boundaries with or without marked points respectively. Since we have removed all the closed curves with $a_j=0$, the multicurve $\gamma^n_i$ contains precisely $p$ and $u$ closed curves parallel to boundaries with or without marked points for $n$ big enough. Denote by $B$ the set of closed curves parallel to boundary components without marked points. Applying Lemma \ref{le:interesctingcurves} to the multicurve $\gamma^n_i$ minus $B$ we get $\max(\lceil(p/2)\rceil),1)$ curves $\beta^n_i$ intersecting all closed curves in $\gamma^n_i$ except the ones parallel to boundaries without marked points. Doing the appropriate Dehn twists along the closed curves of $\gamma^n_i$ and rescaling to the curves $\beta^n_i$, and adding with the corresponding weights the curves in $B$, we get a sequence converging to $\gamma^n_i$ with $\max(\lceil(p/2)\rceil),1)+u$ many components. As such, taking a diagonal sequence we can get a sequence of multicurves converging to $F_i$ with each multicurve containing $\max(\lceil(p/2)\rceil),1)+u$ components.
		
		Finally, since $F$ is internally decomposable, there is at least one $F_i$ with at least 2 interior components, so one of these multicurves has strictly less components than the limiting foliations, and we have non-strong convergence.
	\end{proof}

	Theorem \ref{th:maxcondition} follows by combining Propositions \ref{pr:ifpart} and \ref{pr:onlyifpart}.

	We do not need $S$ to have a lot of topology to find internally decomposable foliations. In fact, determining which surfaces do not support internally decomposable foliations we get the following result.
	
	\begin{proposition}\label{pr:continuousBusemann}
		Let $S_{g,b_m,b_u,p}$ be a surface of genus $g$ with $b_m$ and $b_u$ boundaries with and without marked points respectively and $p$ interior marked points. Then the Busemann map is continuous if and only if $3g+2b_m+b_u+p\le 4$.
	\end{proposition}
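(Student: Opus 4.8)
The plan is to reduce the statement to a topological property of measured foliations and then read it off from the complexity $\xi := 3g+2b_m+b_u+p$ of $S$. First I would assemble the equivalences already established in this section: the earlier proposition shows that $B$ is continuous at the ray $\tray{q}{\cdot}$ if and only if $q$ is infusible, and \cref{th:maxcondition} shows that $q$ is infusible if and only if $V(q)$ is internally indecomposable. Since every $F\in\MF$ is the vertical foliation of some quadratic differential based at $b$ (by the Hubbard--Masur correspondence) and $B$ is automatically continuous on the interior $\T(S)$, the map $B$ is continuous if and only if every measured foliation on $S$ is internally indecomposable. The proposition is thus equivalent to: no foliation on $S$ has an interior part carrying two interior components, if and only if $\xi\le 4$. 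I would prove the two implications separately.

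For the direction where $\xi\ge 5$, I would construct an internally decomposable foliation of the simplest possible shape. It suffices to produce two disjoint, pairwise non-isotopic, essential simple closed curves $c_1,c_2$, neither parallel to a boundary component without marked points. Setting $F=c_1+c_2$, the foliation $F$ contains no proper arc, so its unique interior part is all of $S$; and both $c_1$ and $c_2$ are indecomposable components that are neither proper arcs nor boundary annuli, hence interior components. Thus $F$ is internally decomposable and $B$ is discontinuous at the corresponding ray. The content is then the existence of such a pair, which I would establish by a short case analysis on $(g,b_m,b_u,p)$: curves parallel to distinct marked boundaries, a non-separating handle curve together with a second disjoint curve, and curves separating interior marked points or unmarked boundaries all provide interior curves, and the inequality $\xi\ge 5$ is exactly what guarantees that two of them can be chosen disjoint and non-isotopic.

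For the converse, where $\xi\le 4$, I would show that on such $S$ every foliation has at most one interior component in total, which forces each interior part to have at most one and hence internal indecomposability. The admissible types form a short finite list (spheres with $b_u+p\le 4$ holes and marked points, the few configurations with $b_m\ge 1$, and the torus, the once-holed torus and the once-punctured torus), so I would treat them by complexity. For the polygon pieces, namely a sphere with one boundary and at most one interior marked point, \cref{le:polygonsfinitelymanyarcs} shows that every indecomposable foliation is a proper arc, hence a boundary component, so these carry no interior component at all. For the remaining types the maximal number of disjoint non-isotopic essential curves that are not parallel to an unmarked boundary is one, and a minimal (filling) component would occupy a subsurface of complexity too large to leave room for a second interior component; in every case the total number of interior components is at most one.

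The main obstacle is the converse, specifically handling interior components that are minimal rather than closed curves: these are invisible to the ``two disjoint interior curves'' picture and must instead be controlled through the complexity of the subsurface they fill. A secondary but pervasive difficulty, present in both directions, is the bookkeeping forced by the marked-point conventions on the boundary, since whether a boundary-parallel curve counts as a boundary annulus or as an interior component depends on whether its boundary carries marked points; this is precisely what makes a marked boundary contribute $2$ and an unmarked boundary contribute $1$ to $\xi$.
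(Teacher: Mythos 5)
Your overall reduction is exactly the paper's: continuity of $B$ on the interior is automatic, continuity at a ray in direction $q$ is equivalent to $q$ being infusible, \cref{th:maxcondition} converts this to internal indecomposability of $V(q)$, and Hubbard--Masur lets you quantify over all measured foliations. Your construction for the direction $3g+2b_m+b_u+p\ge 5$ — two disjoint, non-isotopic interior closed curves, produced by a case analysis on $(g,b_m,b_u,p)$ — is also the same as the paper's \cref{le:nonexceptionalimpliesfusible}, and is fine.

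The converse direction, however, has a genuine gap. Your argument is phrased entirely in terms of disjoint curves and of the subsurface a minimal component fills (``complexity too large to leave room for a second interior component''). This misses the central subtlety in the definition of indecomposable components: a \emph{single} minimal component can carry a transverse measure that is a sum of two or more projectively distinct ergodic measures, in which case the foliation has several indecomposable interior components \emph{with the same support}, all inside one interior part. No disjointness or room-counting argument can exclude this; what must be proved is a bound on the number of ergodic measures (for instance, that every minimal foliation on the once-marked torus and on the $4$-marked sphere is uniquely ergodic). The paper closes precisely this gap by a different mechanism (\cref{le:exceptionalimpliesnonfusible}): an internally decomposable foliation on any surface with $3g+2b_m+b_u+p\le 4$ would induce one on the once-marked torus or the $4$-marked sphere; replacing the marked point by a boundary, adding a boundary-parallel annulus, and passing to the conformal double then produces a foliation on the closed genus-$2$ surface with at least $5$ indecomposable components, contradicting Papadopoulos's bound of $3g-3=3$. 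This counts ergodic summands automatically, since the bound is on indecomposable components rather than on disjoint subsurfaces. Your proposal becomes a proof only after the ``complexity'' assertion is replaced by such an argument (the doubling trick, or an explicit unique-ergodicity argument via torus covers for the two exceptional cases); as written, the hardest part of the statement is assumed rather than proved.
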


	We shall split the proof in the following two lemmas
	
	\begin{lemma}\label{le:nonexceptionalimpliesfusible}
		Let $S_{g,b_m,b_u,p}$ be a surface with $3g+2b_m+b_u+p > 4$. Then it admits an internally decomposable foliation.
	\end{lemma}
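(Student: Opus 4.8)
The plan is to reduce the statement to a purely combinatorial construction of closed curves. I claim it suffices to produce two disjoint, pairwise non-isotopic essential simple closed curves $c_1,c_2$ on $S$, neither of which is parallel to a boundary component without marked points; call such curves \emph{useful}. Indeed, recall that weighted multicurves are measured foliations, so $F=c_1+c_2$ is a measured foliation containing no proper arc. Removing the (empty set of) proper arcs therefore leaves the single interior part $S$ itself, and by the hypothesis on the $c_i$ each thickened curve is neither a proper arc nor a boundary annulus, hence an interior component. Thus this unique interior part carries two interior components, so $F$ is internally decomposable. It remains only to build two useful curves whenever $3g+2b_m+b_u+p>4$.

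To locate them I would track the quantity $3g+2b_m+b_u+p$ through a \emph{weight} assigned to each topological feature of $S$: weight $2$ to each boundary component carrying marked points, weight $1$ to each boundary component without marked points, and weight $1$ to each interior marked point, so that the total feature weight is $W=2b_m+b_u+p$. The key point is a dictionary, to be read off directly from the definitions of interior and boundary component: a simple closed curve $c$ lying in a planar piece of $S$, with complementary feature-sets $T$ and $T^{c}$, is useful exactly when both $T$ and $T^{c}$ have weight at least $2$. The excluded sides are precisely those of weight $0$ (a disk, so $c$ is inessential), a single marked point (a disk with one marked point, inessential), and a single unmarked boundary (an annulus, so $c$ is parallel to an unmarked boundary); conversely every side of weight at least $2$ contains a boundary component or at least two marked points and is not a lone unmarked boundary, so $c$ is essential and not a boundary annulus.

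With this dictionary I split on the genus. If $g\ge 2$ I take two non-separating curves through distinct handles: these are disjoint, essential, non-peripheral and non-isotopic, and $3g\ge 6>4$. If $g=1$ the hypothesis reads $W\ge 2$; I take the non-separating curve $a$ of the handle, which is automatically useful, together with a curve $c$ enclosing a set of features of weight at least $2$ in a planar region disjoint from $a$, leaving the handle on the other side. The handle side is never a disk nor an annulus, so it has weight at least $2$ and $c$ is useful by the dictionary; as $c$ is separating and $a$ is not, they are non-isotopic. Finally, if $g=0$ the hypothesis is $W\ge 5$, and here I would partition the feature set as $\mathrm{Left}\sqcup\mathrm{Middle}\sqcup\mathrm{Right}$ with $\mathrm{Left}$ and $\mathrm{Right}$ each of weight exactly $2$ and $\mathrm{Middle}$ nonempty. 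This is possible precisely because $W\ge 5$: such a configuration has at least three features and all weights lie in $\{1,2\}$, so one greedily peels off weight $2$ on each side and is left with weight $W-4\ge 1$ in the middle. Enclosing $\mathrm{Left}$ by $c$ and $\mathrm{Left}\cup\mathrm{Middle}$ by $c'$ yields two nested, hence disjoint, curves, both useful by the dictionary, and non-isotopic because the nonempty $\mathrm{Middle}$ separates them.

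The technical heart, and the place I expect the real work, is the dictionary of the second paragraph: one must verify that a curve parallel to a boundary \emph{with} marked points is genuinely essential and qualifies as an interior component (the source of the weight $2$), whereas a curve parallel to a boundary \emph{without} marked points is a boundary annulus (weight $1$, excluded), and that weight $0$ or $1$ sides exhaust the non-useful configurations. Once this correspondence is pinned down the three genus cases are routine, the genus-$1$ case being mildly special in that one complementary side always carries the handle and is therefore good for free.
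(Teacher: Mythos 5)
Your proposal is correct and takes essentially the same approach as the paper: both proofs reduce the statement to exhibiting two disjoint, non-isotopic \emph{interior} closed curves (essential, not parallel to an unmarked boundary), and then construct such a pair explicitly --- non-separating curves when there is genus, separating curves around pairs of marked points/boundaries or around marked boundaries in the planar cases. Your weight dictionary and the Left/Middle/Right greedy partition simply reorganize the paper's five-case enumeration into uniform bookkeeping; the underlying curve constructions are the same.
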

	\begin{proof}
		A multicurve consisting of two interior closed curves generates an internally decomposable foliation, so we just have to find such a pair for each possible surface satisfying the hypothesis. If $S$ has genus at least 2 we can take a multicurve consisting of $2$ non separating closed curves. If $S$ is a torus with at least 2 boundaries or marked points, or a boundary with marked points, we can take a non separating closed curve and a separating closed curve around 2 boundaries or marked points, or around a boundary with marked points. If $S$ is a sphere with at least 5 marked points or boundaries, we can take a closed curve around two interior points or boundaries, and a closed curve around two different interior points or boundaries. If $S$ is a sphere with 1 boundary with marked points and at least 3 other boundaries or interior points we can take a closed curve around the boundary with marked points, and a closed curve around two other interior points or boundaries. Lastly, if $S$ is a sphere with 2 boundaries with marked points and another interior marked point or boundary we take a closed curve around each boundary with marked points.
	\end{proof}

	\begin{lemma}\label{le:exceptionalimpliesnonfusible}
		Let $S_{g,b_m,b_u,p}$ be a surface with $3g+2b_m+b_u+p\le 4$. Then every foliation on $S$ is internally indecomposable.
	\end{lemma}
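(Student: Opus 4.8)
The plan is to prove the statement at the level of the whole surface: I will show that when $3g+2b_m+b_u+p\le 4$ no measured foliation on $S$ has two disjoint interior components at all. This suffices, because the interior components contained in a single interior part are in particular interior components of the whole foliation; hence if $S$ admits at most one interior component globally, then every interior part contains at most one, and every foliation is internally indecomposable. In particular I will not need to analyse the cutting along proper arcs.

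First I reduce from interior components to curves, dualising the construction of \cref{le:nonexceptionalimpliesfusible}. Suppose $F$ has two disjoint interior components $G_1,G_2$, filling disjoint essential subsurfaces $Y_1,Y_2$. From each $G_i$ I extract an essential simple closed curve $c_i$: if $G_i$ is a thickened curve I take its core, and if $G_i$ is a minimal component I take a curve non-peripheral in $Y_i$, which exists because a minimal component fills a subsurface that is neither a disk with few marked points, an annulus, nor a pair of pants. By the definition of an interior component, $c_i$ is not isotopic to a boundary component without marked points; and since $c_i$ is the core of $Y_i$ or is non-peripheral in $Y_i$ while $Y_1$ and $Y_2$ are disjoint, the curves $c_1,c_2$ are disjoint and non-isotopic. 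Thus two disjoint interior components yield two disjoint, non-isotopic essential simple closed curves, neither isotopic to an unmarked boundary component, which is exactly the configuration produced in \cref{le:nonexceptionalimpliesfusible}. The lemma is therefore the converse of that one: when $3g+2b_m+b_u+p\le 4$ no such pair of curves exists.

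Next I carry out the enumeration. The surfaces satisfying $3g+2b_m+b_u+p\le 4$ are the closed, once-holed or once-punctured torus; the sphere with no marked boundary and $b_u+p\le 4$; the sphere with one marked boundary and $b_u+p\le 2$; and the annulus with both boundaries marked, together with their sub-cases of strictly smaller complexity. I treat them by a dichotomy on complexity. The once-holed or once-punctured torus and the four-feature sphere have complexity one: they do carry minimal foliations, but any minimal component already fills the whole surface and so is the unique component, while a foliation without a minimal component is a multicurve containing at most one interior essential curve up to isotopy, since on these surfaces any two disjoint interior essential curves are isotopic. The remaining surfaces have complexity zero, or are an annulus: they carry no minimal foliation, and their only interior essential curves are those parallel to a marked boundary, of which there is a single isotopy class. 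For the pair-of-pants type this is the curve parallel to the marked boundary, equivalently surrounding the two remaining features, and for the annulus it is the core. In every case one obtains at most one interior component, and no two disjoint non-isotopic interior curves can be found.

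The main obstacle is the curve-extraction step combined with the bookkeeping for marked boundaries. One must check that a curve taken from a minimal component is genuinely non-isotopic to its companion and is not secretly isotopic to an unmarked boundary of $S$; this is precisely where non-peripherality inside $Y_i$ and the disjointness of $Y_1,Y_2$ are used, and it is also where one rules out a minimal component coexisting with a second interior component, which on a complexity-one surface is impossible because the minimal component fills everything. For the marked-boundary surfaces the subtlety is that a boundary-parallel curve \emph{does} count as an interior component, so the verification that there is nonetheless a unique interior isotopy class cannot be read off from the unmarked picture and must be done directly as above, invoking \cref{le:polygonsfinitelymanyarcs} to see that these pieces carry no recurrence. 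Once these points are handled the enumeration is routine and mirrors, case by case, the construction of \cref{le:nonexceptionalimpliesfusible}.
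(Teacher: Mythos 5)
There is a genuine gap, and it sits at the very first step of your reduction. You claim that internal decomposability forces the existence of \emph{two disjoint} interior components filling disjoint essential subsurfaces $Y_1,Y_2$. But under the definition used in \cref{se:measuredfoliations}, two distinct indecomposable components need not be disjoint: a single minimal piece of the foliation can carry two projectively distinct ergodic transverse measures, and each of these counts as a separate indecomposable (hence interior) component supported on the \emph{same} subsurface. In that situation $Y_1=Y_2$, your curve extraction produces isotopic curves, and no contradiction with the curve count arises. The later step ``any minimal component already fills the whole surface and so is the unique component'' is a non sequitur for the same reason: filling does not imply unique ergodicity, so filling alone does not exclude a second interior component riding on the same minimal piece. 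To repair the argument you would have to prove that minimal foliations on the complexity-one surfaces in your list (once-punctured torus, four-feature sphere, etc.) are uniquely ergodic; this is true, but it is a genuine additional fact that your proposal neither states nor proves, and it is exactly the case your curve-based strategy cannot see.

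This is also where your route diverges most from the paper's. The paper first converts every surface in the range $3g+2b_m+b_u+p\le 4$ into a closed one (unmarked boundaries become marked points, marked boundaries become two marked points), reducing everything to the torus with one marked point and the sphere with four marked points; it then replaces the marked point by a boundary, adds a parallel annulus, and applies the doubling trick of \cref{se:doublingtrick} to land on a closed genus-$2$ surface, where a foliation with at least two indecomposable components would double to one with at least five, contradicting Papadopoulos' bound of $3g-3=3$. The key point is that this bound counts indecomposable components \emph{including} distinct ergodic measures on a common support, so the non-uniquely-ergodic scenario is excluded automatically rather than case by case. A secondary, more minor issue with your write-up: you invoke \cref{le:polygonsfinitelymanyarcs} for spheres with one marked boundary and two interior marked points, but that lemma is stated only for one interior marked point, so the ``no recurrence'' claim for those pieces also needs its own (easy, but separate) justification.
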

	\begin{proof}
		Assume we have an internally decomposable foliation on $S_{g,b_m,b_u,p}$. Then we can get an internally decomposable foliation on $S_{g,0,0,b_u+p+2b_m}$ by removing the boundary components, replacing the boundaries without marked points with marked points and each boundary with marked points for 2 marked points. Furthermore, if we have at least one marked point, we can get an internally decomposable foliation in $S_{g,0,0,b_u+p+2b_m+k}$, $k\in \N$, by replacing a marked point with a $k+1$ marked points.
				
		Hence, we only need to prove that a torus with one marked point and a sphere with 4 marked points do not admit internally decomposable foliations. However, since these do not have boundaries, a foliation being internally decomposable translates to a foliation having at least two indecomposable components. 
		
		Assume the torus with one marked point admits a foliation with two indecomposable components. We can replace the marked point with a boundary, and add to the foliation a boundary component parallel to that boundary. Considering the doubled surface explained in \cref{se:doublingtrick} we get a closed surface of genus $2$ without boundaries nor marked points, with at least 5 indecomposable components. Recall that the maximum number of indecomposable components for a foliation on a surface of genus $g$ is $3g-3$, so for genus $2$ the maximum is $3$, giving us a contradiction. A similar process applies for the sphere with 4 marked points.
	\end{proof}
				
	\begin{proof}[Proof of Proposition \ref{pr:continuousBusemann}]
		The Busemann map is continuous at every point in the interior of Teichmüller space, as it is the identity when restricted in there and $\vbd{X}$ is closed. Hence, we only need to prove continuity or discontinuity at the points on the boundary. By Lemma \ref{le:nonexceptionalimpliesfusible} if $3g+2b_m+b_u+p>4$ then $S$ admits an internally decomposable foliation $F$, so by \cref{th:maxcondition} the Hubbard--Masur quadratic differential associated to $F$ at the basepoint $X$ is fusible and hence the Busemann map is not continuous at that point. On the other hand, if $3g+2b_m+b_u+p\le 4$ then by \cref{le:exceptionalimpliesnonfusible} for any quadratic differential $q$, the vertical foliation $V(q)$ is internally indecomposable, so again by \cref{th:maxcondition} every quadratic differential is infusible an $B$ is continuous at every boundary point.
	\end{proof}
		
	By combining Proposition \ref{pr:continuousBusemann} with Proposition \ref{pr:horobocompfiner}, we get the precise classification of surfaces with horofunction compactification isomorphic to visual compactification announced in Theorem \ref{th:homeomorphictovisualcomp} from the introduction.

	\begin{proof}[Proof of Theorem \ref{th:homeomorphictovisualcomp}]
		As shown in Proposition \ref{pr:horobocompfiner}, the visual compactification and the horofunction compactification are isomorphic if and only if the Busemann map is continuous, so the theorem follows by applying Proposition \ref{pr:continuousBusemann}.
	\end{proof}

\subsection{Criteria for convergence}\label{se:horocycles}

	One straightforward consequence of the horofunction compactification being finer than the visual compactification is the following criterion regarding the convergence of sequences in the horofunction compactification.

	\begin{corollary}\label{co:convergencepaths}
		Let $(x_n)\subset \T(S)$ be a sequence. If $(x_n)$ converges to a quadratic differential $q$ in the visual compactification, then all accumulation points of $(x_n)$ in the horofunction compactification are contained in $\fibermap^{-1}(q)$. In particular, if $V(q)$ is internally indecomposable, then $(x_n)$ converges in the horofunction compactification.
		
		Furthermore, if $(x_n)$ does not converge in the visual compactification, then it does not converge in the horofunction compactification.
	\end{corollary}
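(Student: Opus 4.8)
The plan is to deduce all three assertions from the continuity of the projection $\fibermap$ together with the identity $\fibermap(h(p))=p$ for $p\in\T(S)$, both established in Theorem \ref{th:projectionfunction}. Throughout I use that, since $\T(S)$ is proper, its horofunction compactification $\hbc{\T(S)}$ is compact, Hausdorff and second countable, hence metrizable; in particular every sequence has accumulation points, and a sequence possessing a unique accumulation point converges to it.

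For the first assertion, suppose $x_n\to q$ in the visual compactification, and let $\xi\in\hbc{\T(S)}$ be any accumulation point of $(h(x_n))$, say $h(x_{n_k})\to\xi$. The projection $\fibermap$ is continuous by Proposition \ref{pr:fibermapcontinuous}, so $\fibermap(h(x_{n_k}))\to\fibermap(\xi)$ in $\vbc{\T(S)}_b$. But $\fibermap(h(x_{n_k}))=x_{n_k}$, and the subsequence $x_{n_k}$ still converges to $q$; hence $\fibermap(\xi)=q$, that is $\xi\in\fibermap^{-1}(q)$. This proves that every horofunction accumulation point of $(x_n)$ lies in $\fibermap^{-1}(q)$.

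For the second assertion I would invoke the characterization of the fibers. If $V(q)$ is internally indecomposable, then $q$ is infusible by Theorem \ref{th:maxcondition}, and $q$ being infusible is equivalent to continuity of the Busemann map at the ray $\tray{q}{\cdot}$. By the equivalence $(1)\Leftrightarrow(2)$ of Proposition \ref{pr:continuityatqintro}, continuity of $B$ at $\tray{q}{\cdot}$ is in turn equivalent to the fiber $\fibermap^{-1}(q)$ being a singleton, necessarily $\{B(q)\}$. Combining this with the first assertion, every accumulation point of $(h(x_n))$ equals $B(q)$; since $\hbc{\T(S)}$ is compact and metrizable, a sequence with a single accumulation point converges, so $(x_n)$ converges to $B(q)$ in the horofunction compactification.

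The third assertion is the contrapositive packaged with continuity: if $(h(x_n))$ converged in the horofunction compactification to some $\xi$, then by continuity of $\fibermap$ the sequence $x_n=\fibermap(h(x_n))$ would converge to $\fibermap(\xi)$ in the visual compactification, contradicting the hypothesis that $(x_n)$ does not converge there; hence $(x_n)$ cannot converge in the horofunction compactification. Since the whole argument is a short assembly of already-proven facts, there is no serious obstacle; the only points demanding a little care are the passage from \emph{unique accumulation point} to \emph{convergence}, which is exactly where metrizability (equivalently, second countability) of the horofunction compactification enters, and the bookkeeping that a subsequence of a convergent sequence in the visual compactification retains the same limit.
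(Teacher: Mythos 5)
Your proof is correct and follows essentially the same route as the paper: continuity of $\fibermap$ gives the first and third assertions, and the combination of Theorem \ref{th:maxcondition} with Proposition \ref{pr:continuityatqintro} reduces the second to a unique-accumulation-point-plus-compactness argument. The only difference is that you spell out the metrizability justification and the identification of the singleton fiber with $\{B(q)\}$, which the paper leaves implicit.
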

	\begin{proof}
		If $x_n$ converges in the visual compactification to a quadratic differential $q$ then by the continuity of $\fibermap$ all its accumulation points are in $\fibermap^{-1}(q)$. If $V(q)$ is internally indecomposable, then by \cref{th:maxcondition} the quadratic differential $q$ is infusible, so the Busemann map is continuous at $q$ and by \cref{pr:continuityatqintro} the fiber $\fibermap^{-1}(q)$ is a singleton. Therefore $x_n$ converges to $\fibermap^{-1}(q)$, as that is the only accumulation point of $x_n$ and the horofunction compactification is compact.
		
		On the other hand, if $x_n$ converges to $\xi$ in the horofunction compactification, by continuity of $\fibermap$, $x_n$ converges to $\fibermap(\xi)$ in the visual compactification.
	\end{proof}
	
	A frequent topic in the study of compactifications of Teichmüller spaces is the convergence of certain measure-preserving paths. We shall see now how the previous results can be applied in that study.
	
	Let $X\in\T(S)$ be a point in Teichmüller space and $q$ be a unit quadratic differential based at $X$. It is a well known fact that there exists a unique orientation-preserving isometric embedding $\iota :\H\to \T(S)$ from the hyperbolic plane $\H$ to the Teichmüller space such that $\iota(i)=X$ and $\iota^{*}(q)=i$, see the work of Herrlich--Schmithüsen \cite{Herrlich} for a detailed explanation. The path $\iota(i+t)$ for $t\in \R_+$ is called the \emph{horocycle} generated by $q$. Since $\iota$ is an isometric embedding, $h(X)(p)=d(\iota^{-1}X,\iota^{-1}p)-d(\iota^{-1}X,\iota^{-1}b)$ for $X,b,p\in \iota(\H)$. That is, if we restrict the evaluations of horofunctions to the image of the Teichmüller disc, the value coincides with the values in the hyperbolic plane. Hence, since the path $i+t$ is a horocycle of the Busemann point obtained by moving along the geodesic $e^ti$ along the hyperbolic plane, the path $\iota(i+t)$ is also a horocycle of the corresponding Busemann point $B(q)$, obtained by moving along the geodesic $\iota(e^t i)$.
	
	Since $\iota$ is an isometric embedding, the geodesic between $X$ and $\iota(i+t)$ is contained in $\iota(\H)$. Furthermore, the pushforward and pullback maps are continuous, so denoting $q_t$ the unit quadratic differential spawning the geodesic between $X$ and $\iota(i+t)$, we have $\lim_{t\to\infty} \iota^* (q_t)=i$, and $\iota_*(i)=q$, so $\lim_{t\to\infty} q_t=q$. The distance between $\iota(i+t)$ and $X$ grows to infinity, so any horocycle path generated by some $q$ based at $X$ converges to $q$ in the visual compactification based at $X$. Hence, horocycles generated by infusible quadratic differentials converge in the horofunction compactification, which had been previously shown by Jiang--Su \cite{jiang} and Alberge \cite{Vincent} in the context of surfaces without boundary.
	\begin{corollary}\label{co:convergencehorocycles}
		Let $S$ be a compact surface with possibly nonempty boundary and finitely many marked points and let $q$ be an infusible quadratic differential based at any $X\in \T(S)$. Then the horocycle generated by $q$ converges in the horofunction compactification.
	\end{corollary}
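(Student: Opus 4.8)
The plan is to reduce the statement about the horocycle \emph{path} to a statement about sequences and then quote \cref{co:convergencepaths}. The only geometric input required has already been assembled in the paragraph preceding the statement: the isometric embedding $\iota$ identifies the Teichmüller metric restricted to $\iota(\H)$ with the hyperbolic metric, and the horocycle $\iota(i+t)$ converges, as $t\to\infty$, to $q$ in the visual compactification based at $X$. Everything else is formal.

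First I would use that the horofunction compactification $\hbc{\T(S)}$ is compact and metrizable, hence first countable. Consequently, to show that $\iota(i+t)$ converges as $t\to\infty$ it is enough to show that for every sequence $t_n\to\infty$ the points $\iota(i+t_n)$ converge to one and the same limit. So I fix an arbitrary sequence $t_n\to\infty$ and study the sequence $(\iota(i+t_n))\subset\T(S)$.

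Since the whole path tends to $q$ in the visual compactification based at $X$, so does this sequence. Because $q$ is infusible, \cref{th:maxcondition} shows that $V(q)$ is internally indecomposable, which is exactly the hypothesis under which \cref{co:convergencepaths} guarantees horofunction convergence. Applying that corollary, $\iota(i+t_n)$ converges in $\hbc{\T(S)}$ to the unique element of the fiber $\fibermap^{-1}(q)$; that fiber is a singleton by \cref{pr:continuityatqintro}, namely the Busemann point $B(q)$. As this limit is independent of the chosen sequence, all subsequential limits of the path coincide, and by compactness the path $\iota(i+t)$ therefore converges to $B(q)$ in the horofunction compactification.

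There is no substantive obstacle here: the analytic content lives entirely in the visual convergence of the horocycle and in \cref{co:convergencepaths}, and the present argument merely packages the reduction from paths to sequences, using that infusibility collapses the relevant fiber to a point.
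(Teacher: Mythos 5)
Your proof is correct and takes essentially the same route as the paper: visual convergence of the horocycle to $q$ based at $X$, then \cref{co:convergencepaths}, then the fact that infusibility collapses the fiber $\fibermap^{-1}_X(q)$ to the single point $B(q)$, so the path has a unique accumulation point and converges by compactness. The only differences are cosmetic: you make the reduction from path convergence to sequence convergence explicit via metrizability, and you detour through \cref{th:maxcondition} to reach internal indecomposability, which is redundant since infusibility already yields the singleton fiber directly via \cref{pr:continuityatqintro}.
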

	\begin{proof}
		The horocycle path converges to $q$ in the visual compactification based at $X$, so by \cref{co:convergencepaths} all accumulation points in the horofunction compactification are contained in $\fibermap^{-1}_X(q)$. Furthermore, since $q$ is infusible, $\fibermap^{-1}_X(q)$ is a singleton, so the horocycle path has a unique accumulation point in the horofunction compactification, and hence it converges.
	\end{proof}
	
	On the other hand, Fortier Bourque found some diverging horocycles in the horofunction compactification.
	\begin{theorem}[Fortier Bourque {\cite[Theorem 1.1]{Fortier}}]
		Let $S$ be a closed surface of genus $g$ with $p$ marked points, such that $3g+p\ge5$. Then there is some fusible quadratic differential $q$ based at some basepoint $X\in\T(S)$ such that the associated horocycle path does not converge in the horofunction compactification.
	\end{theorem}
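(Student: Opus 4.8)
The plan is to transport the question into the Gardiner--Masur picture and then exhibit two distinct subsequential limits of $\E$ along the horocycle. As observed just before \cref{co:convergencehorocycles}, the horocycle $\iota(i+s)$ converges to $q$ in the visual compactification based at $X$; hence, by \cref{co:convergencepaths}, every accumulation point of the horocycle in the horofunction (equivalently Gardiner--Masur) compactification lies in the fiber $\fibermap^{-1}(q)$. If $q$ is fusible this fiber fails to be a singleton, by \cref{pr:continuityatqintro} together with \cref{th:maxcondition}, so there is room for more than one limit inside it. The real content is therefore not that the fiber is large, but that the horocycle actually visits two of its points rather than settling on one.

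First I would record the asymptotics of $\E(\iota(i+s))$. Writing $\iota(i+s)=\tray{q_s}{\rho(s)}$, where $\rho(s)=d(X,\iota(i+s))\to\infty$ and $q_s$ is the unit direction from $X$ to $\iota(i+s)$, one has $q_s\to q$. Feeding the direction $q_s$ into Walsh's ray asymptotic (the extension of \cite[Theorem 1]{Walsh} stated before \cref{le:walshlowerbound}) and using that the dilatation $K_{b,\iota(i+s)}$ is comparable to $e^{2\rho(s)}$, I would obtain that $\E(\iota(i+s))(F)^2$ is, to leading order and up to a positive factor independent of $F$, given by
\[\sum_j\frac{i(G_j^s,F)^2}{i(G_j^s,H(q_s))},\]
where the $G_j^s$ are the indecomposable components of $V(q_s)$. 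Thus the limiting behaviour of $\E$ along the horocycle is dictated entirely by how the components of $V(q_s)$ degenerate as $q_s\to q$, that is, by the mode of convergence.

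The divergence mechanism is then a Cauchy--Schwarz gap. Choose $q$ with $V(q)=G_1+G_2$ a sum of two indecomposable components; this is possible, and makes $q$ fusible, by \cref{le:nonexceptionalimpliesfusible} and \cref{th:maxcondition} exactly when $3g+p\ge5$. Along any subsequence on which $q_s\to q$ strongly (each $G_j^s$ limiting to an indecomposable component) the factor above tends to $i(G_1,F)^2/i(G_1,H(q))+i(G_2,F)^2/i(G_2,H(q))$, while along any subsequence on which a single component of $V(q_s)$ fuses to the decomposable limit $G_1+G_2$ it tends instead to $\big(i(G_1,F)+i(G_2,F)\big)^2/\big(i(G_1,H(q))+i(G_2,H(q))\big)$. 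As quadratic forms in $\big(i(G_1,F),i(G_2,F)\big)$ these are non-proportional, since
\[\frac{(a_1+a_2)^2}{b_1+b_2}<\frac{a_1^2}{b_1}+\frac{a_2^2}{b_2}\qquad\text{whenever } a_1/b_1\neq a_2/b_2,\]
so the two subsequential Gardiner--Masur functions differ even after any positive rescaling, and hence are distinct accumulation points. Once both regimes are known to occur for arbitrarily large $s$, the horocycle cannot converge.

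The hard part is supplying that last input: building an explicit $(X,q)$ for which the shears underlying the horocycle flow make $V(q_s)$ oscillate between a connected foliation (whose single component fuses $G_1+G_2$) and a split foliation (whose two components recover the separate $G_1,G_2$). This is where Fortier Bourque's concrete flat geometry is needed: one assembles $q$ from two pieces carrying $G_1$ and $G_2$, glued so that the shear parameter $s$ governs, modulo the relevant periods, whether a leaf passing between the pieces closes up or threads both into one minimal foliation. I would then prove an equidistribution statement guaranteeing that both phases recur for unbounded $s$, and control the error terms in the extremal-length asymptotics uniformly enough that the two limits above are genuinely attained. This analysis is the substance of the proof; the reductions through \cref{co:convergencepaths}, \cref{th:maxcondition} and Walsh's asymptotics are only scaffolding.
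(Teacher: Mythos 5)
You should first note that this statement is not proven in the paper at all: it is imported verbatim from Fortier Bourque \cite[Theorem 1.1]{Fortier}, so there is no internal argument to compare yours against. Judged on its own, your proposal correctly assembles the soft framework: the horocycle converges to $q$ in the visual compactification based at $X$, so by \cref{co:convergencepaths} its accumulation points lie in $\fibermap^{-1}(q)$; fusibility of $q$ (available for $3g+p\ge 5$ by \cref{le:nonexceptionalimpliesfusible} and \cref{th:maxcondition}) makes this fiber a non-singleton via \cref{pr:continuityatqintro}; and the strict case of \cref{le:elementaryinequality} shows that the ``split'' and ``fused'' candidate limits are genuinely distinct functions on $\MF$. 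All of this matches the scaffolding the paper itself erects around the citation, and none of it is the theorem.

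The genuine gap is twofold. First, your leading-order formula for $\E(\iota(i+s))$ is unjustified: Walsh's asymptotic (the extension of \cite[Theorem 1]{Walsh} stated before \cref{le:walshlowerbound}) concerns a \emph{fixed} direction as $t\to\infty$ and carries no uniformity in the direction. By \cref{le:walshlowerbound}, $e^{-2t}\ext_{\tray{q'}{t}}(F)$ only \emph{decreases} toward the Busemann value $\sum_j i(G_j',F)^2/i(G_j',H(q'))$, so at the finite time $\rho(s)$ the horocycle point may still sit far above it; by \cref{pr:boundslimits} and \cref{pr:pathGM} the subsequential limits can a priori realize a whole interval of values between $\Xi^{-1}M(q)$ and $\Xi^{-1}B(q)$. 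Which value is actually attained is governed by the competition between the rate at which $\rho(s)\to\infty$ and the rate at which the components of $V(q_s)$ degenerate, and this is exactly the quantitative flat-geometry estimate you defer. Second, the recurrence of both regimes along the actual horocycle (your ``equidistribution statement'') is asserted, not proven: nothing in choosing $q$ with $V(q)=G_1+G_2$ guarantees that the direction $q_s$ from $X$ to $\iota(i+s)$ ever fuses, let alone at times compatible with $\rho(s)$ so that the fused limit, rather than something closer to $B(q)$, is the one attained. In short, you have proven the conditional statement that \emph{if} both regimes occur with the right quantitative relation then the horocycle diverges; that conditional is the easy part, and establishing its hypothesis for an explicit $(X,q)$ is the substance of Fortier Bourque's proof.
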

	\cref{co:convergencepaths} gives an upper limit on the set of accumulation points, as it has to be contained in $\fibermap^{-1}_X(q)$.
	
	Furthermore, by \cref{co:convergingtohoriffconvergingeveryvis} we have that a path converges in the horofunction compactification if and only if it converges in each visual compactification. Hence, such a divergent horocycle also diverges in some visual compactification. That is, we get Corollary \ref{co:divergencehorocycles}. 
	This contrasts with the behavior of Teichmüller rays, which by \cref{co:convergencegeodesics} or \cite[Theorem 7]{Walsh} converge in all visual compactifications.
	
\section{Dimension of the fibers}\label{se:shapeoffibers}

	Our first approach in determining the shape of the fibers is looking at the limits of Busemann points, which by Proposition \ref{pr:lowerbound} give us bounds on the elements of $\fibermap^{-1}(q)$. For a given quadratic differential $q$ and a foliation $G$ we define $\wals^q(G)$ as the map from measured foliations to $\R$ given by 
	\[
	\wals^q(G)=\frac{i(G,\cdot)^2}{i(G,H(q))},
	\]
	if $i(G,H(q))>0$, and $\wals^q(G)=0$ otherwise.
	By the extension of Walsh's \cref{co:walshbusemanshape} describing Busemann points in the Gardiner--Masur compactification, we see that the element $\E_q=\Xi^{-1} B_q$ has the form $\sqrt{\sum_i \wals^q(V_i)}$, where $V_i$ are the indecomposable components of $V(q)$. Hence, a reasonable path to follow for understanding the limits of Busemann points is understanding the limits of $\wals^q$ as $q$ varies.
	\begin{lemma}\label{le:limith}
		Let $q_n$ be a sequence of quadratic differentials on $X$ converging to $q$, and let $V_j^n$, $1\le j\le c(n)$ be the indecomposable components of $V(q_n)$. Let $G^n$ be a sequence of non zero measured foliations of the form $\sum \alpha_j^n V_j^n$, converging to a measured foliation $G$. Then
		\[
		\lim_{n\to\infty} \wals^{q_n}(G^n) = \wals^q (G)
		\]	
		if $G$ is non zero and $\lim_{n\to\infty} \wals^{q_n}(G^n)=0$ if $G$ is zero, where the convergence is pointwise in both cases.
	\end{lemma}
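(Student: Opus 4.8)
The plan is to separate the behaviour of the denominator $i(G^n,H(q_n))$ from that of the numerator, since the whole difficulty is concentrated in the regime where this denominator tends to $0$. First I would record the two continuity inputs: as $q\mapsto V(q)$ and $q\mapsto H(q)$ are continuous and $q_n\to q$, we have $V(q_n)\to V(q)$ and $H(q_n)\to H(q)$, while intersection number is jointly continuous on $\MF\times\MF$. The structural fact I would exploit is that each $G^n=\sum_j\alpha_j^nV_j^n$ is carried by the vertical foliation of $q_n$ (all of its leaves are vertical leaves), so $i(G^n,V(q_n))=0$ for every $n$; passing to the limit gives $i(G,V(q))=0$. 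I would also use that, for $q\neq 0$, the foliations $V(q)$ and $H(q)$ jointly fill (bind) the surface, i.e.\ $i(M,V(q))+i(M,H(q))>0$ for every $M\neq 0$. For simple closed curves this is immediate from the flat geometry of $|q|$, since the $q$-geodesic representative satisfies $i(c,V(q))+i(c,H(q))\ge \ell_{|q|}(c)>0$; density of curves, homogeneity and continuity extend it to all of $\MF$, and the bordered case reduces to the closed one via the doubling trick of \cref{se:doublingtrick}, under which $i(M^d,V(q^d))=2i(M,V(q))$ and similarly for $H$. Combined with $i(G^n,V(q_n))=0$ and $G^n\neq 0$, filling already forces $i(G^n,H(q_n))>0$ for every $n$, so $\wals^{q_n}(G^n)$ is always given by the genuine ratio and never by the ``otherwise'' branch.

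Next I would dispatch the case $G\neq 0$ by pure continuity. From $i(G,V(q))=0$ and $G\neq 0$, filling forces $i(G,H(q))>0$, so that $\wals^q(G)(F)=i(G,F)^2/i(G,H(q))$. Fixing $F$, the numerator $i(G^n,F)^2\to i(G,F)^2$ and the denominator $i(G^n,H(q_n))\to i(G,H(q))>0$, whence $\wals^{q_n}(G^n)(F)\to \wals^q(G)(F)$.

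The hard part will be the case $G=0$, where both numerator and denominator tend to $0$ and the ratio is a priori indeterminate. Here I would use that $\wals^q(G)$ is homogeneous of degree $1$ in $G$: writing $G^n=\lambda_n\hat G^n$ with $\ext_X(\hat G^n)=1$ and $\lambda_n=\ext_X(G^n)^{1/2}$, continuity of $\ext_X$ together with $\ext_X(0)=0$ gives $\lambda_n\to 0$, while $\wals^{q_n}(G^n)(F)=\lambda_n\,\wals^{q_n}(\hat G^n)(F)$. It then suffices to show that $\wals^{q_n}(\hat G^n)(F)$ stays bounded, and I would argue this by contradiction. If $\wals^{q_n}(\hat G^n)(F)\to\infty$ along a subsequence, then, the set $\{\,\ext_X=1\,\}$ being compact, I may pass to a further subsequence with $\hat G^n\to\hat G$, where $\ext_X(\hat G)=1$ and hence $\hat G\neq 0$. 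Continuity of intersection number gives $i(\hat G^n,F)\to i(\hat G,F)$ (finite) and $i(\hat G^n,H(q_n))\to i(\hat G,H(q))$; but $i(\hat G,V(q))=\lim i(\hat G^n,V(q_n))=0$ with $\hat G\neq 0$, so filling yields $i(\hat G,H(q))>0$, forcing $\wals^{q_n}(\hat G^n)(F)\to i(\hat G,F)^2/i(\hat G,H(q))<\infty$, a contradiction. Thus $\wals^{q_n}(\hat G^n)(F)$ is bounded, and $\lambda_n\to 0$ gives $\wals^{q_n}(G^n)(F)\to 0$.

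In short, the only real obstacle is the vanishing denominator, and the mechanism that rescues the argument is the joint filling of $V(q)$ and $H(q)$: because every $G^n$ is vertical for $q_n$, so that $i(G^n,V(q_n))=0$, filling prevents $i(G^n,H(q_n))$ from decaying faster than the rate forced by $G^n\to 0$, so the numerator always wins. The bordered case is handled uniformly by doubling, which preserves $i$, $\ext$, $V$ and $H$ up to the explicit factors recorded in \cref{se:doublingtrick}.
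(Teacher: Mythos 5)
Your proof is correct. For the case $G\neq 0$ it follows the same route as the paper (continuity of intersection numbers), but the paper's own proof is essentially a two-line argument: it asserts the formula $\wals^{q_n}(G^n)(F)=i(G^n,F)^2/i(G^n,H(q_n))$, invokes continuity for $G\neq 0$, and for $G=0$ simply defers to the proof of \cite[Lemma 27]{Walsh}. What you do differently is twofold. First, you make explicit a step the paper leaves implicit even in the easy case: for the limit to equal $\wals^q(G)$ one needs $i(G,H(q))>0$ (otherwise, by the paper's definition, $\wals^q(G)=0$ while the ratio could diverge), and you derive this from $i(G^n,V(q_n))=0$ passing to the limit together with the fact that $V(q)$ and $H(q)$ jointly fill. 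Second, for $G=0$ you replace the citation by a self-contained argument: write $G^n=\lambda_n\hat G^n$ with $\ext_X(\hat G^n)=1$, use degree-$1$ homogeneity of $\wals$, compactness of the unit extremal-length sphere, and filling to show the normalized values $\wals^{q_n}(\hat G^n)(F)$ stay bounded while $\lambda_n\to 0$. The paper's route buys brevity; yours buys independence from Walsh's lemma and an argument that transparently covers the bordered case via the doubling trick of \cref{se:doublingtrick}, in the spirit of the paper's other extensions of Walsh's results.

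One point needs tightening. Your justification of the filling property, ``density of curves, homogeneity and continuity extend it to all of $\MF$,'' is not valid as stated: a strict inequality does not survive passage to a limit. What does extend by continuity from closed curves to all of $\MF$ is the non-strict inequality $i(M,V(q))+i(M,H(q))\ge \ell_{|q|}(M)$, where $\ell_{|q|}$ is flat length; one then needs the separate (standard) fact that $\ell_{|q|}(M)>0$ for every nonzero $M\in\MF$, or one can simply quote the binding of the pair $\left(V(q),H(q)\right)$ as a known result. With that repair, your argument is complete.
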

	\begin{proof}
		For any measured foliation $F$ we have $\wals^{q_n}(G^n)(F)=\frac{i(G^n,F)^2}{i(G^n,H(q_n))}$, so if $G$ is non zero the lemma follows by continuity of the intersection number.

		If $G$ is zero  the result follows from applying the same proof than in \cite[Lemma 27]{Walsh}.
	\end{proof}
	
	Denote $\B$ the set of Busemann points, $\Bclose$ its closure and $\Bclose(q)$ the intersection $\Bclose\cap \fibermap^{-1}(q)$. We can use the previous lemma to show that the elements of $\Bclose(q)$ satisfy certain properties.
	
	\begin{proposition}\label{pr:busemanclosureshape}
		Let $S$ be a closed surface with possibly marked points, $\xi\in \Bclose(q)$ and $V_i$, $i\in\{1,\ldots,k\}$ be the indecomposable components of $V(q)$. Denote $x_i=\frac{i(V_i,\cdot)}{i(V_i,H(q))}$. Then, the square of the representation of $\xi$ in the Gardiner--Masur compactification, $(\Xi^{-1} \xi)^2$, is a homogeneous polynomial of degree $2$ in the variables $x_i$, whose coefficients sum to $1$.
	\end{proposition}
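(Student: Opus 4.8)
The plan is to reduce the statement to the explicit formula for the Gardiner--Masur representatives of Busemann points and to pass to the limit along components. Since $S$ is closed, every Busemann point is of the form $B_{q'}$ for a unit-area quadratic differential $q'$, so I would begin by writing $\xi=\lim_n B_{q_n}$ for a sequence of such Busemann points. Because $\xi\in\fibermap^{-1}(q)$ and $\fibermap$ is continuous with $\fibermap(B_{q_n})=\tray{q_n}{\cdot}$, the rays $\tray{q_n}{\cdot}$ converge to $\tray{q}{\cdot}$ in the visual compactification, which is equivalent to $q_n\to q$ as unit-area quadratic differentials. By \cref{co:walshbusemanshape} the representative satisfies $(\Xi^{-1}B_{q_n})^2=\sum_j \wals^{q_n}(V^n_j)$, where the $V^n_j$ are the indecomposable components of $V(q_n)$; since convergence in the Gardiner--Masur compactification implies pointwise convergence on $\MF$, squaring gives $(\Xi^{-1}\xi)^2=\lim_n \sum_j \wals^{q_n}(V^n_j)$ pointwise.

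The number of indecomposable components is uniformly bounded (Papadopoulos's bound), so after passing to a subsequence I may assume this number is a constant $c$ and that each $V^n_j$ converges to a measured foliation $W_j$ (possibly zero). The identity $\sum_j V^n_j=V(q_n)\to V(q)=\sum_i V_i$ forces $\sum_j W_j=V(q)$, and the key structural step is to show that each $W_j$ is a nonnegative combination $W_j=\sum_i \mu_{ji}V_i$ with $\sum_j\mu_{ji}=1$ for every $i$. This follows from the uniqueness of the decomposition into indecomposable components: decomposing each $W_j$ and comparing with $\sum_i V_i$, every indecomposable piece of every $W_j$ must be proportional to some $V_i$ supported on the same minimal component, and matching total measures on each minimal component yields the coefficient constraint. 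Applying \cref{le:limith} to each sequence $V^n_j\to W_j$, and discarding the finitely many indices with $W_j=0$ (whose contribution tends to $0$), I exchange the finite sum with the limit to obtain $(\Xi^{-1}\xi)^2=\sum_{j:\,W_j\neq 0}\wals^q(W_j)$.

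What remains is a computation. Writing $a_i=i(V_i,H(q))$ and using the additivity of the intersection number over the indecomposable decomposition, one has $i(W_j,F)=\sum_i\mu_{ji}i(V_i,F)=\sum_i\mu_{ji}a_ix_i$ and $i(W_j,H(q))=\sum_i\mu_{ji}a_i$, so that
\[
\wals^q(W_j)=\frac{\left(\sum_i \mu_{ji}a_i x_i\right)^2}{\sum_i \mu_{ji}a_i}.
\]
Each summand is a constant multiple of a perfect square in the $x_i$, so $(\Xi^{-1}\xi)^2$ is a homogeneous polynomial of degree $2$ in the $x_i$. Finally, the sum of its coefficients is its value when all $x_i=1$, that is, its value at $F=H(q)$; there $\sum_i\mu_{ji}a_ix_i=\sum_i\mu_{ji}a_i=i(W_j,H(q))$, so the polynomial evaluates to $\sum_j i(W_j,H(q))=\sum_i a_i\sum_j\mu_{ji}=\sum_i a_i=i(V(q),H(q))=1$, using that $q$ has unit area.

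I expect the main obstacle to be the structural step identifying the limits $W_j$ as combinations $\sum_i\mu_{ji}V_i$ with $\sum_j\mu_{ji}=1$, together with the additivity $i(W_j,\cdot)=\sum_i\mu_{ji}i(V_i,\cdot)$: both rest on the uniqueness and additivity of the decomposition of a measured foliation into indecomposable (ergodic) components, which must be handled with care precisely when several $V_i$ share a common minimal component.
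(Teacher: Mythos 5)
Your proposal is correct and follows essentially the same route as the paper: write $\xi$ as a limit of Busemann points $B(q_n)$ with $q_n\to q$, pass to a subsequence where the (uniformly boundedly many) indecomposable components $V^n_j$ converge, identify their limits as combinations $\sum_i\alpha_j^i V_i$ with $\sum_j\alpha_j^i=1$, and apply Lemma \ref{le:limith} to get the same explicit degree-2 polynomial, whose coefficients sum to the area of $q$. The only differences are cosmetic: you justify $q_n\to q$ via continuity of $\fibermap$ and explicitly discard zero limits, details the paper leaves implicit.
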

	
	Recall that we are using a normalized version of the Gardiner--Masur compactification. Under the projectivized version the sum of the coefficients cannot have any fixed value.
	
	\begin{proof}
		Since the surface does not have boundary, all Busemann points are of the form $B(q')$ for some quadratic differential of unit area $q'$. Consider a sequence $(q_n)$ such that $B(q_n)$ converges to $\xi$ and $q_n$ converges to $q$. Let $c(n)$ be the number of indecomposable vertical components of $V(q_n)$, and let $V^n_j$, $0<j\le c(n)$ be those components. We know that $c(n)$ is bounded by some number depending on the topology of the surface. Take a subsequence such that $c(n)$ is equal to some constant $c$ and $V^n_j$ converges for each $j$. The sum $\sum_{j=1}^c V^n_j$ converges as $n\to\infty$ to $\sum_{i=1}^k V_i$, so the limit of each $V^n_j$ has to be of the form $\sum_{i=1}^k \alpha_j^i V_i$. Furthermore, $\sum_{j=1}^c \alpha_j^i=1$, since 
		\[
			\sum_{i=1}^k V_i=V(q)=\lim_{n\to\infty} V(q_n)=\lim_{n\to\infty}\sum_{j=1}^cV_{j}^n=\sum_{j=1}^c\sum_{i=1}^k\alpha_j^iV_i=\sum_{i=1}^k\left(\sum_{j=1}^c\alpha_j^i\right)V_i.
		\]
		The element associated to the Busemann point $B(q_n)$ in the Gardiner--Masur compactification satisfies
		\[
		\E_{q_n}^2=\sum_{j=1}^c \wals^{q_n}(V^n_j).
		\]
		Hence, applying Lemma \ref{le:limith} we get the following expressions for the square of the limit of Busemann points:
		\[
		(\Xi^{-1}\xi)^2=\sum_{j=1}^c \wals^q\left(\sum_{i=1}^k \alpha_j^i V_i\right)=\sum_{j=1}^c\frac{\left(\sum_{i=1}^k \alpha_j^i i(V_i,H(q)) x_i\right)^2}{\sum_{i=1}^k \alpha_j^i i(V_i,H(q))}.
		\]
		That is, we get a homogeneous polynomial of degree 2 in the variables $x_i$. Since $q$ has unit area, the sum of the coefficients is
		\[
		\sum_{j=1}^c \sum_{i=1}^k \alpha_j^i i(V_i,H(q))=\sum_{i=1}^k i(V_i, H(q))=1,
		\]
		which completes our claim.
	\end{proof}

	By Proposition \ref{pr:upperbound}, the Busemann point $B(q)$ gives an upper bound on all functions in $\fibermap^{-1}(q)$. While Proposition \ref{pr:lowerbound} does not give us a lower bound directly, we can use Lemma \ref{le:walshcompact} to get one. For a unit area quadratic differential $q$, let $Z_j$ be the interior parts of $V(q)$, and denote $G_j$ the union of interior indecomposable components within $Z_j$. Further, let $P_i$ be the boundary components of $V(q)$. We define the \emph{minimal point} at $q$ as
	\[
		M(q)=\Xi \left(\sum_i \wals^q(P_i)+\sum_j \wals^q\left(G_j\right)\right)^{1/2}.
	\]
	
	\begin{proposition}\label{pr:lowerboundGM}
		Let $q$ be a quadratic differential. Then, for any $\xi\in\fibermap^{-1}(q)$, we have
		\[\Xi^{-1}\xi\ge \Xi^{-1} M(q)\]
		in the Gardiner--Masur compactification.
		Furthermore, $M(q)\in \fibermap^{-1}(q)$ whenever each $G_j$ has at most two annuli parallel to the boundaries of $Z_j$ with marked points.
	\end{proposition}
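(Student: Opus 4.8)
The plan is to work throughout in the Gardiner--Masur model via the isomorphism $\Xi$, proving the lower bound from the extension of Walsh's lower bound (Lemma \ref{le:walshlowerbound}) together with a convexity estimate for $\wals^q$, and then realizing $M(q)$ explicitly as a limit of Busemann points. For the first claim I would fix $\xi\in\fibermap^{-1}(q)$ together with a sequence $x_n=\tray{q_n}{t_n}$ in $\T(S)$ with $h(x_n)\to\xi$, $q_n\to q$ and $t_n\to\infty$. Since $\Xi\circ\E=h$ on $\T(S)$, the functions $\E(x_n)$ converge pointwise on $\MF$ to $\Xi^{-1}\xi$. Using $d(b,x_n)=t_n$, hence $K_{b,x_n}=e^{2t_n}$, Lemma \ref{le:walshlowerbound} gives for every $F\in\MF$
\[
\E(x_n)(F)^2=e^{-2t_n}\ext_{\tray{q_n}{t_n}}(F)\ge\sum_j\wals^{q_n}(V_j^n)(F),
\]
where the $V_j^n$ are the indecomposable components of $V(q_n)$. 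Passing to the limit, $\Xi^{-1}\xi(F)\ge\liminf_n\sum_j\wals^{q_n}(V_j^n)(F)$, so it suffices to bound the right-hand side below by $(\Xi^{-1}M(q)(F))^2$.

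To this end I would pass to a subsequence realizing the $\liminf$ along which the number of components is constant and each $V_j^n$ converges to a foliation $F_j$; Lemma \ref{le:limith} then gives $\sum_j\wals^{q_n}(V_j^n)\to\sum_j\wals^q(F_j)$. By Proposition \ref{pr:notsplittingboundary} each boundary component $P_i$ of $V(q)$ eventually occurs as a component of $V(q_n)$ with weight tending to $1$, while the remaining components are disjoint from the proper arcs $P_i$ and hence lie in the interior parts $Z_k$, their limits summing to $G_k$ inside each $Z_k$. The key estimate is the superadditivity of $\wals^q$: for $A_1,\dots,A_r$, Cauchy--Schwarz applied to $a_j=i(A_j,F)/\sqrt{i(A_j,H(q))}$ and $b_j=\sqrt{i(A_j,H(q))}$ yields
\[
\wals^q\!\Big(\sum_j A_j\Big)(F)=\frac{\big(\sum_j i(A_j,F)\big)^2}{\sum_j i(A_j,H(q))}\le\sum_j\frac{i(A_j,F)^2}{i(A_j,H(q))}=\sum_j\wals^q(A_j)(F).
\]
Applying this inside each $Z_k$ gives $\sum_j\wals^q(F_j)\ge\sum_i\wals^q(P_i)+\sum_k\wals^q(G_k)=(\Xi^{-1}M(q))^2$, whence $\Xi^{-1}\xi\ge\Xi^{-1}M(q)$.

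For the second claim I would produce $q_n\to q$ with $B(q_n)\to M(q)$. The idea is to replace, inside each interior part $Z_j$, the possibly decomposable $G_j$ by a single indecomposable foliation. Since $G_j$ has no proper arcs, Proposition \ref{pr:curvesdense} gives closed-curve multicurves converging to $G_j$ in $Z_j$; by hypothesis at most two of their components are parallel to boundaries of $Z_j$ with marked points, and none is parallel to a boundary without marked points (such curves would be boundary annuli, hence excluded from $G_j$). Thus $p\le 2$ in Lemma \ref{le:interesctingcurves}, which produces a single auxiliary curve meeting every component; twisting along the multicurve and rescaling exactly as in Proposition \ref{pr:onlyifpart} then yields indecomposable foliations $G_j^n\to G_j$ supported in $Z_j$. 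Taking $F_n=\sum_i P_i+\sum_j G_j^n$ and letting $q_n$ be the normalized Hubbard--Masur differential with $V(q_n)=F_n$, I get $q_n\to q$ with indecomposable components $\{P_i\}\cup\{G_j^n\}$, so by Lemma \ref{le:limith}
\[
\Xi^{-1}B(q_n)=\Big(\sum_i\wals^{q_n}(P_i)+\sum_j\wals^{q_n}(G_j^n)\Big)^{1/2}\longrightarrow\Big(\sum_i\wals^q(P_i)+\sum_j\wals^q(G_j)\Big)^{1/2}=\Xi^{-1}M(q).
\]
Hence $B(q_n)\to M(q)$, so $M(q)\in\Bclose$, and since $\fibermap(B(q_n))=\tray{q_n}{\cdot}\to\tray{q}{\cdot}$ in the visual compactification, the continuity of $\fibermap$ (Proposition \ref{pr:fibermapcontinuous}) gives $\fibermap(M(q))=q$, i.e.\ $M(q)\in\fibermap^{-1}(q)$.

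The main obstacle is the merging step of the second claim: all of $G_j$ must be turned into a \emph{single} indecomposable component, and this is possible exactly when Lemma \ref{le:interesctingcurves} needs only one auxiliary curve, i.e.\ when $\max(\lceil p/2\rceil,1)=1$, which is precisely the hypothesis $p\le 2$. The delicate point is the odd case $p=1$, where the auxiliary curve is a proper arc of $Z_j$ rather than a closed curve; there I would need to check that the resulting merged foliation is still a single indecomposable interior component disjoint from the boundary components, so that $V(q_n)$ decomposes as claimed and Lemma \ref{le:limith} applies. By contrast, the first claim needs no hypothesis on the $G_j$, consistent with the lower bound holding for every $q$.
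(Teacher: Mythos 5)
Your proposal follows essentially the same route as the paper's proof: the lower bound via Lemma \ref{le:walshlowerbound}, splitting $V(q_n)$ along the persistent boundary components supplied by Proposition \ref{pr:notsplittingboundary}, passing to limits with Lemma \ref{le:limith}, and the Cauchy--Schwarz (Titu) superadditivity of $\wals^q$; then realizing $M(q)\in\fibermap^{-1}(q)$ by repeating the twisting construction of Proposition \ref{pr:onlyifpart}, where the hypothesis on the $G_j$ ensures Lemma \ref{le:interesctingcurves} yields a single merging curve per interior part. Your write-up is in fact more detailed than the paper's one-sentence treatment of the second claim, and the worry you flag about the $p=1$ case is harmless: Lemma \ref{le:limith} is insensitive to whether the merged component $G_j^n$ is an interior or a boundary component, so the computation of $\lim_n \Xi^{-1}B(q_n)$ goes through unchanged.
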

	
	In the context of surfaces without boundary the previous result has been also proven by Liu--Shi in \cite[Lemma 3.10]{LiuShi}. In such context we have $M(q)=\Xi i(V(q),\cdot)^2$, which by the proposition is always contained in $\fibermap^{-1}(q)$.

	The minimality is essentially derived from the following well-known inequality.
	\begin{lemma}[Titu's lemma]\label{le:elementaryinequality}
		For any positive reals $a_1,\ldots, a_n$ and $b_1,\ldots, b_n$ we have 
		\[
		\sum_j\frac{a_j^2}{b_j}\ge \frac{\left(\sum_j a_j\right)^2}{\sum_j b_j}.
		\]
	\end{lemma}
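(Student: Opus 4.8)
The plan is to recognize this statement as the Cauchy--Schwarz inequality in its Engel (``sum of fractions'') form, so that a one-line argument suffices. First I would apply Cauchy--Schwarz to the two finite sequences $\left(a_j/\sqrt{b_j}\right)_j$ and $\left(\sqrt{b_j}\right)_j$, both of which are well defined precisely because every $b_j$ is strictly positive. This yields
\[
	\left(\sum_j a_j\right)^2 = \left(\sum_j \frac{a_j}{\sqrt{b_j}}\cdot \sqrt{b_j}\right)^2 \le \left(\sum_j \frac{a_j^2}{b_j}\right)\left(\sum_j b_j\right).
\]
Since $\sum_j b_j>0$, dividing both sides by it gives exactly the asserted inequality, and the proof is complete.

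If one prefers to avoid invoking Cauchy--Schwarz as a black box, an entirely self-contained alternative proceeds by induction on $n$. The case $n=1$ is an equality, and the case $n=2$, namely $\frac{a_1^2}{b_1}+\frac{a_2^2}{b_2}\ge \frac{(a_1+a_2)^2}{b_1+b_2}$, reduces after clearing the positive denominators to the manifestly true inequality $(a_1 b_2 - a_2 b_1)^2\ge 0$. The inductive step then follows by writing $\sum_{j=1}^n \frac{a_j^2}{b_j}=\sum_{j=1}^{n-1}\frac{a_j^2}{b_j}+\frac{a_n^2}{b_n}$, applying the inductive hypothesis to the first sum, and then collapsing the two remaining fractions with the two-variable case.

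I do not anticipate any genuine obstacle, as this is a classical elementary inequality. The only point that requires any care is the standing hypothesis that all the $b_j$ are strictly positive: this is exactly what guarantees that the square roots, the denominators, and the final division are all legitimate, and it is also what is needed to identify the equality case (which occurs precisely when the vectors $(a_j)_j$ and $(b_j)_j$ are proportional, though that refinement is not needed here).
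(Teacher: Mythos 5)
Your proposal is correct and its main argument is essentially the paper's proof: the paper likewise clears the denominator, writing the claim as $\sum_i b_i \sum_j a_j^2/b_j \ge \bigl(\sum_j a_j\bigr)^2$, and invokes Cauchy--Schwarz, which amounts exactly to your splitting into the sequences $\bigl(a_j/\sqrt{b_j}\bigr)_j$ and $\bigl(\sqrt{b_j}\bigr)_j$. The inductive argument you sketch is a fine self-contained alternative, but it is not needed.
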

	\begin{proof}
		The inequality can be written as
		\[
		\sum_i b_i\sum_j\frac{a_j^2}{b_j}\ge \left(\sum_j a_j\right)^2,
		\]
		so the result follows after applying the Cauchy--Schwartz inequality.
	\end{proof}
	
	The implication this lemma has for our discussion is that $\wals^q(\cdot)$ is convex, in the sense that for any $G=\sum_i G_i$ and any measured foliation $F$ we have 
	\[
		\sum_i \wals^q(G_i)(F) \ge \wals^q(G)(F).
	\]

	\begin{proof}[Proof of Proposition \ref{pr:lowerboundGM}]
		If $q$ is infusible then each $G_j$ is indecomposable, so $M(q)=B(q)$, the fiber $\fibermap^{-1}(q)$ has one point and the proposition is satisfied.
		
		Consider then $q$ fusible and $\xi\in \fibermap^{-1}(q)$. Let $(x_n)=(\tray{q_n}{t_n})\subset \T$ converging to $\xi$. By Lemma \ref{le:walshlowerbound} we have $\Xi^{-1}(h(x_n))\ge\Xi^{-1} B(q_n)$. Hence, $\Xi^{-1}\xi\ge \liminf_{n\to\infty}\Xi^{-1} B(q_n)$.
		
		Given a measured foliation $F$, take a subsequence so that  \[\liminf_{n\to\infty}\Xi^{-1} B(q_n)(F)=\lim_{n\to\infty}\Xi^{-1} B(q_n)(F).\]
		The foliations $V(q_n)$ converge to $V(q)$, so by Proposition \ref{pr:notsplittingboundary} for $n$ big enough all boundary components $P_i$ are contained within $V(q_n)$. Hence, for $n$ big enough the foliations $V(q_n)$ can be split to the interior parts $Z_j$ by cutting along the proper arcs. Denote $G_j^n$ the interior components of the foliation $V(q_n)$ restricted to $Z_j$. Let $G_{j,k}^n$ be the indecomposable components of $G_j^n$. The sequence $G_j^n$ converges to $G_j$, so we can take a subsequence such that each $G_{j,k}^n$ converges to some foliation $G_{j,k}$ with $\sum_k G_{j,k}=G_j$. Applying Lemma \ref{le:limith} we have
		\begin{multline*}
			\lim_{n\to\infty}\Xi^{-1} B(q_n)(F)=\lim_{n\to\infty}\sum_i^n \wals^{q_n}(P_i)+\sum_j \sum_k \wals^{q_n}\left(G^n_{j,k}\right)\\=\sum_i \wals^q(P_i)+\sum_j \sum_k \wals^q\left(G_{j,k}\right).
		\end{multline*}
		Hence, applying Lemma \ref{le:elementaryinequality} to the second sum we get the first part of the proposition.
		
		To observe that the limit is actually reached we can repeat the proof of Proposition \ref{pr:onlyifpart} and observe that a proper arc for each interior part is enough to approach the foliation whenever each interior part of the foliation has at most two annuli parallel to boundaries with marked points.
	\end{proof}

	By \cref{co:orderpreserved} this lower bound is carried to the horofunction representation and by Proposition \ref{pr:upperbound} we have an upper bound. Hence, we have the chain of inequalities
	\[M(q) \le \xi \le B(q),\]
	for any $\xi\in \fibermap^{-1}(q)$.
	As we see in the next proposition, this chain can be translated as well to the Gardiner--Masur compactification.
		
	\begin{proposition}\label{pr:upperboundGM}
		Let $\xi\in\fibermap^{-1}(q)$. Then,
			\[\Xi^{-1}\xi \le \Xi^{-1} B(q).\]
	\end{proposition}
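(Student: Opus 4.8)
The plan is to transfer the horofunction inequality $\xi\le B(q)$, which is exactly Proposition \ref{pr:upperbound}, into the Gardiner--Masur picture. I deliberately avoid invoking an order-reflecting property of $\Xi$ (Corollary \ref{co:orderpreserved} only gives the implication "$f\ge g$ in $\overline E$ $\Rightarrow$ $\Xi(f)\ge\Xi(g)$", which is the wrong direction). Instead I rerun the argument of Proposition \ref{pr:upperbound} directly at the level of the embedding $\E$. The engine replacing Lemma \ref{le:strictinequality} (monotonicity of horofunctions along geodesics) will be a monotonicity of $\E$ along Teichmüller rays based at $b$.

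First I would record that for any ray $\eta=\tray{p}{\cdot}$ starting at $b$ one has, by Kerckhoff's formula $d=\tfrac12\log K$, that $K_{b,\eta(t)}=e^{2t}$, and hence $\E(\eta(t))(F)^2=e^{-2t}\ext_{\eta(t)}(F)$ for every $F\in\MF$. The quasiconformal distortion bound $\ext_{\eta(t)}(F)\le K_{\eta(t),\eta(s)}\ext_{\eta(s)}(F)=e^{2(t-s)}\ext_{\eta(s)}(F)$ for $t\ge s$ then yields $\E(\eta(t))(F)^2\le e^{-2s}\ext_{\eta(s)}(F)=\E(\eta(s))(F)^2$, so $\E$ is nonincreasing along rays from $b$. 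This is the $\E$-analogue of the monotonicity used to prove Proposition \ref{pr:upperbound}.

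Next I would take the approximating data supplied by the proof of Proposition \ref{pr:upperbound}: a sequence $x_n=\gamma_n(t_n)$ with $h(x_n)\to\xi$, where $\gamma_n\to\gamma=\tray{q}{\cdot}$ in $D_b$ and $t_n\to\infty$. Since $\Xi\circ\E=h$ on $\T(S)$ and $\Xi^{-1}$ is continuous, $\E(x_n)\to\Xi^{-1}\xi$ pointwise. Fixing $F\in\MF$ and $\varepsilon>0$, I reuse the cutoff $s_n=\sup\{t<t_n:\ d(\gamma(t),\gamma_n(t))<\varepsilon\}$, which satisfies $s_n\to\infty$, $s_n<t_n$ and $d(\gamma(s_n),\gamma_n(s_n))\le\varepsilon$. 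Because both rays leave $b$, one has $d(b,\gamma(s_n))=d(b,\gamma_n(s_n))=s_n$, so the normalizations $K_{b,\cdot}=e^{2s_n}$ agree and cancel, while $\varepsilon$-closeness gives $\ext_{\gamma_n(s_n)}(F)\le e^{2\varepsilon}\ext_{\gamma(s_n)}(F)$; hence $\E(\gamma_n(s_n))(F)\le e^{\varepsilon}\E(\gamma(s_n))(F)$. Combining with the monotonicity along $\gamma_n$ (valid since $t_n>s_n$) gives the chain $\E(\gamma_n(t_n))(F)\le\E(\gamma_n(s_n))(F)\le e^{\varepsilon}\E(\gamma(s_n))(F)$. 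Letting $n\to\infty$ and using that $\gamma(s_n)=\tray{q}{s_n}\to B(q)$ in the horofunction compactification, so that $\E(\gamma(s_n))\to\Xi^{-1}B(q)=\E(q)$, yields $(\Xi^{-1}\xi)(F)\le e^{\varepsilon}\E(q)(F)$. Since $\varepsilon$ is arbitrary and $F$ is arbitrary, this gives $\Xi^{-1}\xi\le\Xi^{-1}B(q)$.

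The only delicate point — mirroring the estimate in Proposition \ref{pr:upperbound} — is that the comparison between the moving ray $\gamma_n$ and the limit ray $\gamma$ must be made at the common parameter $s_n$ at which they are uniformly $\varepsilon$-close, so that the $e^{2s_n}$ normalizations cancel exactly and only the harmless factor $e^{2\varepsilon}$ from extremal-length distortion survives; had I compared at $t_n$ directly the two rays need not be close and no useful bound would result. I expect no essential difficulty beyond this bookkeeping, since every ingredient (the distortion bound, $s_n\to\infty$ with $d(\gamma(s_n),\gamma_n(s_n))\le\varepsilon$, and the convergence $\E(\tray{q}{s})\to\E(q)$) is already in place.
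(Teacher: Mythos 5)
Your proposal is correct, but it takes a genuinely different route from the paper's own proof. The paper argues more economically: since $\xi\in\fibermap^{-1}(q)$, Lemma \ref{le:fixingvaluealongpath} gives the optimal-geodesic identity $\xi(\tray{q}{t})=-t$ for all $t$, and plugging this into the explicit sup-formula defining $\Xi$ yields at once $(\Xi^{-1}\xi)(F)^2\le e^{-2t}\ext_{\tray{q}{t}}(F)$ for every $F$ and every $t$; letting $t\to\infty$ and invoking the extension of Walsh's limit theorem finishes the argument with no $\varepsilon$-bookkeeping. You instead bypass the explicit formula for $\Xi$ entirely, using only that $\Xi$ is an isomorphism of compactifications with $\Xi\circ\E=h$, and you re-run the squeeze argument of Proposition \ref{pr:upperbound} at the level of the embedding $\E$: the monotonicity of $t\mapsto e^{-2t}\ext_{\eta(t)}(F)$ along rays from $b$ (a consequence of Kerckhoff's formula and the quasiconformal distortion bound) plays the role of Lemma \ref{le:strictinequality}, the cutoff times $s_n$ are reused so that the normalizations $e^{2s_n}$ cancel exactly, and Corollary \ref{co:walshbusemanshape} supplies the limit $\E(\tray{q}{s_n})\to\Xi^{-1}B(q)$. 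Your opening observation is also accurate and important: Corollary \ref{co:orderpreserved} only transports order from the Gardiner--Masur side to the horofunction side, so the inequality $\xi\le B(q)$ of Proposition \ref{pr:upperbound} cannot simply be pushed through $\Xi^{-1}$, which is exactly why either the paper's use of the formula for $\Xi$ or your direct re-derivation is needed. What the paper's route buys is brevity and a clean exploitation of the structure of $\Xi$; what yours buys is independence from that formula and a transparent demonstration that the Gardiner--Masur bound is the verbatim translation of the metric bound. Both proofs ultimately rest on the same final ingredient, Walsh's asymptotic formula for $e^{-2t}\ext_{\tray{q}{t}}(F)$.
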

	\begin{proof}
		We have a sequence of points $\tray{q_n}{t_n}$ converging to $\xi$, with $q_n$ converging to $q$.
		By Lemma \ref{le:fixingvaluealongpath} we have $\xi(\tray{q}{t})=-t$. Further, $\tray{q_n}{t_n}$ converges in the Gardiner--Masur compactification to the function $f(G)^2=\lim_{n\to\infty} e^{-2t_n} \ext_\tray{q_n}{t_n}(G)$, and we have $\Xi f (x) = \xi(x)$. Hence,
		\[\frac{1}{2}\log\frac{f(F)}{\ext_\tray{q}{t}(F)}\le \frac{1}{2}\log\sup_{G\in P} \frac{f(G)}{\ext_\tray{q}{t}(G)}=-t.
		\]
		Upon exponentiating and reordering the terms, we get
		\[\lim_{n\to\infty} e^{-2t_n} \ext_\tray{q_n}{t_n}(F)=f^2(F)\le e^{-2t}\ext_\tray{q}{t}(F)
		\]
		for all $t$. Letting $t\to\infty$, the right hand side converges to $(\Xi^{-1}B(q)(F))^2,$ so we get the proposition.
	\end{proof}
		
		 Using these bounds we can further refine the characterization of points in $\Xi^{-1}\fibermap^{-1}(q)$.
	\begin{proposition}\label{pr:firstderivative}
		Let $q$ be a quadratic differential, let $V_i$, $i\in\{1,\ldots,k\}$ be the indecomposable components of $V(q)$ and let $x_i(F)=\frac{i(V_i,F)}{i(V_i,H(q))}$. Given $f\in \Xi^{-1}\fibermap^{-1}(q)$ and $c>0$ we have, for all $F\in \MF$,
		\[ 
		f^2(F)=c^2+2 c \sum_i i(V_i,H(q))(x_i(F)-c)+\sum_{i,j} O \left((x_i(F)-c)(x_j(F)-c)\right).
		\]
		In particular, as a function of the values $x_i(F)$ at the point $x_i=c$ for all $i$, $f^2(x_1,\ldots ,x_k)$ takes value $c^2$, is differentiable and satisfies $\frac{\partial}{\partial x_i} f^2(x_1,\ldots,x_k) = 2 c\:i(V_i,H(q)).$
	\end{proposition}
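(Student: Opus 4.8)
The plan is to sandwich $f^2$ between the squares of the two explicit bounds on $\fibermap^{-1}(q)$ already at our disposal, and to notice that these bounds share the same value and the same first-order behaviour at the point where every $x_i$ equals $c$. Write $m_i=i(V_i,H(q))$, so that $i(V_i,F)=x_i(F)\,m_i$ and $\sum_i m_i=i(V(q),H(q))=1$ by unit area. By \cref{pr:upperboundGM} the upper bound is $(\Xi^{-1}B(q))^2=\sum_i\wals^q(V_i)=\sum_i m_i x_i^2$, and by \cref{pr:lowerboundGM} the lower bound $(\Xi^{-1}M(q))^2$ is obtained by grouping the $V_i$ into the boundary components, each contributing $m_i x_i^2$, and the interior groups $G_j=\sum_{i\in I_j}V_i$, each contributing $\wals^q(G_j)=\big(\sum_{i\in I_j}m_i x_i\big)^2/\sum_{i\in I_j}m_i$. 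Since both results already state their inequalities in the Gardiner--Masur compactification, $\Xi^{-1}M(q)\le\Xi^{-1}\xi\le\Xi^{-1}B(q)$, and all three functions are nonnegative, squaring gives the pointwise sandwich $(\Xi^{-1}M(q))^2(F)\le f(F)^2\le(\Xi^{-1}B(q))^2(F)$ for every $F\in\MF$, with $f=\Xi^{-1}\xi$.

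Next I would record the exact expansions of the two bounds about the point $x_i=c$. A direct computation gives the identity
\[
	\sum_i m_i x_i^2 = c^2 + 2c\sum_i m_i(x_i-c) + \sum_i m_i(x_i-c)^2,
\]
and, because each group term $\wals^q(G_j)$ is a quadratic form in the $x_i$, the lower bound expands as
\[
	(\Xi^{-1}M(q))^2 = c^2 + 2c\sum_i m_i(x_i-c) + \sum_{i\ \mathrm{bdry}} m_i(x_i-c)^2 + \sum_j \frac{\big(\sum_{i\in I_j}m_i(x_i-c)\big)^2}{\sum_{i\in I_j}m_i}.
\]
The crucial observation is that both bounds have the same affine part $A(F):=c^2+2c\sum_i m_i(x_i-c)$, that is, the same value $c^2$ and the same gradient $(2c\,m_i)_i$ at $x_i=c$, and that their remainders $R_U$ and $R_\ell$ are both nonnegative and both $O\big(\big(\sum_i|x_i-c|\big)^2\big)$.

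With these expansions the conclusion is immediate. Writing $A$, $R_\ell$, $R_U$ for the affine part and the two remainders, the sandwich becomes $A+R_\ell\le f^2\le A+R_U$, whence $0\le R_\ell\le f^2-A\le R_U$; since $R_U=O\big(\big(\sum_i|x_i-c|\big)^2\big)$ this forces $f^2-A=\sum_{i,j}O\big((x_i-c)(x_j-c)\big)$, which is exactly the claimed expansion. The value $c^2$ at $x_i=c$, the differentiability, and the formula $\partial_{x_i}f^2=2c\,m_i$ then follow by reading off the first-order part $A$.

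The one genuinely delicate point is interpretational rather than computational: $f(F)$ is not a priori a function of $(x_1(F),\dots,x_k(F))$ alone, so the phrase ``$f^2(x_1,\dots,x_k)$'' must be made sense of through the sandwich. I would therefore phrase everything as a pointwise estimate valid for all $F\in\MF$: since $A$, $R_\ell$ and $R_U$ depend on $F$ only through the $x_i(F)$, the bound $|f(F)^2-A(F)|\le\max(R_\ell(F),R_U(F))$ is a bona fide statement about $f$, and differentiability at $x_i=c$ is the assertion that every approach with $x_i(F)\to c$ yields the same linear functional $\sum_i 2c\,m_i\,dx_i$. The remaining care is bookkeeping: checking that the partition of the $V_i$ into boundary components and interior groups makes the lower-bound expansion valid on surfaces with boundary, the closed case being simply that of a single interior group $G_1=V(q)$.
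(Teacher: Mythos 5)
Your proof is correct and takes essentially the same approach as the paper: sandwiching $f^2$ between $\left(\Xi^{-1}M(q)\right)^2$ and $\left(\Xi^{-1}B(q)\right)^2$ (Propositions \ref{pr:lowerboundGM} and \ref{pr:upperboundGM}), expanding both bounds about the point $x_i=c$, and noting that they share the affine part $c^2+2c\sum_i m_i(x_i-c)$ while their remainders are nonnegative and quadratic in the $(x_i-c)$. The only cosmetic difference is that the paper first relaxes the lower bound to $\left(\sum_i m_i x_i\right)^2$ via Titu's lemma (Lemma \ref{le:elementaryinequality}), giving a uniform expansion in place of your grouped boundary/interior one; the structure of the argument is otherwise identical, and your explicit handling of the interpretational point about $f$ depending only on the $x_i$ through the sandwich is a careful touch the paper leaves implicit.
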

	\begin{proof}
		We have that $\left(\Xi^{-1} M(q)\right)^2\le f^2\le \left(\Xi^{-1} B(q)\right)^2$. Denoting $a_i=i(V_i,H(q))$ and $x_i=x_i(\cdot)$ we have by Lemmas \ref{le:elementaryinequality} and \ref{pr:lowerboundGM} that $\left(\sum a_ix_i\right)^2\le \left(\Xi^{-1} M(q)\right)^2$. Writing the bounds on $f^2$ in terms of the variables $x_i$, we obtain 
		\[
		\left(\sum a_ix_i\right)^2\le f^2\le \sum a_ix^2_i.
		\]
		Adding that $\sum a_i=1$, we have that $f^2$ is bounded below by the arithmetic mean, and above by the quadratic mean. Rewritting both sides as a polynomial in $x_i-c$, we get
		\[
		c^2+2c\sum a_i(x_i-c)+\left(\sum a_i (x_i-c)\right)^2\le f^2\le c^2+2c\sum a_i(x_i-c)+\sum a_i (x_i-c)^2,
		\]
		so the first part of the proposition is satisfied. Subbing in the value $x_i(F)=c$ we get the second part.
	\end{proof}
	
	By Propositions \ref{pr:optimalpath} and \ref{pr:firstderivativehorofunction} all members of $\fibermap^{-1}(q)$ share their values along $\tray{q}{\cdot}$, as well as the directional derivatives at the points of the geodesic. For a given $q$ we have $x_i(\lambda H(q))=\lambda$ for all $i$ and all $\lambda>0$. Hence, Proposition \ref{pr:firstderivative} shows a similar relation for the representations of the elements of $\fibermap^{-1}(q)$ in the Gardiner--Masur compactification, as they share their value, as well as some derivatives, at all foliations of the form $\lambda H(q)$. 
	
	As shown by Fortier Bourque \cite{Fortier}, the Gardiner--Masur boundary contains extremal length functions, so we can use Proposition \ref{pr:firstderivative} to get some information on the differentials of these functions. Namely, we recover in a more restricted setting the following result, proven in \cite[Theorem 1.1]{Miyachi2}.
	\begin{theorem}[Miyachi]\label{th:differentialextremallenght}
		Let $G_t$, $t\in[0,t_0]$ be a path in the space of measured foliations on $X$ which admits a tangent vector $\dot{G}_0$ at $t=0$ with respect to the canonical piecewise linear structure. Then, the extremal length $\ext(G,X)$ is right-differentiable at $t=0$ and satisfies
		\[
			\left.\frac{d}{dt^+} \ext(G_t,X)\right\vert_{t=0}=2 i(\dot{G}_0, F_{G_0,X}),
		\]
		where $F_{G_0,X}$ is the horizontal foliation of the Hubbard--Masur differential associated to $G_0$ on $X$.
	\end{theorem}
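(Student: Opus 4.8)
The plan is to exploit the variational (dual) description of extremal length. For any nonzero holomorphic quadratic differential $q'$ on $X$, using the flat metric $|q'|$ as a competing conformal metric in the metric definition of extremal length gives the lower bound
\[
\ext_X(G)\ \ge\ \frac{i(G,H(q'))^2}{\int_X|q'|},
\]
since the $|q'|$-length of any representative of $G$ dominates its total horizontal variation $i(G,H(q'))$. Equality holds precisely at the Hubbard--Masur differential $q_{G,X}$ (whose vertical foliation is $G$), because then $i(G,H(q_{G,X}))=i(V(q_{G,X}),H(q_{G,X}))=\int_X|q_{G,X}|=\ext_X(G)$, using the area identity $i(V(q),H(q))=\int_X|q|$. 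Hence one has the duality formula
\[
\ext_X(G)=\max_{q'\neq 0}\frac{i(G,H(q'))^2}{\int_X|q'|},
\]
with the maximum attained only at $q'=q_{G,X}$ by uniqueness of the extremal metric. First I would record that $F_{G_0,X}=H(q_{G_0,X})$ is transverse to $G_0=V(q_{G_0,X})$, the two being the horizontal and vertical foliations of one quadratic differential; consequently, away from the non-differentiability walls of Bonahon's piecewise linear structure the intersection function $F\mapsto i(F,F_{G_0,X})$ is smooth at $G_0$, so along the smooth path $G_t$ its right derivative at $t=0$ is the pairing $i(\dot G_0,F_{G_0,X})$ of the tangent vector with the foliation.

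Then I would sandwich $\ext_X(G_t)$ between two one-parameter families built from the duality formula. Writing $q_0:=q_{G_0,X}$ and $g_{q'}(t):=i(G_t,H(q'))^2/\int_X|q'|$, the lower bound gives $\ext_X(G_t)\ge g_{q_0}(t)$ with equality at $t=0$; differentiating the right-hand side and using $i(G_0,F_{G_0,X})=\int_X|q_0|=\ext_X(G_0)$ yields
\[
\liminf_{t\to0^+}\frac{\ext_X(G_t)-\ext_X(G_0)}{t}\ \ge\ \frac{2\,i(G_0,F_{G_0,X})\,i(\dot G_0,F_{G_0,X})}{\int_X|q_0|}=2\,i(\dot G_0,F_{G_0,X}).
\]
For the reverse inequality I would use the extremal differential at time $t$: with $q_t:=q_{G_t,X}$ one has $\ext_X(G_t)=g_{q_t}(t)$ and $\ext_X(G_0)\ge g_{q_t}(0)$, hence
\[
\ext_X(G_t)-\ext_X(G_0)\ \le\ g_{q_t}(t)-g_{q_t}(0).
\]
Dividing by $t$, letting $t\to0^+$, and using continuity of the Hubbard--Masur map to get $q_t\to q_0$, the right-hand difference quotient should converge to the same value $2\,i(\dot G_0,F_{G_0,X})$, giving the matching $\limsup$ and hence right-differentiability with the stated value.

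The hard part will be the convergence of the difference quotients $t^{-1}\big(g_{q_t}(t)-g_{q_t}(0)\big)$ in the upper bound, which is an envelope-theorem (Danskin) phenomenon: one must show the $t$-derivative of $g_{q'}$ depends continuously enough on $q'$ near $q_0$ to interchange the two limits. What makes this tractable is that transversality of $G_0$ and $F_{G_0,X}$ is an open condition, so for $q'$ in a neighborhood of $q_0$ the foliation $H(q')$ stays transverse to $G_0$ and $i(\,\cdot\,,H(q'))$ remains smooth at $G_0$ with derivative varying continuously in $q'$; this is exactly the regularity supplied by Bonahon's differentiable piecewise linear structure on $\MF$ away from its walls, and carefully verifying this uniform control (together with the precise meaning of the pairing $i(\dot G_0,F_{G_0,X})$ in that structure) is where the real work lies, the rest being the elementary computation above. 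Alternatively, one can package the whole argument as differentiating the convex function $G\mapsto\sqrt{\ext_X(G)}=\max_{q'}i(G,H(q'))/(\int_X|q'|)^{1/2}$, whose unique active maximizer $q_0$ at $G_0$ (with $i(\,\cdot\,,H(q_0))$ smooth there) forces a single subgradient and hence genuine differentiability at $G_0$, recovering the same derivative after squaring.
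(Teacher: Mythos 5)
First, a point of comparison: the paper does not prove this statement. It is quoted as Miyachi's theorem (\cite[Theorem 1.1]{Miyachi2}) and used as an external ingredient; the only thing the paper itself does is recover a very restricted special case (paths of the form $V(q)+tJ$ with $J$ supported on the indecomposable components of $V(q)$) as a consistency check, by combining Proposition \ref{pr:firstderivative} with Fortier Bourque's Theorem \ref{th:maxsresult}. So your argument is necessarily a different route from anything in the paper, and it has to stand on its own as a proof of Miyachi's formula. Its elementary skeleton is correct: Minsky's inequality $i(G,H(q'))^2\le \ext_X(G)\ext_X(H(q'))$ together with $\ext_X(H(q'))=\int_X|q'|$ gives your family of lower bounds, equality at $q'=q_{G,X}$ follows from $i(V(q),H(q))=\int_X|q|$, and the two-sided sandwich (fixed maximizer $q_0$ from below, moving maximizer $q_t$ from above) is the right Danskin-type structure, with the continuity of the Hubbard--Masur map supplying $q_t\to q_0$.

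However, there are two genuine gaps, and they sit exactly where the analytic content of the theorem lies. First, you assert that transversality of $G_0$ and $F_{G_0,X}$ makes $F\mapsto i(F,F_{G_0,X})$ smooth at $G_0$ ``away from the walls'', so that $\frac{d}{dt^+}i(G_t,F_{G_0,X})\vert_{t=0}=i(\dot G_0,F_{G_0,X})$. Transversality of the two foliations does not by itself give this, and the hedge ``away from the walls'' excludes precisely the cases this paper needs: in Lemma \ref{le:extremallengthnotsmooth} the path is $\alpha+t\beta$, whose starting point has vanishing $\beta$-weight and so lies on a boundary face of every chart containing the path. What one must actually show is that $\dot G_0$ can be realized in a train-track chart whose carrier intersects a carrier of $F_{G_0,X}$ efficiently, so that the intersection number is bilinear in the weights; this is what gives both the right-differentiability and the very meaning of the pairing $i(\dot G_0,F_{G_0,X})$, and it is essentially Bonahon's theory, not an incidental remark. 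Second, the upper bound requires the uniform first-order expansion
\[
\limsup_{t\to 0^+}\frac{i(G_t,H(q_t))-i(G_0,H(q_t))}{t}\le i(\dot G_0,H(q_0)),
\]
that is, $i(G_t,\mu)=i(G_0,\mu)+t\,i(\dot G_0,\mu)+o(t)$ uniformly as $\mu$ ranges over the converging family $H(q_t)$. You explicitly defer this (``where the real work lies''), but without it the $\limsup$ direction, and hence right-differentiability itself, is unproven. So the proposal is a plausible outline of a self-contained proof, genuinely different from the paper's treatment, but it is not yet a proof: both the definition of the derivative pairing and the envelope-theorem interchange of limits remain to be established.
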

	
	The concrete extremal length functions in the Gardiner--Masur boundary we are going to use are given by the following theorem.
	
	\begin{theorem}[Fortier Bourque]\label{th:maxsresult}
		Let $\{w_1,\ldots,w_k\}$ be weights with $w_i>0$, let $\phi_n=\tau_1^{\lfloor n w_1 \rfloor}\circ \dots \circ \tau_k^{\lfloor n w_k \rfloor}$ be a sequence of Dehn multitwist around a multicurve $\{\alpha_1,\ldots ,\alpha_k\}$ in a surface $S$ and let $X\in \T(S)$. Then the sequence $\phi_n(X)$ converges to 
		\[
			\left[\ext^{1/2}\left(\sum_{i=1}^k w_i i(F,\alpha_i)\alpha_i,X\right)\right]_{F\in \MF(S)}
		\]
		in the projective Gardiner--Masur compactification as $n\to\infty.$
	\end{theorem}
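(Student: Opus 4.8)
The plan is to reduce the statement to the asymptotics of the Dehn multitwist acting on measured foliations, using three facts about extremal length: its equivariance under the mapping class group, its homogeneity of degree two in the foliation variable, and its continuity on $\MF(S)\times\T(S)$; the latter two are recorded in \cref{se:backgroundteichmuller}. Each $\tau_i$ acts on $\T(S)$ by an isometry, and extremal length satisfies $\ext_{g\cdot X}(F)=\ext_X(g^{-1}\cdot F)$ for every mapping class $g$. Hence the coordinate of $\phi_n(X)$ at a foliation $F$ is $\ext_{\phi_n X}(F)^{1/2}=\ext_X(\phi_n^{-1}F)^{1/2}$, which moves the problem from the varying conformal structure to the fixed structure $X$ together with the varying foliations $\phi_n^{-1}F$, where $\phi_n^{-1}=\prod_{i}\tau_i^{-\lfloor n w_i\rfloor}$ (the factors commute since the $\alpha_i$ are disjoint).

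First I would prove the key limit in $\MF$,
\[
\lim_{n\to\infty}\frac{1}{n}\,\phi_n^{-1}F=\sum_{i=1}^k w_i\,i(F,\alpha_i)\,\alpha_i ,
\]
valid for every $F\in\MF(S)$. As convergence in $\MF$ is detected by intersection numbers with all curves, it is enough to show that $\frac{1}{n}\,i(\phi_n^{-1}F,\beta)\to\sum_i w_i\,i(F,\alpha_i)\,i(\alpha_i,\beta)$ for every simple closed curve $\beta$. Here I would invoke the classical single-twist estimate $|i(\tau_a^m b,c)-|m|\,i(a,b)\,i(a,c)|\le i(b,c)$, extended from curves to foliations by continuity, combined with the observation that disjointness forces $i(\tau_j^m F,\alpha_i)=i(F,\alpha_i)$ for $i\neq j$, so that the intersection of the twisted foliation with each $\alpha_i$ is unchanged. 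Dividing by $n$ and using $\lfloor n w_i\rfloor/n\to w_i$ then gives the displayed limit.

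Granting the limit, the remainder is formal. By homogeneity of degree two, $\ext_X(\phi_n^{-1}F)^{1/2}=n\,\ext_X\!\big(\tfrac{1}{n}\phi_n^{-1}F\big)^{1/2}$, and by continuity of extremal length in the foliation variable the rescaled factor converges, for each fixed $F$, to $\ext_X\!\big(\sum_i w_i\,i(F,\alpha_i)\,\alpha_i\big)^{1/2}$. The common scalar $n$ does not depend on $F$, so it is killed by projectivisation: choosing the normalising constants $c_n=1/n$ exhibits $\phi_n(X)$ as converging coordinatewise, hence in the subspace topology of the projective Gardiner--Masur compactification, to the point $\big[\ext^{1/2}\!\big(\sum_i w_i\,i(F,\alpha_i)\,\alpha_i,X\big)\big]_{F\in\MF(S)}$. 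This is a genuine projective representative because the multicurve is nonempty and all $w_i>0$, so the limiting function is not identically zero.

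The main obstacle is the multitwist estimate underlying the $\MF$-limit. A direct induction on the number of factors is delicate: the error in the single-twist bound is $i(b,c)$, but at the $k$-th stage this becomes the intersection of the already heavily twisted foliation with $\beta$, which grows linearly in $n$. The way around this is geometric: put $F$ and $\beta$ in minimal position with respect to pairwise disjoint annular neighbourhoods of the $\alpha_i$, note that $\phi_n^{-1}F$ coincides with $F$ outside these annuli, and count intersections annulus by annulus, where each of the $i(\alpha_i,\beta)$ crossings of $\beta$ meets the $i(F,\alpha_i)$ strands of $F$ about $\lfloor n w_i\rfloor$ times with an ambiguity of at most one per crossing. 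This yields the main term $\sum_i\lfloor n w_i\rfloor\,i(F,\alpha_i)\,i(\alpha_i,\beta)$ with a total error bounded by $i(F,\beta)+\sum_i i(\alpha_i,\beta)$, uniformly in $n$, which is exactly the control needed to pass to the limit.
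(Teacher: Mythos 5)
Your proposal is correct and follows essentially the same route as the proof this paper relies on: the paper does not reprove the statement but cites Fortier Bourque's Corollary 3.4, and that proof is exactly your argument --- mapping class group equivariance $\ext_{\phi_n X}(F)=\ext_X(\phi_n^{-1}F)$, the classical Fathi--Laudenbach--Po\'enaru multitwist estimate giving $\tfrac{1}{n}\phi_n^{-1}F\to\sum_i w_i\, i(F,\alpha_i)\,\alpha_i$ in $\MF$, and then degree-two homogeneity and continuity of extremal length before passing to the projectivization. The only quibble is that your annulus-by-annulus error term should read $i(F,\beta)+\sum_i i(F,\alpha_i)\, i(\alpha_i,\beta)$ rather than $i(F,\beta)+\sum_i i(\alpha_i,\beta)$, but since the argument needs only a bound uniform in $n$, this changes nothing.
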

	
	The precise statement of this result is slightly weaker \cite[Corollary 3.4]{Fortier}, but the same proof yields this extension.
		
	Fix a multicurve $\{\alpha_1,\ldots ,\alpha_k\}$, weights $\{w_1,\ldots, w_k\}$ and denote $\alpha=\sum w_i \alpha_i$. Furthermore, normalize the weights $\{w_1,\ldots, w_k\}$ so that there is a unit area quadratic differential $q$ such that $V(q)=\alpha$. Denote $V_i$ the vertical components of $V(q)$. That is, $V_i=w_i\alpha_i$. We are able to recover Miyachi's formula when $i(V_i,H(q))=w_i$ for all $i$. The sequence $\phi_n(X)$ converges in the visual compactification based at $X$ to $q\in T_X\T(S)$. By Theorem \ref{th:maxsresult} the function $f(F)=\lambda^{1/2}\ext^{1/2}\left(\sum_{i=1}^k w_i i(F,\alpha_i)\alpha_i,X\right)$ is in $\Xi^{-1}\fibermap^{-1}(q)$ for some $\lambda>0$. We have $i(F,\alpha_i)=x_i(F)i(V_i,H(q))/w_i$. So, assuming $i(V_i,H(q))=w_i$ we can write
	\[
		f^2(F)=\lambda\ext\left(\sum_{i=1}^k x_i(F)V_i,X\right).
	\]
	We have $x_i(H(q))=1$ for all $i$, so by Proposition \ref{pr:firstderivative} the value of $\lambda$ satisfies
	\[
		f^2(H(q))=\lambda\ext\left(V(q),X\right)=1.
	\]
	Since $q$ has unit area, $\ext\left(V(q),X\right)=1$, so $\lambda=1$. Let $I$ be any foliation such that $H(q)+I$ is well defined, and let $F_t=H(q)+tI$. We have
	\[
		f^2(F_t)=\ext\left(\sum_i V_i + t\sum_i x_i(I) V_i,X\right).
	\]
	Hence, denoting $J=\sum x_i(I) V_i$ and $G_t=V(q)+tJ$ we can apply Proposition \ref{pr:firstderivative} to get
	\begin{multline*}
    \left.\frac{d}{dt^+} \ext\left(G_t,X\right)\right\vert_{t=0}=
    \sum_i  \left.\frac{d x_i}{dt}\right\vert_{t=0}
    \left.\frac{\partial f^2}{\partial x_i}\right\vert_{x_i=1}\\
    =\sum_i \frac{i(V_i,I)}{i(V_i,H(q))}\cdot 2i(V_i,H(q))
    =2i(V(q),I).
	\end{multline*}
	On the other hand, applying Miyachi's Theorem \ref{th:differentialextremallenght} directly we get
	\begin{multline*}
		\left.\frac{d}{dt^+} \ext\left(G_t,X\right)\right\vert_{t=0}=2i(H(q),J)=2\sum_i i(H(q),V_i)x_i(I)\\
		=2\sum_i i(H(q),V_i)\frac{i(V_i,I)}{i(H(q),V_i)}=2i(V(q),I),
	\end{multline*}
	so both expressions coincide, and we have recovered Theorem \ref{th:differentialextremallenght} in this rather restricted setting. We would like to note that Proposition \ref{pr:firstderivative} also gives some information for finding the second derivatives around the point $H(q)$. Namely, the second derivatives cannot diverge to infinity as we approach $H(q)$.
	
	Combining Proposition \ref{pr:firstderivative} with Proposition \ref{pr:busemanclosureshape} we get fairly restrictive necessary conditions for the points in $\Bclose(q)$ for surfaces without boundary. We shall be using these conditions in \cref{se:nondensity} to prove that Busemann points are not dense in the horoboundary. Now we prove a more straightforward consequence. For a topological space $U$, denote $\dim(U)$ its Lebesgue dimension. See the book by Munkres \cite[Chapter 5.80]{Munkres} for some background on basic dimension theory. Given an embedding $U\hookrightarrow V$ we have $\dim(U)\le \dim(V)$, so the conditions for the points on $\Bclose(q)$ gives us the following result. 
		
	\begin{corollary}
		Let $S$ be a surface without boundary. Let $q$ be a quadratic differential such that $V(q)$ has $n$ indecomposable components. Then,
		\[
			\dim(\Bclose(q))\le \frac{n(n-1)}{2}.
		\]
	\end{corollary}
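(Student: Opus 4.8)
The plan is to turn the two displayed conditions on $\Bclose(q)$ into an honest topological embedding into a finite dimensional affine space of dimension $\frac{n(n-1)}{2}$, and then to apply the monotonicity $\dim(U)\le\dim(V)$ of the covering dimension under embeddings $U\hookrightarrow V$. I keep the notation $x_i=\frac{i(V_i,\cdot)}{i(V_i,H(q))}$ of Proposition \ref{pr:busemanclosureshape}. For $\xi\in\Bclose(q)$ that proposition expresses $(\Xi^{-1}\xi)^2$ as a homogeneous degree-$2$ polynomial in $x_1,\dots,x_n$, which I write in symmetric form as
\[
(\Xi^{-1}\xi)^2=\sum_{i,j=1}^n a_{ij}(\xi)\,x_i x_j,\qquad a_{ij}(\xi)=a_{ji}(\xi).
\]
Thus every $\xi\in\Bclose(q)$ determines a symmetric matrix $A(\xi)=(a_{ij}(\xi))$, and $A\colon\Bclose(q)\to\mathrm{Sym}_n$ is the candidate embedding, where $\mathrm{Sym}_n$ denotes the $\frac{n(n+1)}{2}$-dimensional space of symmetric $n\times n$ matrices.

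First I would cut down the target using the first-order information of Proposition \ref{pr:firstderivative}. Differentiating the polynomial above gives $\frac{\partial}{\partial x_i}(\Xi^{-1}\xi)^2=2\sum_l a_{il}(\xi)\,x_l$, which at the diagonal point $x=(c,\dots,c)$ equals $2c\sum_l a_{il}(\xi)$. Comparing with the value $2c\,i(V_i,H(q))$ supplied by Proposition \ref{pr:firstderivative} yields the row-sum conditions
\[
\sum_{l=1}^n a_{il}(\xi)=i(V_i,H(q)),\qquad i=1,\dots,n.
\]
These are $n$ linear conditions, and they are independent because diagonal matrices already realize arbitrary row sums; hence they define an affine subspace $\mathcal{A}\subset\mathrm{Sym}_n$ of dimension $\frac{n(n+1)}{2}-n=\frac{n(n-1)}{2}$. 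Summing the $n$ conditions and using that $q$ has unit area recovers the normalization $\sum_{i,j}a_{ij}=\sum_i i(V_i,H(q))=1$ of Proposition \ref{pr:busemanclosureshape}, so that condition is already built in. Therefore $A$ maps $\Bclose(q)$ into the $\frac{n(n-1)}{2}$-dimensional space $\mathcal{A}$.

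It remains to see that $A$ is a topological embedding. Since $\Bclose(q)=\Bclose\cap\fibermap^{-1}(q)$ is closed in the compact horoboundary it is compact, and $\mathcal{A}$ is Hausdorff, so it is enough that $A$ be continuous and injective. Injectivity is immediate: $A(\xi)=A(\xi')$ forces $(\Xi^{-1}\xi)^2=(\Xi^{-1}\xi')^2$ pointwise on $\MF$, and since $\Xi^{-1}\xi,\Xi^{-1}\xi'\ge 0$ this gives $\Xi^{-1}\xi=\Xi^{-1}\xi'$, hence $\xi=\xi'$ because $\Xi$ is a homeomorphism. For continuity, note that for each fixed $F\in\MF$ the evaluation $\xi\mapsto(\Xi^{-1}\xi)^2(F)=\sum_{i,j}a_{ij}(\xi)\,x_i(F)x_j(F)$ is continuous in $\xi$ and is an affine function of the matrix $A(\xi)$, namely its pairing with the symmetric tensor $x(F)\otimes x(F)$, where $x(F)=(x_1(F),\dots,x_n(F))$. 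Consequently $A$ is continuous as soon as finitely many such tensors span $\mathrm{Sym}_n$, for then $A(\xi)$ is reconstructed from finitely many continuous evaluations by a fixed linear isomorphism.

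The main obstacle is precisely this spanning statement, that is, showing that $\{x(F):F\in\MF\}$ contains a nonempty open subset of $\R^n$; once it does, the quadratic form $v\mapsto v^{T}Bv$ attached to a nonzero $B\in\mathrm{Sym}_n$ cannot vanish on it, so the tensors $x(F)\otimes x(F)$ span $\mathrm{Sym}_n$. To produce such an open set I would exploit that the indecomposable components $V_i$ are supported in the pairwise disjoint subsurfaces cut out by the critical graph of $q$: for each $i$ one seeks a measured foliation $G_i$ supported away from the other components, with $i(V_i,G_i)>0$ and $i(V_j,G_i)=0$ for $j\ne i$ --- taking the horizontal foliation of its subsurface when $V_i$ is a minimal component, and a simple closed curve crossing the core and disjoint from the remaining $V_j$ when $V_i$ is annular. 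Then $F=\sum_i t_iG_i$ is a measured foliation with $x(F)=\big(t_i\,i(V_i,G_i)/i(V_i,H(q))\big)_i$, whose image fills the open positive orthant as $(t_1,\dots,t_n)$ ranges over $\R_{>0}^n$. Verifying the existence of these $G_i$ (especially the disjointness in the annular case) is the only non-formal point; granting it, $A$ is a homeomorphism of $\Bclose(q)$ onto a subset of $\mathcal{A}$, and
\[
\dim(\Bclose(q))\le\dim(\mathcal{A})=\frac{n(n-1)}{2}.
\]
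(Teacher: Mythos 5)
Your dimension count is exactly the paper's: represent $(\Xi^{-1}\xi)^2$ as a symmetric quadratic form in the $x_i$, impose the $n$ independent linear conditions coming from \cref{pr:firstderivative}, and conclude $\tfrac{n(n+1)}{2}-n=\tfrac{n(n-1)}{2}$. The difference is that you insist on embedding $\Bclose(q)$ into the \emph{coefficient} space, which forces you to prove that the coefficients are determined by the function --- your ``spanning statement'' --- and this is where the proof breaks. The premise you use to build the test foliations $G_i$, namely that the indecomposable components $V_i$ are supported on pairwise disjoint subsurfaces cut out by the critical graph, is false in general. In this paper (following Walsh), an indecomposable component is either a thickened curve or a minimal component equipped with an \emph{ergodic} transverse measure: a single minimal component whose transverse measure splits into $k\ge 2$ projectively distinct ergodic measures contributes $k$ indecomposable components with \emph{identical} topological support. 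For those components no measured foliation $G$ whatsoever satisfies $i(V_i,G)>0$ and $i(V_j,G)=0$, because every ergodic transverse measure on a minimal foliation has full support, so any $G$ crossing the component meets all of them positively. Hence the orthant-filling construction cannot be repaired by cleverer choices of $G_i$; if the spanning statement is true it needs a genuinely different argument. (Even the annular case you flagged fails: if the complement of the annulus is filled by another component, every closed curve crossing the core also crosses that component.)

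The gap is avoidable, and this is how the paper's terse proof should be read: one never needs the coefficients to be well defined. Let $\mathcal{A}$ be your $\tfrac{n(n-1)}{2}$-dimensional affine space of constrained symmetric matrices, and let $T$ be the affine map sending $B\in\mathcal{A}$ to the function $F\mapsto x(F)^{T}Bx(F)$ on $\MF$, where $x(F)=(x_1(F),\dots,x_n(F))$. \cref{pr:busemanclosureshape} together with \cref{pr:firstderivative} says precisely that $\xi\mapsto(\Xi^{-1}\xi)^2$ maps $\Bclose(q)$ into the image $T(\mathcal{A})$, which is an affine subspace of functions of dimension at most $\tfrac{n(n-1)}{2}$ --- an affine image can only lose dimension, whether or not $T$ is injective. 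Give $T(\mathcal{A})$ the topology of pointwise convergence; being finite dimensional and Hausdorff, it carries its Euclidean topology. The map $\xi\mapsto(\Xi^{-1}\xi)^2$ is continuous into it (the Gardiner--Masur topology dominates pointwise convergence, and squaring preserves it) and injective (the functions $\Xi^{-1}\xi$ are nonnegative), and $\Bclose(q)$ is compact, so this map is an embedding and $\dim(\Bclose(q))\le\tfrac{n(n-1)}{2}$ follows, with your spanning statement sidestepped entirely.
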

	\begin{proof}
		By Proposition \ref{pr:busemanclosureshape} we have an embedding of $\Bclose(q)$ into the space of homogeneous polynomials of degree $2$. For a given $\xi\in \Bclose(q)$, let $b_{i,j}^\xi$ be the coefficient of $x_i x_j$. Adding the restriction $b_{i,j}=b_{j,i}$ we have a coefficient for each possible pair, so the dimension of homogeneous polynomials of degree $2$ is equal to the number of possible pairs, that is, $\frac{n(n+1)}{2}$. Furthermore, by Proposition \ref{pr:firstderivative} we know the value of the first derivatives at $x_i=c$ for all $i$. For each $i$ this gives us the linear equation $\sum_{j\neq i} b_{i,j}^\xi + 2 b_{i,i}^\xi=2 i(V_i,H(q))$. These $n$ equations are linearly independent, as $b_{i,i}^\xi$ is only contained on the equation related to $x_i$. As such, the dimension of the coefficients is at most $\frac{n(n+1)}{2}-n=\frac{n(n-1)}{2}$.
		
		We note that the sum of the coefficients being $1$ is the equation we get when summing the $n$ equations given by the derivatives, so we cannot use that to restrict further the dimension.
	\end{proof}

	Recall that the number of indecomposable components $n$ is bounded in terms of the topology of the surface. Hence, the previous corollary gives us a uniform upper bound on the dimension of $\Bclose(q)$.
	More interestingly, we can also get a lower bound for the dimension of $\Bclose(q)$. This allows us to get a lower bound on the dimension of $\fibermap^{-1}(q)$. Furthermore, as this is a lower bound, we do not need to restrict ourselves to surfaces without boundary, as the set of Busemann points always contains the set of Busemann points of the form $B(q)$. The bound is obtained by finding a dimensionally big set of different ways to approach a certain $q$ along the boundary and showing that each of these different approaches results in different limits for the associated Busemann points.
	
	\begin{theorem}\label{th:dimensionfiberslowerbound}
		Let $S$ be a surface of genus $g$ with $b_m$ and $b_u$ boundaries with and without marked points respectively and $p$ interior marked points.
		Then there is some unit quadratic differential $q$ such that 
		\[\dim (\Bclose(q))\ge 2\left\lfloor\frac{g+b_m}{2}+\frac{b_u+p}{4}-\sigma(g,b_u+p)\right\rfloor,\]
		where $\sigma$ has value
		\begin{itemize}
			\item 0 if $g\ge 2$,
			\item 1/4 if $g=1$ and $b_u+p\ge 1$,
			\item 1/2 if $g=1$ and $b_u+p=0$ or $g=0$ and $b_u+p\ge 2$,
			\item 3/4 if $g=0$ and $b_u+p=1$ and
			\item 1 if $g=0$ and $b_u+p=0$.
		\end{itemize}
	\end{theorem}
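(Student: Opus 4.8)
The plan is to exhibit, for a well-chosen $q$, a continuous injective map from a manifold of dimension $2M$ into $\Bclose(q)$, where $M=\lfloor \tfrac{g+b_m}{2}+\tfrac{b_u+p}{4}-\sigma(g,b_u+p)\rfloor$; then $\dim\Bclose(q)\ge 2M$ follows from the monotonicity of the Lebesgue dimension under embeddings. The points of $\Bclose(q)$ that I will use are all of the form $\lim_n B(q_n)$ with $q_n\to q$, so they lie in $\fibermap^{-1}(q)$ by continuity of $\fibermap$, and they are limits of genuine Busemann points $B(q_n)$, which exist for every surface. By Lemma \ref{le:limith} and the computation in Proposition \ref{pr:busemanclosureshape}, such a limit is determined by its \emph{merging pattern}: if the indecomposable components $V_j^n$ of $V(q_n)$ satisfy $V_j^n\to\sum_i\alpha_j^iV_i$ with $\sum_j\alpha_j^i=1$, then the square of its Gardiner--Masur representative is $\sum_j\wals^q\!\big(\sum_i\alpha_j^iV_i\big)$. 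Hence it suffices to produce a $2M$-parameter family of merging patterns $(\alpha_j^i)$ that is both \emph{realizable} by an actual sequence $q_n\to q$ and \emph{faithful}, meaning distinct parameters give distinct functions.

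I would first choose $V(q)$ to be a weighted multicurve whose support, together with the ambient topology, splits the surface into $M$ pairwise disjoint ``gadget'' subsurfaces $Y_1,\dots,Y_M$, each carrying enough complexity to host a two-parameter family of distinct decompositions of $V(q)|_{Y_r}$ into indecomposable pieces. Inside each $Y_r$ the mechanism is to degenerate minimal filling foliations to the local boundary multicurve with two independent variable ratios, thereby splitting the local mass of $V(q)$ between two indecomposable components through a two-dimensional family of coefficients; concretely this is realized by iterated Dehn twists and Theorem \ref{th:maxsresult}, the disjointness of the resulting components being guaranteed by supporting each gadget on its own subsurface. Assembling the gadgets produces a sequence $q_n\to q$ in the visual compactification, so that $B(q_n)$ accumulates on a point of $\fibermap^{-1}(q)$ depending continuously on the $2M$ chosen ratios.

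To prove faithfulness I would read the parameters off the coefficients of the resulting homogeneous degree-$2$ polynomial in the variables $x_i=\tfrac{i(V_i,\cdot)}{i(V_i,H(q))}$ of Proposition \ref{pr:busemanclosureshape}: within a gadget, the coefficient of a cross term $x_ix_{i'}$ records the extent to which $V_i$ and $V_{i'}$ have been merged, and the strict convexity furnished by Titu's inequality (Lemma \ref{le:elementaryinequality}) separates decompositions with different ratios. Since distinct gadgets involve disjoint sets of variables, their contributions are independent and the full parameter map is injective; for surfaces with boundary I would transfer this distinctness through the doubling trick of \cref{se:doublingtrick}. Continuity being clear from Lemma \ref{le:limith}, invariance of domain then yields $\dim\Bclose(q)\ge 2M$.

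Finally, the exact value of $M$ comes from maximizing the number of disjoint gadgets: a handle and a marked boundary each contribute half a unit of capacity (the weight $\tfrac{g+b_m}{2}$), while an unmarked boundary or interior marked point contributes a quarter (the weight $\tfrac{b_u+p}{4}$), and one full unit is what a single gadget consumes; the correction $\sigma$ accounts for the leftover topology too small to close up into a further gadget, exactly as the $-3$ does in the bound $3g-3$ on the number of components. I expect the genuine difficulty to be twofold. First is the realizability step, namely simultaneously prescribing the limits of several \emph{disjoint} indecomposable components via explicit twisting while keeping their union convergent to the fixed foliation $V(q)$. Second is pinning down the sharp combinatorial count, and in particular verifying the small-genus and few-boundary exceptional values of $\sigma$ by a direct case analysis.
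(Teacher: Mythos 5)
Your proposal follows essentially the same route as the paper's proof: the paper also fixes a weighted multicurve $V(q)$ organized into disjoint ``gadgets'' (pairs of handles in the closed case, or a handle plus a marked boundary, or four punctures/unmarked boundaries, etc.), produces for each gadget a two-parameter family of limit points by applying Dehn multitwists around the components of $V(q)$ to a dual weighted multicurve, computes the limits of the resulting Busemann points with Lemma \ref{le:limith} exactly as you describe, and obtains the bound from a continuous injection of a parameter space $U$ with $\dim U = 2M$ into $\Bclose(q)$; your capacity count (one half per handle or marked boundary, one quarter per puncture or unmarked boundary, with $\sigma$ absorbing the leftovers) is precisely the paper's count.

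Two places where your plan as written would need repair. First, the faithfulness step: strict convexity via Titu's lemma is not what the paper uses there, and it would not suffice, since convexity only separates merged from unmerged configurations, not two different merging ratios. The paper instead notes that one cannot simply equate coefficients of the two degree-$2$ polynomials (the variables $x_i$ only range over realizable intersection vectors), so it evaluates on foliations of the form $b_1 G_{3k+1}+b_2 G_{3k+2}+b_3 G_{3k+3}$ to recover the cross-term coefficients, and then runs an explicit algebraic argument --- dividing the three coefficient identities per gadget and exploiting the normalization constraints on the weights in a monotonicity argument --- to force $\alpha=\beta$. Some computation of this kind is unavoidable because the twist weights $w^\alpha$ also depend on the parameters and enter the coefficients nonlinearly. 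Second, the doubling trick plays no role in the paper's proof and it is unclear how it would transfer injectivity; instead, marked points and boundaries are handled directly by replacing the dual curves $G_i$ with curves running around pairs of punctures/unmarked boundaries or around a marked boundary (halving the associated twist weights), and this explicit construction is exactly where the exceptional values of $\sigma$ for $g\le 1$ come from.
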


	\begin{proof}
		For simplicity we shall first do the proof in the case where $b_m=b_u=p=0$, and $g\ge 2$. Let $q$ be the quadratic differential such that $V(q)$ is the union of the closed curves $V_1,\ldots, V_{3C}$ shown in \cref{fi:v}, where $C= \lfloor g/2 \rfloor$. Let $U\subset \R^{3C}$ be the space of vectors $(\alpha_1,\alpha_2,\ldots,\alpha_{3C})$ with positive coefficients and such that 
		\begin{equation}\label{eq:dimensionproof1}
			\alpha_{3k+1}+\alpha_{3k+2}+\alpha_{3k+3}=\frac{1}{C}.	
		\end{equation}
		Each independent linear restriction reduces the dimension of the set $U$ by $1$, so $\dim U = 2C$. Hence, to prove the simplest case of the theorem it suffices to build an injective continuous map from $U$ to $\Bclose(q)$.
		
		Choose $\alpha\in U$ and consider the multicurve $\gamma^\alpha=\sum \alpha_i G_i$, where $G_i$ are as in \cref{fi:v}. We will shortly show that by applying Dehn twists about the closed curves $V_i$ to $\gamma^\alpha$ we can get a sequence of multicurves approaching $V(q)$. We can then take the sequences of associated Busemann points, which as we will see converge to distinct points in $\fibermap^{-1}(q)$. We will define the injective continuous map from $U$ to $\fibermap^{-1}(q)$ by setting it as the limit of the associated sequence of Busemann points, giving us the theorem.
		
		Let $\tau_i$ be the Dehn twist around $V_i$, and let $w^\alpha_{i}$ be such that 
		\begin{equation}\label{eq:dimensionproof2}
		w^\alpha_{3k+1}(\alpha_{3k+2}+\alpha_{3k+3})=
		w^\alpha_{3k+2}(\alpha_{3k+3}+\alpha_{3k+1})=
		w_{3k+3}(\alpha_{3k+1}+\alpha_{3k+2})=\frac{1}{3C}.
		\end{equation}
		
		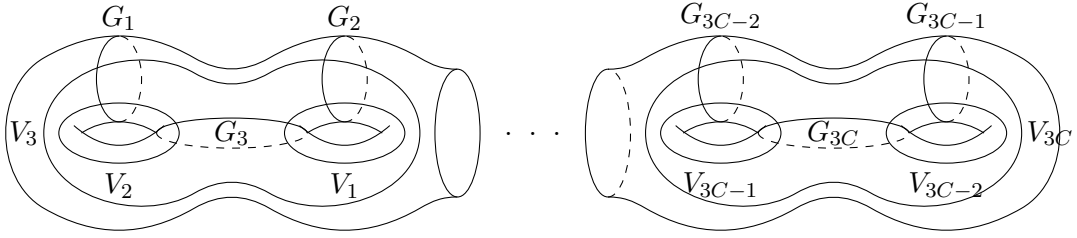
\begin{figure}\centering
			\scalebox{0.8}{
                \begin{tikzpicture}
    				\draw[smooth] (5.5,-0.85) to[out=180,in=30] (5,-1) to[out=210,in=-30] (3,-1) to[out=150,in=30] (2,-1) to[out=210,in=-30] (0,-1) to[out=150,in=-150] (0,1) to[out=30,in=150] (2,1) to[out=-30,in=210] (3,1) to[out=30,in=150] (5,1) to[out=-30,in=180] (5.5,0.85);
    				\draw[smooth] (0.4,0.1) .. controls (0.8,-0.25) and (1.2,-0.25) .. (1.6,0.1);
    				\draw[smooth] (0.5,0) .. controls (0.8,0.2) and (1.2,0.2) .. (1.5,0);
    				\draw[smooth] (3.4,0.1) .. controls (3.8,-0.25) and (4.2,-0.25) .. (4.6,0.1);
    				\draw[smooth] (3.5,0) .. controls (3.8,0.2) and (4.2,0.2) .. (4.5,0);
    				\node at (6.5,0) {$. \; \; . \; \; .$};
    				\draw[smooth] (7.5,0.85) to[out=0,in=210] (8,1) to[out=30,in=150] (10,1) to[out=-30,in=210] (11,1) to[out=30,in=150] (13,1) to[out=-30,in=30] (13,-1) to[out=210,in=-30] (11,-1) to[out=150,in=30] (10,-1) to[out=210,in=-30] (8,-1) to[out=150,in=0] (7.5,-0.85);
    				\draw[smooth] (8.4,0.1) .. controls (8.8,-0.25) and (9.2,-0.25) .. (9.6,0.1);
    				\draw[smooth] (8.5,0) .. controls (8.8,0.2) and (9.2,0.2) .. (9.5,0);
    				\draw[smooth] (11.4,0.1) .. controls (11.8,-0.25) and (12.2,-0.25) .. (12.6,0.1);
    				\draw[smooth] (11.5,0) .. controls (11.8,0.2) and (12.2,0.2) .. (12.5,0);
    				
    				\draw (5.5,-0.85) arc(270:90:0.3 and 0.85);
    				\draw (5.5,-0.85) arc(270:450:0.3 and 0.85);
    				\draw (7.5,-0.85) arc(270:90:0.3 and 0.85);
    				\draw[dashed] (7.5,-0.85) arc(270:450:0.3 and 0.85);

    				\node [label=center:$G_3$] at (2.5,0) {};
    				\draw (1.5,0) arc(180:0:1 and 0.2);
    				\draw[dashed] (1.5,0) arc(180:0:1 and -0.2);
    				
    				\node [label=above:$G_1$] at (1,1.1) {};
    				\draw (1.0,0.15) arc(270:90:0.3 and 1.14/2);
    				\draw[dashed] (1.0,0.15) arc(270:450:0.3 and 1.14/2);
    				
    				\node [label=above:$G_2$] at (4,1.1) {};
    				\draw (4.0,0.15) arc(270:90:0.3 and 1.14/2);
    				\draw[dashed] (4.0,0.15) arc(270:450:0.3 and 1.14/2);
    				
    				\node [label=center:$G_{3C}$] at (10.5,0) {};
    				\draw (9.5,0) arc(180:0:1 and 0.2);
    				\draw[dashed] (9.5,0) arc(180:0:1 and -0.2);
    				
    				\node [label=above:$G_{3C-2}$] at (9,1.1) {};
    				\draw (9.0,0.15) arc(270:90:0.3 and 1.14/2);
    				\draw[dashed] (9.0,0.15) arc(270:450:0.3 and 1.14/2);
    				
    				\node [label=above:$G_{3C-1}$] at (12,1.1) {};
    				\draw (12.0,0.15) arc(270:90:0.3 and 1.14/2);
    				\draw[dashed] (12.0,0.15) arc(270:450:0.3 and 1.14/2);
    				
    				\node [label=center:$V_2$] at (1,-0.7) {};
    				\draw  (1,0) ellipse (0.8 and 0.4);
    				
    				\node [label=center:$V_1$] at (4,-0.7) {};
    				\draw  (4,0) ellipse (0.8 and 0.4);
    				
    				\node [label=left:$V_{3}$] at (0.2,0) {};
    				\draw[smooth] (5,0) to[out=270,in=-30] (3,-0.8) to[out=150,in=30] (2,-0.8) to[out=210,in=270] (0,0)  to[out=90,in=150] (2,0.8) to[out=-30,in=210] (3,0.8) to[out=30,in=90] (5,0) ;
    				
    				\node [label=center:$V_{3C-1}$] at (9,-0.7) {};
    				\draw  (9,0) ellipse (0.8 and 0.4);
    				
    				\node [label=center:$V_{3C-2}$] at (12,-0.7) {};
    				\draw  (12,0) ellipse (0.8 and 0.4);
    				
    				\node [label=right:$V_{3C}$] at (12.8,0) {};
    				\draw[smooth] (13,0) to[out=270,in=-30] (11,-0.8) to[out=150,in=30] (10,-0.8) to[out=210,in=270] (8,0)  to[out=90,in=150] (10,0.8) to[out=-30,in=210] (11,0.8) to[out=30,in=90] (13,0) ;
    			\end{tikzpicture}
            }
			\caption{Labeling of the curves when the surface has no boundaries nor marked points. If $g$ is odd then there is an unused handle.}
			\label{fi:v}
		\end{figure}
	
		Define $\phi^\alpha_n=\tau_1^{\lfloor w^\alpha_1 n \rfloor}\circ\tau_2^{\lfloor w^\alpha_2 n \rfloor} \circ\dots \circ\tau_{3C}^{\lfloor w^\alpha_{3C} n \rfloor}$. For $1\le k \le C$ and $j\in\{1,2,3\}$ Denote $F_{k,j}^\alpha=\sum_{i\in\{1,2,3\}-j}w_{3k+i}^\alpha V_{3k+i}$. By counting the intersections between the curves $V_i$ and $G_i$ we have that there is some sequence $\lambda_n$ such that $\lambda_n\phi^\alpha_n G_{3k+j}$ converges to $F_{k,j}^\alpha$ for all $k,j$ as $n\to\infty$. By the conditions on the weights,  $\lambda_n\phi^\alpha_n\gamma^\alpha$ converges to $V(q)$. Let $q^\alpha_n$ be the quadratic differential associated to $\lambda_n\phi^\alpha_n \gamma^\alpha$. Since $\lambda_n\phi^\alpha_n\gamma^\alpha$ converges to $V(q)$, we have that $q_n$ converges to $q$, so all accumulation points of $(B(q_n))$ are in $\fibermap^{-1}(q)$. We know that $(\Xi^{-1}B(q^\alpha_n))^2=\sum_i  \wals^q(\alpha_i \lambda_n\phi^\alpha_n G_i)$, so by Lemma \ref{le:limith} we have
		\[
		(\xi^\alpha)^2=\lim_{n\to\infty} (\Xi^{-1}B(q^\alpha_n))^2=
		\sum_{k=0}^{C-1}\sum_{j\in\{1,2,3\}}
		\alpha_{3k+j} \wals^q(F_{k,j}^\alpha).
		\]
		Define then the map from $U$ to $\fibermap^{-1}(q)$ sending $\alpha\in U$ to $\Xi\xi^\alpha\in \fibermap^{-1}(q)$.
		As before, we shall denote $x_i:=\frac{i(V_i,\cdot)}{i(V_i,H(q))}=3C i(V_i,\cdot)$. With this notation we have 
		\[
		\wals^q(F_{k,j}^\alpha)=\frac{i(F_{k,j}^\alpha,\cdot)^2}{i(F_{k,j}^\alpha,H(q))}=\frac{\left(\sum_{i\notin\{1,2,3\}-j}w^\alpha_{3k+i}x_{3k+i}\right)^2}{3C\sum_{i\notin\{1,2,3\}-j}w^\alpha_{3k+i}}.
		\]
		That is, given $\alpha$ we know precisely the shape of the polynomial $\xi^\alpha$. Since $\alpha$ has positive coefficients, each of the $w_i^\alpha$ depends continuously on $\alpha$, so $\xi^\alpha$ depends continuously on $\alpha$. 
		
		It remains to show injectivity. Let $\beta\in U$ be such that $\xi^{\alpha}=\xi^{\beta}$. While we have equated two polynomials, we cannot conclude directly that the coefficients are equal, as these cannot be evaluated for arbitrary values. However, we can evaluate at elements of the form $b_1 G_{3k+1}+b_2 G_{3k+2}+b_3 G_{3k+3}$ for $b_1,b_2,b_3\ge 0$, which is enough to prove that $\xi^{\alpha}$ and $\xi^{\beta}$ have the same coefficients.
		
		Equating then the coefficients for $x_{3k+1}x_{3k+2}$, $x_{3k+2}x_{3k+3}$ and $x_{3k+1}x_{3k+3}$ we get
		
		\begin{align*}
			\frac{\alpha_{3k+1} w^\alpha_{3k+2}w^\alpha_{3k+3}}{w^\alpha_{3k+2}+w^\alpha_{3k+3}}=&\frac{\beta_{3k+1} w^\beta_{3k+2}w^\beta_{3k+3}}{w^\beta_{3k+2}+w^\beta_{3k+3}},\\
			\frac{\alpha_{3k+2} w^\alpha_{3k+1}w^\alpha_{3k+3}}{w^\alpha_{3k+1}+w^\alpha_{3k+3}}=&\frac{\beta_{3k+2} w^\beta_{3k+1}w^\beta_{3k+3}}{w^\beta_{3k+1}+w^\beta_{3k+3}} \quad \text{ and }\\
			\frac{\alpha_{3k+3} w^\alpha_{3k+1}w^\alpha_{3k+2}}{w^\alpha_{3k+1}+w^\alpha_{3k+2}}=&\frac{\beta_{3k+3} w^\beta_{3k+1}w^\beta_{3k+2}}{w^\beta_{3k+1}+w^\beta_{3k+2}}.
		\end{align*}
	
		Dividing these equalities and using equations \eqref{eq:dimensionproof1} and \eqref{eq:dimensionproof2} we get
		
		\begin{align*}
			\frac{\alpha_{3k+1}}{\alpha_{3k+2}}\frac{(1/C+\alpha_{3k+2})}{(1/C+\alpha_{3k+1})}=&
			\frac{\beta_{3k+1}}{\beta_{3k+2}}\frac{(1/C+\beta_{3k+2})}{(1/C+\beta_{3k+1})},\\
			\frac{\alpha_{3k+2}}{\alpha_{3k+3}}\frac{(1/C+\alpha_{3k+3})}{(1/C+\alpha_{3k+2})}=&
			\frac{\beta_{3k+2}}{\beta_{3k+3}}\frac{(1/C+\beta_{3k+3})}{(1/C+\beta_{3k+2})}  \quad \text{ and } \\
			\frac{\alpha_{3k+3}}{\alpha_{3k+1}}\frac{(1/C+\alpha_{3k+1})}{(1/C+\alpha_{3k+3})}=&
			\frac{\beta_{3k+3}}{\beta_{3k+1}}\frac{(1/C+\beta_{3k+1})}{(1/C+\beta_{3k+3})}.
		\end{align*}
	
		Rearranging the first equality we have
		\begin{equation}\label{eq:increasingfactors}
			\frac{\alpha_{3k+1}}{\beta_{3k+1}}
			\frac{\beta_{3k+2}}{\alpha_{3k+2}}
			=
			\frac{(1/C+\alpha_{3k+1})}{(1/C+\beta_{3k+1})}
			\frac{(1/C+\beta_{3k+2})}{(1/C+\alpha_{3k+1})}.
		\end{equation}
		If $\frac{\alpha_{3k+1}}{\beta_{3k+1}}<1$ we have $\frac{(1/C+\alpha_{3k+1})}{(1/C+\beta_{3k+1})}>\frac{\alpha_{3k+1}}{\beta_{3k+1}}$, and if $\frac{\alpha_{3k+2}}{\beta_{3k+2}}>1$ we have $\frac{(1/C+\alpha_{3k+1})}{(1/C+\beta_{3k+1})}<\frac{\alpha_{3k+1}}{\beta_{3k+1}}$. Assume then that $\alpha_{3k+1}<\beta_{3k+1}$. One of the factors of the left hand side of the product in \cref{eq:increasingfactors} is replaced in the right hand side by a larger value. Hence, the other factor has to be replaced by a smaller value. That is, the inequality 
		$\alpha_{3k+2}<\beta_{3k+2}$ has to be satisfied. Similarly, if $\alpha_{3k+2}<\beta_{3k+2}$ we have $\alpha_{3k+3}<\beta_{3k+3}$. Equation \eqref{eq:dimensionproof1} leads to \[\frac{1}{C}=\alpha_{3k+1}+\alpha_{3k+2}+\alpha_{3k+3}<\beta_{3k+1}+\beta_{3k+2}+\beta_{3k+3}=\frac{1}{C},\]
		which is a contradiction. Similarly, $\alpha_{3k+1}>\beta_{3k+1}$ leads to another contradiction, so $\alpha_{3k+1}=\beta_{3k+1}$, which leads to $\alpha=\beta$.
		Therefore, 
		$\dim(\Bclose(q))\ge \dim(U)=2\left\lfloor \frac{g}{2}\right\rfloor$.
		
		Assume now that $g\ge 2$ and there are some marked points or boundaries. For each pair of marked points or unmarked boundaries, or for each marked boundary we can repeat the proof with an extra genus, by replacing the curves $G_i$ by the curves shown in \cref{fi:replacement}, and halving the associated weights for $w_i$, as the curves intersect now twice the vertical components instead of once.
		
	\begin{figure} \centering
		\begin{tikzpicture}
			\draw[smooth] (5.5,-0.85) to[out=180,in=30] (5,-1) to[out=210,in=-30] (3,-1) to[out=150,in=30] (2,-1) to[out=210,in=-30] (0,-1)  to[out=150,in=0] (-0.5,-0.85);
			\draw[smooth](-0.5,0.85) to[out=0,in=210] (0,1) to[out=30,in=150] (2,1) to[out=-30,in=210] (3,1) to[out=30,in=150] (5,1) to[out=-30,in=180] (5.5,0.85);
			\draw (-0.5,-0.85) arc(270:90:0.3 and 0.85);
			\draw[dashed] (-0.5,-0.85) arc(270:450:0.3 and 0.85);
			\node[circle,fill,inner sep=1pt] at (3.7,0) {};
			\draw (5.5,-0.85) arc(270:90:0.3 and 0.85);
			\draw (5.5,-0.85) arc(270:450:0.3 and 0.85);
			
			\draw  (4.6,0) ellipse (0.2 and 0.2);
			
			\node [label=center:$G_{3k+3}$] at (2.5,0) {};
			\draw[smooth](1.4,0.2) to[out=20,in=90] (4,0)to[out=270, in=-20 ](1.4,-0.2) ; 
			
			\node [label=above:$G_{3k+2}$] at (4.5,1.1) {};
			\draw[dashed] (4.5,-1.21) arc(270:450:0.3 and 1.21);
			\draw (4.5,-1.21) arc(270:90:0.3 and 1.21);
			
			\node [label=above:$G_{3k+1}$] at (1.25,1.1) {};
			\draw (1.25,-1.27) arc(270:90:0.3 and 1.27);
			\draw  [fill=white](1,0) ellipse (0.45 and 0.45);
			\draw [dashed] (1.25,-1.27) arc(270:450:0.3 and 1.27);
			\node[circle,fill,inner sep=1pt] at (0.55,0) {};
		\end{tikzpicture}
		\begin{tikzpicture}
			\draw[smooth] (5.5,-0.85) to[out=180,in=30] (5,-1) to[out=210,in=-30] (3,-1) to[out=150,in=30] (2,-1) to[out=210,in=-30] (0,-1)  to[out=150,in=0] (-0.5,-0.85);
			\draw[smooth](-0.5,0.85) to[out=0,in=210] (0,1) to[out=30,in=150] (2,1) to[out=-30,in=210] (3,1) to[out=30,in=150] (5,1) to[out=-30,in=180] (5.5,0.85);
			\draw (-0.5,-0.85) arc(270:90:0.3 and 0.85);
			\draw[dashed] (-0.5,-0.85) arc(270:450:0.3 and 0.85);
					
			\draw (5.5,-0.85) arc(270:90:0.3 and 0.85);
			\draw (5.5,-0.85) arc(270:450:0.3 and 0.85);
					
			\node [label=above:$G_{3k+1}$] at (1,1.1) {};
			\draw (1.0,0.15) arc(270:90:0.3 and 1.14/2);
			\draw[dashed] (1.0,0.15) arc(270:450:0.3 and 1.14/2);
			
			\draw[smooth] (0.4,0.1) .. controls (0.8,-0.25) and (1.2,-0.25) .. (1.6,0.1);
			\draw[smooth] (0.5,0) .. controls (0.8,0.2) and (1.2,0.2) .. (1.5,0);
			
			\node [label=center:$G_{3k+3}$] at (2.5,0) {};
			\draw[smooth](4,0) to[out=200,in=-20] (1.3,-0.1) ; 
			\draw[smooth,dashed](1.3,-0.1) to [out= 230, in = 0](-0,-1) ;
			\draw[smooth](4,-0.3) to[out=200,in=-30] (0.7,-0.1) ; 
			\draw[smooth,dashed](0.7,-0.1) to [out= 230, in = -30](0.2,-0.2) to [out= 150,in = 0](-0,1) ;
			\draw (0,1) arc(120:240:0.3 and 1.15);
			
			\node [label=above:$G_{3k+2}$] at (4.5,1.1) {};
			\draw[dashed] (4.5,-1.21) arc(270:450:0.3 and 1.21);
			\draw (4.5,-1.21) arc(270:90:0.3 and 1.21);
			
			\draw  [fill=white](3.7+0.45,0) ellipse (0.45 and 0.45);
			\node[circle,fill,inner sep=1pt] at (3.7+0.9,0) {};
		\end{tikzpicture}
		\caption{Each pair of marked points and boundary components without marked points can replace a genus, as well as each boundary with marked points.}
		\label{fi:replacement}
	\end{figure}
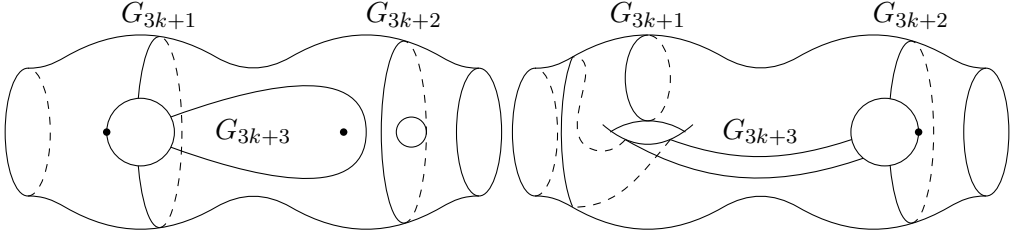
	
		If $g=1$ we need to place at least one feature at one of the ends to prevent the curve $G_1$ from being contractible or parallel to a unmarked boundary, so if we have marked points or boundaries without marked points we place these, as boundaries with marked points are more effective at increasing the dimension. In this way we get that if $b_u+p\ge 1$ then 
		\[
		\dim(\Bclose(q))\ge 2 \left\lfloor \frac{g+b_m}{2}+\frac{b_u+p-1}{4} \right\rfloor
		\]
		and if $b_u+p=0$ then
		\[
		\dim(\Bclose(q))\ge 2 \left\lfloor \frac{g+b_m-1}{2}\right\rfloor.
		\]
				
		Lastly, if $g=0$ we need to place two elements, one at each end. Using the same choice as we took for $g=1$ we get
		\[
		\dim(\Bclose(q))\ge 2 \left\lfloor \frac{b_m}{2}+\frac{b_u+p-2}{4} \right\rfloor \text{ for } b_u+p\ge 2,
		\]
		\[
		\dim(\Bclose(q))\ge 2 \left\lfloor \frac{b_m-1}{2}\right\rfloor \text{ for } b_u+p=1 \text{ and }
		\]
		\[
		\dim(\Bclose(q))\ge 2 \left\lfloor \frac{b_m-2}{2}\right\rfloor \text{ for }b_u+p=0.
		\]
	\end{proof}
	
	We would like to note that this lower bound is does not look optimal to us. Furthermore, the method used is restricted to getting to the dimension of the closure of Busemann points, so 
	the dimension of the whole fiber may be significantly larger than what could be achieved by refining the strategy from the proof.

\section{Non density of the Busemann points}\label{se:nondensity}
\subsection{Busemann points are not dense in the horoboundary}
    By Proposition \ref{pr:busemanclosureshape} we know that points in the closure of Busemann points are smooth in the Gardiner--Masur representation with respect to certain variables. By showing that at least one point in the horoboundary is not smooth with respect to the corresponding variables we will prove that Busemann points are not dense. The points we use for this analysis are once again the ones found by Fortier Bourque in Theorem \ref{th:maxsresult}.
	
	Following Fortier Bourque's reasoning, we shall first prove the non density for the sphere with five marked points, and then lift to general closed surfaces by using the branched coverings given by the following Lemma, found in \cite[Lemma 7.1]{Gekhtman}.
	\begin{lemma}[Gekhtman--Markovic]\label{le:branchingcover}
		Let $S$ be a closed surface of genus $g$ with $p$ marked points, such that $3g+p\ge5$. Then there is a branched cover $\overline{S_{g,p}}\to\overline{S_{0,5}}$ that branches at all preimages of marked points that are not marked and induces an isometric embedding $\T(S_{0,5})\hookrightarrow\T(S_{g,p})$.
	\end{lemma}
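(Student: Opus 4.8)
The plan is to prove the statement in two essentially independent pieces: a \emph{topological} construction of a branched cover $\pi\colon \overline{S_{g,p}}\to\overline{S_{0,5}}$ with the prescribed ramification, and the \emph{analytic} fact that any such cover induces an isometric embedding $\T(S_{0,5})\hookrightarrow\T(S_{g,p})$ by pullback of complex structures. The guiding observation is that the branching condition in the statement is exactly the compatibility needed for the pullback of an integrable holomorphic quadratic differential to remain an integrable holomorphic quadratic differential upstairs: poles are allowed only at marked points, so unmarked preimages of marked points must be precisely the branch points. I would make this the backbone of the argument.

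For the embedding, fix a topological branched cover $\pi$ whose branch points lie over the five marked points of $S_{0,5}$ and such that a preimage of a marked point is unramified if and only if it is one of the $p$ marked points of $S_{g,p}$. Given $(X',f')\in\T(S_{0,5})$, the complex structure of $X'$ pulls back along $\pi$ to a unique complex structure on $\overline{S_{g,p}}$ making $\pi$ holomorphic (the branch points being modelled on $z\mapsto z^{e}$), and with the marking $f=\pi^{*}f'$ this defines a map $\Phi\colon\T(S_{0,5})\to\T(S_{g,p})$. In a coordinate $w$ centred at a marked point a quadratic differential $q'$ has at worst a simple pole, $q'=(a/w+\cdots)\,dw^{2}$, and over a preimage of ramification index $e$ one has $w=z^{e}$, so
\[
\pi^{*}q'=a\,e^{2}\,z^{\,e-2}\,dz^{2}+\cdots .
\]
Hence $\pi^{*}q'$ is holomorphic at every branch point (where $e\ge 2$) and has at worst a simple pole exactly at the unramified preimages, i.e.\ at the marked points of $S_{g,p}$; this is where the branching hypothesis enters. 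Thus $\pi^{*}$ carries integrable holomorphic quadratic differentials of $X'$ to those of $X=\Phi(X')$, multiplying the $L^{1}$-norm by $\deg\pi$.

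To see that $\Phi$ is isometric I would pull back Teichmüller geodesics. The Teichmüller geodesic issuing from $X'$ in the direction $q'$ is the affine stretch in the natural coordinates of $q'$; since those coordinates pull back to the natural coordinates of $\pi^{*}q'$ away from branch points, its image under $\Phi$ is the affine stretch with the \emph{same} stretching factor with respect to $\pi^{*}q'$. By Teichmüller's uniqueness theorem this lift is itself the extremal map of its homotopy class, so it is the Teichmüller geodesic upstairs and has the same length parameter. As $\T(S_{g,p})$ is uniquely geodesic with distance-realizing geodesics, the image of the geodesic joining $X_1'$ to $X_2'$ is a geodesic of equal length joining $\Phi(X_1')$ to $\Phi(X_2')$, whence $d_{\T(S_{g,p})}(\Phi X_1',\Phi X_2')=d_{\T(S_{0,5})}(X_1',X_2')$ and $\Phi$ is an isometric embedding.

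It remains to build the cover realizing a given $(g,p)$ with $3g+p\ge 5$. Writing $d$ for the degree, $r_i$ for the number of points over the $i$-th marked point, and $b$ for the number of branch points, the Riemann--Hurwitz formula over the genus-$0$ base gives $\sum_{i=1}^{5}r_i=3d+2-2g$ with total ramification $2d-2+2g$, while $p=\big(\sum_i r_i\big)-b$. For a suitable $d$ I would realize the cover by five permutations $\sigma_1,\dots,\sigma_5\in S_d$ with product the identity, generating a transitive subgroup, whose cycle types leave exactly $p$ fixed points in the fibres over the marked points and yield genus $g$. The inequality $3g+p\ge 5$ is precisely the feasibility condition here: it is the statement $\dim_{\C}\T(S_{g,p})\ge\dim_{\C}\T(S_{0,5})$, and it is what makes a non-negative, realizable choice of ramification data possible. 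The main obstacle is exactly this last combinatorial step — verifying that the Riemann--Hurwitz constraints are met by an actual transitive monodromy tuple for every admissible $(g,p)$, handling the parity and transitivity requirements of the Hurwitz existence problem and the degenerate small cases (such as $g=0$) by hand. Once a compatible cover is produced, the isometric embedding follows routinely from Teichmüller's theorem and the pole computation above.
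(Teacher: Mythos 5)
The paper itself does not prove this statement: it is imported verbatim from Gekhtman--Markovic \cite[Lemma 7.1]{Gekhtman}, so there is no internal proof to compare against, and your proposal must be judged on what a complete proof requires. Your analytic half is correct and is the standard argument: the local computation $\pi^{*}q'=ae^{2}z^{e-2}\,dz^{2}+\cdots$ shows that the stated branching condition is exactly what guarantees that pullbacks of integrable holomorphic quadratic differentials (at worst simple poles at marked points) are again admissible upstairs, and since lifts of Teichm\"uller maps are again Teichm\"uller maps with the same dilatation, Teichm\"uller's extremality/uniqueness theorem yields $d(\Phi X_1',\Phi X_2')=d(X_1',X_2')$. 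Modulo routine points you gloss over (well-definedness of $\Phi$ on equivalence classes, i.e.\ that conformal maps isotopic to the identity rel marked points lift to such maps, and that the lifted extremal map lies in the homotopy class determined by the pulled-back markings), this part is fine.

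The genuine gap is the step you yourself flag as ``the main obstacle'': the existence of the branched cover. You reduce it to finding a transitive monodromy tuple $\sigma_1,\dots,\sigma_5\in S_d$ with prescribed cycle data and then assert that $3g+p\ge 5$ is ``precisely the feasibility condition.'' It is not: $3g+p\ge 5$ is only the dimension inequality $\dim_{\C}\T(S_{g,p})\ge\dim_{\C}\T(S_{0,5})$, and consistency with Riemann--Hurwitz bookkeeping does not imply realizability. The Hurwitz existence problem over the sphere is well known to admit exceptional, non-realizable branch data; for instance, in degree $4$ the data $(2,2),(2,2),(3,1)$ passes all numerical checks, yet is not realizable, because the product of two double transpositions in $S_4$ lies in the Klein four-group and is never a $3$-cycle. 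So the lemma cannot be concluded from numerology, and your plan as written could stall on exactly such an obstruction; one must actually exhibit covers for every $(g,p)$ with $3g+p\ge 5$. A complete argument does this explicitly: for example, the double cover of $\overline{S_{0,5}}$ branched over four of the five marked points is $\overline{S_{1,2}}$ and satisfies the branching condition (the four ramified preimages are unmarked branch points, the two preimages of the fifth point are the marked points), and the general case is then reached by composing such concrete covers, after checking that the branching condition is preserved under composition. This explicit construction is the real topological content of Gekhtman--Markovic's lemma, and it is the part missing from your proposal.
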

	
	The particular conformal structure given to $S_{0,5}$ is obtained as follows. Let $S^1=\R/\mathbb{Z}$ and let $C=S^1\times [-1,1]$. We obtain a sphere $\Sigma$ by sealing the top and bottom of $C$ via the relation $(x,y)\sim(-x,y)$ for all $(x,y)\in S^1\times\{-1,1\}$. Let $P$ be set consisting of the five points $(0,\pm 1)$, $(1/2,\pm 1)$ and $(0,0)$. The pair $S=(\Sigma,P)$, where we view $\Sigma$ as a topological space, is the sphere with five marked points. We get a point $X$ in $\T(S)$ by considering the complex structure on $\Sigma$ obtained by the construction, using the identity map as our marking. 
	
	\begin{figure} \centering
		\begin{tikzpicture}
			\draw (1,2) -- (1,-2) -- (-1,-2) -- (-1,2) --(1,2);
			\node [circle,fill,inner sep=1pt] at (-1,0) {};
			\node [circle,fill,inner sep=1pt] at (1,2) {};
			\node [circle,fill,inner sep=1pt] at (1,-2) {};
			\node [circle,fill,inner sep=1pt] at (-1,-2) {};
			\node [circle,fill,inner sep=1pt] at (-1,2) {};
			
			\draw[dashed] (-1,1) arc(180:0:1 and 0.5/2);
			\draw (-1,1) arc(180:369:1 and 0.5/2);
			\node [label=center:$\alpha$] at (0,0.5) {};
			
			\draw[dashed] (-1,-1) arc(180:0:1 and 0.5/2);
			\draw (-1,-1) arc(180:369:1 and 0.5/2);
			\node [label=center:$\beta$] at (0,-1.5) {};
		\end{tikzpicture}
		\label{fi:pillowcase}
		\caption{Sphere with five marked points, with curves $\alpha$ and $\beta$. We show that the extremal length is not $C^{2}$ along the path $\alpha+t\beta$, $t\in[0,t_0]$.}
	\end{figure}
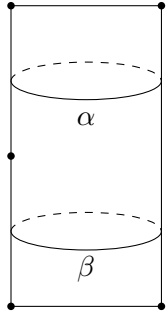

	Let $\alpha(t)=(t,1/2)$ and $\beta(t)=(t,-1/2)$ for $t\in S^1$. Denote $\tau_\alpha$ and $\tau_\beta$ the Dehn twists along $\alpha$ and $\beta$. By Fortier Bourque's theorem, the sequence $(X_n)=((\tau_\alpha\circ\tau_\beta)^n X)$ converges to a multiple of $\ext^{1/2}(i(\alpha,\cdot)\alpha+i(\beta,\cdot)\beta, X))$ in the Gardiner--Masur compactification. Furthermore, the sequence $(X_n)$ converges in the visual compactification based at $X$ to the geodesic spawned by the quadratic differential $q_{\alpha+\beta,X}$. Indeed, as detailed in \cite[Section 4]{Fortier}, the elements $(X_n)$ diverge to infinity along the horocycle defined by the quadratic differential $q_{\alpha+\beta,X}$. Hence, inside embedded hyperbolic plane associated to $q_{\alpha+\beta,X}$, the sequence $(X_n)$ converges in the visual boundary to the geodesic spawned by $q_{\alpha+\beta,X}$, and so the same occurs in the ambient space. That is, $\Xi \ext^{1/2}(i(\alpha,\cdot)\alpha+i(\beta,\cdot)\beta, X) \in \fibermap^{-1}(q_{\alpha+\beta,X})$, so by \cref{pr:busemanclosureshape} if we show that $\ext(i(\alpha,\cdot)\alpha+i(\beta,\cdot)\beta, X)$ is not smooth with respect to the values of $i(\alpha,\cdot)$ and $i(\beta,\cdot)$, then $\Xi \ext^{1/2}(i(\alpha,\cdot)\alpha+i(\beta,\cdot)\beta, X) \notin \Bclose(q_{\alpha+\beta,X})$, and hence it is also not in $\Bclose$.
	
\begin{lemma}\label{le:extremallengthnotsmooth}
	Let $X\in \T(S_{0,5})$ and $G_t$, $t\in[0,t_0]$ be the foliation $\alpha+t\beta$ on $S_{0,5}$. The map $f(t):=\ext(G_t,X)$ is not $C^{2}$.
\end{lemma}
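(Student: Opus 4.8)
The plan is to reduce the statement to the regularity of Miyachi's first--derivative formula and then to compute that derivative explicitly on the pillowcase $X$. Since the path $G_t=\alpha+t\beta$ is linear in the canonical piecewise linear structure, its tangent vector is the constant foliation $\beta$; translating the path so that it starts at the parameter $t$ and applying \cref{th:differentialextremallenght} gives
\[
	f'(t)=\frac{d}{dt}\ext(G_t,X)=2\,i\bigl(\beta,\,H(q_{\alpha+t\beta,X})\bigr),
\]
where $q_{\alpha+t\beta,X}$ is the Hubbard--Masur differential of $\alpha+t\beta$ on $X$ and $H(\cdot)$ is its horizontal foliation. Because $f\in C^{2+\varepsilon}$ would force $f'\in C^{1+\varepsilon}$, it suffices to prove that $t\mapsto i(\beta,H(q_{\alpha+t\beta,X}))$ fails to be $C^{1+\varepsilon}$ for every $\varepsilon>0$. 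This is exactly the step that the introduction attributes to an explicit computation, simplified by the estimates of Markovic.

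The next step is to make $i(\beta,H(q_{\alpha+t\beta,X}))$ geometric. The vertical foliation $\alpha+t\beta$ is a pair of disjoint closed curves, so $q_{\alpha+t\beta,X}$ is of Jenkins--Strebel type: its flat structure is a union of two cylinders $A_t$ and $B_t$ with cores homotopic to $\alpha$ and $\beta$ and transverse widths $1$ and $t$, while the three complementary regions of $\alpha\cup\beta$ (the two folded ends of the pillowcase and the middle band carrying the interior marked point $(0,0)$) collapse onto the critical graph. In these coordinates $i(\beta,H(q_{\alpha+t\beta,X}))$ is the core length $m(t)$ of the cylinder $B_t$ measured against $H$, and the total--area identity reads $\ext(G_t,X)=m_A(t)+t\,m(t)$. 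The task is therefore to control the flat geometry of $B_t$ as a function of $t$. I would exploit the symmetries of the pillowcase, passing to its torus double cover branched at the four corner points, on which $q_{\alpha+t\beta,X}$ becomes the square of an abelian differential and the configuration is a flat torus carrying a marked slit; this turns the problem into an explicit slit--torus modulus computation, and it is here that the estimates of Markovic \cite{Markovic} cut down the work dramatically.

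The heart of the argument, and the step I expect to be the main obstacle, is the asymptotic expansion of $m(t)=i(\beta,H(q_{\alpha+t\beta,X}))$ as the cylinder $B_t$ degenerates. The interior marked point lies on the boundary of the band adjacent to $B_t$, so the conformal modulus of $B_t$ is governed by a collar pinching against that marked point, which is the classical source of a logarithmic term. The expected conclusion is that the increments of $m$ (hence of $f'$) carry a logarithmic modulus of continuity, decaying like $h/\log(1/h)$ rather than $h^{1+\varepsilon}$: such a contribution is too weak to be dominated by $|t-s|^{1+\varepsilon}$ for any $\varepsilon>0$, yet it is also too weak to settle the existence of $f''$, which is consistent with the open status recorded after the statement. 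The delicate part is to track the geometry of the critical graph near the marked point precisely enough, using Markovic's estimates for the relevant moduli, to isolate this logarithmic term and to verify that the resulting increment of $f'$ cannot obey any Hölder bound of exponent $1+\varepsilon$; once this is done, $f=\ext(G_t,X)$ is not $C^{2+\varepsilon}$ for any $\varepsilon>0$.
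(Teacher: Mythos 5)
Your reduction is exactly the paper's: assume $f\in C^{2+\varepsilon}$, invoke Miyachi's formula to write $f'(t)=2\,i(\beta,F_{G_t,X})$, and aim to show this fails to be $C^{1+\varepsilon}$; and the mechanism you predict (a remainder of size $t/\log(1/t)$ after subtracting the linear part, which beats any $t^{1+\varepsilon}$ but says nothing about $f''$) is precisely what the paper exploits. However, there is a genuine gap: the decisive quantitative input is never established, only announced. You write that the asymptotic expansion of $m(t)=i(\beta,F_{G_t,X})$ is ``the step I expect to be the main obstacle'' and describe ``the expected conclusion,'' deferring the verification to unspecified ``estimates of Markovic.'' But this expansion \emph{is} the proof. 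In the paper, the argument proceeds by identifying $X$ with the conformal double of an $L$-shaped polygon, so that $q_{G_t,X}=dz^2$, the parameters $a,b,l$ of the polygon satisfy $t=b/a$ and $i(\beta,F_{G_t,X})=2l/a$ (after rescaling), and then quoting Markovic's explicit Section~9 expansions
\[
a(r)=a(0)+D_1r+O(r^2),\qquad b(r)=D_2r+O(r^2),\qquad l(r)=l(0)+D_3\,r/\log r+o(r/\log r),
\]
from which a short computation shows that $g(t(r))-g(0)-g'(0)t(r)$ is of order $r/\log r$ while $t(r)\asymp r$, giving the contradiction. Without some version of these expansions, your claim that the collar pinching produces a surviving $h/\log(1/h)$ term in the specific combination $2l(r)/a(r)$ is a conjecture; note in particular that one must check both that $t(r)$ is genuinely linear in $r$ (so that $g'(0)$ exists and the $C^{1+\varepsilon}$ Taylor comparison is the right test) and that the logarithmic term does not cancel against the linear corrections coming from $a(r)$.

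A secondary concern: your proposed route for obtaining the expansion --- passing to the torus double cover branched at the four corners and doing a slit-torus modulus computation --- is different from the paper's and is left entirely unexecuted; it is also not obviously simpler, since the fifth marked point $(0,0)$ (which is what produces the logarithm) lifts to two points upstairs and its local geometry still has to be tracked. As it stands, the proposal is a correct strategy outline whose central estimate is assumed rather than proven.
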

\begin{proof}
	By Miyachi's Theorem \ref{th:differentialextremallenght} we have
	\[\frac{d}{dt}\ext(G_t,X)=2 i(\beta,F_{G_t,X}),\]
	where we remind that $F_{G_t,X}$ is the horizontal foliation of the unique Hubbard--Masur differential associated to $G_t$ on $X$. Hence, the Lemma is equivalent to proving that $g(t)=i(\beta,F_{G_t,X})$ is not $C^{1}$.
	
	For a general surface finding a precise expression of $F_{G,X}$ is a complicated problem, as the relation established by Hubbard and Masur is not explicit. However, in our case the surface is topologically simple, and one can use Schwartz--Christoffel maps to get a map from $G$ to $F_{G,X}$. In particular, it is possible to show that the sphere with 5 marked points is conformally equivalent to the Riemannian surface obtained by doubling an $L$-shaped polygon, marking the inner angles as shown in \cref{fi:lshaped} and setting certain values for $a,b$ and $l$. Furthermore, the quadratic differential obtained by $dz^2$ has $\alpha$ and $\beta$ as vertical foliations, with weights $a$ and $b$. Hence $q_{G_t,X}$ is $dz^2$ on the $L$-shaped pillowcase where $a=1$ and $b=t$, so $i(\beta,F_{G_t,X})=2l$. Markovic estimated in \cite[Section 9]{Markovic} the values of $a,b$ and $l$ around $b=0$ depending on a common parameter $r$. Up to rescaling, these values are given by
	\begin{align*}
		a(r)=&a(0)+D_1r+O(r^2),\\
		b(r)=&D_2r+O(r^2) \text{ and } \\
		l(r)=&l(0)+D_3r\log \frac{1}{r} + o\left(r\log \frac{1}{r}\right),
	\end{align*}
where $A(r)=B(r)+O(f(r))$ means $\frac{|A(r)-B(r)|}{f(r)}$ is bounded around $r=0$, and $A(r)=B(r)+o(f(r))$ means $\frac{|A(r)-B(r)|}{f(r)}$ converges to $0$ as $r$ converges to $0$.
	
	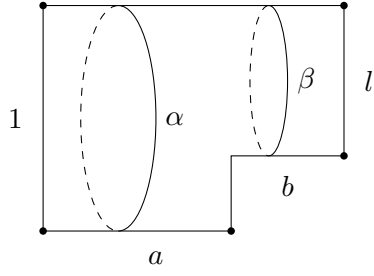
\begin{figure} \centering
		\begin{tikzpicture}
			\draw (-2,-3) -- (-2,0) -- (2,0) -- (2,-2) -- (0.5,-2) -- (0.5,-3) --(-2,-3);
			\node [label=below:$a$] at (-0.5,-3) {};
			\node [label=right:$l$] at (2,-1) {};
			\node [label=below:$b$] at (1.25,-2) {};
			\node [label=left:$1$] at (-2,-1.5) {};
			
			\draw[dashed] (-1,0) arc(90:270:0.5 and 1.5);
			\draw (-1,0) arc(90:-90:0.5 and 1.5);
			\node [label=center:$\alpha$] at (-0.25,-1.5) {};
			
			\draw[dashed] (1,0) arc(90:270:0.25 and 1);
			\draw (1,0) arc(90:-90:0.25 and 1);
			\node [label=center:$\beta$] at (1.5,-1) {};
			
			\node [circle,fill,inner sep=1pt] at (-2,-3) {};			
			\node [circle,fill,inner sep=1pt] at (-2,0) {};			
			\node [circle,fill,inner sep=1pt] at (2,0) {};	
			\node [circle,fill,inner sep=1pt] at (2,-2) {};			
			\node [circle,fill,inner sep=1pt] at (0.5,-3) {};			
		\end{tikzpicture}
		\caption{Doubling of the $L$-shaped polygon together with the curves $\alpha$ and $\beta$.}
		\label{fi:lshaped}
	\end{figure}
	
	Rescaling the pillowcase by $1/a(r)$ we see that the parameter $t$ can be expressed as $t(r)=b(r)/a(r)$, and $g(t(r))=i(\beta,F_{G_t,X})=2l(r)/a(r)$. Observing that $t(0)=0$, we can evaluate the first derivative of $g(t)$ at $0$ by evaluating the limit
	\begin{multline*}
		\lim_{h\to 0}\frac{g(h)-g(0)}{h}=\lim_{r\to 0}\frac{g(t(r))-g(0)}{t(r)}=\lim_{r\to 0}\frac{2l(r)/a(r)-2l(0)/a(0)}{b(r)/a(r)}=\\
  2\lim_{r\to 0}\frac{l(r)-l(0)a(r)/a(0)}{b(r)}
		=2\lim_{r\to 0}\frac{D_3r\log(\frac{1}{r})+o(r\log(\frac{1}{r}))-\frac{l(0)D_1}{a(0)} r}{D_2r+O(r^2)}=\infty.	
	\end{multline*}
	
	And so, $g(t)$ is not differentiable at $t=0$, and hence $f(t)$ is not $C^2$.
\end{proof}

Repeating Fortier Bourque's reasoning we can lift this example to any surface of genus $g$ with $p$ marked points as long as $3g+p\ge5$. Besides  Gekhtman--Markovic's Lemma \ref{le:branchingcover},
the other key ingredient for the lifting is the following result.
\begin{lemma}[Fortier Bourque]\label{le:lifting}
	Let $\pi:S_{g,p}\to S_{0,5}$ be a branched cover of degree $d$ and let $\iota:\T(S_{0,5})\hookrightarrow \T(S_{g,p})$ be the induced isometric embedding. For any measured foliation $F$ on $S_{0,5}$ and any $X\in\T(S_{0,5})$, we have the identity 
	\[\ext(\pi^{-1}(F),\iota(X))=d\ext(F,X).\]
\end{lemma}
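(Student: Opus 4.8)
The plan is to identify the Hubbard–Masur differential of the lifted data as the pullback under $\pi$ of the Hubbard–Masur differential downstairs, and then to read off the extremal length—which is the total area of that differential—from the degree of the cover. Throughout I use that $\ext_X(F)=\int_X|q_{F,X}|$, where $q_{F,X}$ is the unique holomorphic quadratic differential on $X$ with $V(q_{F,X})=F$.

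First I would recall that the isometric embedding $\iota$ is built precisely so that, for every $X\in\T(S_{0,5})$, the branched cover $\pi$ is holomorphic as a map from the Riemann surface $\iota(X)$ to the Riemann surface $X$; this is exactly what makes $\iota$ an isometry for the Teichmüller metrics. Consequently the pullback $\pi^{*}q_{F,X}$ is a holomorphic quadratic differential on $\iota(X)$. The only places where holomorphicity must be checked are the branch points, which lie over the marked points of $S_{0,5}$; there $\pi$ has the local form $w=z^{k}$, under which the pullback of a holomorphic quadratic differential is again holomorphic (acquiring a zero of higher order), so $\pi^{*}q_{F,X}$ has finite area and is a legitimate cotangent vector at $\iota(X)$.

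Next I would show that the vertical foliation of this pullback is exactly $\pi^{-1}(F)$. Away from the branch points and the singularities of $q_{F,X}$, the map $\pi$ is a local biholomorphism, so $\sqrt{\pi^{*}q_{F,X}}=\pi^{*}\sqrt{q_{F,X}}$ and hence the transverse measure $|\operatorname{Re}\sqrt{\pi^{*}q_{F,X}}|$ is the pullback of $|\operatorname{Re}\sqrt{q_{F,X}}|$; thus vertical trajectories map to vertical trajectories and the transverse measures correspond, which is the definition of the preimage foliation $\pi^{-1}(F)$. At a branch point the local model $w=z^{k}$ shows that the vertical trajectories of $\pi^{*}q_{F,X}$ are the $\pi$-preimages of those of $q_{F,X}$, so the identification persists across the branch locus and $V(\pi^{*}q_{F,X})=\pi^{-1}(F)$. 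By uniqueness of the Hubbard–Masur differential associated to $\pi^{-1}(F)$ on $\iota(X)$, we conclude $q_{\pi^{-1}(F),\iota(X)}=\pi^{*}q_{F,X}$.

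Finally I would compute the areas. Since $\pi$ is a branched cover of degree $d$ and its branch locus has measure zero, the change-of-variables formula $\int_{\iota(X)}\pi^{*}\omega=d\int_{X}\omega$ applied to the area form $\omega=|q_{F,X}|$ gives
\[
\ext(\pi^{-1}(F),\iota(X))=\int_{\iota(X)}|\pi^{*}q_{F,X}|=d\int_{X}|q_{F,X}|=d\,\ext(F,X),
\]
which is the claimed identity. The one step requiring genuine care is the verification that the pullback really is the Hubbard–Masur differential of $\pi^{-1}(F)$, namely controlling the prong and singularity structure of the pulled-back foliation at the branch points; but because the branch points sit over marked points, where measured foliations are permitted arbitrary prong behavior, this causes no obstruction and the identification goes through.
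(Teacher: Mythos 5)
Your proposal is correct and follows essentially the same route as the paper: the paper's proof likewise rests on the identity $\pi^{*}q_{F,X}=q_{\pi^{-1}(F),\iota(X)}$ followed by the degree-$d$ change-of-variables for the area, though the paper simply asserts that identity while you supply the verification (holomorphicity across the branch locus, identification of the vertical foliation, and Hubbard--Masur uniqueness). Your added care at the branch points, which lie over marked points where simple poles and one-prong singularities are permitted, is exactly the detail the paper leaves implicit.
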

\begin{proof}
	Recall that $q_{F,X}$ is the Hubbard--Masur differential associated to $\gamma$. We have that $\pi^*q_{F,X}=q_{\pi^{-1}(F),\iota(X)}$, so
	\[
	\ext(\pi^{-1}(F),\iota(X))=
	\int_{\iota(X)}|q_{\pi^{-1}(F),\iota(X)}|=
	d\int_X|q_{F,X}|=d\ext(F,X).
	\]
\end{proof}

Lifting the foliation $G_t$ from Lemma \ref{le:extremallengthnotsmooth} we get an upper bound for the smoothness of the extremal length. 

\begin{theorem}\label{th:extremallengthnotsmooth}
	Let $S$ be a closed surface of genus $g$ with $p$ marked points, such that $3g+p\ge5$. Then there exist two non intersecting multicurves $\hat{\alpha}$, $\hat{\beta}$ and some $X\in \T(S)$ such that the map $f(t):=\ext(\hat\alpha+t\hat\beta,X)$, $t\in[0,t_0]$ is not $C^{2}$.
\end{theorem}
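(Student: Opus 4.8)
The plan is to lift the one-parameter family from \cref{le:extremallengthnotsmooth} on the five-marked sphere up to $S$ via the Gekhtman--Markovic branched cover of \cref{le:branchingcover}, and to transport the failure of $C^{2+\varepsilon}$ regularity through the exact proportionality of extremal lengths recorded in \cref{le:lifting}. In effect, the whole analytic content has already been isolated in the base case, and what remains is a formal pull-back argument.

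First I would invoke \cref{le:branchingcover} to fix a branched cover $\pi:\overline{S_{g,p}}\to\overline{S_{0,5}}$, say of degree $d$, together with the induced isometric embedding $\iota:\T(S_{0,5})\hookrightarrow\T(S_{g,p})$. Let $\alpha,\beta$ and the conformal structure $X\in\T(S_{0,5})$ be the disjoint curves and the basepoint from \cref{le:extremallengthnotsmooth}, for which $f(t)=\ext(\alpha+t\beta,X)$ fails to be $C^{2+\varepsilon}$ for every $\varepsilon>0$. I would then set $\hat\alpha=\pi^{-1}(\alpha)$ and $\hat\beta=\pi^{-1}(\beta)$ and take the basepoint to be $\iota(X)\in\T(S_{g,p})$.

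Two checks of a formal nature are needed. Since $\alpha$ and $\beta$ are disjoint simple closed curves on the five-times-marked pillowcase and hence avoid the branch locus, their preimages $\hat\alpha,\hat\beta$ are disjoint multicurves on $S$, which are the two non-intersecting multicurves promised in the statement. Moreover, pulling back a measured foliation by $\pi$ is linear in the transverse measure, so $\pi^{-1}(\alpha+t\beta)=\hat\alpha+t\hat\beta$ for every $t\in[0,t_0]$. Granting these, \cref{le:lifting} yields, for all $t$,
\[
\ext(\hat\alpha+t\hat\beta,\iota(X))=\ext(\pi^{-1}(\alpha+t\beta),\iota(X))=d\,\ext(\alpha+t\beta,X)=d\,f(t).
\]

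Finally, since $d$ is a fixed positive integer, multiplication by $d$ preserves every Hölder class; hence $t\mapsto\ext(\hat\alpha+t\hat\beta,\iota(X))=d\,f(t)$ is $C^{2+\varepsilon}$ if and only if $f$ is, and \cref{le:extremallengthnotsmooth} shows that $f$ is not $C^{2+\varepsilon}$ for any $\varepsilon>0$. This gives the theorem, with the point $X$ of the statement taken to be $\iota(X)$. I do not expect any genuine obstacle here: all the difficulty is already concentrated in \cref{le:extremallengthnotsmooth}, via Miyachi's first-derivative formula (\cref{th:differentialextremallenght}) and Markovic's asymptotic estimates for the $L$-shaped pillowcase. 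The only point that merits an explicit line is the compatibility $\pi^{-1}(\alpha+t\beta)=\hat\alpha+t\hat\beta$ of the pull-back with convex combinations, which is immediate from the definition of the pulled-back transverse measure.
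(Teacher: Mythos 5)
Your proposal is correct and follows essentially the same route as the paper's own proof: lift the curves of \cref{le:extremallengthnotsmooth} through the Gekhtman--Markovic cover, use $\hat\alpha+t\hat\beta=\pi^{-1}(\alpha+t\beta)$ and \cref{le:lifting} to get $\ext(\hat\alpha+t\hat\beta,\iota(X))=d\,\ext(\alpha+t\beta,X)$, and conclude since scaling by the constant degree $d$ preserves the Hölder class. The extra formal checks you flag (disjointness of preimages, linearity of the pull-back in the transverse measure) are fine and are left implicit in the paper.
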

\begin{proof}
	Since $3g+p\ge5$ we have a map $\pi:S_{g,p}\to S_{0,5}$, with an induced isometric embedding $\iota:\T(S_{0,5})\hookrightarrow\T(S_{g,p})$. By Lemma \ref{le:extremallengthnotsmooth} we have two curves $\alpha,\beta\in S_{0,5}$ such that, for any $X\in \T(S_{0,5})$ the map $t\to \ext(\alpha+t\beta,X)$ is not $C^2$. Let $\hat\alpha=\pi^{-1}(\alpha)$ and $\hat\beta=\pi^{-1}(\beta)$. We have $\hat\alpha+t\hat\beta=\pi^{-1}(\alpha+t\beta)$, so applying Lemma \ref{le:lifting} we get $\ext(\hat\alpha+t\hat\beta,i(X))=d\ext(\alpha+t\beta,X)$. By Lemma \ref{le:extremallengthnotsmooth} the function $\ext(\alpha+t\beta,X)$ is not $C^{2}$, so we get the theorem.
\end{proof}

Theorem \ref{th:extremallengthnotsmoothintro} is essentially a rephrasing of the previous theorem. Finally, we are able to prove that Busemann points are not dense.

\begin{proof}[Proof of Theorem \ref{th:busemannnotdense}]
  Let $\alpha$ and $\beta$ be as in \cref{le:extremallengthnotsmooth}. Furthermore, let $\pi:S_{g,p}\to S_{0,5}$ and $\iota: \T(S_{0,5})\hookrightarrow\T(S_{g,p})$ be as in \cref{le:branchingcover}. For the $X\in \T(S_{0,5})$ described before \cref{le:extremallengthnotsmooth} the sequence $(X_n)=(\tau_{\beta}\circ \tau_{\alpha})^n X$ is contained in the horocycle generated by $q_{\alpha+\beta,X}$ and the distance $d(X_n,X)$ goes to infinity. Therefore $(X_n)$  converges in $\vbc{\T(S_{0,5})}_X$ to the geodesic spawned by $q_{\alpha+\beta,X}$.  Following Fortier Bourque's reasoning in the proof of \cite[Theorem 1.1]{Fortier}, using half translation structures, applying the Dehn twist $\tau_{\alpha}\circ \tau_{\beta}$ to $X$ is equivalent to applying the shearing transformation 
 \[h_m=\begin{pmatrix}
1 & m \\
0 & 1
\end{pmatrix}
\]
to the half translation structure defined by $q_{\alpha+\beta,X}$. This action commutes with the pull-back coming from the branched cover, so the elements $(X_n)$ are associated with the half translation structure defined by $h_n \pi^*(q_{\alpha+\beta,X})$. These points diverge to infinity along the horocycle defined by $\pi^*(q_{\alpha+\beta,X})$, and so converge in $\vbc{\T(S_{g,p})}_{\iota(X)}$ to the geodesic spawned by $q_{\pi^{-1}(\alpha)+\pi^{-1}(\beta),\iota(X)}$.

Let $c_i$, $1\le i \le k$, be the components of the half translation structure associated to $\pi^{-1}(\alpha+\beta,X)$. Each $c_i$ covers either $\alpha$ or $\beta$ with some degree $d_i\in \mathbb{N}$. Hence, each component $c_i$ corresponds to a curve and is a cylindrical with height $1$ and circumference $d_i$. Therefore, if $m$ is the common multiple between all $d_i$, and $\gamma_i$ is the curve associated to the component $c_i$, shifting the flat metric via the matrix $h_m$ is equivalent to performing $m/d_i$ Dehn twists around each curve $\gamma_i$. Letting $\phi$ be the composition of such Dehn twists, we have $\iota(X_{mn})=\phi^n \iota(X)$. Hence, by Fortier Bourque's \cref{th:maxsresult}, in the Gardiner--Masur compactification the sequence $(\iota(X_{mn}))_n$ converges, as $n\to \infty$, to
\[
\xi=\left[ \ext^{1/2}\left(\sum_{i=1}^k\frac{1}{d_i}i(F,\gamma_i) \gamma_i, \iota(X)\right)\right]_{F\in \MF(S_{g,n})}.
\]
Therefore, $\Xi \xi \in \fibermap^{-1} (q_{\pi^{-1}(\alpha)+\pi^{-1}(\beta),\iota(X)})$. To see that $\Xi \xi$ is not in $\Bclose$ it remains to see that it is not in $\Bclose(q_{\pi^{-1}(\alpha)+\pi^{-1}(\beta),\iota(X)}).$ We have, $i(c_i,H(q_{\pi^{-1}(\alpha)+\pi^{-1}(\beta),\iota(X)})=d_i$, so by \cref{pr:busemanclosureshape} it remains to prove that there is some path of foliations $G_t$ such that the functions $x_i=\frac{i(\gamma_i,G_t)}{d_i}$ vary smootly, while the function $f(x_1,\ldots, x_k)=\ext\left(\sum_{i=1}^k\frac{1}{d_i}x_i\gamma_i, \iota(X)\right)$ does not. Reorder the curves so there is some $p\ge 1$ such that $\pi^{-1}\alpha=\gamma_1+\ldots +\gamma_p$ and $\pi^{-1} \beta=\gamma_{p+1}+\ldots + \gamma_k$. It follows from Dehn-Thurston's coordinates that for any natural numbers $n_j$, $1\le j \le k$ there is a multicurve $G_{(n_j)}$ such that $i(G_{(n_j)},\gamma_i)=n_i$. See, for example, the book by Penner--Harer \cite[Theorem 1.2.1]{PennHar}. Allowing renormalizations of the multicurves we get that $n_j$ can be any non-negative rationals. Finally, doing a limit argument in the space of projective measured foliations we can take $n_j$ to be any non-negative real numbers. That is, for any $t\ge 0$ there exists a measured foliation $G_t$ such that $i(G_t,\gamma_i)=d_i$ for $i\le p$, and $i(G_t,\gamma_i)=t d_i$ otherwise. Hence, along such foliations we have $x_i=1$ for $i\le p$ and $x_i= t$ otherwise. Therefore, along this path,
\[
f(1,\ldots,1,t,\ldots,t)=\ext\left(\pi^{-1}(\alpha)+t\pi^{-1}(\beta), \iota(X)\right),\]
which by \cref{th:extremallengthnotsmooth} is not smooth, as $\pi^{-1}(\alpha)$ and $\pi^{-1}(\beta)$ are the curves used in the proof of the Theorem.
\end{proof}	

\subsection{Busemann points with one indecomposable component are nowhere dense}

The Thurston compactification can be build in a similar way as the Gardiner--Masur compactification, by using the hyperbolic length of the curves instead of the extremal length. Let $\phi$ be the map between $\T(S)$ and $P\R_+^\Curv$ defined by sending $X\in \T(S)$ to the projective vector $[\ell(\alpha,X)]_{\alpha\in \Curv}$. The pair $(\phi,\overline{\phi(\T(S))})$ defines a compactification, and the boundary is given by the space of projective measured foliations, denoted $\PMF$.

As explained by Miyachi \cite{Miyachi}, neither the Thurston nor the horofunction compactification is finer than the other one. However, it is possible to get some relation. Let $\PMF^{UE}\subset \PMF$ be the set of uniquely ergodic foliations. Following the work of Masur \cite{Masur2}, $\PMF^{UE}$ has full Lebesgue measure within $\PMF$. Miyachi \cite[Corollary 1]{Miyachi} shows that the mapping $\phi$ on $\T(S)$ can be extended to an homeomorphism $f$ between $\phi(\T(S))\cup \PMF^{UE}$ and $h(\T(S))\cup B_{UE}$ such that for $x\in \T(S)$ we have $f(\phi(x))=h$, where $B_{UE}$ are the Busemann points associated to quadratic differentials whose vertical foliation is uniquely ergodic. One might understand this result as stating that the two compactifications are the same almost everywhere with respect to the Lebesgue measure on $\PMF$. As we shall see, the same does not follow with respect to any strictly positive measure on the horoboundary.

The homeomorphism $f$ described by Miyachi is obtained by first defining a map between the boundaries. For a given $x\in \T(S)$, the map on the boundary is denoted $\mathcal{G}_x$, and by its definition we have $\mathcal{G}_x(F)=B(q_{F,x})$, where we recall that $q_{F,x}$ is the quadratic differential on $x$ with $V(q_{F,x})=F$. Denote $\B_1$ the set of Busemann points associated to foliations with one indecomposable component. We have $\mathcal{G}_x(\PMF^{UE})=\B_{UE} \subset \B_1$. However, the following is also satisfied.

\begin{theorem}\label{th:nowheredense}
	Let $S$ be a closed surface of genus $g$ with $p$ marked points, such that $3g+p\ge5$. Then the set $\B_1$ is nowhere dense in the horoboundary.
\end{theorem}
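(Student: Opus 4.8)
The plan is to show that $\overline{\B_1}$ has empty interior by exhibiting, inside any nonempty open subset of $\overline{\B_1}$, a Busemann point of a \emph{decomposable} quadratic differential, and then arguing that no such point can lie in $\overline{\B_1}$. Throughout I use that $S$ is closed, so every Busemann point is of the form $B(q)$ and ``internally indecomposable'' means ``indecomposable''.

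First I would pin down the Gardiner--Masur representation of points of $\overline{\B_1}$. If $\xi=\lim_n B(q_n)$ with each $V(q_n)$ indecomposable, pass to a subsequence with $q_n\to q$; since $V(q_n)$ has a single component and $q_n$ has unit area, Corollary \ref{co:walshbusemanshape} gives $\Xi^{-1}B(q_n)=i(V(q_n),\cdot)$, and continuity of the intersection number yields $\Xi^{-1}\xi=i(V(q),\cdot)$. Hence
\[
\overline{\B_1}\subseteq \mathcal M:=\bigl\{\,\Xi\, i(F,\cdot):F\in\MF\,\bigr\},
\]
the set of ``minimal points'' $M(q)$ of Proposition \ref{pr:lowerboundGM}. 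This is just the one--component specialization of Proposition \ref{pr:busemanclosureshape}, where the degree--$2$ polynomial collapses to a perfect square of a single intersection function.

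Next I would prove the key obstruction: if $V(q)$ has $k\ge2$ indecomposable components $V_1,\dots,V_k$, then $\Xi^{-1}B(q)=\bigl(\sum_j\wals^q(V_j)\bigr)^{1/2}$ is \emph{not} of the form $i(F,\cdot)$, so $B(q)\notin\mathcal M\supseteq\overline{\B_1}$. The mechanism is a rank computation: restricted to a two--parameter family of weighted multicurves $y_1c_1+y_2c_2$ with $c_1,c_2$ disjoint simple closed curves, every intersection function is \emph{linear} in $(y_1,y_2)$, so $i(F,\cdot)^2$ is a perfect square (a rank--$1$ quadratic form), whereas $\sum_j i(V_j,\cdot)^2/i(V_j,H(q))$ becomes $\sum_j \langle u_j,(y_1,y_2)\rangle^2/a_j$ with $u_j=(i(V_j,c_1),i(V_j,c_2))$ and $a_j=i(V_j,H(q))$. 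Choosing $c_1$ meeting $V_1$ but disjoint from $V_2$, and $c_2$ disjoint from $c_1$ but meeting $V_2$, makes $u_1$ and $u_2$ linearly independent, so the form has rank $2$, a contradiction. The existence of such $c_1,c_2$ is the only place the topology of $S$ enters; since I am free to choose the approximating multicurves in the last step, I will simply arrange that two of their components admit disjoint dual curves.

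Finally I would derive the contradiction. Suppose some nonempty open $U\subseteq\overline{\B_1}$ exists. As $\B_1$ is dense in its own closure, $U$ contains a genuine point $\xi_0=B(q_0)$ with $V(q_0)$ indecomposable; by Theorem \ref{th:maxcondition} such $q_0$ is infusible, so \emph{every} sequence converging to $q_0$ converges strongly. I then select $q_n\to q_0$ with $V(q_n)$ decomposable: take unit--extremal--length weighted multicurves with $\ge2$ components converging to $V(q_0)$ (multicurves are dense and the Hubbard--Masur map $F\mapsto q_{F,b}$ is a homeomorphism, so the corresponding $q_n\to q_0$). Strong convergence and Theorem \ref{th:Busemanncontinuityifstrong} give $B(q_n)\to B(q_0)=\xi_0\in U$, so $B(q_n)\in U$ for large $n$; but by the previous step $B(q_n)\notin\overline{\B_1}$, contradicting $U\subseteq\overline{\B_1}$. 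Hence $\overline{\B_1}$ has empty interior and $\B_1$ is nowhere dense. I expect the main obstacle to be the second step, namely certifying that the Busemann function of a decomposable differential is genuinely not an intersection function; the linearity/rank argument handles it cleanly, its only delicate input being the existence of the two test curves, which I defuse by choosing the approximating multicurves conveniently.
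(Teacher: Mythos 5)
Your proof is correct, but it takes a genuinely different route from the paper's. The paper deduces nowhere-density from its Theorem \ref{th:busemannnotdense} (non-density of Busemann points, whose proof rests on the extremal-length non-smoothness machinery: Fortier Bourque's limit points, Markovic's estimates, branched covers): it notes that $\B_1$, hence the complement of its closure, is $\MCG(S)$-invariant, takes a horofunction $f\in\Xi^{-1}\fibermap^{-1}(q_0)$ outside $\Xi^{-1}\Bclose$ supplied by that theorem, and uses pseudo-Anosov iteration to show that the iterates $\phi^n f$ (all outside $\overline{\B_1}$ by invariance) converge to the Busemann point of the pseudo-Anosov differential; density of pseudo-Anosov foliations, strong convergence and a diagonal argument then reach every point of $\B_1$. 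You bypass Theorem \ref{th:busemannnotdense} and all dynamics: you trap $\overline{\B_1}$ inside the set of intersection functions $i(F,\cdot)$ (a one-component analogue of Proposition \ref{pr:busemanclosureshape}), exclude two-cylinder Busemann points from that set by a rank-two-versus-rank-one quadratic form computation, and then push such points into any putative open subset of $\overline{\B_1}$ using density of two-component multicurves together with infusibility (Theorems \ref{th:maxcondition} and \ref{th:Busemanncontinuityifstrong}). What you gain is economy and self-containedness: nothing analytic is needed, and your witnesses are themselves Busemann points, so you additionally show that Busemann points of suitable decomposable differentials avoid $\overline{\B_1}$; what the paper's route gains, having already paid for Theorem \ref{th:busemannnotdense}, is brevity and an equivariant picture in which even non-Busemann horofunctions accumulate on $\B_1$, which meshes with its measure-theoretic corollaries. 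Two small points you should write out in a final version: (i) the disjoint dual curves exist for \emph{any} two disjoint non-isotopic essential curves $\alpha_1,\alpha_2$ — take $c_1$ in the component of $S\setminus\alpha_2$ containing $\alpha_1$, then $c_2$ in the component of $S\setminus c_1$ containing $\alpha_2$, both available by change of coordinates since neither curve is peripheral there — so no special care in choosing the approximating multicurves is actually needed; (ii) the identity $\Xi^{-1}B(q_n)=i(V(q_n),\cdot)$ uses $i(V(q_n),H(q_n))=\operatorname{Area}(q_n)=1$, a fact the paper itself invokes implicitly in the proof of Proposition \ref{pr:busemanclosureshape}.
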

\begin{proof}
	The action of $\MCG(S)$ on $\T(S)$ is extended to the projectivized version of the Gardiner--Masur compactification by $\psi[f(\alpha)]_{\alpha\in \Curv}=[f(\psi\alpha)]_{\alpha\in \Curv}$. For any $q$ such that $V(q)$ is an indecomposable measured foliation, $\E_q=\Xi^{-1}B(q)=[i(V(q),\alpha)]_{\alpha\in\Curv}$, so $\psi \E_q=[i(V(q),\psi(\alpha))]_{\alpha\in\Curv}=[i(\psi^{-1}(V(q)),\alpha)]_{\alpha\in\Curv}.$ Hence, $\psi \E_q$ is equal to the representation of the Busemann point in the Gardiner--Masur compactification associated to the quadratic differential with vertical foliation $\psi^{-1}V(q)$, which also is an indecomposable measured foliation. Therefore, $\B_1$ is invariant under the action of $\MCG(S)$, and since $\MCG(S)$ acts by homeomorphisms, the complement of the closure is also invariant.
	
	Let $q_0$ be a quadratic differential such that there is some $f\in \Xi^{-1}\fibermap^{-1}(q_0)$ not in $\Xi^{-1}\Bclose$. Such a quadratic differential exists, by \cref{th:busemannnotdense}. By the proof of the theorem, we can assume that $V(q_0)$ is a multicurve. Furthermore, let $q$ be a quadratic differential such that $V(q)$ and $H(q)$ are the stable and unstable foliations respectively of some pseudo-Anosov element $\phi\in MCG(S)$. It is well known \cite[Expose 12]{FLP} that for any closed curve $\alpha$ we have that $\lambda^{-n}\phi^n(\alpha)$ converges to $\frac{i(\alpha,V(q))}{i(H(q),V(q))}H(q)$, where $\lambda$ is the stretch factor of $\phi$. For any foliation $F$ we have that $\Xi^{-1}M(q_0)(F)=0$ if and only if $i(V(q_0),F)=0$, where $M(q_0)$ is the minimal point defined in \cref{se:shapeoffibers}. Hence, since $H(q)$ is the unstable foliation of a pseudo-Anosov element and $V(q_0)$ is a multicurve, we have $i(V(q_0),H(q))\neq 0$, and so $f(H(q))\ge\Xi^{-1}M(q_0)(H(q))>0$. We have $\phi^{n}[f(\alpha)]_{\alpha\in\Curv}=[f(\phi^n(\alpha))]_{\alpha\in\Curv}$.	
	Taking limits and using that the functions in the Gardiner--Masur compactification are homogeneous of degree 1, we get that 
	\[
		\lim_{n\to\infty}[\phi^nf(\alpha)]_{\alpha\in\Curv}=\left[i(\alpha,V(q))f\left(\frac{H(q)}{i(V(q),H(q))}\right)\right]_{\alpha\in\Curv}=[i(\alpha,V(q))]_{\alpha\in\Curv},
	\]
	Hence, in the normalized version, $\phi^n f$ converges to $i(\cdot ,V(q))=\Xi^{-1} B(q)$, as $V(q)$ is uniquely ergodic and therefore indecomposable. That is, $B(q)$ can be approached through a sequence of elements contained in the complement of the closure of $\B_1$.
	
	Let $B(q')$ be any element in $\B_1$, where $q'$ is any quadratic differential such that $V(q')$ has one indecomposable component. The set of pseudo-Anosov foliations is dense in $\MF(S)$, so we have a sequence of quadratic differentials $(q_n)$ converging to $q'$ with $V(q_n)$ being a pseudo-Anosov foliation. Since $q'$ has one indecomposable component, the convergence is strong, and so $B(q_n)$ converges to $B(q')$. Each $B(q_n)$ can be approached through a sequence of elements contained in the complement of the closure of $\B_1$, so taking a diagonal sequence the same can be said for $B(q')$. 
\end{proof}

\begin{corollary}\label{co:nofullmeasure}
	Let $S$ be a closed surface of genus $g$ with $p$ marked points, such that $3g+p\ge5$. Then, for any finite strictly positive measure $\nu$ on the horoboundary, the set $\Bclose_1$ does not have full $\nu$-measure.
\end{corollary}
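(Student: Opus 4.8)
The plan is to deduce this statement directly from the topological fact already established in \cref{th:nowheredense}, namely that $\B_1$ is nowhere dense in the horoboundary $\hbd{\T(S)}$. The only additional ingredient is the definition of a strictly positive measure: a Borel measure $\nu$ is \emph{strictly positive} precisely when every nonempty open set has positive $\nu$-measure. With this in hand the corollary becomes essentially immediate, so the substance of the argument is entirely contained in \cref{th:nowheredense}, and the remaining obstacle is only bookkeeping.

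First I would unwind what nowhere density gives us. Since $\B_1$ is nowhere dense, its closure $\overline{\B_1}$ has empty interior. Consequently the complement
\[
	U:=\hbd{\T(S)}\setminus\overline{\B_1}
\]
is open, and it is nonempty (indeed dense), because an empty complement of $\overline{\B_1}$ would force $\overline{\B_1}=\hbd{\T(S)}$, contradicting the emptiness of its interior as soon as the horoboundary is itself nonempty, which holds in our range $3g+p\ge 5$.

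Next I would invoke strict positivity. As $U$ is a nonempty open subset of $\hbd{\T(S)}$, the hypothesis on $\nu$ yields $\nu(U)>0$. Now observe the inclusion $U=\hbd{\T(S)}\setminus\overline{\B_1}\subseteq\hbd{\T(S)}\setminus\B_1$, coming from $\B_1\subseteq\overline{\B_1}$. Monotonicity of $\nu$ therefore gives
\[
	\nu\bigl(\hbd{\T(S)}\setminus\B_1\bigr)\ge\nu(U)>0.
\]
Since $\nu$ is finite, this means $\nu(\B_1)<\nu\bigl(\hbd{\T(S)}\bigr)$, i.e.\ $\B_1$ does not have full $\nu$-measure, which is exactly the claim.

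In short, there is no genuine difficulty beyond \cref{th:nowheredense}: a nowhere dense set has a dense open complement, and a strictly positive measure must charge that complement. The one point worth stating carefully is the chain of inclusions $\B_1\subseteq\overline{\B_1}$ together with passing to complements, so that the positive mass detected on the open set $U$ is correctly transferred to the complement of $\B_1$ itself rather than merely to the complement of its closure.
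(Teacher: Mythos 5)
Your proof is correct and follows exactly the paper's argument: by \cref{th:nowheredense} the complement of $\overline{\B_1}$ is open and nonempty, hence charged by any strictly positive measure, so $\B_1$ cannot have full measure. You simply spell out the bookkeeping (the inclusion $\B_1\subseteq\overline{\B_1}$ and passing to complements) that the paper leaves implicit.
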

\begin{proof}
	By Theorem \ref{th:nowheredense}, the complement of $\Bclose_1$ is open and nonempty, so it must have positive $\nu$-measure.
\end{proof}

This last result tells us that the image of Miyachi's homeomorphism does not have full \(\nu\)-measure within the horoboundary for any strictly positive measure \(\nu\). However, as announced in the introduction, any attempt to extend the identity from the Thurston compactification to the horoboundary compactification to a set of full measure within the Thurston compactification results in the same problem. We restate here the result as we shall use the notation for the proof.

\noequivalentmeasuresintro*

\begin{proof}
	Assume such a $U$ exists. Choose then a basepoint $x\in \T(S)$ and let $U'=U\cap \PMF^{UE}$. For each element of $F \in U'$ the associated Hubbard--Masur quadratic differential $q_{F,x}$ satisfies $\tray{q_{F,x}}{t}\to F$ as $t\to\infty$. Hence, since $\phi$ is continuous at $F$ we have $\phi(F)=B(q_{F,x})$. That is, $\phi(U')\subset \B_1$.
	
	Let $G \in U$. The set $\PMF^{UE}$ has full $\mu$ measure, so $U'=\PMF^{UE}\cap U$ also has full measure. Hence, since the Lebesgue measure is strictly positive, $U'$ is dense within $\PMF$. Therefore $G$ can be accessed through a sequence $(F_n)\subset U'$. Hence, since $\phi$ is continuous in $G$ we have $\phi(G)=\lim \phi(F_n)$, so $\phi(U)\subset \Bclose_1$ and $\phi(U)$ can not have full $\nu$-measure.
\end{proof}

Another natural family of measures on the boundary is obtained by considering harmonic measures. Given a non-elementary measure $\mu$ on $\MCG(S)$ it is possible to define a random walk $(w_n)$ as the sequence of random variables defined by  
\[
w_n=g_0g_1g_2\ldots g_n,
\] 
where $g_i$ are independent, identically distributed random variables on $\MCG(S)$ sampled according to the distribution $\mu$. As proven by Kaimanovich and Masur in \cite[Theorem 2.2.4]{Kaimanovich}, random walks generated by a non-elementary probability measure converges almost surely in Thurston's compactification, so we can define the hitting measure $\nu$ in $\PMF$. Furthermore, the walk converges almost surely to uniquely ergodic projective foliations, so we can translate this result to the horofunction compactification in the following way.
\begin{corollary}
	Let $\mu$ be a non-elementary measure on $\MCG(S)$. Then the associated harmonic measure on the horoboundary is supported in a nowhere dense set.
\end{corollary}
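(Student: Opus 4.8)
The plan is to transport the Kaimanovich--Masur convergence statement from Thurston's compactification to the horofunction compactification by means of Miyachi's bicontinuous map $\mathcal{G}_x$, and then to invoke \cref{th:nowheredense}. The substantive content is entirely carried by that theorem; everything else is a transfer argument.

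First I would record the input from Thurston theory. Since $\mu$ is non-elementary, by Kaimanovich--Masur \cite{Kaimanovich} the random walk $(w_n)$ converges almost surely in Thurston's compactification, and the hitting measure $\nu$ on $\PMF$ is the law of the almost-sure limit. Crucially, this limit lies almost surely in the set $\PMF^{UE}$ of uniquely ergodic projective foliations, so $\nu(\PMF^{UE})=1$. Realizing the walk in $\T(S)$ through a fixed basepoint $x$, the sequence $(w_n\cdot x)$ converges almost surely in Thurston's compactification to some $F_\infty\in\PMF^{UE}$, whose law is $\nu$.

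Next I would transfer the convergence. Miyachi's map $\mathcal{G}_x$ is a homeomorphism onto its image from $\T(S)\cup\PMF^{UE}$ to $\hbc{\T(S)}$, restricting to the identification $h$ on $\T(S)$ and sending $F\in\PMF^{UE}$ to $B(q_{F,x})$. By its continuity at points of $\PMF^{UE}$ along sequences from $\T(S)$, the fact that $w_n\cdot x\to F_\infty$ in Thurston's boundary gives that $h(w_n\cdot x)$ converges in the horofunction compactification to $\mathcal{G}_x(F_\infty)=B(q_{F_\infty,x})$ almost surely. Hence the walk converges almost surely in the horofunction compactification, and the associated harmonic measure $\nu_h$ on the horoboundary is the push-forward $(\mathcal{G}_x)_*\nu$. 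Since $\nu$ is carried by $\PMF^{UE}$ and $\mathcal{G}_x(\PMF^{UE})\subset\B_1$, we obtain $\nu_h(\B_1)=1$.

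Finally I would conclude. The support $\operatorname{supp}(\nu_h)$ is by definition closed, and it is contained in $\overline{\B_1}$ because $\B_1$ has full $\nu_h$-measure. By \cref{th:nowheredense} the set $\B_1$ is nowhere dense, so $\overline{\B_1}$ has empty interior; any closed subset of $\overline{\B_1}$ therefore also has empty interior and is nowhere dense. Thus $\operatorname{supp}(\nu_h)$ is nowhere dense, which is the claim. The only delicate point is the transfer step, where one must ensure that almost-sure convergence in Thurston's boundary to a point of $\PMF^{UE}$ really does yield convergence in the horofunction compactification; this is exactly the bicontinuity of $\mathcal{G}_x$ on $\T(S)\cup\PMF^{UE}$, so once \cref{th:nowheredense} is available the argument is short.
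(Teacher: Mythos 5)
Your proposal is correct and follows essentially the same route as the paper: Kaimanovich--Masur gives almost-sure convergence to uniquely ergodic foliations in Thurston's boundary, Miyachi's result (the bicontinuity of $\mathcal{G}_x$, cited in the paper as \cite[Corollary 1]{Miyachi}) transfers this to almost-sure convergence to Busemann points in $\B_1$, and \cref{th:nowheredense} finishes the argument. Your closing step distinguishing $\B_1$ from $\overline{\B_1}$ when passing to the support is a slightly more careful phrasing of what the paper states tersely, but it is not a different argument.
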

\begin{proof}
	For any $x\in\T(S)$ the sequence $(w_nx)$ converges almost surely in Thurston compactification to some  $F\in \PMF^{UE}$. Hence, by \cite[Corollary 1]{Miyachi}, the sequence $(w_nx)$ converges almost surely to the Busemann point generated by a quadratic differential $q$ with $V(q)$ being a multiple of $F$. Hence, the support of the harmonic measure is contained in $\B_1$, which is nowhere dense by Theorem \ref{th:nowheredense}.
\end{proof}
 
\section{Topology of the Horoboundary}\label{se:globaltopology}
	
	In this section we make some progress towards determining the global topology of the horoboundary. We begin by showing that the minimal point $M(q)$ introduced in Proposition \ref{pr:lowerboundGM} serves as a section for the map $\fibermap$ whenever $S$ does not have a boundary. Our main goal for this section is proving the following Theorem.
	
	\begin{theorem}\label{th:globalsection}
	Let $S$ be a surface of genus $g$ with $b_m$ and $b_u$ boundaries with and without marked points respectively and $p$ interior marked points. Then, the map $\fibermap$ restricted to the boundary has a global continuous section $\vbd{\T} \to  \hbd{\T}$ if and only if at least one of the two following conditions is satisfied:
	\begin{itemize}
	 \item $b_m=b_u=0$ or
	 \item $2g+2b_m+b_u+p-\max(1-b_u,0)\le 4$.
	\end{itemize}	
	The section is given by sending the ray in the direction of $q$ to the point $M(q)$ defined before Proposition $\ref{pr:lowerboundGM}$.

	Furthermore, if the map does not admit a global section, then it does not admit any local section around some points.
	\end{theorem}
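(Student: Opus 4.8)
The plan is to route both directions through the minimal point $M(q)$ and its order-theoretic relation to the fibers of $\fibermap$. For the sufficiency direction I would show that under either listed hypothesis the assignment $q\mapsto M(q)$ is a well-defined continuous section. Well-definedness is almost immediate: when $b_m=b_u=0$ there are no boundary components and $M(q)=\Xi\,i(V(q),\cdot)$ lies in $\fibermap^{-1}(q)$ by the remark after \cref{pr:lowerboundGM}; when instead $2g+2b_m+b_u+p-\max(1-b_u,0)\le4$, a counting argument shows that no interior part of any $V(q)$ can carry three annuli parallel to boundaries with marked points, so \cref{pr:lowerboundGM} again gives $M(q)\in\fibermap^{-1}(q)$. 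Since $\fibermap(M(q))=q$ by construction, $M$ is a set-theoretic section, and the real content is its continuity.

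To prove $M$ continuous along $q_n\to q$ I would use \cref{pr:notsplittingboundary} to show that the boundary components and proper arcs of the limit $V(q)$ eventually appear inside $V(q_n)$, so that the decomposition into interior parts is stable up to vanishing pieces; combining this with \cref{le:limith} and continuity of the intersection number yields pointwise convergence $\Xi^{-1}M(q_n)\to\Xi^{-1}M(q)$, hence convergence in the horofunction compactification. The crucial feature making this go through is exactly the ``at most two annuli'' bound: a proper arc (seam) joining two marked boundaries, when twisted, converges to the union of the two corresponding annuli, so if every interior part has at most two such annuli the merged value that $M$ records agrees with every limit produced by these seams. With three annuli a single seam can merge only two of them, and $M$ would jump — which is precisely the mechanism that will drive the necessity direction.

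For necessity, assume both conditions fail, so $b_m+b_u\ge1$ and the inequality is violated. First I would isolate a single \emph{fusible} differential $q$: mirroring \cref{le:nonexceptionalimpliesfusible}, and using the doubling trick of \cref{se:doublingtrick} together with Papadopoulos' bound $3g-3$ exactly as in \cref{le:exceptionalimpliesnonfusible}, the numerical failure of $2g+2b_m+b_u+p-\max(1-b_u,0)\le4$ is what guarantees a vertical foliation with two interior components lying in a common interior part that can be separated by a proper arc (equivalently, three marked-boundary annuli in one interior part). I would then exploit that a differential is infusible iff its vertical foliation is internally indecomposable (\cref{th:maxcondition}) and that infusible differentials have singleton fibers (\cref{pr:continuityatqintro}): any continuous local section $s$ near the ray $\gamma_q$ is forced to satisfy $s(q_n)=B(q_n)$ for every infusible $q_n$. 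The obstruction is that $q$ admits two sequences of infusible differentials converging to it with \emph{distinct} Busemann limits. One sequence \emph{merges} a group of components into a single interior component by twisting a connecting curve, so by \cref{le:limith} its Busemann points tend to $\wals^q$ of the merged union; the other \emph{separates} the components into different interior parts by a twisted proper arc, so its Busemann points tend to the split value $\sum_k\wals^q(\text{part}_k)$. Because $q$ is fusible these two limits differ strictly by Titu's inequality (\cref{le:elementaryinequality}); continuity of $s$ would force $s(q)$ to equal both, a contradiction. Hence no local section exists around $\gamma_q$, and a fortiori no global one. In the three-annuli case the same argument is run with twisted seams between two different pairs of marked boundaries, giving distinct split-into-two limits.

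The hard part will be twofold. First, the continuity of $M$ in the sufficiency range is delicate because the interior-part decomposition is a priori discontinuous in $q$; one must check that the only discontinuities are the seam-induced jumps described above and that they are suppressed exactly by the ``at most two annuli'' bound. Second, in the necessity range one must verify that the twisting constructions genuinely produce infusible differentials — i.e. that after inserting the separating arc (or after fully merging) each interior part receives a single interior component, invoking \cref{le:interesctingcurves} and \cref{pr:onlyifpart} to control the number of components — and that the merge and separation constructions are \emph{simultaneously} available precisely when the stated inequality fails. This last matching, where the weight $2$ on $b_m$ and the correction term $\max(1-b_u,0)$ appear, amounts to relating the existence of suitable separating proper arcs to the numerical condition, and is the most delicate bookkeeping of the proof.
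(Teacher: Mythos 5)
Your necessity argument is, in outline, the paper's own: Proposition \ref{pr:continuoussection3} also takes a differential $q$ whose vertical foliation consists of two closed interior components $G_1,G_2$ separable by a proper arc $P$, approaches it once by the internally indecomposable foliations $P+nG_1+nG_2$ (suitably normalized) and once by a twisted curve meeting both $G_1$ and $G_2$, notes that on singleton fibers any section is forced to take the value of the corresponding Busemann point, and gets the contradiction from the strict gap between $\wals^q(G_1)+\wals^q(G_2)$ and $\wals^q(G_1+G_2)$. That half is sound, except for your parenthetical claim that such a configuration is ``equivalently, three marked-boundary annuli in one interior part'' --- this is false (the genus $\ge 2$ cases and, e.g., the torus with one unmarked boundary and two marked points have no marked boundaries at all, hence no such annuli, yet lie in the no-section range), and organizing the case analysis around it would derail the bookkeeping you defer.

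The genuine gap is in your sufficiency direction. The ``at most two annuli parallel to marked boundaries per interior part'' hypothesis of Proposition \ref{pr:lowerboundGM} governs only the membership $M(q)\in\fibermap^{-1}(q)$; it says nothing about continuity of $M$, and it does not separate the two ranges of the theorem. Concretely, the torus with $g=1$, $b_m=0$, $b_u=1$, $p=2$ violates the numerical condition ($2+1+2=5>4$) and admits no local section, yet it satisfies your annuli condition vacuously; so if your continuity argument were valid it would apply verbatim to this surface and contradict the theorem. The point of failure is your claim that \cref{pr:notsplittingboundary} makes the interior-part decomposition ``stable up to vanishing pieces'': that proposition only forces the boundary components of the \emph{limit} to appear in the approximating foliations, and it does not prevent $V(q_n)$ from carrying \emph{extra} proper arcs of vanishing weight. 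Exactly such an arc (e.g., one separating a one-holed torus carrying a nonseparating curve $G_1$ from a piece carrying a curve $G_2$ around the two marked points) splits an interior part of $V(q)$, so $M(q_n)$ records the split value $\wals^q(G_1)+\wals^q(G_2)$ in the limit while $M(q)$ records the merged value $\wals^q(G_1+G_2)$, and $M$ jumps. What the paper actually proves in Proposition \ref{pr:continuoussection2}, by a case-by-case topological analysis of the surfaces in the sufficiency range, is that \emph{every} separating proper arc leaves interior components on at most one side, so all interior components of any foliation --- including the approximating ones, whatever vanishing arcs they contain --- lie in a single interior part; only with this structural fact does \cref{le:limith} yield $M(q_n)\to M(q)$. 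This fact, not the annuli count, is the missing ingredient, and without it your continuity proof does not go through.
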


	We begin by proving the theorem for surfaces without boundary, as it is significantly easier to prove.
	
	\begin{proposition}\label{pr:continuoussection}
		Let $S$ be a surface without boundary. Then the projection map $\fibermap$ restricted to the boundary admits a global section, given by the map $M:\vbd{\T}\to \hbd{\T}$.
	\end{proposition}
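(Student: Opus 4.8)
The plan is to split the statement into two claims: that $M$ is a set-theoretic section of $\fibermap$ restricted to the boundary, i.e. $\fibermap(M(q))=q$ for every ray $\tray{q}{\cdot}\in\vbd{\T}$, and that $M$ is continuous. The section property is essentially free from the previous section. When $S$ has no boundary the foliation $V(q)$ has no boundary components and its complement of proper arcs is a single interior part, whose unique interior component is $V(q)$ itself; the hypothesis of \cref{pr:lowerboundGM} is then vacuously satisfied, and that proposition gives $M(q)\in\fibermap^{-1}(q)$, that is, $\fibermap(M(q))=q$. Moreover, since $V\colon q\mapsto V(q)$ is the Hubbard--Masur homeomorphism and $q$ has unit area, we have $i(V(q),H(q))=1$, so the Gardiner--Masur representative simplifies to
\[
	\Xi^{-1}M(q)=\wals^q(V(q))^{1/2}=\frac{i(V(q),\cdot)}{i(V(q),H(q))^{1/2}}=i(V(q),\cdot).
\]
Because $\Xi$ is a homeomorphism of compactifications, everything then reduces to showing that the map $q\mapsto i(V(q),\cdot)$ is continuous from $\vbd{\T}$ into $\overline{E}$.

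For continuity I would argue as follows. Identify $\vbd{\T}$ with the unit sphere of quadratic differentials based at $b$ via \cref{le:geodesicsequalsphere}, and let $q_n\to q$. Continuity of the Hubbard--Masur map gives $V(q_n)\to V(q)$ in $\MF$. The set $\{V(q_n)\}_n\cup\{V(q)\}$ is compact, so for any compact $K\subset\MF$ the intersection number is uniformly continuous on the compact product of this set with $K$; feeding $V(q_n)\to V(q)$ into that uniform continuity shows $i(V(q_n),\cdot)\to i(V(q),\cdot)$ uniformly on $K$. Hence the convergence is uniform on every compact subset of $\MF$, which is exactly convergence in the compact-open topology of $\overline{E}$; note that each $i(V(q_n),\cdot)=\Xi^{-1}M(q_n)$ and the limit indeed lie in $\overline{E}$. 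Applying the homeomorphism $\Xi$ yields $M(q_n)\to M(q)$ in $\hbc{\T}$, so $M$ is continuous, and together with the section property this proves the proposition.

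The point to emphasize is conceptual rather than technical: $M$ is continuous precisely because its Gardiner--Masur representative is the single function $i(V(q),\cdot)$ attached to the whole foliation $V(q)$, and $V(q)$ varies continuously with $q$. This is what distinguishes $M$ from the Busemann point $B(q)$, which is always a set-theoretic section (since $\fibermap(B(q))=q$) but whose representative $\bigl(\sum_j\wals^q(V_j)\bigr)^{1/2}$ from \cref{co:walshbusemanshape} depends on the indecomposable decomposition of $V(q)$; the number and shape of these components can change abruptly as $q$ varies, which is exactly the source of the discontinuity of $\bussmap$ analysed in \cref{se:continuitybusemanteich}. The only genuinely technical step is the promotion of pointwise convergence of the intersection functions to uniform convergence on compacta, for which the compactness of a convergent sequence together with its limit and the joint continuity of the intersection number suffice; I do not expect this to present any real difficulty, so that the bulk of the argument is the bookkeeping that reduces the claim to the continuity of $q\mapsto i(V(q),\cdot)$.
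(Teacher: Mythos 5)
Your proof is correct and takes essentially the same route as the paper: the section property comes from \cref{pr:lowerboundGM} (whose extra hypothesis is vacuous when $S$ has no boundary), and continuity comes from the identity $\Xi^{-1}M(q)=i(V(q),\cdot)$ together with the continuity of the Hubbard--Masur map and of $\Xi$. The paper's own proof consists of exactly these two observations stated tersely; your promotion of pointwise convergence of $i(V(q_n),\cdot)$ to uniform convergence on compacta merely fills in a detail the paper leaves implicit.
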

	\begin{proof}
		By Proposition \ref{pr:lowerboundGM} every preimage $\fibermap^{-1}(q)$ contains $M(q)$. We have $M(q)=\Xi (i(V(q),\cdot))$, which is continuous, as the map $\Xi$ is continuous.
	\end{proof}
	
	The rest of the cases of Theorem \ref{th:globalsection} require a more careful analysis.

	\begin{proposition}\label{pr:continuoussection2}
		Let $S$ be either		
		\begin{itemize}
			\item a torus with up at most two unmarked boundaries or interior marked points,
			\item a torus with one marked boundary and one interior marked point,
			\item a sphere with one marked boundary and up to three interior marked points or
			\item a sphere with two marked boundaries and interior marked point.
		\end{itemize}
		Then the projection map $\fibermap$ restricted to the boundary admits a global section, given by the map $M:\vbd{\T}\to \hbd{\T}$.
	\end{proposition}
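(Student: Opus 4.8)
The plan is to establish the two properties that make $M$ a section of $\fibermap$ over the boundary: that $M(q)$ lands in the fiber $\fibermap^{-1}(q)$ for every $q$, and that $M$ is continuous. The membership is routine for these surfaces, whereas the continuity is where their topological smallness is essential.

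For membership I would invoke the criterion already isolated in Proposition~\ref{pr:lowerboundGM}: one has $M(q)\in\fibermap^{-1}(q)$ provided that, for each interior part $Z_j$, the foliation $G_j$ contains at most two annuli parallel to boundary components of $Z_j$ carrying marked points. An annulus can only be parallel to a \emph{full circle} boundary of $Z_j$, and the circle boundaries of the interior parts that carry marked points are exactly the uncut marked boundaries of $S$, of which there are at most $b_m\le 2$ on every surface in the statement. Hence the hypothesis of Proposition~\ref{pr:lowerboundGM} is met for all $q$, and $M(q)\in\fibermap^{-1}(q)$ throughout.

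For continuity I would work in the Gardiner--Masur model, where $\Xi^{-1}M(q)=\bigl(\sum_i\wals^q(P_i)+\sum_j\wals^q(G_j)\bigr)^{1/2}$, and show that $q_n\to q$ forces pointwise convergence of these representations on $\MF$, which suffices since $\Xi$ is continuous. Fix $V(q)=\sum_iP_i+\sum_jG_j$. By Proposition~\ref{pr:notsplittingboundary} the boundary components $P_i$ persist in $V(q_n)$ with weights tending to $1$, so their terms converge by Lemma~\ref{le:limith}; any further boundary component of $V(q_n)$ has weight tending to $0$ and contributes nothing, by the zero case of the same lemma (and Lemmas~\ref{le:notsplittingboundary4} and~\ref{le:notsplittingboundary5} guarantee that interior components cannot sneak into a boundary component in the limit, nor conversely). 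Because the proper arcs of $V(q)$ survive in $V(q_n)$, each interior part $Z_j$ is still cut out for large $n$, and the interior components of $V(q_n)$ inside $Z_j$ converge as a group to $G_j$; if $M(q_n)$ keeps them grouped into one interior part, a last application of Lemma~\ref{le:limith} yields $M(q_n)\to M(q)$.

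The single obstruction, and the crux of the proof, is that $V(q_n)$ might carry an \emph{extra} proper arc of vanishing weight inside some $Z_j$ separating two interior components $A_n,B_n$; then $M(q_n)$ would record $\wals^{q_n}(A_n)+\wals^{q_n}(B_n)$ in place of the grouped term, and by Titu's Lemma~\ref{le:elementaryinequality} its limit $\wals^q(A)+\wals^q(B)$ would strictly exceed $\wals^q(G_j)$ when $A,B$ are projectively distinct, producing a jump. I would rule this out precisely through the restriction on $S$: on each listed surface no interior part of any $V(q)$ simultaneously supports two projectively distinct interior components and admits a proper arc separating them. This is a finite topological verification driven by an Euler-characteristic budget — cutting an interior part along a separating arc raises $\chi$ by one, leaving at least one side an annulus or disc (the tori have a single handle that cannot sit on both sides, and the spheres have too few punctures and boundaries), so one side cannot house a second interior component. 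The technical point to pin down, which I expect to be the hard part, is the \emph{stability} of non-separation: if the limiting components $A,B$ admit no separating proper arc, then neither do their perturbations $A_n,B_n$ for large $n$, so the offending arc cannot exist and the group stays intact. With that established the grouping is stable, $M(q_n)\to M(q)$, and $M$ is a continuous section.
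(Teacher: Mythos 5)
Your overall route --- membership via Proposition~\ref{pr:lowerboundGM}, continuity via Proposition~\ref{pr:notsplittingboundary} and Lemma~\ref{le:limith}, and the identification of the regrouping obstruction (an extra low-weight proper arc in $V(q_n)$ splitting an interior part, which by Lemma~\ref{le:elementaryinequality} would push the limit of $M(q_n)$ strictly above $M(q)$) --- is essentially the paper's, and your diagnosis of the obstruction is exactly why the restriction on $S$ matters (compare Proposition~\ref{pr:continuoussection3}, where this very mechanism kills the section).

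The problem is your last step. What you call ``the hard part'' --- stability of non-separation, i.e.\ transferring ``no separating arc between $A$ and $B$'' from the limit foliation to the perturbations $A_n,B_n$ --- is not the right statement, is left unproven, and is in fact unnecessary. The fact needed is \emph{uniform over all foliations on $S$}: on each listed surface, any measured foliation containing a separating proper arc has all of its interior components on one side of that arc (this is the paper's first step, verified case by case; your Euler-characteristic sketch is a reasonable way to organize that check). Once this is known, you apply it directly to $V(q_n)$: the offending extra arc is itself a component of the foliation $V(q_n)$, so $A_n$ and $B_n$, being interior components of that same foliation, cannot lie on opposite sides of it. Hence every $M(q_n)$ automatically groups \emph{all} interior components into a single term $\wals^{q_n}(G^n)$, with $G^n\to G$, and Lemma~\ref{le:limith} finishes the continuity. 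No limiting or perturbation argument enters at all; framing the issue as one leaves a hole precisely where you claim the difficulty sits, and ``admitting no separating arc'' is not obviously an open condition, so the stability statement would be genuinely awkward to establish. A second, smaller inaccuracy: in your membership argument, the boundary circles of the interior parts that carry marked points are \emph{not} exactly the uncut marked boundaries of $S$ --- cutting along a proper arc whose endpoints lie on a marked boundary creates new boundary circles of $Z_j$ that can still carry marked points --- so the bound of at most two such annuli per interior part requires the same case-by-case inspection rather than the one-line count $b_m\le 2$.
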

	\begin{proof}
		We shall build the section in the same way we built it in Proposition \ref{pr:continuoussection}, that is, sending $q$ to $M(q)$. 
		
		Our first step in the proof is seeing that if $V(q)$ contains a separating proper arc then only one of the two parts separated by the proper arc admit interior components. We shall do this by inspecting each possible case. Assume then that $V(q)$ has a separating proper arc. 
		
		If $S$ is a torus with up to two unmarked boundaries or marked points or a torus with one marked boundary and one marked point, then the separating proper arc splits the surface into a torus with a marked boundary and a sphere with a marked boundary and a marked point or unmarked boundary. The latter does not admit an interior component.
		
		If $S$ is a sphere with one marked boundary and up to three boundaries then the separating proper arc splits the surface into two spheres, both with one marked boundary, one of them with two marked points and the other one with one marked point. Again, the latter does not admit an interior component.
		
		Finally, if $S$ is a sphere with two marked boundaries and one marked point or unmarked boundary, the proper arc splits the surface into one sphere with two marked boundaries and a sphere with one marked boundary and one marked point, which again does not admit an interior component.
		
		Take then a sequence of unit quadratic differentials $(q_n)$ converging to $q$. Let $P_i$, $i\in \{1,\ldots, c\}$ be the boundary components of $V(q)$. Furthermore, denote $G$ the union of the interior components. By the first part of the proof, all the interior components are contained in the same interior part. We thus have
		\[\Xi^{-1}M(q)=\left(\sum_i \wals^q(P_i) + \wals^q( G)\right)^{1/2}.\]
		By Proposition \ref{pr:notsplittingboundary} all boundary components of $V(q)$ are contained in $V(q_n)$ for $n$ big enough, and all other boundary components of $V(q_n)$, denoted $P^n$, vanish in the limit. Denote $G^n$ the union of the interior components of $V(q_n)$. As before, each indecomposable component of $G^n$ is contained in the same interior part, so we have
		\[
			\Xi^{-1}M(q_n)=\left(\sum_i \wals^{q_n}(\alpha_i^nP_i) +\wals^{q_n}(P^n)+ \wals^{q_n}(G^n)\right)^{1/2},
		\]
		which converges to $\Xi^{-1}M(q)$.
	\end{proof}
			
	\begin{proposition}\label{pr:continuoussection3}
		Let $S$ be either
		\begin{itemize}
			\item a surface of genus at least two and at least one boundary;
			\item a torus with at least one boundary and two more boundaries or interior marked points;
			\item a torus with at least two boundaries, one being marked, and possibly interior marked points;
			\item a sphere with at least one boundary, and four more boundaries or interior marked points;
			\item a sphere with at least two boundaries, one being marked, and two interior marked points or
			\item a sphere with at least three boundaries, two being marked, and possibly interior marked points.
		\end{itemize}
		Then the projection map $\fibermap$ restricted to the boundary does not admit a local section around some points. 

	\end{proposition}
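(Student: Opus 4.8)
The plan is to exhibit, for each surface in the list, a single direction $q_0\in\vbd{\T}$ around which no continuous local section can exist, exploiting that infusible differentials have singleton fibers. The underlying mechanism is soft. Suppose $q_0$ is fusible and arises as the limit of \emph{two} sequences of infusible unit quadratic differentials $(q_n)$ and $(q_n')$, both converging to $q_0$, with $\lim_n B(q_n)=M(q_0)$ (the minimal point) and $\lim_n B(q_n')=B(q_0)$ (the maximal point). By \cref{th:maxcondition} and \cref{pr:continuityatqintro}, each infusible differential has a singleton fiber, so any local section $s$ defined on a neighbourhood $U$ of $q_0$ must satisfy $s(q)=B(q)$ at every infusible $q\in U$. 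Since both sequences eventually lie in $U$, continuity of $s$ would force $s(q_0)=\lim_n B(q_n)=M(q_0)$ and simultaneously $s(q_0)=\lim_n B(q_n')=B(q_0)$. As we shall arrange $M(q_0)\neq B(q_0)$, this is a contradiction, and no local section exists around $q_0$.

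I would build both sequences from a fusible $q_0$ whose vertical foliation $V(q_0)$ has two non-proportional interior indecomposable components $A$ and $B$ lying in the same interior part. For $(q_n)$ I replace $A+B$ by a single interior indecomposable component $C_n$, obtained by Dehn twisting a curve crossing both $A$ and $B$ exactly as in the proof of \cref{th:dimensionfiberslowerbound}, with $C_n\to A+B$; then $V(q_n)$ is internally indecomposable, hence $q_n$ is infusible by \cref{th:maxcondition}, and \cref{le:limith} gives $(\Xi^{-1}B(q_n))^2\to\wals^{q_0}(A+B)+\sum_i\wals^{q_0}(P_i)=(\Xi^{-1}M(q_0))^2$. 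For $(q_n')$ I instead insert a proper arc $\delta_n$ of weight tending to $0$ that separates $A$ from $B$, together with single interior components converging to $A$ and to $B$ in the two pieces it creates; then $V(q_n')$ is again internally indecomposable, and since $\wals^{q_n'}(\delta_n)\to 0$ by the vanishing case of \cref{le:limith}, we obtain $(\Xi^{-1}B(q_n'))^2\to\wals^{q_0}(A)+\wals^{q_0}(B)+\sum_i\wals^{q_0}(P_i)=(\Xi^{-1}B(q_0))^2$. Strictness $M(q_0)\neq B(q_0)$ follows from the equality case of \cref{le:elementaryinequality}: because $A$ and $B$ are not projectively equal, $\wals^{q_0}(A+B)(F)<\wals^{q_0}(A)(F)+\wals^{q_0}(B)(F)$ for some $F$, and \cref{co:orderpreserved} transports this strict inequality to the horofunction representatives.

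What remains is the topological construction of $q_0$, the components $A,B$, and above all the separating proper arc $\delta$ required for $(q_n')$. This is a case analysis over the six families of surfaces, closely parallel to \cref{le:nonexceptionalimpliesfusible} but with the extra demand that the two interior components be separable by a single proper arc anchored on $\partial S$. This separating arc is precisely the configuration ruled out in the first paragraph of the proof of \cref{pr:continuoussection2}, where a separating proper arc is shown to force all interior components onto one side; that is exactly why those surfaces admit the section $M$ while the present ones do not, and the numerical conditions in the statement are those guaranteeing enough boundary and topology to draw $\delta$. For each family I would exhibit two non-proportional curves $A,B$ in a common interior part together with a boundary arc along which $\delta$ separates them, then verify that the twisting construction and the arc-insertion construction yield internally indecomposable foliations converging to $V(q_0)$.

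The main obstacle is this last topological step: ensuring in every listed case that $A$ and $B$ can be chosen non-proportional within a single interior part while still admitting a separating proper arc, and checking that the approximating foliations remain internally indecomposable throughout the degeneration. The limit computations themselves are routine given \cref{le:limith}, and the obstruction principle of the first paragraph is immediate once the two distinct Busemann limits $M(q_0)$ and $B(q_0)$ have been identified.
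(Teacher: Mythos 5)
Your proposal is correct and is essentially the paper's own proof: the paper likewise derives the contradiction from two sequences of infusible differentials with singleton fibers converging to the same fusible $q$ --- one built by Dehn twisting a curve across both interior components, whose Busemann points converge to the fused limit $\wals^{q}(G_1+G_2)$ (your $M(q_0)$), and one built from a separating proper arc of vanishing weight, whose Busemann points converge to the split limit $\wals^{q}(G_1)+\wals^{q}(G_2)$ (your $B(q_0)$), these being distinct by the strict case of \cref{le:elementaryinequality}. The case-by-case construction of the separating arc and the two curves for each listed family, which you defer as "what remains," is precisely the remaining content of the paper's proof (carried out there with explicit figures for the genus $\ge 2$, torus, and sphere cases), so your outline and the paper's argument coincide in all essentials.
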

	\begin{proof}
		We shall prove this by finding a quadratic differential $q$ and sequences $(q_n^1)$ and $(q_n^2)$ converging to $q$ such that their preimages by $\fibermap$ are singletons, but such that $\fibermap^{-1}(q_n^1)$ and $\fibermap^{-1}(q_n^2)$ converge to different points in $\fibermap^{-1}(q)$. If we had a section around $q$, then its value at $q_n^1$ and $q_n^2$ would be $\fibermap^{-1}(q_n^1)$ and $\fibermap^{-1}(q_n^2)$ respectively, giving us a contradiction.
		
		In all cases the construction will be similar. For $q_n^1$ we build a foliation with a separating proper arc $P$ such that each of the parts has precisely one interior component consisting of a closed curve, which we denote $G_1$ and $G_2$. Letting the weight of the proper arc diminish to $0$ we can get a sequence of quadratic differentials $(q_n^1)$ converging to a quadratic differential $q$ such that $V(q)=G_1+G_2$. Let $F_n^1=P+nG_1+nG_2$, $A_n^1$ and $A$ the area of the Hubbard--Masur differentials $q_{F_n^1,X}$ and $q_{G_1+G_2,X}$ respectively. Denote $\frac{1}{\sqrt{A_n^1}}q_{F_n^1,X}$ as $q_n^1$. These quadratic differentials have unit area, and converge to $\frac{1}{\sqrt{A}}q_{G_1+G_2,X}$, which we denote $q$. By construction, $V(q_n^1)$ is internally indecomposable, so $\fibermap^{-1}(q_n^1)$ is a singleton, and $\Xi^{-1}\fibermap^{-1}(q_n^1)=\left\{\left(\frac{\wals^{q^1_n}(P)+n\wals^{q^1_n}(G_1)+n\wals^{q^1_n}(G_2)}{\sqrt{A_n^1}}\right)^{1/2}\right\}$.
		The sequences $\frac{P}{\sqrt{A_n^1}}$, $\frac{nG_1}{\sqrt{A_n^1}}$ and $\frac{nG_2}{\sqrt{A_n^1}}$ converge respectively to $0$, $\frac{G_1}{\sqrt{A}}$ and $\frac{G_2}{\sqrt{A}}$. Hence, by Lemma \ref{le:limith} the sequence $\fibermap^{-1}(q_n^1)$ converges to $\left\{\left(\frac{\wals^q(G_1)+\wals^q(G_2)}{\sqrt{A}}\right)^{1/2}\right\}$.
		
		For building $q_n^2$ we take a curve $\gamma$ intersecting $G_1$ and $G_2$ at $b_1$ and $b_2$ times, where $b_1,b_2\in\{1,2\}$. Denote $\tau_1$ and $\tau_2$ the Dehn twists around $G_1$ and $G_2$. Let $F_n^2=\tau_1^{2n/b_1}\tau_2^{2n/b_2}\gamma$ and $A_n^2$ the area of the Hubbard--Masur differential $q_{F_n^2,X}$. As before. Denote $\frac{1}{\sqrt{A_n^2}}q_n^2$ the quadratic differentials $\frac{1}{\sqrt{A_n^2}}q_{F_n^2,X}$. These quadratic differentials have unit area, and converge to $q$. Furthermore, each $V(q_n^2)$ is a singleton and $\Xi^{-1}\fibermap^{-1}(q_n^2)=\left\{\left(\frac{\wals^{q^2_n}((\tau_1\tau_2)^n\gamma)}{\sqrt{A_n^2}}\right)^{1/2}\right\}$. The sequence $\frac{(\tau_1\tau_2)^n\gamma}{\sqrt{A_n^2}}$ converges to $\frac{G_1+G_2}{A}$, so by \cref{le:limith} the sequence $\Xi^{-1}\fibermap^{-1}(q_n^2)$ converges to $\left\{\left(\frac{\wals^q(G_1+G_2)}{\sqrt{A}}\right)^{1/2}\right\}$, which is different than the limit of $\Xi^{-1}\fibermap^{-1}(q_n^1)$.
		
		It remains then to find such a multicurve. For genus at least two we take $P$ to be a separating proper arc such that each of the parts is of genus at least one, and $G_1$ and $G_2$ to be non contractible curves, not parallel to unmarked boundaries on each part, as shown in \cref{fi:noncontractible1}. 
		
		For the torus we take $P$ to be a separating proper arc with both endpoints in the unmarked boundary, or a marked boundary if there are no unmarked boundaries. Further, we choose the proper arc such that, after cutting along the arc, one part is a torus with one boundary. That is, every other feature of the surface lies in the other part. Then we let $G_1$ and $G_2$ be non contractible curves on each part, as shown in \cref{fi:noncontractible2}.
		
		Finally, for the sphere we let $P$ be a separating proper arc with both endpoints on an unmarked boundary, or a marked boundary if there are no boundaries without marked points. Further, we choose the arc such that each interior part has at least either a combination of two marked points or boundaries without marked points, or a boundary with marked points. Hence, each interior part supports an interior component formed by a curve, as shown in \cref{fi:noncontractible3}.
		
		 \begin{figure} \centering
			\begin{subfigure}[b]{0.30\textwidth}
				\centering
				\resizebox{\linewidth}{!}{
					\begin{tikzpicture}
						\draw[smooth]  (1.75,-1.5) to[out=120,in=-30] (0,-1) to[out=150,in=-150] (0,1)
						to[out=30,in=150] (2,1) to[out=-30,in=210] (3,1) to[out=30,in=150] (5,1)
						to (5,1) to[out=-30,in=30] (5,-1) to[out=210,in=70] (3.25,-1.5);
						\draw[smooth] (0.4,0.1) .. controls (0.8,-0.25) and (1.2,-0.25) .. (1.6,0.1);
						\draw[smooth] (0.5,0) .. controls (0.8,0.2) and (1.2,0.2) .. (1.5,0);
						\draw[smooth] (3.4,0.1) .. controls (3.8,-0.25) and (4.2,-0.25) .. (4.6,0.1);
						\draw[smooth] (3.5,0) .. controls (3.8,0.2) and (4.2,0.2) .. (4.5,0);
						\draw (1.75,-1.5) arc(180:360:0.75 and 0.2);
						\draw [dashed](1.75,-1.5) arc(180:0:0.75 and 0.2);
						
						\node [label=above:$P$] at (2.5,0.7) {};
						\draw (2.5,0.855) arc(90:190:0.25 and 2.174);
						\draw [dashed](2.5,0.855) arc(90:0:0.25 and 2.174);

						\node [label=above:$G_1$] at (1,1.1) {};
						\draw (1.0,0.15) arc(270:90:0.3 and 1.14/2);
						\draw[dashed] (1.0,0.15) arc(270:450:0.3 and 1.14/2);
						
						\node [label=above:$G_2$] at (4,1.1) {};
						\draw (4.0,0.15) arc(270:90:0.3 and 1.14/2);
						\draw[dashed] (4.0,0.15) arc(270:450:0.3 and 1.14/2);
						
						\node [label=center:$\gamma$] at (2.5,-0.1) {};
						\draw[smooth] (5,0) to[out=270,in=-30] (3,-0.5) to[out=150,in=30] (2,-0.5) to[out=210,in=270] (0,0)  to[out=90,in=150] (2,0.5) to[out=-30,in=210] (3,0.5) to[out=30,in=90] (5,0) ;
				\end{tikzpicture}}  
				\caption{}
				\label{fi:noncontractible1}
			\end{subfigure}
			\begin{subfigure}[b]{0.30\textwidth}
				\centering
				\resizebox{\linewidth}{!}{
					\begin{tikzpicture}
					\draw[smooth]  (1.75,-1.5) to[out=120,in=-30] (0,-1) to[out=150,in=-150] (0,1)
					to[out=30,in=150] (2,1) to[out=-30,in=190] (3,0.9) to[out=10,in=200] (4.5,1.3) to[out=210,in=170] (5,0) ;
					
					\draw[smooth] (5,-0.5) to[out=190,in=70] (3.25,-1.5);
					\draw[smooth] (0.4,0.1) .. controls (0.8,-0.25) and (1.2,-0.25) .. (1.6,0.1);
					\draw[smooth] (0.5,0) .. controls (0.8,0.2) and (1.2,0.2) .. (1.5,0);
					
					\draw (1.75,-1.5) arc(180:360:0.75 and 0.2);
					\draw [dashed](1.75,-1.5) arc(180:0:0.75 and 0.2);
					
					\draw (5,0) arc(90:270:0.08 and 0.25);
					\draw (5,0) arc(90:-90:0.08 and 0.25);
					
					\node[circle,fill,inner sep=1pt] at (4.5,1.3) {};
					
					\node [label=above:$P$] at (2.5,0.7) {};
					\draw (2.5,0.855) arc(90:190:0.25 and 2.174);
					\draw [dashed](2.5,0.855) arc(90:0:0.25 and 2.174);
					
					\node [label=above:$G_1$] at (1,1.1) {};
					\draw (1.0,0.15) arc(270:90:0.3 and 1.14/2);
					\draw[dashed] (1.0,0.15) arc(270:450:0.3 and 1.14/2);
					
					\node [label=above:$G_2$] at (3.8,0.9) {};
					\draw (3.8,1.07) arc(90:270:0.25 and 0.955);
					\draw[dashed] (3.8,1.07) arc(90:-90:0.25 and 0.955);
					\node[white] at (5.5,0) {};
					
					\node [label=center:$\gamma$] at (2.5,-0.3) {};
					\draw[smooth] (4.41,0.5) to [out=250, in=0] (1,-0.6) to[out=180,in=270] (0,0)  to[out=90,in=150] (2,0.5) to[out=-30,in=210] (3,0.5) to[out=30,in=200] (3.5,1);
					\draw[dashed][smooth] (3.5,1) [out=350, in=140]to (4.41,0.5);
				\end{tikzpicture}}  
				\caption{}
				\label{fi:noncontractible2}
			\end{subfigure}
			\begin{subfigure}[b]{0.30\textwidth}
				\centering
				\resizebox{\linewidth}{!}{
					\begin{tikzpicture}
					\draw[smooth] (1.75,-1.5) to[out=120,in=10] (0,-1);
					
					\draw[smooth] (0,1)	to[out=-10,in=190] (3,0.9) to[out=10,in=200] (4.5,1.3) to[out=210,in=170] (5,0) ;
					
					\draw[smooth] (5,-0.5) to[out=190,in=70] (3.25,-1.5);
					
					\draw (1.75,-1.5) arc(180:360:0.75 and 0.2);
					\draw [dashed](1.75,-1.5) arc(180:0:0.75 and 0.2);
					
					\draw (5,0) arc(90:270:0.08 and 0.25);
					\draw (5,0) arc(90:-90:0.08 and 0.25);
					
					\node[circle,fill,inner sep=1pt] at (4.5,1.3) {};
					
					\draw (0,1) arc(90:270:0.3 and 1);
					\draw [dashed](0,1) arc(90:-90:0.3 and 1);
					\node[circle,fill,inner sep=1pt] at (-0.3,0) {};
					
					\node [label=above:$P$] at (2.5,0.7) {};
					\draw (2.5,0.83) arc(90:190:0.25 and 2.15);
					\draw [dashed](2.5,0.83) arc(90:0:0.25 and 2.15);
					
					\node [label=above:$G_1$] at (1,0.7) {};
					\draw (1.0,0.84) arc(90:270:0.3 and 0.905);
					\draw [dashed](1.0,0.84) arc(90:-90:0.3 and 0.905);
					
					\node [label=above:$G_2$] at (3.8,0.9) {};
					\draw (3.8,1.07) arc(90:270:0.25 and 0.955);
					\draw[dashed] (3.8,1.07) arc(90:-90:0.25 and 0.955);

					\node [label=center:$\gamma$] at (2.5,-0.3) {};
					\draw[smooth] (4.41,0.5) to [out=250, in=0] (-0.26,-0.5);
					\draw[smooth] (-0.26,0.5) to[out=0,in=210] (3,0.7) to[out=30,in=200] (3.5,1);
					\draw[dashed][smooth] (3.5,1) [out=350, in=140]to (4.41,0.5);
					
					\node[white] at (5.5,1) {};

				\end{tikzpicture}}  
				\caption{}
				\label{fi:noncontractible3}
			\end{subfigure}
		\caption{Curves chosen in the proof of Proposition \ref{pr:continuoussection3}}
		\end{figure}
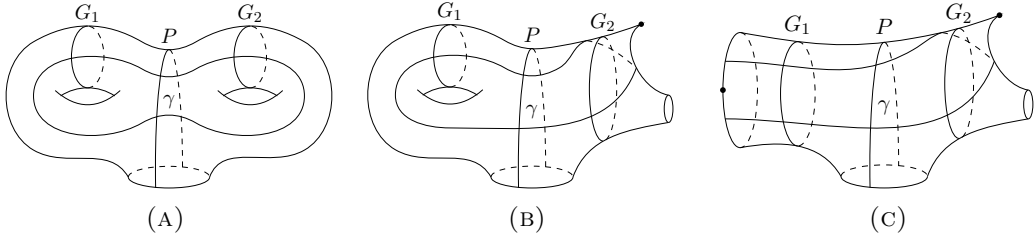

	\end{proof}

	\begin{proof}[Proof of Theorem \ref{th:globalsection}]
		This is a combination of the results from Propositions \ref{pr:continuoussection}, \ref{pr:continuoussection2} and \ref{pr:continuoussection3}.
	\end{proof}
		
	By Proposition \ref{pr:finslerconnected} we know that the horoboundary is connected whenever the real dimension of Teichmüller space is at least 2. In the following result we go a bit further, by showing that it is actually path connected.
	
	\begin{proof}[Proof of Theorem \ref{th:teichconnected}]
		
		Let $x,y\in \hbd{\T(S)}$. If $S$ does not have boundary then $\fibermap$ has a global section, so we can lift any path between $\fibermap(x)$ and $\fibermap(y)$ to a path between $M(\fibermap(x))$ and $M(\fibermap(y))$. Then, since $\fibermap^{-1}(x)$ and $\fibermap^{-1}(y)$ are path connected, we can connect $x$ to $M(\fibermap(x))$ and $y$ to $M(\fibermap(y))$ via paths.

		If $S$ has boundary we might have to be a bit more careful, as we might not have a global section. However, as we shall see, we can take a path $q_t$ between $\Pi(x)$ and $\Pi(y)$ such that $B(q_t)$ has finitely many discontinuities. Then, since each of the preimages is path connected these discontinuities can be fixed by using paths in the fibers, so we will have a path between $x$ and $y$.
		
		Choose a boundary component of $S$, denote $b$ a curve parallel to that boundary and let $F_x=V(\Pi(x))$. If $F_x$ contains $b$ then all the expressions of the form $(1-t)F_x+tb$ with $t\in[0,1]$ correspond to foliations on $S$, which we denote $F_t$. Denote $q_t$ the unit area quadratic differential such that $V(q_t)$ is a multiple of $F_t$. This defines a continuous path joining $\fibermap(x)$ and the unit area quadratic differential associated to a multiple of $b$. Let $V_i$ be the vertical components of $F_x$ that are not $b$, and let $w_0$ be the weight of $b$ in $F_x$. Then, $B(q_t)^2=\frac{1}{\sqrt{\operatorname{Area}(q_{F_t,X})}}\left((1-t)\sum \wals^{q_t} (V_i)+(t+(1-t)w_0)\wals^{q_t}(b)\right)$, which gives a continuous path from $B(q_0)\in\fibermap^{-1}\fibermap(x)$ to $B(q_1)\in \fibermap^{-1}(q_1)$. If $F_x$ does not contain $b$, but $b$ can be added to the foliation then we proceed just as before. Hence, if both $x$ and $y$ result in foliations where $b$ can be added, we create a path by concatenating the paths between $x$, the Busemann point in $\fibermap^{-1}\fibermap(x)$, the Busemann point associated to $b$, the Busemann point in $\fibermap^{-1}\fibermap(y)$ and $y$.
		
		If $b$ cannot be added to the foliation $F_x$ then there must be some set $P$ of proper arcs in $F_x$ incident to the boundary component associated to $b$. Let $F'_x$ be the foliation $F_x$ without the proper arcs $P$ and assume $F'_x$ is nonempty. Denote $F_t$ the foliations $(1-t)P+(1+t)F'_x$, $t\in [0,1]$, and $q_t$ the unit area quadratic differentials such that $V(q_t)$ is a multiple of $F_t$. Denoting $V_i$ the vertical components of $F'_x$, and $P_j$ the proper arcs incident to the boundary component associated to $b$, we have $B(q_t)^2=\frac{1}{\sqrt{\operatorname{Area}(q_{F_t,X})}}\left((1-t)\sum_j\wals^{q_t}(P_j)+(1+t)\sum\wals^{q_t}(V_i)\right)$ for $t<1$, which is continuous. Furthermore, $\lim_{t\to 1} B(q_t)\in \fibermap^{-1}(q_1)$. Hence, we can concatenate a paths between $x$, the Busemann point in $\fibermap^{-1}\fibermap(x)$, the limit $\lim_{t\to 1} B(q_t)$, the Busemann point $B(q_1)$ and Busemann point associated to $b$.
		
		If $F'_x$ is empty we want to add some other components to $F_x$. If it admits some other component $k$ then we repeat the previous reasoning with $F_t=(1-\frac{t}{2})F_x+\frac{t}{2}k$, which does not result in any discontinuity. If $F_x$ does not admit any other component then there must be at least 2 proper arcs incident to the boundary component associated to $b$, so we choose one of them, denoted $p$, and repeat the previous reasoning with $F_t=(1-t)F_x+tp$, which does not result in any discontinuity. Finally, we concatenate this last path with the previous paths.
	\end{proof}

\section{Formulas for limits of extremal lengths}\label{se:formulas}

	We finish by reframing the bounds we got for the elements of $\Xi^{-1}\fibermap^{-1}(q)$ as results regarding limits of extremal lengths, getting in this way some extensions of \cite[Theorem 1]{Walsh}.
	
	\begin{proposition}\label{pr:boundslimits}
		Let $F$ be a measured foliation, $(q_n)$ be a sequence of unit area quadratic differentials converging to a quadratic differential $q$ and $(t_n)$ be a sequence of real numbers converging to infinity. Then,
		\[\left(\Xi^{-1}M(q)\right)^2\le \liminf_{n\to\infty} e^{-2t_n} \ext_\tray{q_n}{t_n}(F)\le\limsup_{n\to\infty} e^{-2t_n} \ext_\tray{q_n}{t_n}(F)\le
		\left(\Xi^{-1}B(q)\right)^2
		\]
	\end{proposition}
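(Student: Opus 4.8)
The plan is to recast the quantity $e^{-2t_n}\ext_{\tray{q_n}{t_n}}(F)$ as the squared value of the Gardiner--Masur representation of the points $x_n:=\tray{q_n}{t_n}$, and then to pin down all of its subsequential limits using the two bounds already established in Propositions \ref{pr:lowerboundGM} and \ref{pr:upperboundGM}. First I would observe that, since $\tray{q_n}{\cdot}$ is parametrized by arc length, $d(b,x_n)=t_n$, so Kerckhoff's formula gives $K_{b,x_n}=e^{2t_n}$, and by the definition of $\E$ we have
\[
\E(x_n)(F)^2=\frac{\ext_{x_n}(F)}{K_{b,x_n}}=e^{-2t_n}\ext_{\tray{q_n}{t_n}}(F).
\]
Thus it is enough to bound the subsequential limits of $\E(x_n)(F)^2$.

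Next I would verify that $x_n\to q$ in the visual compactification based at $b$. Since $q_n\to q$, the geodesic rays $\tray{q_n}{\cdot}$ converge to $\tray{q}{\cdot}$ uniformly on compact sets, and $d(b,x_n)=t_n\to\infty$; by the definition of the visual topology this is precisely convergence of $(x_n)$ to $q$. Consequently, Corollary \ref{co:convergencepaths} guarantees that every accumulation point of $(x_n)$ in the horofunction compactification lies in $\fibermap^{-1}(q)$, so via the isomorphism $\Xi$ every accumulation point of $(\E(x_n))$ in the compact space $\overline{E}$ is of the form $\Xi^{-1}\xi$ with $\xi\in\fibermap^{-1}(q)$.

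The core of the argument is then a subsequence extraction. Write $L=\liminf_n \E(x_n)(F)^2$ and $U=\limsup_n\E(x_n)(F)^2$. Passing to a subsequence realizing $L$ and refining, by compactness of $\overline{E}$, to a subsequence along which $\E(x_{n_k})$ converges to some $\Xi^{-1}\xi$ with $\xi\in\fibermap^{-1}(q)$, I obtain $L=(\Xi^{-1}\xi(F))^2$, since evaluation at $F$ and squaring are continuous for the (pointwise) Gardiner--Masur topology and the two limits must coincide. Because the elements of $\overline{E}$ are nonnegative functions, Proposition \ref{pr:lowerboundGM} gives $\Xi^{-1}M(q)\le\Xi^{-1}\xi$, which upon evaluating at $F$ and squaring yields $(\Xi^{-1}M(q)(F))^2\le L$. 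The identical argument applied to $U$, together with Proposition \ref{pr:upperboundGM} (giving $\Xi^{-1}\xi\le\Xi^{-1}B(q)$), yields $U\le(\Xi^{-1}B(q)(F))^2$. Combining these with $L\le U$ produces the full chain of inequalities in the statement.

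I expect the only genuinely delicate step to be the verification that $(x_n)$ converges to $q$ in the visual sense, i.e.\ the continuity of the assignment sending a unit-area quadratic differential to its arc-length parametrized ray and the way this feeds into the definition of the visual topology; once that is in place, everything reduces to the routine compactness-and-order argument above, built directly on the already-proven bounds and on the observation that the functions involved are nonnegative so that squaring preserves the pointwise order.
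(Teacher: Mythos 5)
Your proposal is correct and follows essentially the same route as the paper: pass to a subsequence realizing the liminf (resp. limsup), refine by compactness so that the points $\tray{q_n}{t_n}$ converge in the horofunction compactification to some $\xi\in\fibermap^{-1}(q)$, identify the limit of $e^{-2t_n}\ext_{\tray{q_n}{t_n}}(F)$ with $(\Xi^{-1}\xi(F))^2$, and apply Propositions \ref{pr:lowerboundGM} and \ref{pr:upperboundGM}. The paper's proof is merely terser, leaving implicit the details you spell out (the Kerckhoff-formula identification $e^{-2t_n}\ext_{\tray{q_n}{t_n}}=\E(\tray{q_n}{t_n})^2$, the visual convergence of $\tray{q_n}{t_n}$ to $q$, and the appeal to \cref{co:convergencepaths}).
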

	\begin{proof}
		Take a subsequence such that $e^{-2t_n} \ext_\tray{q_n}{t_n}(F)$ converges to the liminf. Furthermore, take a subsequence such that $\tray{q_n}{t_n}$ converge to a point $\xi\in \fibermap^{-1}(q)$. By Proposition \ref{pr:lowerboundGM} we have $(\Xi^{-1}M(q))^2\le \xi^2$. Since $e^{-2t_n} \ext_\tray{q_n}{t_n}(F)$ converges to $\xi^2(F)$ we have the lower bound. For the upper bound we repeat the process taking the limsup and using Proposition \ref{pr:upperboundGM}.
	\end{proof}
	
	By noting that $\Xi^{-1}M(q)(F)$ and $\Xi^{-1}B(q)(F)$ evaluate to $0$ if and only if $i(V(q),F)=0$, we get the following corollary, which has also been proven for surfaces without boundary by Liu--Shi in \cite[Corollary 3.11]{LiuShi}.

	\begin{corollary}\label{pr:relationqdtoGM}
		Let $(q_n)$ be a sequence of unit area quadratic differentials converging to a quadratic differential $q$, and $(t_n)$ be a sequence of real numbers converging to infinity. Then, 
		\[\liminf_{n\to\infty} e^{-2t_n} \ext_\tray{q_n}{t_n}(F)=0 \iff i(V(q),F)=0.\]
	\end{corollary}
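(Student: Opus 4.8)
The corollary is an immediate consequence of Proposition \ref{pr:boundslimits} together with the already-noted fact that both bounding functions $\Xi^{-1}M(q)(F)$ and $\Xi^{-1}B(q)(F)$ vanish exactly when $i(V(q),F)=0$. First I would make this vanishing criterion precise. The Busemann point is $\Xi^{-1}B(q)=\left(\sum_j \wals^q(G_j)\right)^{1/2}$ by \cref{co:walshbusemanshape}, where the $G_j$ are the indecomposable components of $V(q)$; evaluated at $F$ this is $\sum_j i(G_j,F)^2/i(G_j,H(q))$, a sum of nonnegative terms. Since $\sum_j G_j = V(q)$ and intersection number is additive over the decomposition, $i(V(q),F)=\sum_j i(G_j,F)$, so this sum is $0$ if and only if $i(G_j,F)=0$ for every $j$, which holds if and only if $i(V(q),F)=0$. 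The minimal point $\Xi^{-1}M(q)=\left(\sum_i \wals^q(P_i)+\sum_j \wals^q(G_j)\right)^{1/2}$ groups the same indecomposable components differently (into boundary components $P_i$ and unions $G_j$ of interior components), but it is still a sum of terms of the form $i(\cdot,F)^2/i(\cdot,H(q))$ whose arguments sum to $V(q)$, so the identical argument shows it vanishes at $F$ precisely when $i(V(q),F)=0$.

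With the endpoints understood, I would argue both directions. For the forward direction, suppose $\liminf_{n\to\infty} e^{-2t_n}\ext_{\tray{q_n}{t_n}}(F)=0$. By the lower bound of Proposition \ref{pr:boundslimits} we have $\left(\Xi^{-1}M(q)(F)\right)^2 \le 0$, hence $\Xi^{-1}M(q)(F)=0$, and by the vanishing criterion $i(V(q),F)=0$. For the reverse direction, suppose $i(V(q),F)=0$. Then $\Xi^{-1}B(q)(F)=0$ by the criterion, so the upper bound of Proposition \ref{pr:boundslimits} forces $\limsup_{n\to\infty} e^{-2t_n}\ext_{\tray{q_n}{t_n}}(F)\le 0$. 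Since each term $e^{-2t_n}\ext_{\tray{q_n}{t_n}}(F)$ is nonnegative, the limit superior is also nonnegative, so it equals $0$, and therefore the liminf (which is squeezed below it and above by $0$) is $0$ as well.

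There is essentially no obstacle here: the whole content is packaged in Proposition \ref{pr:boundslimits}, and the only genuine observation is the vanishing criterion for the two bounding functions, which I would state as a one-line lemma or simply inline. The mildest point requiring care is making sure the minimal point $M(q)$ also satisfies the vanishing criterion even in the fusible case, where the $G_j$ are unions of several interior indecomposable components rather than single ones; but since $i(\cdot,F)$ is still additive and all summands are nonnegative, this causes no difficulty. I would therefore write the proof in roughly four sentences, citing Proposition \ref{pr:boundslimits} for the squeeze and the explicit forms of $\Xi^{-1}M(q)$ and $\Xi^{-1}B(q)$ for the vanishing.
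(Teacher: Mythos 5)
Your proof is correct and takes exactly the paper's route: the paper deduces the corollary from the squeeze in Proposition \ref{pr:boundslimits} together with the same observation that $\Xi^{-1}M(q)(F)$ and $\Xi^{-1}B(q)(F)$ vanish precisely when $i(V(q),F)=0$. Your spelled-out justification of that vanishing criterion (nonnegativity of each term $i(\cdot,F)^2/i(\cdot,H(q))$ and additivity of intersection number over a decomposition summing to $V(q)$) is the intended, and correct, filling-in of the paper's one-line remark.
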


	Proposition \ref{pr:boundslimits} can be strengthened slightly in the following manner.
	
	\begin{proposition}\label{pr:lowerboundrefinedlimit}
		Let $(q_n)$ be a sequence of unit area quadratic differentials converging to a quadratic differential $q$. Furthermore, denote $V_i^n$ the indecomposable components of $q_n$. If the vertical components can be reordered so that for each $i$ we have that $V_i^n$ converges to a foliation $V_i$, then
		\[\liminf_{n\to\infty} e^{-2t_n} \ext_\tray{q_n}{t_n}(F)\ge
		\sum_i \wals^q(V_i).\]
	\end{proposition}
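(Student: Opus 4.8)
The plan is to read off the bound directly from the uniform estimate of \cref{le:walshlowerbound}, stopping short of the convexity step (\cref{le:elementaryinequality}) that collapses the finer decomposition into $M(q)$ in the proof of \cref{pr:lowerboundGM}. Fix a measured foliation $F$. Each $q_n$ has unit area, so \cref{le:walshlowerbound} applied to $q_n$ gives, for every $n$,
\[
	e^{-2t_n}\ext_{\tray{q_n}{t_n}}(F)\ge \sum_i \frac{i(V_i^n,F)^2}{i(V_i^n,H(q_n))}=\sum_i \wals^{q_n}(V_i^n)(F),
\]
the sum ranging over the indecomposable components of $V(q_n)$. Here every summand is given by the first branch in the definition of $\wals$, since the horizontal foliation meets each indecomposable vertical component in positive intersection number (this number being the positive area of the corresponding component).

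I would first observe that the hypothesis — that the components can be reordered so that $V_i^n\to V_i$ for each $i$ — means that the number of indecomposable components is eventually a fixed constant $c$, carried by a consistent labeling $V_1^n,\dots,V_c^n$ (components that degenerate in the limit are recorded by setting the corresponding $V_i$ to the zero foliation). Consequently the right-hand side above is a sum over the fixed finite index set $\{1,\dots,c\}$.

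The main step is then to take $\liminf_{n\to\infty}$ across the inequality. Monotonicity of the $\liminf$ gives
\[
	\liminf_{n\to\infty} e^{-2t_n}\ext_{\tray{q_n}{t_n}}(F)\ge \liminf_{n\to\infty}\sum_{i=1}^c \wals^{q_n}(V_i^n)(F).
\]
For each fixed $i$ the foliation $V_i^n$ is trivially of the form $\sum_j\alpha_j^nV_j^n$ required by \cref{le:limith} (a single unit coefficient), and $V_i^n\to V_i$, so that lemma yields $\wals^{q_n}(V_i^n)(F)\to\wals^q(V_i)(F)$ when $V_i\neq 0$ and $\wals^{q_n}(V_i^n)(F)\to 0=\wals^q(V_i)(F)$ when $V_i=0$. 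Because the index set is finite, the limit of the sum is the sum of the limits, so the right-hand $\liminf$ is an honest limit equal to $\sum_{i=1}^c\wals^q(V_i)(F)$. Chaining the two inequalities gives the claim.

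I do not expect a serious obstacle here: the estimate is a direct consequence of \cref{le:walshlowerbound} and \cref{le:limith}, and it improves \cref{pr:lowerboundGM} precisely by \emph{not} applying Titu's inequality to merge the limiting components $V_i$ lying in a common interior part. The only genuinely delicate points are bookkeeping ones — fixing the index set as above, and confirming that the limiting value for a nonzero $V_i$ with $i(V_i,H(q))=0$ is still correctly $\wals^q(V_i)(F)=0$, which is exactly the content of the case of nonzero $G$ in \cref{le:limith} together with the vanishing branch in the definition of $\wals^q$.
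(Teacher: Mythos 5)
Your proof is correct and follows essentially the same route as the paper's: both apply the extension of Walsh's lower bound (\cref{le:walshlowerbound}) to each $q_n$ to get $e^{-2t_n}\ext_{\tray{q_n}{t_n}}(F)\ge\sum_i\wals^{q_n}(V_i^n)(F)$, and then pass to the limit term by term via \cref{le:limith}. The only cosmetic difference is that the paper extracts a subsequence realizing the liminf (and converging in the Gardiner--Masur compactification), whereas you take the liminf directly and note that the right-hand side is an honest limit over a fixed finite index set; your bookkeeping remarks about padding degenerating components with zero foliations are sensible and consistent with how the paper uses \cref{le:limith}.
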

	\begin{proof}
		Take a sequence such that the limit is equal to the liminf, and such that we have convergence in the Gardiner--Masur compactification. Let $\xi$ be the limit in the horofunction compactification. By
		Lemma \ref{le:walshlowerbound} we have $e^{-2t_n} \ext_\tray{q_n}{t_n}(F)\ge \left(\Xi^{-1}B(q_n)\right)^2$, and by \cref{co:walshbusemanshape} we have $\left(\Xi^{-1}B(q_n)\right)^2=\sum_i \wals^{q_n}(V_i^n)$. Hence, by Lemma \ref{le:limith}, taking limits on both sides we get the proposition.
	\end{proof}	

	If we have strong convergence the upper bound from Proposition \ref{pr:boundslimits} and the lower bound from Proposition \ref{pr:lowerboundrefinedlimit} coincide, so adding Walsh's formula for the Busemann points \cite[Theorem 1]{Walsh} we have a proof of Theorem \ref{th:limits}.
	
	Finally, the path connectedness of the fibers can be translated to the following result.
	
	\begin{proposition}\label{pr:pathGM}
		Let $(q_n)$ be a sequence of unit quadratic differentials converging to $q$, and $(t_n)$ be a sequence of times converging to infinity. Further, for any $F\in \MF$ denote $L(F):=\liminf_{n\to\infty}\ext_\tray{q_n}{t_n}(F)$. Then, for any $s\in [L(F), \E^2_q(F)]$ there is a subsequence of $q_{n_k^s}$ and a sequence $(t_k^s)$ of times such that, for any $G\in \MF$ the limit \[\lim_{k\to\infty} e^{-2t_{k}^s} \ext_\tray{q_{n_k^s}}{t_k^s}(G)\] is defined, and if $G=F$ it has value $s$.
	\end{proposition}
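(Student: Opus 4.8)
The plan is to realize every target value $s$ as the value at $F$ of a point lying on a path inside the fibre $\fibermap^{-1}(q)$, and to exploit the fact that the path furnished by Proposition \ref{pr:pathconnected} can be built entirely out of points on the Teichmüller geodesics $\tray{q_n}{\cdot}$, so that each of its points is a genuine horofunction limit along a subsequence of $(q_n)$.

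First I would fix $F$ and produce the two endpoints of the interval. Passing to a subsequence of $(q_n)$ along which $e^{-2t_n}\ext_{\tray{q_n}{t_n}}(F)\to L(F)$, and then, by compactness of $\hbc{\T(S)}$, to a further subsequence along which $h(\tray{q_n}{t_n})$ converges to some $\xi_-$, I obtain, exactly as in the proof of Proposition \ref{pr:upperboundGM}, that $e^{-2t_n}\ext_{\tray{q_n}{t_n}}(G)\to(\Xi^{-1}\xi_-)^2(G)$ for every $G$, with value $L(F)$ at $F$. Since $q_n\to q$ and $t_n\to\infty$ force $\tray{q_n}{t_n}\to q$ in the visual compactification, continuity of $\fibermap$ gives $\xi_-\in\fibermap^{-1}(q)$. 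The other endpoint is $B(q)$ itself, for which $(\Xi^{-1}B(q))^2(F)=\E_q^2(F)$ by definition of $\E_q$; note that $L(F)\le\E_q^2(F)$ by Proposition \ref{pr:boundslimits}.

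Next I would run the construction in the proof of Proposition \ref{pr:pathconnected} with the sequence $x_n=\tray{q_n}{u_n}$ realizing $\xi_-$ (where $u_n=t_n$ along the chosen subsequence), producing a continuous path $\alpha\colon[0,1]\to\fibermap^{-1}(q)$ with $\alpha(0)=B(q)$ and $\alpha(1)=\xi_-$. The decisive feature I would stress is that each $\alpha(t)$ arises as a horofunction limit of points $h\!\left(\tray{q_n}{s_n(t)}\right)$ with $s_n(t)\in[l_n,u_n]$, the value $l_n\to\infty$ being chosen so that $\tray{q_n}{l_n}\to B(q)$; hence every point of the path is the limit, along a further subsequence, of points of the form $\tray{q_{n_k}}{t_k}$ with $t_k\to\infty$. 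Setting $\Phi(t)=(\Xi^{-1}\alpha(t))^2(F)$, this is continuous in $t$ (continuity of $\alpha$ into $\hbc{\T(S)}$, of $\Xi^{-1}$, and of evaluation at $F$), with $\Phi(0)=\E_q^2(F)$ and $\Phi(1)=L(F)$, so the intermediate value theorem yields, for each $s\in[L(F),\E_q^2(F)]$, a parameter $t_s$ with $\Phi(t_s)=s$.

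Finally, for such a $t_s$ I would extract from the defining approximation of $\alpha(t_s)$ a subsequence $q_{n_k^s}$ and times $t_k^s=s_{n_k^s}(t_s)$ along which $h(\tray{q_{n_k^s}}{t_k^s})\to\alpha(t_s)$ in $\hbc{\T(S)}$; translating back as in Proposition \ref{pr:upperboundGM} gives that $\lim_{k\to\infty}e^{-2t_k^s}\ext_{\tray{q_{n_k^s}}{t_k^s}}(G)=(\Xi^{-1}\alpha(t_s))^2(G)$ exists for every $G$ and equals $s$ when $G=F$, while $t_k^s\ge l_{n_k^s}\to\infty$ as required. The main obstacle is precisely the bookkeeping in the middle step: one must ensure that the path of Proposition \ref{pr:pathconnected} can be routed through points of the specific geodesics $\tray{q_n}{\cdot}$ rather than through an abstract approximating sequence, which is why I would reprise that construction with $x_n=\tray{q_n}{u_n}$ instead of invoking the statement as a black box; the remaining continuity and diagonal-extraction arguments are then routine.
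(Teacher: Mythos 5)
Your proposal is correct and takes essentially the same route as the paper's proof: pass to a subsequence realizing $L(F)$ that converges in the horofunction compactification to a point $\xi$ of $\fibermap^{-1}(q)$, connect $\xi$ to $B(q)$ by the path of Proposition \ref{pr:pathconnected}, locate the desired value by the intermediate value theorem, and then use the internal structure of that path's construction (each path point is approached by points $h\left(\tray{q_{n_k}}{t_k}\right)$ with $t_k\to\infty$) to extract the subsequence and times. The paper invokes this last fact tersely (``by the way we constructed $\gamma_t$''), whereas you make it explicit by re-running the construction with $x_n=\tray{q_n}{t_n}$; this is the same argument, just spelled out.
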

	\begin{proof}
		We can take a subsequence such that $\lim_{n\to\infty}\ext_\tray{q_n}{t_n}(F)$ converges to the liminf, and a further subsequence such that we have convergence in the Gardiner--Masur compactification to a point $\Xi^{-1}\xi\in\Xi^{-1}\fibermap^{-1}(q)$. By Theorem \ref{pr:pathconnected} we have a path between $\xi$ and $B(q)$ contained in $\fibermap^{-1}(q)$, and hence a path $\gamma$ between $\Xi^{-1}\xi$ and $\Xi^{-1}B(q)$ contained in $\Xi^{-1}\fibermap^{-1}(q)$. By continuity there is a point in that path such that $\gamma_t(F)=\sqrt{s}$, and by the way we constructed $\gamma_t$, it is reached by taking a subsequence of $(q_{n_k^s})$ and a sequence $(t_k^s)$ of times converging to infinity. Finally, since $\gamma_t$ is a point in the Gardiner--Masur compactification approached by $\tray{q_{n_k^s}^s}{t_{k}^s}$, the value of $\gamma_t(G)^2$ is equal to the limit from the proposition.
	\end{proof}
		
\bibliographystyle{gtart}
\bibliography{horoboundary}{}	
\end{document}